\newcommand{\df}{\dfrac}
\newcommand{\tf}{\tfrac}
\renewcommand{\Re}{\operatorname{Re}}
\renewcommand{\Im}{\operatorname{Im}}
 \renewcommand{\a}{\alpha}
\renewcommand{\b}{\beta}
\newcommand{\e}{\epsilon}
\newcommand{\G}{\Gamma}
\renewcommand{\l}{\lambda}
\newcommand{\x}{\xi}
\renewcommand{\(}{\left\(}
\renewcommand{\)}{\right\)}
\renewcommand{\[}{\left\[}
\renewcommand{\]}{\right\]}
\let\dotlessi=\i
\renewcommand{\i}{\infty}
\numberwithin{equation}{section}
 \theoremstyle{plain}
\newtheorem{theorem}{Theorem}[section]
\newtheorem{lemma}[theorem]{Lemma}
\newtheorem{corollary}[theorem]{Corollary}
\newtheorem{proposition}[theorem]{Proposition}
\newtheorem{remark}[]{Remark}
\def\proof{\@ifnextchar[{\@oproof}{\@nproof}}
\def\@oproof[#1][#2]{\trivlist\item[\hskip\labelsep\textit{#2 Proof of\
#1.}~]\ignorespaces}
\def\@nproof{\trivlist\item[\hskip\labelsep\textit{Proof.}~]\ignorespaces}
\begin{document}
\title{Vorono\"{I} summation formula for the generalized divisor function $\sigma_{z}^{(k)}(n)$}
\author{Atul Dixit, Bibekananda Maji and Akshaa Vatwani}
\address{Department of Mathematics, Indian Institute of Technology Gandhinagar, Palaj, Gandhinagar 382355, Gujarat, India}\email{adixit@iitgn.ac.in}
\address{Department of Mathematics, Indian Institute of Technology Indore, Simrol, Indore 453552, Madhya Pradesh,  India}\email{bmaji@iiti.ac.in}
\address{Department of Mathematics, Indian Institute of Technology Gandhinagar, Palaj, Gandhinagar 382355, Gujarat, India}\email{akshaa.vatwani@iitgn.ac.in}
\address{Department of Mathematics, Indian Institute of Technology Gandhinagar, Palaj, Gandhinagar 382355, Gujarat, India}\email{shashankashok.c@iitgn.ac.in}
\address{Department of Mathematics, Indian Institute of Technology Gandhinagar, Palaj, Gandhinagar 382355, Gujarat, India}\email{aviral.srivastava@iitgn.ac.in}
\thanks{2020 \textit{Mathematics Subject Classification.} Primary 11M06; Secondary 33E20, 33C10.\\
	\textit{Keywords and phrases.}  Vorono\"{\dotlessi} summation formula, differential equations, generalized divisor function, elementary symmetric polynomials, Lambert series.}

\begin{abstract}
	For a fixed $z\in\mathbb{C}$ and a fixed $k\in\mathbb{N}$, let $\sigma_{z}^{(k)}(n)$ denote the sum of $z$-th powers of those divisors $d$ of $n$ whose $k$-th powers also divide $n$. This arithmetic function is a simultaneous generalization of the well-known divisor function $\sigma_z(n)$ as well as the divisor function $d^{(k)}(n)$ first studied by Wigert. The Dirichlet series of $\sigma_{z}^{(k)}(n)$ does not fall under the purview of Chandrasekharan and Narasimhan's fundamental work on Hecke's functional equation with multiple gamma factors. Nevertheless, as we show here, an explicit and elegant Vorono\"{\dotlessi} summation formula exists for this function. As its corollaries, some transformations of Wigert are generalized. The kernel $H_{z}^{(k)}(x)$ of the associated integral transform is a new generalization of the Bessel kernel. Several properties of this kernel such as its differential equation, asymptotic behavior and its special values are derived. A crucial relation between $H_{z}^{(k)}(x)$ and an associated integral $K_{z}^{(k)}(x)$ is obtained, the proof of which  is deep, and employs the theory of linear differential equations and the properties of Stirling numbers of the second kind and elementary symmetric polynomials. 
	\end{abstract}
\maketitle
\tableofcontents

\section{Introduction}\label{intro}
The summation formulas of Poisson, Vorono\"\dotlessi, Lipschitz as well as the Euler-Maclaurin and Abel-Plana summation formulas have been studied for a long time in view of their enormous applications in the mathematical sciences. Out of these, the Vorono\"{\dotlessi} summation formula is the \emph{pierre angulaire} of number theory because of its use in estimating the summatory functions of certain arithmetic functions and, in particular, due to the instrumental role that it plays in the Gauss circle problem and the Dirichlet divisor problem.

The celebrated result of Vorono\"{\dotlessi} \cite{voronoi} associated with $d(n)$, the number of divisors of $n$, is given by
\begin{align}\label{vsf}
\sideset{}{'}\sum_{n\leq x}d(n)&=x(\log x+(2\gamma-1))+\frac{1}{4}+\sqrt{x}\sum_{n=1}^{\infty}\frac{d(n)}{\sqrt{n}}\left(-Y_{1}(4\pi\sqrt{nx})-\frac{2}{\pi}K_{1}(4\pi\sqrt{nx})\right).
\end{align}
Here, $Y_{\nu}(\xi)$ and $K_{\nu}(\xi)$ denote the Bessel and modified Bessel functions of the second kind of order $\nu\notin\mathbb{Z}$ respectively defined by \cite[p.~64, 78, eq.~(6)]{watson}
\begin{align}
	Y_{\nu}(\xi)&:=\frac{J_{\nu}(\xi)\cos(\pi \nu)-J_{-\nu}(\xi)}{\sin{\pi \nu}},\label{yj}\\
	K_{\nu}(\xi)&:=\frac{\pi}{2}\frac{I_{-\nu}(\xi)-I_{\nu}(\xi)}{\sin{\pi \nu}}\label{kbes},
\end{align}
where
\begin{align}\label{sumbesselj}
	J_{\nu}(\xi)&:=\sum_{m=0}^{\infty}\frac{(-1)^m(\xi/2)^{2m+\nu}}{m!\Gamma(m+1+\nu)}, \quad |\xi|<\infty,\\
		I_{\nu}(\xi)&:=
		\begin{cases}
			e^{-\frac{1}{2}\pi \nu i}J_{\nu}(e^{\frac{1}{2}\pi i}\xi), & \text{if $-\pi<$ arg $\xi\leq\frac{\pi}{2}$,}\nonumber\\
			e^{\frac{3}{2}\pi \nu i}J_{\nu}(e^{-\frac{3}{2}\pi i}\xi), & \text{if $\frac{\pi}{2}<$ arg $\xi\leq \pi$}, 
		\end{cases}
\end{align}
are the Bessel and modified Bessel functions of the first kind respectively \cite[p.~40, 77]{watson}. When the order of the Bessel functions in \eqref{yj} and \eqref{kbes} is an integer, say, $n$, then we define $Y_n(\xi)=\lim_{\nu\to n}Y_{\nu}(\xi)$ and $K_n(\xi)=\lim_{\nu\to n}K_{\nu}(\xi)$.

 The infinite series in \eqref{vsf} is the error term $\Delta(x)$ in the Dirichlet divisor problem, that is,
\begin{equation} \label{error as inf sum}
\Delta(x)=\sqrt{x}\sum_{n=1}^{\infty}\frac{d(n)}{\sqrt{n}}\left(-Y_{1}(4\pi\sqrt{nx})-\frac{2}{\pi}K_{1}(4\pi\sqrt{nx})\right).
\end{equation}
The general form of the Vorono\"{\dotlessi} summation formula involving a test function $f$,  given by Vorono\"{\dotlessi} \cite{voronoi}, is
\begin{align}\label{kvsf}
\sum_{\alpha<n<\beta}\!\!\! d(n)f(n)\!=\! \int_{\alpha}^{\beta}\!\!(2\gamma+\log t)f(t) dt +2\pi\sum_{n=1}^{\infty}d(n)\int_{\alpha}^{\beta}f(t)\left(\dfrac{2}{\pi}K_{0}(4\pi\sqrt{nt})-Y_{0}(4\pi\sqrt{nt})\right)dt,
\end{align}
where $f(t)$ is a function of bounded variation in $(\alpha,\beta)$ and $0<\alpha<\beta$. Mathematicians have worked with several different versions of the Vorono\"{\dotlessi} summation formula differing in the conditions put forth on $f$ and catered to a particular problem they have been interested in. See, for example, Dixon and Ferrar \cite{dixfer1}, Koshliakov \cite{koshliakov}, Nasim \cite{nasim}, Wilton \cite{wilton}, Yakubovich \cite{yakubovich1} etc. Vorono\"{\dotlessi} also claimed a formula corresponding to \eqref{vsf} for $r_2(n)$, the number of representations of $n$ as sum of two squares. This was subsequently proved by Hardy \cite{hardyqjpam1915} and Sierpi\'{n}ski \cite{sierpinski}.

Today Vorono\"{\dotlessi} summation formulas are known to exist for coefficients of various $L$-functions, for example, the $L$-functions associated with modular forms, Maass forms, and more recently, with automorphic forms as well. See, for example, \cite{corbett}, \cite{guinand}, \cite{millerschmid0.5},  \cite{millerschmid1}, \cite{yakubovich0}. The reader is encouraged to read the excellent survey \cite{millerschmid} on the Vorono\"{\dotlessi} (also, Poisson) summation formulas. As mentioned in this survey, summation formulas can be used to obtain functional equations for various $L$-functions, and, likewise, the properties of the $L$-functions, in turn, can be used to derive the summation formulas. See also \cite[Section 5]{bdgz} for a recent survey.
Let us consider the following setup due to Chandrasekharan and Narasimhan \cite{cn1}. 

Let $a(n)$ and $b(n)$, $1\leq n<\infty$, be two sequences of complex numbers which are not identically 0.  Let
\begin{equation}\label{18}
	\varphi(s):=\sum_{n=1}^{\infty}\df{a(n)}{\lambda_n^s}, \quad \textup{Re}(s)>\sigma_a; \qquad
	\psi(s):=\sum_{n=1}^{\infty}\df{b(n)}{\mu_n^s}, \quad \textup{Re}(s)>\sigma_a^*,
\end{equation}
where  $\{\l_n\}$ and $\{\mu_n\}$ are two sequences of positive numbers, each tending to $\infty$, and $\sigma_a$ and $\sigma_a^*$ are, respectively,  the abscissae of absolute convergence for $\varphi(s)$ and $\psi(s)$. Suppose that $\varphi(s)$ and $\psi(s)$ have analytic continuations into the entire complex plane $\mathbb{C}$ and are analytic on $\mathbb{C}$ except for a finite set of poles, which we denote by $\bf{S}$.
For some $\delta>0$, suppose that $\varphi(s)$ and $\psi(s)$  satisfy a functional equation of the form
\begin{equation}\label{19}
	\chi(s):=(2\pi)^{-s}\Gamma(s)\varphi(s)=(2\pi)^{s-\delta}\Gamma(\delta-s)\psi(\delta-s).
\end{equation}
Chandrasekharan and Narasimhan \cite[p.~6, Lemmas 4, 5]{cn1} showed that the functional equation \eqref{19} is not only equivalent to the `modular' relation
\begin{equation}\label{modular}
	\sum_{n=1}^{\infty}a(n)e^{-\lambda_n x}=\left(\df{2\pi}{x}\right)^{\delta}\sum_{n=1}^{\infty}b(n)e^{-4\pi^2\mu_n/x}+P(x), \qquad \Re(x)>0,
\end{equation}
where
\begin{equation*}
	P(x):=\frac{1}{2\pi i}\int_{\mathcal{C}}(2\pi)^z\chi(z)x^{-z}dz,
\end{equation*}
where $\mathcal{C}$ is a curve, or a set of curves, encircling all of $\bf{S}$, but also to the Riesz-sum identity
\begin{gather}
	\df{1}{\Gamma(\rho+1)}{\sum_{\lambda_n\leq x}}^{\prime}a(n)(x-\lambda_n)^{\rho}=
	\left(\df{1}{2\pi}\right)^{\rho}
	\sum_{n=1}^{\infty}b(n)\left(\df{x}{\mu_n}\right)^{(\delta+\rho)/2}J_{\delta+\rho}(4\pi\sqrt{\mu_n x})+Q_{\rho}(x),\label{21}
\end{gather}
where $x>0$, $\rho>2\sigma_a^*-\delta-\frac12$, $J_{\nu}(z)$  is defined in \eqref{sumbesselj}, and
\begin{equation*}
	Q_{\rho}(x):=\df{1}{2\pi i}\int_{\mathcal{C}}\df{\chi(z)(2\pi)^zx^{z+\rho}}{\Gamma(\rho+1+z)}dz,
\end{equation*}
with $\mathcal{C}$ defined as before. The prime $\prime$ on the sum in the left-hand side indicates that if $\rho=0$ and $x\in\{\lambda_n\}$, then only $\tf12 a(n)$ is to be taken into account. The restriction $\rho>2\sigma_a^*-\delta-\frac12$ can be replaced by $\rho>2\sigma_a^*-\delta-\frac32$, subject to certain conditions, as enunciated in \cite[p.~14, Theorem III]{cn1}. The `modular' relation in \eqref{modular} is due to Bochner \cite{bochner}.
Later, Chandrasekharan and Narasimhan \cite{cn2} considered the  more general functional equation 
\begin{equation}\label{genDelta}
	\Delta(s)\varphi(s)=\Delta(\delta-s)\psi(\delta-s),
\end{equation}
where $\delta>0$,   $\Delta(s):=\prod_{j=1}^{N}\G\left(\alpha_js+\beta_j\right)$, with $N\geq1$, $\beta_j\in\mathbb{C}$, and $\a_j>0$ with $\sum_{j=1}^{N}\alpha_j\geq1$.


As described in \cite[p.~3800]{popov}, most versions of the Vorono\"{\dotlessi} summation formula for an arithmetic function $a(n)$, and associated with a test function $f$ require \eqref{21} to first hold for $\rho=0$. This has to be established separately as one cannot put $\rho=0$ in \eqref{21}. This is inherent in the nature of the proofs of these versions. The only exception to this that we know of is the method of Koshliakov (see \cite[p.~3800]{popov} for more details) although it requires $f$ to be analytic. 

But this suggests an important thing. If an arithmetic function $a(n)$ falls into the purview of the aforementioned setting of Chandrasekharan and Narasimhan, then the Vorono\"{\dotlessi} summation formula for it, and involving a test function $f$ (not necessarily analytic), would automatically hold, provided \eqref{21} holds for $\rho=0$.

In this paper, among other things, we obtain the Vorono\"{\dotlessi} summation formula associated with a generalized divisor function that \emph{does not} fall into the purview of the setting of Chandrasekharan and Narasimhan from \cite{cn2} except in two special cases. This arithmetic function is defined for $k \in \mathbb{N}, z \in \mathbb{C}$ by
\begin{equation}\label{sigmakz}
	\sigma^{(k)}_{z}(n):= \sum_{d^k |n} d^z. 
\end{equation}
It is easily seen that the Dirichlet series associated to $\sigma^{(k)}_{z}(n)$ is $\zeta(s) \zeta(k s-z)$, that is,
\begin{equation}\label{gen_sigma_z}
\sum_{n=1}^\infty \frac{\sigma^{(k)}_{z}(n)}{n^s}=\zeta(s) \zeta(k s-z) \qquad\left(\Re(s) > \max\left\{ 1, \frac{1+\Re(z)}{k}\right\}\right).
\end{equation}
The form of the Dirichlet series implies that the setting of Chandrasekharan and Narasimhan will not be applicable here unless $k\in\mathbb{N}$ and $z=\frac{k-1}{2}$ or unless $k=1$ and $z>-1$. This is explained in detail at the beginning of the next section. In our Vorono\"{\dotlessi} summation formula for $\sigma^{(k)}_{z}(n)$, we will also encounter another divisor function,  defined by 
\begin{equation}\label{skzn}
S_{z}^{(k)}(n):= \sum_{d_{1}^k d_2=n} d_{2}^{\frac{1+z}{k}-1}.
\end{equation}
 One can easily show that 
\begin{equation}\label{divisor_function_S_z}
	\sum_{n=1}^\infty \frac{S_{z}^{(k)}(n)}{n^s}=\zeta(ks)\zeta\left(s+1-\frac{1+z}{k}  \right) \qquad\left(\Re(s) > \max\left\{ \frac{1}{k},  \frac{1+\Re(z)}{k} \right\}\right).
\end{equation}
Observe that $\sigma_{z}^{(1)}(n)=S_{z}^{(1)}(n)=\sigma_z(n)$.

The earliest mention of the function $\sigma^{(k)}_{z}(n)$, defined slightly differently, occurs in a paper of Crum \cite{crum} although he obtains just the Dirichlet series representation \eqref{gen_sigma_z} in his work. Later, Berndt, Roy, Zaharescu and the first author \cite[Section 10.2]{bdrz1} briefly studied this function. Robles and Roy \cite{roblesroy} obtained asymptotic estimates for the summatory function of $\sigma_z^{(k)}(n)$. The special case $z=0$ of $\sigma^{(k)}_{z}(n)$ was studied in detail by Wigert \cite{wig} as early as in 1925.

Note that
	\begin{equation}\label{seriesram}
	\sum_{n=1}^{\infty}\sigma_z^{(k)}(n)e^{-ny}=\sum_{n=1}^{\infty}\frac{n^z}{e^{n^{k}y}-1}.
\end{equation}
On page $332$ of the Lost Notebook, Ramanujan considered the above series for $k\in\mathbb{N}$ and any even integer $z-k$. Although he did not give any transformation for this series for general values of $z$ and $k$ mentioned above, he did give it for $z=0$ and $k=2$ \cite[p.~332]{lnb}, which certainly shows that he considered studying these series an important task. Recently, the first and the second authors \cite[Theorem 1.2]{dixitmaji1} generalized Ramanujan's famous formula for $\zeta(2m+1)$ by obtaining a transformation for the series in \eqref{seriesram} for $z=-2m-1, m\in\mathbb{Z}$.

Various number-theoretic constructs are also intimately connected with the function $\sigma_z^{(k)}(n)$. 
An explicit appearance of the function $\sigma_{z}^{(k)}(n)$ occurs  in a result of Cohen \cite{cohen}. To state it,  consider the generalization of the Ramanujan sum defined by him \cite{cohen} to be
\begin{equation*}
	c_{\ell, k}(n):=\sum_{b=1\atop{(b,\ell^k)_k=1}}^{\ell^k} \exp{\left(\frac{2\pi ibn}{\ell^{k}}\right)},
\end{equation*} 
where the condition $(b,\ell^k)_k=1$ means there is no prime $p$ such that $p|\ell$ and $p^{k}|b$. Then the Dirichlet series of $c_{\ell, k}(n)$ satisfies \cite[Theorem 4]{cohen} (see also \cite[p.~163]{mccarthy})
\begin{equation*}
	\zeta(s)\sum_{\ell=1}^{\infty}\frac{c_{\ell, k}(n)}{\ell^s}=\sigma_{1-\frac{s}{k}}^{(k)}(n),
\end{equation*}
for $s>k$. The case $k=1$ of this identity was given by Ramanujan \cite{ram1918}.

Moreover, let $p_{k}(n)$ denote the number of power partitions of an integer $n$, that is, the number of partitions  of $n$ into parts which are $k$-th powers.  These partitions were studied by Hardy and Ramanujan in their famous work \cite{hardyramanujan}. In the new proof of the asymptotic expansion of $p_k(n)$ as $n\to\infty$ using the saddle-point method given by Tenenbaum, Wu and Li \cite[Equation (2.4)]{tenenbaum}, the series $\sum\limits_{n=1}^{\infty}\sigma_{k}^{(k)}(n)e^{-ny}$ makes its appearance. 

For $\textup{Im}(z)>0$, the Dedekind eta function is defined by $\eta(z):=e^{\frac{i\pi z}{12}}\prod\limits_{n=1}^{\infty}(1-e^{2\pi inz})$. It satisfies the modular transformation 
\begin{equation}\label{dedekind}
\eta(-1/z)=\sqrt{-iz}\eta(z).
\end{equation} 
In his recent study in the context of power partitions, Zagier \cite{zagierhrj} generalized this property. Consider the generalized eta-function $\eta_s(z)$ defined by
\begin{equation*}
	\eta_s(z):=\exp{\left(-\pi i\zeta(-s)z\right)}\prod_{n=1}^{\infty}(1-\exp{(2\pi in^{s}z)})\hspace{4mm}(z\in\mathbb{H}, s\in\mathbb{R}^{+}).
\end{equation*}
Then for $k\in\mathbb{N}$, he proved \cite[Equation (6)]{zagierhrj}
\begin{equation}\label{Zagier}
	\eta_k(-1/z)=(2\pi)^{(k-1)/2}\sqrt{-iz}\prod_{w\in\mathbb{H}\atop{w^k=\pm z}}\eta_{1/k}(w).
\end{equation}
Clearly, \eqref{Zagier} reduces to \eqref{dedekind} for $k=1$. In \cite{bdg_N}, the authors show the equivalence of \eqref{Zagier} with one of the corollaries of their general result by starting with
\begin{equation*}
	\log \eta_k(iy):=\pi \zeta(-k)y-\sum_{n=1}^{\infty}\sigma_k^{(k)}(n){e^{-2\pi n y}\over n}
\end{equation*}
for Re$(y)>0$. Thus, we see that the function $\sigma_k^{(k)}(n)$ is intimately connected with power partitions. We note here that the transformation for $\eta_s(z)$ was first obtained in an equivalent form by Ramanujan \cite[p.~330]{lnb}, and was then rediscovered by Wright \cite{wright}. Kr\"{a}tzel \cite{kratzel1} further generalized Ramanujan's result.

The extended higher Herglotz function recently studied in \cite{hhf1} has an integral representation with the integrand consisting of the sum $\sum_{n=1}^{\infty}\sigma_{-k}^{(N)}(n)e^{-2\pi nt}$; see \cite[Equation (2.8)]{hhf1}. Cohen and Ramanujan-type identities associated to $\sigma_z^{k}(n)$ and $K_{\nu}(\xi)$ were recently obtained in \cite{banerjeemaji}. This shows frequent appearance of $\sigma_z^{(k)}(n)$ in various topics in number theory.

As mentioned before, Wigert worked with the special case $z=0$ of $\sigma_{z}^{(k)}(n)$, which he denoted by $d^{(k)}(n)$. For the infinite series
$
	L_k(w):=\sum_{n=1}^{\infty}d^{(k)}(n)e^{-nw}, \quad(\mathrm{Re}(w)>0),
$
he obtained the following result \cite[p.~8-10]{wig}.
\begin{theorem}\label{Wigert_Lkw}
	For an even $k>1$, we have\footnote{Wigert simplifies this result in the special case $k=2$ in the footnote on p.~9 of \cite{wig}. However, this result was already known to Ramanujan. See \cite[p.~332]{lnb}. }
	\begin{align}\label{wigertkeven}
		L_k(w)&=\frac{\zeta(k)}{w}+w^{-\frac{1}{k}}\G\left(1+\frac{1}{k}\right)\zeta\left(\frac{1}{k}\right)+\frac{1}{4}+\frac{(-1)^{\frac{k}{2}-1}}{k}\left(\frac{2\pi}{w}\right)^{\frac{1}{k}} \times \qquad \quad  \nonumber
  		\\
		\sum_{j=0}^{\frac{k}{2}-1} \bigg\{ &e^{\frac{i\pi(2j+1)(k-1)}{2k}}\overline{L}_{k}\bigg(\!2\pi\left(\frac{2\pi}{w}\right)^{\frac{1}{k}} \!\!\! e^{-\frac{(2j+1)\pi i}{2k}}\bigg)
 	\!+\!e^{-\frac{i\pi(2j+1)(k-1)}{2k}}\overline{L}_{k}\bigg(\!2\pi\left(\frac{2\pi}{w}\right)^{\frac{1}{k}}\!\!\!
 	e^{\frac{(2j+1)\pi i}{2k}}\bigg) \! \!  \bigg\},
	\end{align}
	where $\overline{L}_{k}(w):=\displaystyle\sum_{n=1}^{\infty}\frac{n^{\frac{1}{k}-1}}{{\exp}(n^{\frac{1}{k}}w)-1}$. For an odd $k>1$, $L_k(w)$ admits the  asymptotic formula:
	\begin{align}\label{wigertkodd}
		L_k(w)&=\frac{\zeta(k)}{w}+w^{-\frac{1}{k}}\G\left(1+\frac{1}{k}\right)\zeta\left(\frac{1}{k}\right)+\frac{1}{4}\pm\frac{i}{k}(-1)^{\frac{k-1}{2}}\left(\frac{2\pi}{w}\right)^{\frac{1}{k}}\overline{L}_{k}\bigg(2\pi\left(\frac{2\pi}{w}\right)^{\frac{1}{k}}\bigg)\nonumber\\
		&\quad+4(-1)^{\frac{k+1}{2}}\sum_{j=1}^{N}\frac{\zeta(2j)\zeta(2kj-k+1)(k(2j-1))!}{(2\pi)^{2(k+1)j-k+1}}w^{2j-1}+\frac{i}{k}(-1)^{\frac{k-1}{2}}\left(\frac{2\pi}{w}\right)^{\frac{1}{k}}\nonumber\\
		&\quad\times\sum_{j=0}^{\frac{k-1}{2}}\bigg\{e^{\frac{i\pi j(k-1)}{k}}\overline{L}_{k}\bigg(2\pi\left(\frac{2\pi}{w}\right)^{\frac{1}{k}}e^{-\frac{i\pi j}{k}}\bigg)-e^{\frac{-i\pi j(k-1)}{k}}\overline{L}_{k}\bigg(2\pi\left(\frac{2\pi}{w}\right)^{\frac{1}{k}}e^{\frac{i\pi j}{k}}\bigg)\bigg\}+\Theta(N),
	\end{align}
	where
	\begin{align*}
		\Theta(N)& :=\frac{(2\pi)^k}{w}\frac{(-1)^{\frac{k-1}{2}}}{2\pi i}\int_{1+\rho-i\infty}^{1+\rho+i\infty}\left(\frac{w}{(2\pi)^{k+1}}\right)^{s}\left(\cot\frac{\pi s}{2}\mp i\right)\G(ks-k+1)\zeta(ks-k+1)\zeta(s)\, ds \nonumber\\
		& =O_{k, N}(|w|^{2N}),
	\end{align*}
	as $w\to 0$ in the region $|\arg w|\leq \l<\pi/2$. 
	
\end{theorem}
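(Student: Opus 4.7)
\emph{Proof plan.} My approach is via the Mellin--Barnes method. Writing $e^{-nw}=\tfrac{1}{2\pi i}\int_{(c)}\G(s)(nw)^{-s}\,ds$ for $c>0$ and invoking \eqref{gen_sigma_z} at $z=0$, one obtains
\begin{equation*}
L_k(w)=\frac{1}{2\pi i}\int_{(c)}\G(s)\zeta(s)\zeta(ks)w^{-s}\,ds,\qquad c>1.
\end{equation*}
Shifting the line of integration to $\Re(s)=-\lambda$ for small $\lambda>0$ sweeps across three simple poles: $s=1$ contributes $\zeta(k)/w$; $s=1/k$ contributes $w^{-1/k}\G(1+1/k)\zeta(1/k)$ (combining the residue $1/k$ of $\zeta(ks)$ with $\G(1/k)/k=\G(1+1/k)$); and $s=0$ yields $\zeta(0)^{2}=1/4$. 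These account for the three leading terms common to both \eqref{wigertkeven} and \eqref{wigertkodd}.

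On the remaining integral on $\Re(s)=-\lambda$, apply the functional equation of $\zeta$ to both $\zeta(s)$ and $\zeta(ks)$, and use $\G(s)\G(1-s)=\pi/\sin(\pi s)$ with $\sin(\pi s)=2\sin(\pi s/2)\cos(\pi s/2)$. The integrand collapses to
\begin{equation*}
\frac{(2\pi)^{(k+1)s-1}\sin(\pi ks/2)}{\cos(\pi s/2)}\,\G(1-ks)\zeta(1-s)\zeta(1-ks)\,w^{-s}.
\end{equation*}
Substituting $s=-u$ and then $t=1+ku$ turns the $\G\,\zeta\,\zeta$ product into $\G(t)\zeta(t)\zeta((t+k-1)/k)$, and a short Mellin computation $\int_{0}^{\infty}\overline L_k(W)W^{s-1}\,dW=\G(s)\zeta(s)\zeta((s+k-1)/k)$ identifies this as the Mellin integrand of $\overline L_k(W)$ at $W=2\pi(2\pi/w)^{1/k}$.

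It remains to decompose $\sin(k\theta)/\cos\theta$ with $\theta=\pi s/2$. From the recurrence $f_k(\theta)=2\sin((k-1)\theta)-f_{k-2}(\theta)$ with $f_1=\tan\theta$, $f_2=2\sin\theta$, one gets, for even $k=2m$, the finite exponential sum $2\sum_{j=0}^{m-1}(-1)^j\sin((k-1-2j)\theta)$. Each sine splits as $e^{\pm i c_j t}/(2i)$, and the rewriting $e^{\mp i c_j t}W^{-t}=(We^{\pm i c_j})^{-t}$ turns the integral into a finite linear combination of $\overline L_k$'s evaluated at the argument $W$ rotated by appropriate $2k$-th roots of unity. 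Tracking phases (the $(-1)^j$ absorbs the $j\pi$ discrepancy between the natural indexing $c_j=(k-1-2j)\pi/(2k)$ and the theorem's $\beta_j=(2j+1)\pi/(2k)$) recovers \eqref{wigertkeven} exactly.

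For odd $k=2m+1$ the recursion leaves a residual $\pm\tan(\pi s/2)$ term on top of the analogous sine sum. The sine part again produces rotated copies of $\overline L_k$. The $\tan(\pi s/2)$ piece has simple poles at $s=-(2\ell-1)$, $\ell\ge 1$, each with residue $-2/\pi$; shifting its contour past the first $N$ of these picks up residues in which $\G(1+k(2\ell-1))=(k(2\ell-1))!$ and $\zeta(1+(2\ell-1))=\zeta(2\ell)$ appear, reproducing precisely the polynomial tail of \eqref{wigertkodd} with overall sign $4(-1)^{(k+1)/2}$. The leftover integral is $\Theta(N)$, and standard Stirling bounds on $\G(ks-k+1)$ combined with moderate growth of $\zeta$ on vertical lines yield $\Theta(N)=O_{k,N}(|w|^{2N})$. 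The main technical obstacle is carefully bookkeeping signs and phases through the functional equation, the change of variables, and the trigonometric decomposition so that the rotations of $W$ line up with those appearing in the theorem.
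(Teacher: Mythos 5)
Your route is the classical Mellin--Barnes argument---essentially Wigert's original proof---and it is genuinely different from how this paper recovers the result: here \eqref{wigertkeven} is never proved directly but is obtained as the $m=0$ case of Corollary \ref{Wigert_gen_ for z=2m}, which itself comes from the Schwartz-class summation formula (Theorem \ref{Gen_Voronoi_Sigmakz}) specialized to $f(x)=e^{-xw}$, combined with the kernel identity for $H_z^{(k)}$, the function $B(z,b)$ and the partial-fraction lemmas; the odd-$k$ asymptotic \eqref{wigertkodd} is only quoted from Wigert and not re-derived here at all. Your even-$k$ plan is sound: the three residues at $s=1,\tfrac1k,0$, the double use of the functional equation, the identification of $\Gamma(t)\zeta(t)\zeta\bigl(\tfrac{t+k-1}{k}\bigr)$ as the Mellin transform of $\overline{L}_k$, and the decomposition $\sin(k\theta)/\cos\theta=2\sum_{j=0}^{k/2-1}(-1)^j\sin((k-1-2j)\theta)$ do combine to give \eqref{wigertkeven}; the phase bookkeeping you wave at does close up (for $k=2$, with $W=2\pi(2\pi/w)^{1/2}$, one finds $\tfrac1i\bigl(e^{i\pi/4}\overline{L}_2(We^{i\pi/4})-e^{-i\pi/4}\overline{L}_2(We^{-i\pi/4})\bigr)=e^{-i\pi/4}\overline{L}_2(We^{i\pi/4})+e^{i\pi/4}\overline{L}_2(We^{-i\pi/4})$, exactly the bracket in the theorem), but it should be carried out, not asserted, and you should also check that the rotated arguments stay in $|\arg|<\pi/2$ for $|\arg w|\le\lambda<\pi/2$.

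For odd $k$ there is a concrete gap between your sketch and the statement. The recursion forces the residual term to be $(-1)^{(k-1)/2}\tan(\pi s/2)$ (no genuine $\pm$ there), and if you shift this whole piece to the left past $s=-1,-3,\dots,-(2N-1)$ you indeed get the polynomial tail and a remainder that is $O_{k,N}(|w|^{2N})$; however, that remainder is \emph{not} the $\Theta(N)$ of \eqref{wigertkodd}, and your formula never produces the separate unrotated term $\pm\frac{i}{k}(-1)^{\frac{k-1}{2}}\bigl(\frac{2\pi}{w}\bigr)^{1/k}\overline{L}_k\bigl(2\pi(2\pi/w)^{1/k}\bigr)$. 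Both that term and the factor $\bigl(\cot\frac{\pi s}{2}\mp i\bigr)$ inside $\Theta(N)$ come from first splitting $\tan\theta=\pm i+(\tan\theta\mp i)$: the constant $\pm i$, kept on the line where the Dirichlet series still converges, yields the unrotated $\overline{L}_k$ exactly, and only the $(\tan\theta\mp i)$ part is pushed across the poles to create the asymptotic series and the stated error integral (whose contour $\Re(s)=1+\rho$ must moreover be tied to $N$ to justify the bound $O_{k,N}(|w|^{2N})$). Your version is an equivalent regrouping---the omitted $\overline{L}_k$ term is exponentially small and can be re-absorbed by a pole-free shift---but as written it does not prove \eqref{wigertkodd} in the form stated, and the theorem's $\pm$ ambiguity originates in this $\pm i$ split, not in the trigonometric recursion.
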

In the  case $k=1$,  the asymptotic expansion of the Lambert series $\sum_{n=1}^\infty d^{(k)}(n) \exp(-n w) $ was previously obtained by Wigert himself in \cite{wig0} (see also \cite[p.~163, Theorem 7.15]{titch}). In a follow-up paper \cite{wigannalen}, Wigert also obtained a Riesz-type identity for $d^{(k)}(n)$, of the type in \eqref{21}, for any $\rho>1$.

Koshliakov \cite[Equation (4)]{koshwigleningrad} obtained the Vorono\"{\dotlessi} summation formula for $d^{(k)}(n)$ given below. He took $k$ to be even in this result since he later wanted to give its special case for $f(w)=e^{-nw}$, which gives an exact formula \emph{only} for  even $k$ (see \eqref{wigertkeven} above), however, the result itself is true for any $k\in\mathbb{N}$.

\begin{theorem}\label{Koshliakov}
	Let $0 < \alpha < \beta$ and $ \alpha,  \beta \not\in \mathbb{Z}$.  Let $k>1$ be a natural number. Let $f(x)$ be an analytic function defined inside a closed contour containing $[\alpha,  \beta]$. Then
	\begin{align}\label{Koshliakov_eqn}
		\sum_{\a<n<\b} d^{(k)}(n)f(n) & = \int_{\a}^{\b} \left( \zeta(k) +\frac{1}{k}\zeta\left(\frac{1}{k}\right) y^{\frac{1}{k}-1}\right) f(y) dy \nonumber\\
		& + 4 (2\pi)^{1/k-1} \sum_{n=1}^\infty S^{(k)}(n) \int_{\a}^\b H_{0}^{(k)}\left( (2\pi)^{1+ 1/k} (ny)^{1/k} \right)y^{\frac{1}{k}-1} f(y) dy,
	\end{align}
	where $S^{(k)}(n):=S_{0}^{(k)}(n)$ and  $H_{0}^{(k)}(x):= \int_{0}^\infty \cos(1/t^k) \cos(x t) t^{-k} dt.$
\end{theorem}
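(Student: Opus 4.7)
The plan is to derive the formula by combining Perron's formula with Stieltjes integration, rather than attempting to fit $\zeta(s)\zeta(ks)$ into the Chandrasekharan--Narasimhan framework (which, as noted in the paper, fails for this Dirichlet series unless $k=1$). Let $A(y):=\sideset{}{'}\sum_{n\leq y}d^{(k)}(n)$. By Perron's formula, for $y\notin\mathbb{Z}$ and $c>1$,
\begin{equation*}
A(y)=\frac{1}{2\pi i}\int_{c-i\infty}^{c+i\infty}\zeta(s)\zeta(ks)\frac{y^{s}}{s}\,ds.
\end{equation*}
I would shift the contour to $\Re s=-\eta<0$, crossing three simple poles: at $s=1$ (from $\zeta(s)$) with residue $\zeta(k)y$, at $s=1/k$ (from $\zeta(ks)$) with residue $\zeta(1/k)y^{1/k}$, and at $s=0$ (from $1/s$) with residue $\zeta(0)^{2}=\tfrac14$. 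This gives $A(y)=\zeta(k)y+\zeta(1/k)y^{1/k}+\tfrac14+E(y)$, where $E(y)$ is the residual integral on $\Re s=-\eta$.

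On the shifted line I would invoke the functional equations $\zeta(s)=2^{s}\pi^{s-1}\sin(\pi s/2)\Gamma(1-s)\zeta(1-s)$ and its analogue for $\zeta(ks)$, and use the Dirichlet-series identification $\zeta(1-s)\zeta(1-ks)=\sum_{n=1}^{\infty}S^{(k)}(n)n^{s-1/k}$, valid for $\Re s<0$ by \eqref{divisor_function_S_z} with $z=0$. Interchanging sum and integral, justified by Stirling-type bounds on the gamma factors, produces $E(y)=\sum_{n=1}^{\infty}S^{(k)}(n)n^{-1/k}G_{k}(ny)$, where
\begin{equation*}
G_{k}(X):=\frac{1}{2\pi i}\int_{(-\eta)}\Psi(s)\frac{X^{s}}{s}\,ds,\qquad \Psi(s):=2^{(k+1)s}\pi^{(k+1)s-2}\sin(\pi s/2)\sin(\pi ks/2)\Gamma(1-s)\Gamma(1-ks).
\end{equation*}

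Since $f$ is analytic on a neighborhood of $[\alpha,\beta]$, it is certainly of bounded variation there, and Stieltjes integration gives
\begin{equation*}
\sum_{\alpha<n<\beta}d^{(k)}(n)f(n)=\int_{\alpha}^{\beta}f(y)\,dA(y)=\int_{\alpha}^{\beta}\Big(\zeta(k)+\tfrac{1}{k}\zeta(1/k)y^{1/k-1}\Big)f(y)\,dy+\int_{\alpha}^{\beta}f(y)\,dE(y),
\end{equation*}
the $\tfrac14$ contributing nothing as it is constant. After termwise differentiation of $E(y)$ and the change of variable $X=ny$, the final step is to verify the kernel identity
\begin{equation*}
G_{k}'(X)=4(2\pi)^{1/k-1}X^{1/k-1}H_{0}^{(k)}\!\left((2\pi)^{1+1/k}X^{1/k}\right),
\end{equation*}
which, together with the $n^{1-1/k}$ coming from $\tfrac{d}{dy}G_{k}(ny)$, yields exactly the Bessel-type series in \eqref{Koshliakov_eqn}. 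I would verify this identity by Mellin transform: apply $\int_{0}^{\infty}t^{s-1}\cos(xt)\,dt=\Gamma(s)\cos(\pi s/2)x^{-s}$ once for $\cos(xt)$ and, after the substitution $u=1/t^{k}$, for $\cos(1/t^{k})$, and match the resulting expression against $\Psi(s)/s$ up to elementary powers of $2\pi$.

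The hard part will be the explicit identification of $G_{k}'$ with the $H_{0}^{(k)}$-kernel: this amounts to a Mellin-transform identity relating $\sin(\pi s/2)\sin(\pi ks/2)\Gamma(1-s)\Gamma(1-ks)$ to the iterated cosine-Mellin integral defining $H_{0}^{(k)}$. A secondary obstacle is legitimizing the termwise differentiation of $E(y)$, which would require decay of $G_{k}'$ obtained by additional contour shifts, together with uniform tail bounds on the resulting series.
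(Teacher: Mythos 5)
Your route (Perron's formula for $A(y)=\sideset{}{'}\sum_{n\le y}d^{(k)}(n)$, contour shift, functional equation, termwise resummation, then Stieltjes integration against $f$) is genuinely different from the paper's, and its formal skeleton is sound: the residues are right, and your kernel identity is formally correct with the constant $4(2\pi)^{1/k-1}$ — substituting $s=1-ks'$ into the Mellin--Barnes representation \eqref{integral_representation_H_z^(k)} at $z=0$ turns $4(2\pi)^{1/k-1}X^{1/k-1}H_0^{(k)}\big((2\pi)^{1+1/k}X^{1/k}\big)$ into exactly $\frac{1}{2\pi i}\int\Psi(s')X^{s'-1}\,ds'$. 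But there are two genuine gaps. First, the interchange of $\sum_n$ with $\int_{(-\eta)}$, and the subsequent termwise treatment of $E(y)$, are not ``justified by Stirling-type bounds'': on $\Re(s)=-\eta$ the exponential factors cancel exactly, since $|\sin(\pi s/2)\sin(\pi ks/2)|\asymp e^{\pi(k+1)|t|/2}$ against $|\Gamma(1-s)\Gamma(1-ks)|\ll |t|^{A}e^{-\pi(k+1)|t|/2}$, so the integrand is only polynomially bounded and oscillatory. Neither the shifted integral nor the individual $G_k(ny)$ converges absolutely, Fubini is unavailable, and what you actually need is the pointwise identity $E(y)=\sum_n S^{(k)}(n)n^{-1/k}G_k(ny)$ with convergence uniform enough on $[\alpha,\beta]$ to integrate termwise (after integrating by parts). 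This is exactly the classical ``$\rho=0$'' obstruction recalled in the introduction via \cite{popov}; it is not routine here because for $k>1$, $z=0$ the Dirichlet series $\zeta(s)\zeta(ks)$ falls outside the Chandrasekharan--Narasimhan framework, further left shifts only worsen the polynomial growth, and Theorem \ref{asym of H_z^k} shows the kernel $H_0^{(k)}(y)$ has amplitude growing like $y^{(k-2)/(2(k+1))}$ for $k>2$, so the final series converges only through oscillation against $f$ — no absolute-convergence argument can close this step, and you offer no mechanism (stationary phase, Riesz means, or a smoothing device) to do so.

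Second, the step you defer as ``the hard part'' — identifying $G_k'$ with the $H_0^{(k)}$-kernel by applying the cosine Mellin transform twice and ``matching against $\Psi(s)/s$'' — is precisely the point where a formal computation cannot be turned into a proof by transform bookkeeping. Both factors $\cos(xt)$ and $\cos(1/t^k)$ are oscillatory and fail the integrability hypotheses of Theorem \ref{Parseval} and of Vu Kim Tuan's extension (Theorem \ref{parseval_extension_tuan}); the paper explicitly notes that no admissible decomposition of the integrand of $H_z^{(k)}$ exists, and that the formal Parseval computation ``happens'' to give the right answer. Making the Mellin--Barnes representation \eqref{integral_representation_H_z^(k)} of $H_0^{(k)}$ rigorous is the content of Theorem \ref{H_z_sum of two_K_z}, proved via the order-$(2k+2)$ differential equation of Theorem \ref{hade}, the uniqueness theorem for linear ODEs, and Lemma \ref{coeff_de} (Stirling numbers and elementary symmetric polynomials). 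So your plan quietly assumes the paper's main analytic theorem. By contrast, the paper derives Theorem \ref{Koshliakov} as the $z=0$ case of Theorem \ref{Voronoi_Sigma_z}, whose proof avoids the summatory function altogether: it contour-integrates $f(x)\Psi_k(\pm ix;z)$ around $[\alpha,\beta]$, where $\Psi_k$ is built from the absolutely convergent series $\frac{x}{\pi}\sum_{n\ge1}\sigma_z^{(k)}(n)/(n^2+x^2)$, and only then invokes Theorem \ref{H_z_sum of two_K_z}; this sidesteps both of the convergence issues on which your proposal is silent.
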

\begin{remark}
Although the results in Koshliakov's paper \cite{koshwigleningrad} are correct, we warn the readers of many typographical errors. For example, in the argument of the function $L^{(k)}$ in his version of the above result, the expression $(2\pi)^{\frac{1}{k}-1}$ should be replaced by  $(2\pi)^{\frac{1}{k}+1}$.
\end{remark}

\begin{remark}
	 Using the fact that
	 \begin{equation*}
	 	\lim_{s\to1}\zeta(s)+\frac{1}{s}\zeta\left(\frac{1}{s}\right)y^{\frac{1}{s}-1}=2\gamma+\log(y),
	 \end{equation*}
	 Theorem \ref{Koshliakov} can be easily modified to accommodate the case $k=1$. Indeed, this gives \eqref{kvsf} upon using \eqref{Hardy} below.
\end{remark}

 One can extend Theorem \ref{Koshliakov} by letting $\a\to 0$ and $\b\to\infty$ but with the obvious need of putting further restrictions on $f$. This is, of course, permitted when $f(x)=e^{-xw}, x>0$, Re$(w)>0$, (because of the exponential decay), and results in Wigert's \eqref{wigertkeven} as a corollary. \\

In this paper, we prove Vorono\"{\dotlessi} summation formulas for the generalized divisor function $\sigma_z^{(k)}(n)$ defined in \eqref{sigmakz}. We give two such formulas, one of which applies with a test function $f$ analytic in an interval $[\a, \b]$ (see Theorem \ref{Voronoi_Sigma_z} below), while the other is not truncated to any interval and can be applied with a test function belonging to the Schwartz class (see Theorem \ref{Gen_Voronoi_Sigmakz}). Thus, our first version of the Vorono\"{\dotlessi} summation formula is a generalization of Theorem \ref{Koshliakov} of Koshliakov. There are instances in Koshliakov's paper \cite{koshwigleningrad} where the results are but merely stated but not proved at all, for example, \cite[Equation (6)]{koshwigleningrad}. Our generalization of his Equation (6), which is given in Theorem \ref{H_z_sum of two_K_z}, rigorously proves his Theorem \ref{Koshliakov} given above as a special case of our Theorem \ref{Voronoi_Sigma_z}. As can be seen, the proof of Theorem \ref{H_z_sum of two_K_z} is quite non-trivial and requires the theory of  linear differential equations \cite[p.~21, Section 6]{coddington-levinson} along with properties of combinatorial objects such as the Stirling numbers of the second kind and the elementary symmetric polynomials. Also, we later derive \eqref{wigertkeven} as a special case of a more general result, namely, Corollary \ref{Wigert_gen_ for z=2m}.

In addition to obtaining the Vorono\"{\dotlessi} summation formulas for  $\sigma_z^{(k)}(n)$, this paper is equally devoted to developing the theories of the new special functions $H_z^{(k)}(x)$ and $K_z^{(k)}(x)$ which arise in this context and which are defined in \eqref{defn_H_z^k} and \eqref{line integral_K_z} respectively. While the functions $H_z^{(1)}(x)$ and $K_z^{(1)}(x)$ reduce to certain combinations of Bessel functions, for $k>1$, their theory transcends the well-known theory of Bessel functions. This certainly warrants their study.

\section{Main results}\label{mr}
We first show that the generalized divisor function $\sigma_{z}^{(k)}(n)$ defined in \eqref{sigmakz} and its Dirichlet series in \eqref{gen_sigma_z} are not covered by the setting of Chandrasekharan and Narasimhan in \cite{cn2} unless $k\in\mathbb{N}$ and $z=\frac{k-1}{2}$ or unless $k=1$ and $z\in\mathbb{R}$. To that end, first note that the symmetric form of the functional equation of $\zeta(s)$ reads \cite[p.~22, Equation (2.6.4)]{titch}
\begin{equation}\label{zetafe}
	\pi^{-\frac{s}{2}}\Gamma\left(\frac{s}{2}\right)\zeta(s)=\pi^{-\frac{(1-s)}{2}}\Gamma\left(\frac{1-s}{2}\right)\zeta(1-s).
	\end{equation}
Along with \eqref{genDelta} and \eqref{gen_sigma_z} and the Dirichlet series defined in \eqref{18}, this implies that if
\begin{equation}\label{anbn}
a(n)=\pi^{z/2}\sigma_{z}^{(k)}(n)=b(n),\hspace{6mm} \l_n=\pi^{\frac{1}{2}(k+1)}n=\mu_n
\end{equation}
so that
\begin{align*}
\varphi(s)=\psi(s)=\pi^{\frac{1}{2}(z-(k+1)s)}\zeta(s)\zeta(ks-z),\hspace{6mm} \Delta(s)=\Gamma\left(\frac{s}{2}\right)\Gamma\left(\frac{ks-z}{2}\right),
\end{align*}
then we must have, for some $\delta>0$, 
\begin{equation*}
	\Delta(\delta-s)=\Gamma\left(\frac{1-s}{2}\right)\Gamma\left(\frac{1-ks+z}{2}\right),
\end{equation*}
 It is easy to see that this will be true only if $z=\frac{k-1}{2}$ for $k\in\mathbb{N}$ and $\delta=1$ or if $k=1$ and $\delta=z+1>0$. (We get these same conditions if we work with $S_{z}^{(k)}(n)$ rather than $\sigma_{z}^{(k)}(n)$ in \eqref{anbn}.) Thus, our Theorems \ref{Voronoi_Sigma_z} and \ref{Gen_Voronoi_Sigmakz} are covered by the setting of Chandrasekharan and Narasimhan \emph{only} in the aforementioned two special cases which force either $z$ to be rational or $k$ to be $1$. On the other hand, our Theorems \ref{Voronoi_Sigma_z} and \ref{Gen_Voronoi_Sigmakz} hold for any $k\in\mathbb{N}$ and any complex $z$ such that $-1<\textup{Re}(z)<k$.

Before stating Theorem \ref{Voronoi_Sigma_z}, we define the function\footnote{The notation here \emph{does not} mean $k$-th derivative of some function $H_z(x)$. This notation is used to comply with that used by Wigert \cite{wig} and Koshliakov \cite{koshwigleningrad} for the associated arithmetic as well as special functions and is retained throughout the paper for other functions as well. For the $j^{\textup{th}}$ derivative of, say,  $H_z^{(k)}(x)$ with respect to $x$, we use the notation $\frac{d^j}{dx^j}H_z^{(k)}(x)$.} $H_z^{(k)}(x)$ for $k\in\mathbb{N}$ and $x\geq0$ by
\begin{align}\label{defn_H_z^k}
	H_z^{(k)}(x):= \int_{0}^\infty  t^{z-k}\cos(x t)\cos\left(\frac{1}{t^k}\right)\, dt.
\end{align}
In Theorem \ref{gen_Hardy_conv}, it is shown that $H_z^{(k)}(x)$ converges for $-1<\textup{Re}(z)<k$.

For $k=1$, this integral was evaluated by Hardy \cite[p.~184, Equation (4)]{watson} who showed, in particular, that
\begin{align}\label{Hardy}
	H_0^{(1)}(x) =K_{0}(2 \sqrt{x}) -\frac{\pi}{2} Y_{0}(2 \sqrt{x}). 
\end{align}
 Also, the integral $H_{0}^{(k)}(x)$ appeared in Koshliakov's result \eqref{Koshliakov_eqn}. Hence we call the integral $H_{0}^{(k)}(x)$ as the \emph{Hardy-Koshliakov integral}. Theorem \ref{k=1} below generalizes \eqref{Hardy} for any $z$ satisfying $-1<\textup{Re}(z)<1$.
 
As is shown in \cite[Equations (1.14), (4.1)]{dkk1}, the function $H_z^{(1)}(x)$ is a special case of a kernel of Watson \cite{watsonself} given by
 	\begin{equation*}
 		\varpi_{\mu, \nu}(xy):=x^{1/2}\int_{0}^{\infty}J_{\nu}(xt)J_{\mu}\left(\frac{1}{t}\right)\frac{dt}{t},
 	\end{equation*}
namely, $H_z^{(1)}(x)=\frac{\pi}{2}x^{-z/2}\varpi_{\frac{-z-1}{2}, \frac{z-1}{2}}(x)$.
This follows from the fact that $J_{-1/2}(x)=\sqrt{\frac{2}{\pi x}}\cos(x)$. However, for $k>1$, our kernel $H_z^{(k)}(x)$ is new. In Theorem \ref{H_z_sum of two_K_z}, we show that $H_z^{(k)}(x)$ essentially equals the Meijer $G$- function $G_{0,2k+2}^{\,k+1,0} \!\left(  \,\begin{matrix} \{  \} \\ b_1, \cdots , b_{2k+2} \end{matrix} \; \Big| \frac{1}{4} \left(\frac{x}{2k} \right)^{2k}   \right)$ with the parameters $b_1, \cdots, b_{2k+2}$ defined in \eqref{coefficients_bj} below. A similar Meijer $G$-function acting as a symmetric Fourier kernel was studied by Narain \cite[Equation (1.5)]{narain}, however, it is different from ours.

We will also need an auxiliary integral in the proof of the Vorono\"{\dotlessi} summation formula for  $\sigma_{z}^{(k)}(n)$. This integral is defined for $x\neq 0$ by
\begin{align}\label{line integral_K_z}
	K_z^{(k)}(x) := \frac{1}{2 \pi i } \int_{(c)} \Gamma(s) \cos\left( \frac{\pi s}{2}  \right)  \Gamma\left( \frac{s-1-z}{k} +1 \right) \frac{\, ds}{k x^s},
\end{align}
where $k\in\mathbb{N}$ and $\max\{ 0,  1 -k +  \Re(z) \} < \Re(s)=c$  when $|\arg(x)|<\pi/(2k)$, and $\max\{ 0,  1 -k +  \Re(z) \} < \Re(s)=c\leq\frac{1+\textup{Re}(z)}{k+1}$ when $|\arg(x)|=\pi/(2k)$. (Note that the latter strip is non-empty when we additionally assume $-1<\textup{Re}(z)<k$, which will be the case in most of our results.) The integral is absolutely convergent in $|\arg(x)|<\pi/(2k)$ but only conditionally convergent for $|\arg(x)|=\pi/(2k)$. While the former is easily established using Stirling's formula (see \eqref{strivert} below), the latter requires some work, and is hence proved in detail at the end of the proof of Theorem \ref{meijergk}. As a function of $x$, $K_z^{(k)}(x)$ is analytic in $|\arg(x)|<\pi/(2k)$. Also, as a function of $z$, it is analytic in $\mathbb{C}$, provided $|\arg(x)|<\pi/(2k)$; when $|\arg(x)|=\pi/(2k)$, it is analytic in $-1<\textup{Re}(z)<k$.
 
 The functions $H_z^{(k)}(x)$ and $K_z^{(k)}(x)$ are related to each other by means of the following important identity.
 \begin{theorem}\label{H_z_sum of two_K_z}
 	Let $x\geq0$ and $k\in\mathbb{N}$. For any $z$ such that $-1 < \Re(z) < k$,
 	\begin{align} \label{H_z in terms K_z}
 		H_z^{(k)}(x) &= \frac{1}{2} \left\{ \exp\left(\frac{i \pi(k-1-z)}{2 k}  \right) K_z^{(k)}\left( e^{-\frac{i \pi}{2k}} x  \right) + \exp\left(\frac{- i \pi(k-1-z)}{2 k}  \right) K_z^{(k)}\left( e^{\frac{i \pi}{2k}} x  \right)     \right\}\nonumber\\
 		&=\frac{\pi}{\sqrt{k}2^{\frac{1+z}{k}}}G_{0,2k+2}^{\,k+1,0} \!\left(  \,\begin{matrix} \{  \} \\ b_1, \cdots , b_{2k+2} \end{matrix} \; \Big| \frac{1}{4} \left(\frac{x}{2k} \right)^{2k}   \right),
 	\end{align}
 	where the parameters $b_j, 1\leq j\leq 2k+2,$ are given by
 	\begin{align}\label{coefficients_bj}
 		b_j = \begin{cases}
 			\frac{j-1}{k},  & {\rm if} \,\,    1 \leq j \leq k,   \\
 			\frac{1}{2} -\frac{1+z}{2k},  & {\rm if} \,\,  j=k+1,  \\
 			2 + \frac{3-2j}{2k},  & {\rm if} \,\,  k+2 \leq j \leq 2k+1,  \\
 			1-  \frac{1+z}{2k},  & {\rm if} \,\,  j=2k+2,
 		\end{cases}
 	\end{align} 
 and $G_{0,2k+2}^{\,k+1,0} \!\left(  \,\begin{matrix} \{  \} \\ b_1, \cdots , b_{2k+2} \end{matrix} \; \Big| \frac{1}{4} \left(\frac{x}{2k} \right)^{2k}   \right)$ is the Meijer $G$-function defined in \eqref{MeijerG}.
 \end{theorem}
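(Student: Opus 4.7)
The plan is to extract a Mellin--Barnes representation of $H_z^{(k)}(x)$ and then recognize both the combination of $K_z^{(k)}$-values and the Meijer $G$-function in the statement as that same integral. First, I would compute the Mellin transform of $H_z^{(k)}$ by writing
$$\int_0^\infty x^{s-1}H_z^{(k)}(x)\,dx = \int_0^\infty t^{z-k}\cos\!\left(\tfrac{1}{t^k}\right)\int_0^\infty x^{s-1}\cos(xt)\,dx\,dt,$$
with the interchange justified by a regularization such as inserting $e^{-\epsilon x}$ and letting $\epsilon\downarrow 0$. Applying the classical identity $\int_0^\infty x^{s-1}\cos(xt)\,dx = \Gamma(s)\cos(\pi s/2)\,t^{-s}$ for $0<\operatorname{Re}(s)<1$, then substituting $u=1/t^k$ in the remaining $t$-integral and invoking the analogous cosine-Mellin formula, I obtain
$$\mathcal{M}\{H_z^{(k)}\}(s) = \frac{1}{k}\,\Gamma(s)\cos\!\left(\tfrac{\pi s}{2}\right)\Gamma\!\left(\tfrac{s-1-z}{k}+1\right)\cos\!\left(\tfrac{\pi(s+k-1-z)}{2k}\right)$$
in the strip $\max\{0,\operatorname{Re}(z)+1-k\}<\operatorname{Re}(s)<\min\{1,\operatorname{Re}(z)+1\}$, which is non-empty precisely when $-1<\operatorname{Re}(z)<k$. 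Mellin inversion then produces a Barnes-type integral for $H_z^{(k)}(x)$ on a vertical line inside this strip.

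For the first equality, I would split the outer cosine via Euler's formula as
$$\cos\!\left(\tfrac{\pi(s+k-1-z)}{2k}\right) = \tfrac12\bigl(e^{i\pi(k-1-z)/(2k)}e^{i\pi s/(2k)} + e^{-i\pi(k-1-z)/(2k)}e^{-i\pi s/(2k)}\bigr),$$
and use the identity $e^{\pm i\pi s/(2k)}/x^s = (e^{\mp i\pi/(2k)}x)^{-s}$. Splitting the Mellin--Barnes integral accordingly, each of the two resulting pieces matches, after comparison with the definition \eqref{line integral_K_z}, a copy of $K_z^{(k)}$ evaluated at the rotated argument $e^{\mp i\pi/(2k)}x$, yielding the claimed combination directly.

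For the Meijer $G$-function equality, I would make the change of variable $s\mapsto s/(2k)$ in the Mellin--Barnes integral and then invoke Gauss's multiplication formula $\prod_{j=0}^{k-1}\Gamma(w+j/k) = (2\pi)^{(k-1)/2}k^{1/2-kw}\Gamma(kw)$ twice --- once with $w=s/(2k)$ to split $\Gamma(s/2)$ into the product $\prod_{j=1}^{k}\Gamma(s/(2k)+(j-1)/k)$, and once with $w=(1-s)/(2k)$ to split $\Gamma(1/2-s/2)$ into $\prod_{l=1}^{k}\Gamma((2l-1)/(2k)-s/(2k))$. Combined with Legendre duplication $\Gamma(s/2)\Gamma((s+1)/2) = \sqrt{\pi}\,2^{1-s}\Gamma(s)$ and the reflection formula $\Gamma(a)\Gamma(1-a) = \pi/\sin(\pi a)$, the factor $\Gamma(s)\cos(\pi s/2)$ is rewritten as $\Gamma(s/2)/\Gamma((1-s)/2)$ up to explicit powers of $\pi$ and $2$, while $\Gamma((s-1-z)/k+1)\sin(\pi(s-1-z)/(2k))$ becomes the ratio $\Gamma(s/(2k)+1/2-(1+z)/(2k))/\Gamma((1+z)/(2k)-s/(2k))$. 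Tracking the resulting powers of $2$, $k$ and $2k$ produces exactly the prefactor $\pi/(\sqrt{k}\,2^{(1+z)/k})$ together with the required integrand $\prod_{j=1}^{k+1}\Gamma(s/(2k)+b_j)/\prod_{j=k+2}^{2k+2}\Gamma(1-b_j-s/(2k))$ with the $b_j$'s of \eqref{coefficients_bj}.

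The main obstacle is the rigorous justification of the Mellin transform step, because $\int_0^\infty x^{s-1}\cos(xt)\,dx$ and $\int_0^\infty u^{a-1}\cos u\,du$ are both only conditionally convergent and Fubini is not directly applicable. The paper signals that it bypasses these delicate interchanges by an ODE-uniqueness argument: one shows that both $H_z^{(k)}(x)$ and the claimed Meijer $G$-function are annihilated by the same linear ODE of order $2k+2$ arising from the Meijer operator $\prod_{j=1}^{2k+2}(\theta-b_j)$ acting against $\tfrac14(x/(2k))^{2k}$, where $\theta=x\,d/dx$. Expanding the product over the $b_j$'s produces the coefficients as elementary symmetric polynomials in them, and rewriting $\theta^n = \sum_{m=0}^{n}S(n,m)\,x^m\,d^m/dx^m$ via Stirling numbers of the second kind puts the ODE in standard form; matching behavior at $x=0$ (or the dominant term of the asymptotic expansion as $x\to\infty$) then forces the identity on a rigorous footing.
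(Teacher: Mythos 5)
Your primary route---computing the Mellin transform of $H_z^{(k)}$ by interchanging the $x$- and $t$-integrations after inserting a factor $e^{-\epsilon x}$---does not close the gap you yourself identify, and it is precisely the route the paper rejects. The regularization makes the inner $x$-integral absolutely convergent, but the outer integral $\int_0^\infty t^{z-k}\cos(1/t^k)(\cdots)\,dt$ is still only conditionally convergent (near $t=0$ when $\Re(z)\leq k-1$, near $t=\infty$ when $\Re(z)\geq k-1$), so Fubini/dominated convergence still does not apply, and one must additionally justify passing $\epsilon\to0^+$ through conditionally convergent integrals; moreover the resulting inversion integral is itself only conditionally convergent on the admissible vertical lines (this is why the paper needs $c\leq\frac{1+\Re(z)}{k+1}$ and a separate integration-by-parts estimate at the end of the proof of Theorem \ref{meijergk}). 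The authors state explicitly that neither Parseval's formula nor Vu Kim Tuan's extension can handle $H_z^{(k)}$, and they remark in Section \ref{cr} that the formal Parseval computation ``evaluates'' correctly but cannot currently be justified---which is exactly why the entire ODE apparatus exists.

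Your fallback is indeed the paper's method, but as sketched it is materially incomplete. Since \eqref{hade_eqn} has order $2k+2$, matching ``behavior at $x=0$'' or ``the dominant term of the asymptotic expansion as $x\to\infty$'' cannot force the identity: the solution space is $(2k+2)$-dimensional, so one must match all $2k+2$ pieces of initial data. The paper does this in \eqref{explicitder}, proving $\frac{d^j}{dx^j}H_z^{(k)}(x)\big|_{x=0}=\frac{d^j}{dx^j}A_z^{(k)}(x)\big|_{x=0}$ for $0\leq j\leq 2k+1$, and the computation on the $K$-side requires the real-integral representation \eqref{defn_K_z} of Theorem \ref{kkzxreal} (itself proved via Vu Kim Tuan's extension) together with continuity of the derivatives as $x\to0$ along the rotated rays $\arg(y)=\pm\pi/(2k)$. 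You also need the ODE for $H_z^{(k)}$ derived independently of the Meijer-$G$ identification---the paper obtains it by Hardy's method in Theorem \ref{hade}---and then the genuinely nontrivial verification (Lemma \ref{coeff_de}, via elementary symmetric polynomials in the $b_j$ of \eqref{coefficients_bj} and Stirling numbers of the second kind) that the Meijer operator produces exactly the same equation; your sketch gestures at this expansion but treats it as routine. Finally, the argument is first carried out for $-1<\Re(z)<k-1$ and extended to $-1<\Re(z)<k$ by analytic continuation in $z$. Without these steps your proposal is not yet a proof.
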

 This result will be crucially used in the proofs of Theorems \ref{Voronoi_Sigma_z} and \ref{Gen_Voronoi_Sigmakz}. The Meijer $G$-function occurring in \eqref{H_z in terms K_z} is conditionally convergent for $x\geq0$. A proof of this result is offered in the Appendix by employing the theory of linear differential equations whereas the differential equations satisfied by $H_z^{(k)}(x)$ and the Meijer $G$-function themselves are derived in Section \ref{hk}. Compare \eqref{hade_eqn} with \eqref{geqn} by means of Lemma \ref{coeff_de}.
 
 We are now ready to give our first version of the Vorono\"{\dotlessi} summation formula for  $\sigma_{z}^{(k)}(n)$ which involves $H_z^{(k)}(x)$, the aforementioned generalization of the Hardy-Koshliakov integral.
\begin{theorem}\label{Voronoi_Sigma_z}
	Let $0 < \alpha < \beta$ and $ \alpha,  \beta \not\in \mathbb{Z}$.  Let $k\in\mathbb{N}$ and $z\in\mathbb{C}$ with $-1 < \Re(z) <k$ and $z \neq k-1$.  Let $S_{z}^{(k)}(n)$ be defined in \eqref{skzn} and let $f(x)$ be analytic inside a closed contour containing $[\alpha,  \beta]$.  Then
	\begin{align}\label{Voronoi_Sigma_z_eqnn}
		\sum_{ \alpha < n < \beta }  \sigma_z^{(k)}(n)f(n) & =  \int_{\alpha}^{\beta}  f(t) \left( \zeta(k-z) + \frac{1}{k} t^{\frac{1+z}{k}-1} \zeta\left( \frac{1+z}{k} \right)\right) \, dt \nonumber  \\
		&\quad+2 (2\pi)^{\frac{1+z}{k}-1}  \sum_{n=1}^{\infty}  S_{z}^{(k)}(n)  \int_{\alpha}^{\beta }  f(t)  t^{\frac{1+z}{k}-1} H_z^{(k)}\left( ( 2\pi)^{\frac{1}{k}+1} (n t)^{\frac{1}{k}} \right)\, dt. 
	\end{align} 
If $z=k-1$, then
\begin{align}\label{Voronoi_Sigma_z_eqnnz=k-1}
	\sum_{ \alpha < n < \beta }  \sigma_{k-1}^{(k)}(n)f(n) & =  \int_{\alpha}^{\beta}  f(t) \left(\frac{(k+1)\gamma+\log(t)}{k}\right)\, dt \nonumber  \\
	&\quad+2 \sum_{n=1}^{\infty}  S_{k-1}^{(k)}(n)  \int_{\alpha}^{\beta }  f(t) H_{k-1}^{(k)}\left( ( 2\pi)^{\frac{1}{k}+1} (n t)^{\frac{1}{k}} \right)\, dt.
\end{align} 
\end{theorem}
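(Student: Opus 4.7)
Proof plan. The plan is to reduce the left-hand side of \eqref{Voronoi_Sigma_z_eqnn} to a single Mellin--Barnes contour integral via the Perron formula, shift the contour past the polar singularities of $\zeta(s)\zeta(ks-z)$ to extract the main terms, and then identify the resulting Vorono\"{\dotlessi} kernel by combining the functional equation of $\zeta$ with Theorem~\ref{H_z_sum of two_K_z}. Fix $c>\max\{1,(1+\Re(z))/k\}$. Since $\alpha,\beta\notin\mathbb{Z}$, Perron's formula applied to \eqref{gen_sigma_z} gives $\sum_{n\le t}\sigma_{z}^{(k)}(n)=\frac{1}{2\pi i}\int_{(c)}\zeta(s)\zeta(ks-z)\,t^{s}/s\,ds$ for every $t\in(\alpha,\beta)$. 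Integrating against $f$ in the Stieltjes sense and interchanging orders of integration (legitimate because the analyticity of $f$ on $[\alpha,\beta]$ makes the transform $\phi(s):=\int_{\alpha}^{\beta}t^{s-1}f(t)\,dt$ entire and faster-than-polynomially decaying in $|\Im s|$) yields the working identity
\begin{equation*}
\sum_{\alpha<n<\beta}\sigma_{z}^{(k)}(n)f(n)=\frac{1}{2\pi i}\int_{(c)}\zeta(s)\zeta(ks-z)\,\phi(s)\,ds.
\end{equation*}

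Next I would shift the line of integration from $\Re(s)=c$ to $\Re(s)=c'$, where $c'<\min\{\Re(z)/k,0\}$ lies both to the left of the two poles $s=1$ and $s=(1+z)/k$ of the zeta factors and inside the region of absolute convergence of the dual Dirichlet series built out of $S_{z}^{(k)}(n)$. Convexity bounds for $\zeta$ in vertical strips and Stirling on the gamma factors control the horizontal passes. For $z\neq k-1$ the two poles are simple and distinct; their residues are $\zeta(k-z)\phi(1)$ and $k^{-1}\zeta((1+z)/k)\phi((1+z)/k)$, delivering exactly the main-term integral in \eqref{Voronoi_Sigma_z_eqnn}. For $z=k-1$ the two poles coalesce into a double pole at $s=1$; Laurent-expanding $\zeta(s)=(s-1)^{-1}+\gamma+\cdots$ and $\zeta(ks-z)=k^{-1}(s-1)^{-1}+\gamma+\cdots$, together with $t^{s}=t+t(s-1)\log t+\cdots$, returns the residue $k^{-1}\bigl((k+1)\gamma+\log t\bigr)$ integrated against $f$, giving \eqref{Voronoi_Sigma_z_eqnnz=k-1}.

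On the shifted line I would apply the asymmetric functional equation $\zeta(s)=\chi(s)\zeta(1-s)$, with $\chi(s)=2^{s}\pi^{s-1}\sin(\pi s/2)\Gamma(1-s)$, to both zeta factors, so the integrand becomes $\chi(s)\chi(ks-z)\zeta(1-s)\zeta(1-ks+z)\phi(s)$. Using \eqref{divisor_function_S_z} under the substitution $u=(1+z)/k-s$, the factor $\zeta(1-s)\zeta(1-ks+z)$ expands absolutely on this line as $\sum_{n\ge 1}S_{z}^{(k)}(n)\,n^{s-(1+z)/k}$. Swapping sum and integral and performing the $t$-integral produces, term by term, the Mellin--Barnes kernel $\mathcal{K}(u):=\frac{1}{2\pi i}\int_{(c')}\chi(s)\chi(ks-z)u^{s}\,ds$ evaluated at $u=nt$, and reduces the theorem to the identification $\mathcal{K}(u)=2(2\pi)^{(1+z)/k-1}u^{(1+z)/k}H_{z}^{(k)}\!\bigl((2\pi)^{1+1/k}u^{1/k}\bigr)$ (up to the overall rescaling of $2\pi$).

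The principal obstacle is this final identification $\mathcal{K}\leftrightarrow H_{z}^{(k)}$. The strategy is to apply the product-to-sum formula to $2\sin(\pi s/2)\sin(\pi(ks-z)/2)$, splitting $\mathcal{K}$ into two Mellin--Barnes integrals involving $\cos(\pi((k\mp 1)s-z)/2)$, and then to perform a linear change of variable that sends each of them into the defining contour of \eqref{line integral_K_z}. Each piece should then become $K_{z}^{(k)}\!\bigl(xe^{\mp i\pi/(2k)}\bigr)$ multiplied by one of the phase factors $e^{\pm i\pi(k-1-z)/(2k)}$ appearing in \eqref{H_z in terms K_z}; Theorem~\ref{H_z_sum of two_K_z} then collapses the combination to $H_{z}^{(k)}(x)$ and finishes the proof. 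The delicate points are (i) obtaining the phases with the exactly correct signs through the trigonometric and $\Gamma$-function bookkeeping, and (ii) justifying the step in the regime where the line $|\arg x|=\pi/(2k)$ of \eqref{line integral_K_z} is only conditionally convergent; these are the non-routine parts of the argument.
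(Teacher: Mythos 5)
Your reduction to the line integral $\frac{1}{2\pi i}\int_{(c)}\zeta(s)\zeta(ks-z)\phi(s)\,ds$ and the residue bookkeeping (including the coalescing double pole when $z=k-1$) are fine, but the argument breaks at the analytic hinge on which everything else rests: the claim that analyticity of $f$ on $[\alpha,\beta]$ makes $\phi(s)=\int_{\alpha}^{\beta}t^{s-1}f(t)\,dt$ decay faster than any power of $|\Im s|$ is false. Integration by parts gives $\phi(s)=\bigl(\beta^{s}f(\beta)-\alpha^{s}f(\alpha)\bigr)/s+O(|s|^{-2})$, so $\phi$ decays only like $1/|\Im s|$ unless $f$ and all its derivatives vanish at the endpoints. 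With only this decay, on the shifted line $\Re(s)=c'<\min\{0,\Re(z)/k\}$ the integrand has size $\asymp|t|^{\Re(z)-(k+1)c'}$ (the zeta factors grow like $|t|^{1+\Re(z)-(k+1)c'}$ by the functional equation), a \emph{positive} power of $|t|$: the horizontal segments do not tend to zero, the shifted vertical integral is not absolutely convergent, Fubini cannot justify swapping $\sum_{n}S_{z}^{(k)}(n)$ with the $s$-integral, and the termwise kernel $\mathcal{K}(u)=\frac{1}{2\pi i}\int_{(c')}\chi(s)\chi(ks-z)u^{s}\,ds$ simply diverges on that line — the Mellin--Barnes representation \eqref{integral_representation_H_z^(k)} of $H_z^{(k)}$ (equivalently the two $K_z^{(k)}$ pieces from \eqref{line integral_K_z}) converges only for $\max\{0,1-k+\Re(z)\}<\Re(s)\le\frac{1+\Re(z)}{k+1}$, and only conditionally at the right edge, so one cannot reach it from $c'<0$ without a decay hypothesis on $\phi$ that you do not have. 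What you are implicitly asking for is exactly the $\rho=0$ Riesz-sum identity that the introduction of the paper flags as the step that ``has to be established separately''; your plan does not supply it.

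This is why the paper's proof takes a different route (Koshliakov's method), and why your route is reserved for the Schwartz-class version: in Theorem \ref{Gen_Voronoi_Sigmakz} the Mellin transform $F(s)$ genuinely decays rapidly, which legitimizes precisely the contour shift and interchanges you propose. For merely analytic $f$ the paper never forms $\phi(s)$; instead it proves the pointwise identity $\Phi_k(x;z)=\Psi_k(x;z)$ for $\Re(x)>0$ (see \eqref{Phi_z} and \eqref{Psi_z}), where all interchanges occur on a line $\Re(s)>\max\{1,1+\Re(z)\}$ with absolute convergence because $K_z^{(k)}$ decays exponentially inside $|\arg x|<\pi/(2k)$, and only then brings in the test function by applying Cauchy's theorem to $f(x)\Psi_k(\pm ix;z)$ on a closed contour around $[\alpha,\beta]$, deforming back to the segment and invoking Theorem \ref{H_z_sum of two_K_z} at the end. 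To salvage your approach you would need either a smoothing/Riesz-mean device with a delicate limit, or some exploitation of the oscillation of $\beta^{s},\alpha^{s}$ amounting to the $\rho=0$ identity itself; as written, the proposal has a genuine gap at the contour shift and at the kernel identification.
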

As a corollary of the above theorem, we obtain the well-known Vorono\"{\dotlessi} summation formula for $\sigma_z(n)$ \cite[Theorem 6.1]{bdrz1}.
\begin{corollary}\label{varlauform}
	Let $0<\alpha<\beta$ and $\alpha, \beta\notin\mathbb{Z}$. Let $f$ denote a
	function analytic inside a closed contour strictly containing  $[\alpha,
	\beta]$. Assume that $-1< \textup{Re}(z) <1, z\neq0$. Then\footnote{In \cite[Theorem 6.1]{bdrz1}, it was assumed that $-1/2<\textup{Re}(z)<1/2$. However, we see here that the result actually holds for $-1<\textup{Re}(z)<1$.},
	\begin{align}\label{varlauform1}
		\sum_{\alpha<j<\beta}\sigma_{-z}(j)f(j)&=\int_{\alpha}^{\beta}(\zeta(1+z)+t^{-z}\zeta(1-z))f(t)\, dt\nonumber\\
		&\quad+2\pi\sum_{n=1}^{\infty}\sigma_{-z}(n)n^{\frac{1}{2}z}\int_{\alpha}^{\beta}t^{-\frac{1}{2}z}f(t)
		\bigg\{\left(\frac{2}{\pi}K_{z}(4\pi\sqrt{nt})-Y_{z}(4\pi\sqrt{nt})\right)\nonumber\\
		&\quad\quad\quad\quad\quad\quad\times\cos\left(\frac{\pi z}{2}\right)-J_{z}(4\pi\sqrt{n t})\sin\left(\frac{\pi z}{2}\right)\bigg\}\, dt.
	\end{align}
Moreover, if we let $k=1$ in \eqref{Voronoi_Sigma_z_eqnnz=k-1}, then using \eqref{Hardy}, we get \eqref{kvsf}.
\end{corollary}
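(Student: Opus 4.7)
The strategy is to obtain \eqref{varlauform1} by specializing Theorem \ref{Voronoi_Sigma_z} to $k=1$ with $z$ replaced by $-z$, and to deduce the $k=1$ case of \eqref{Voronoi_Sigma_z_eqnnz=k-1} as \eqref{kvsf} via Hardy's formula \eqref{Hardy}.

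For the first part, observe that with $k=1$ and $z\mapsto -z$, the admissibility conditions $-1<\textup{Re}(z)<k$, $z\ne k-1$ of Theorem \ref{Voronoi_Sigma_z} become precisely $-1<\textup{Re}(z)<1$, $z\ne 0$. Since $\sigma_{-z}^{(1)}(n)=S_{-z}^{(1)}(n)=\sigma_{-z}(n)$ (as remarked after \eqref{divisor_function_S_z}), and since $\zeta(k-z)\big|_{k=1,\,z\mapsto-z}=\zeta(1+z)$, $\tfrac{1}{k}t^{(1+z)/k-1}\zeta\!\bigl(\tfrac{1+z}{k}\bigr)\big|_{k=1,\,z\mapsto-z}=t^{-z}\zeta(1-z)$, $(2\pi)^{(1+z)/k-1}\big|_{k=1,\,z\mapsto-z}=(2\pi)^{-z}$, and $(2\pi)^{1/k+1}(nt)^{1/k}\big|_{k=1}=4\pi^{2}nt$, equation \eqref{Voronoi_Sigma_z_eqnn} reduces to
\begin{align*}
\sum_{\alpha<j<\beta}\sigma_{-z}(j)f(j)
&=\int_{\alpha}^{\beta}\bigl(\zeta(1+z)+t^{-z}\zeta(1-z)\bigr)f(t)\,dt\\
&\quad+2(2\pi)^{-z}\sum_{n=1}^{\infty}\sigma_{-z}(n)\int_{\alpha}^{\beta}t^{-z}f(t)\,H_{-z}^{(1)}(4\pi^{2}nt)\,dt.
\end{align*}

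The principal remaining step, and the main obstacle in the argument, is to identify $H_{-z}^{(1)}(4\pi^{2}nt)$ with the Bessel combination appearing on the right of \eqref{varlauform1}. This is supplied by Theorem \ref{k=1}, which generalizes Hardy's identity \eqref{Hardy} to arbitrary $z$ with $-1<\textup{Re}(z)<1$ and evaluates $H_{z}^{(1)}(x)$ explicitly as a combination of $J_{z}(2\sqrt{x})$, $Y_{z}(2\sqrt{x})$, and $K_{z}(2\sqrt{x})$ with weights $\cos(\pi z/2)$ and $\sin(\pi z/2)$. Applying Theorem \ref{k=1} with $z\mapsto-z$ and $x=4\pi^{2}nt$ (so that $2\sqrt{x}=4\pi\sqrt{nt}$), and using the symmetries $K_{-z}=K_{z}$, $Y_{-z}=\cos(\pi z)Y_{z}+\sin(\pi z)J_{z}$, and $J_{-z}=\cos(\pi z)J_{z}-\sin(\pi z)Y_{z}$ (all consequences of \eqref{yj} and \eqref{kbes}), the prefactor $2(2\pi)^{-z}t^{-z}$ absorbs the $x^{z/2}=(2\pi)^{z}(nt)^{z/2}$ produced by Theorem \ref{k=1} to leave exactly the weight $2\pi n^{z/2}t^{-z/2}$ multiplying the stated Bessel kernel of \eqref{varlauform1}.

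For the second assertion, specializing \eqref{Voronoi_Sigma_z_eqnnz=k-1} at $k=1$ forces $z=0$, gives $\sigma_{0}^{(1)}(n)=S_{0}^{(1)}(n)=d(n)$, and reduces the coefficient $[(k+1)\gamma+\log t]/k$ to $2\gamma+\log t$. Hardy's evaluation \eqref{Hardy} then converts $H_{0}^{(1)}(4\pi^{2}nt)$ into an explicit combination of $K_{0}(4\pi\sqrt{nt})$ and $Y_{0}(4\pi\sqrt{nt})$; a straightforward regrouping of constants recovers the Voronoï kernel $\tfrac{2}{\pi}K_{0}(4\pi\sqrt{nt})-Y_{0}(4\pi\sqrt{nt})$ of \eqref{kvsf}.
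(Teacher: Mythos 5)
Your proposal follows essentially the same route as the paper: specialize Theorem \ref{Voronoi_Sigma_z} to $k=1$ and invoke Theorem \ref{k=1}; the only (immaterial) difference is that you perform the substitution $z\mapsto -z$ before applying Theorem \ref{k=1} and then use the connection formulas for $J_{-z}$, $Y_{-z}$, $K_{-z}$, whereas the paper applies Theorem \ref{k=1} first and then replaces $z$ by $-z$, using $\sigma_{-z}(n)n^{z/2}=\sigma_{z}(n)n^{-z/2}$ and the invariance of $\cos(\pi z/2)M_{z}(2\sqrt{x})-\sin(\pi z/2)J_{z}(2\sqrt{x})$ under $z\to -z$; these are the same computation.

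One point does not close as you wrote it: the constant. From your own intermediate display, the weight multiplying the Bessel kernel is $2\,(2\pi)^{-z}t^{-z}\cdot\tfrac{\pi}{2}\,(4\pi^{2}nt)^{z/2}=\pi\, n^{z/2}t^{-z/2}$, not the asserted $2\pi\, n^{z/2}t^{-z/2}$; likewise, taking the printed coefficient $2$ in \eqref{Voronoi_Sigma_z_eqnnz=k-1} at $k=1$ together with \eqref{Hardy} yields $\pi\bigl(\tfrac{2}{\pi}K_{0}-Y_{0}\bigr)$ rather than the $2\pi\bigl(\tfrac{2}{\pi}K_{0}-Y_{0}\bigr)$ of \eqref{kvsf}. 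The discrepancy is not in your Bessel manipulations but in the coefficient of the infinite series in Theorem \ref{Voronoi_Sigma_z} as printed: the paper's own derivation produces the two $K_{z}^{(k)}$ terms with prefactor $2(2\pi)^{\frac{1+z}{k}-1}$, and Theorem \ref{H_z_sum of two_K_z} converts that bracket into $2H_{z}^{(k)}$, so the coefficient in \eqref{Voronoi_Sigma_z_eqnn} and \eqref{Voronoi_Sigma_z_eqnnz=k-1} should be $4(2\pi)^{\frac{1+z}{k}-1}$ (respectively $4$), which is also what consistency with Theorem \ref{Koshliakov}, with Theorem \ref{Gen_Voronoi_Sigmakz} at $z=0$, and with \eqref{kvsf} demands. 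With that corrected coefficient your bookkeeping gives exactly $2\pi\, n^{z/2}t^{-z/2}$ and the $z=0$ case recovers \eqref{kvsf}; as written, your claim that the prefactor "leaves exactly $2\pi n^{z/2}t^{-z/2}$" is off by a factor $2$ from the inputs you cite, so you should either track this factor explicitly or note the correction to the theorem's constant.
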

Also, letting $z=0$ in Theorem \ref{Voronoi_Sigma_z} readily gives Theorem \ref{Koshliakov}. Thus, Theorem \ref{Voronoi_Sigma_z} is a \emph{simultaneous} generalization of Koshliakov's result in Theorem \ref{Koshliakov} and Corollary \ref{varlauform}.

Throughout the sequel, $F(s)$, or $\mathcal{M}(f)(s)$, will always denote the Mellin transform of a function $f$.  Let $\mathscr{S}(\mathbb R)$ denote the space of Schwartz functions on $\mathbb R$, that is, those functions $f$ which satisfy $f\in C^{\infty}(\mathbb{R})$ and all of whose derivatives (including $f$ itself) tend to $0$ faster than any power of $|x|$ as $|x|\to\infty$. See \cite[p.~177]{cohenbook}. Our next result is the ``infinite'' version of Theorem \ref{Voronoi_Sigma_z} for Schwartz functions $f$. 
\begin{theorem}\label{Gen_Voronoi_Sigmakz} Let $k \in \mathbb{N}, z \in \mathbb{C}$ be such that $ -1 < \Re(z) < k, z\neq k-1$.
	Let $f \in \mathscr S (\mathbb R)$.
	Then we have
	\begin{align}\label{extra}
		\sum_{n=1}^\infty \sigma_{z}^{(k)}(n) f(n) & = -\frac{1}{2}\zeta(-z)f(0^{+}) + \zeta(k-z) \int_{0}^\infty f(y) dy + \frac{1}{k} F\left( \frac{1+z}{k} \right)\zeta\left( \frac{1+z}{k} \right) \nonumber\\ 
		& + \frac{(2 \pi)^{(k+1)\left(\frac{1+z}{k}\right)-z}}{\pi^2} \sum_{n=1}^\infty S_{z}^{(k)}(n) \int_{0}^\infty H_z^{(k)}\left( (2\pi)^{1+ 1/k} (ny)^{1/k} \right)y^{\frac{1+z}{k}-1} f(y) dy,
	\end{align}
	where $S_{z}^{(k)}(n)$ and $ H_z^{(k)}(x)$ are defined in \eqref{skzn} and \eqref{defn_H_z^k} respectively. Moreover, when $z=k-1$, 
	\begin{align*}
			\sum_{n=1}^\infty \sigma_{k-1}^{(k)}(n) f(n) & = -\frac{1}{2}\zeta(1-k)f(0^{+}) + 	\int_{0}^{\infty}  f(t) \left(\frac{(k+1)\gamma+\log(t)}{k}\right) dt \nonumber\\ 
		& + 4 \sum_{n=1}^\infty S_{k-1}^{(k)}(n) \int_{0}^\infty H_{k-1}^{(k)}\left( (2\pi)^{1+ 1/k} (ny)^{1/k} \right)f(y) dy.
		\end{align*}
\end{theorem}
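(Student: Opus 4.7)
The plan is to derive both formulas from a single Mellin--Barnes integral representation by shifting the contour and applying the functional equation for $\zeta$. For $f\in\mathscr S(\mathbb R)$, let $F(s)=\int_0^\infty f(y)y^{s-1}\,dy$. This is analytic on $\Re(s)>0$, decays super-polynomially on vertical lines, and continues meromorphically to $\mathbb C$ with simple poles at $s=0,-1,-2,\ldots$; in particular $\mathrm{Res}_{s=0}F(s)=f(0^+)$. Applying Mellin inversion to $f(n)$ and interchanging with $\sum_n\sigma_z^{(k)}(n)$ (legitimate for $c>\max\{1,(1+\Re z)/k\}$ in view of \eqref{gen_sigma_z}) gives
$$
\sum_{n=1}^\infty\sigma_z^{(k)}(n)f(n)=\frac{1}{2\pi i}\int_{(c)}F(s)\,\zeta(s)\,\zeta(ks-z)\,ds.
$$

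Assume first $z\neq k-1$. I would shift the line of integration to $\Re(s)=d$ for a small negative $d$. The shift crosses three simple poles: $s=1$ (from $\zeta(s)$), contributing $\zeta(k-z)\int_0^\infty f(y)\,dy$; $s=(1+z)/k$ (from $\zeta(ks-z)$, residue $1/k$), contributing $\tfrac{1}{k}F((1+z)/k)\zeta((1+z)/k)$; and $s=0$ (from $F(s)$, using $\zeta(0)=-\tfrac12$), contributing $-\tfrac12\zeta(-z)f(0^+)$. These are exactly the three explicit terms of \eqref{extra}. To the remaining integral on $\Re(s)=d$, I would apply $\zeta(s)=\chi(s)\zeta(1-s)$ with $\chi(s):=2^s\pi^{s-1}\sin(\pi s/2)\Gamma(1-s)$ to each zeta factor and substitute $s=(1+z)/k-w$. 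A short calculation yields $1-ks+z=kw$, and by \eqref{divisor_function_S_z} the resulting factor $\zeta(kw)\zeta(w+1-(1+z)/k)$ equals $\sum_n S_z^{(k)}(n)n^{-w}$ for $\Re(w)$ large, so that term-by-term integration reduces matters to identifying each inner contour integral $n$ by $n$.

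The core of the argument is the identification of this inner integral, for each $n$, as a constant multiple of $\int_0^\infty H_z^{(k)}\bigl((2\pi)^{1+1/k}(ny)^{1/k}\bigr)y^{(1+z)/k-1}f(y)\,dy$. By Mellin--Parseval this reduces to computing the Mellin transform of the kernel $y^{(1+z)/k-1}H_z^{(k)}((2\pi)^{1+1/k}(ny)^{1/k})$. Here I would invoke Theorem~\ref{H_z_sum of two_K_z}, which writes $H_z^{(k)}$ as a symmetric combination of $K_z^{(k)}$-values at two rotated arguments; since $K_z^{(k)}$ is by definition the inverse Mellin transform of $\tfrac{1}{k}\Gamma(s)\cos(\pi s/2)\Gamma((s-1-z)/k+1)$ from \eqref{line integral_K_z}, this yields an explicit closed form for $\mathcal M(H_z^{(k)})$. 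After the substitution $s=1-(1+z)/k+w$, the trigonometric identity $\sin(\pi((1+z)/k-w)/2)=\cos(\pi(1-(1+z)/k+w)/2)$ then converts $\chi((1+z)/k-w)\chi(1-kw)$ into exactly $n^w\cdot 2^{(1+z)/k+1}\pi^{(1+z)/k-1}$ times the Mellin transform of the kernel, and the powers of $2$ and $\pi$ consolidate to the advertised constant $(2\pi)^{(k+1)(1+z)/k-z}/\pi^2$. I expect this kernel identification, together with the absolute-convergence estimates on vertical lines needed to apply Parseval and to interchange sum with integral, to be the main technical obstacle.

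For $z=k-1$, the poles at $s=1$ and $s=(1+z)/k=1$ merge into a double pole of $\zeta(s)\zeta(ks-k+1)$. Expanding $\zeta(s)=(s-1)^{-1}+\gamma+O(s-1)$, $\zeta(ks-k+1)=(k(s-1))^{-1}+\gamma+O(s-1)$, and $F(s)=F(1)+F'(1)(s-1)+\cdots$, the residue at $s=1$ works out to $\bigl(F'(1)+(k+1)\gamma F(1)\bigr)/k=\int_0^\infty f(t)\,\frac{(k+1)\gamma+\log t}{k}\,dt$; this single term absorbs the two formerly separate residues at $s=1$ and $s=(1+z)/k$. The remaining contour-shift and kernel-identification steps carry over verbatim, producing the $z=k-1$ formula in the theorem.
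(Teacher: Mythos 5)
Your proposal follows essentially the same route as the paper: Mellin inversion and interchange to get $\frac{1}{2\pi i}\int_{(c)}F(s)\zeta(s)\zeta(ks-z)\,ds$, a contour shift picking up exactly the residues at $s=0,1,\frac{1+z}{k}$ (with the same double-pole computation at $s=1$ when $z=k-1$), the asymmetric functional equation applied to both zeta factors, the substitution $s=\frac{1+z}{k}-w$ and expansion of $\zeta(kw)\zeta\left(w+1-\frac{1+z}{k}\right)$ via \eqref{divisor_function_S_z}, and finally the identification of the inner integral through the Mellin--Barnes representation of $H_z^{(k)}$ coming from Theorem \ref{H_z_sum of two_K_z}, which is exactly the paper's argument. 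The only step you defer rather than carry out (and which the paper treats explicitly) is that after the change of variable the inner line of integration sits at $\Re(\xi)>1$, outside the strip $\max\{0,1-k+\Re(z)\}<\Re(\xi)\leq\frac{1+\Re(z)}{k+1}$ in which the representation \eqref{integral_representation_H_z^(k)} of $H_z^{(k)}$ is valid (the restriction comes from the merely conditional convergence of $K_z^{(k)}$ on the rays $\arg(x)=\pm\frac{\pi}{2k}$), so one must shift the line into that strip, verifying that no pole is crossed --- in particular the potential pole of $F\left(\frac{1+z-\xi}{k}\right)$ at $\xi=1$, which can only occur for $z=0$ and is then cancelled by the vanishing cosine factor --- and that the horizontal integrals vanish.
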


\begin{remark}
	If we let $z=0$ in \eqref{extra}, then one can obtain the aforementioned extension of Koshliakov's Theorem \ref{Koshliakov}, that is, the one obtained by letting $\a \rightarrow 0$ and $\b \rightarrow \infty$ in it.
\end{remark}
Define $B(z,b)$ by
\begin{align}\label{integral defn B(z,b)}
	B(z,b):=\int_0^\infty \frac{t^z \cos t}{t^2+b^2}\, dt.
\end{align}
This integral converges only in the region $ -1<\Re(z)<2$, where it is also given by \cite[p.~43, Equation (5.8)]{Ober}\label{Ober}\footnote{There is a typo in the stated formula in \cite{Ober} in that $b^{-z}$ should be replaced by $b^{2-z}$.}
\begin{align}\label{bzbanother}
	B(z,b) &= \frac{\pi b^{z-1}} {2}   \frac{\cosh b}{\cos (\pi z/2)}  +\Gamma(z-1) \sin(\pi z/2) {}_1F_2\left( 1; 1-\frac{z}{2},  \frac{3-z}{2} \bigg\vert \frac{b^2}{4}\right)\\
	&=\frac{\pi b^{z-1}} {2}   \frac{\cosh b}{\cos (\pi z/2)}  - \frac{\pi}{2 \cos(\pi z/2)} \sum_{n=0}^\infty \frac{b^{2n}}{\Gamma(2n-z+2)}\label{simplified form B(z, b)},
\end{align}
where ${}_1F_2 \left( a; b, c \big\vert z \right) := \sum_{n=0}^\infty \frac{
	(a)_n}{(b)_n (c)_n }\frac{z^n}{n!}$
is the ${}_1F_{2}$-hypergeometric function with $(a)_n:=\Gamma(a+n)/\Gamma(a)$ being the shifted factorial. 
Here, $b\in\mathbb{C}$ with $b\neq \pm i y$ for any real number $y$. 

\begin{remark}\label{analytic continuation of B(z,b)}
	At first glance, it appears that the right-hand side of \eqref{bzbanother} has singularities at every odd integer. However, in Section \ref{wigertsection} we show that the odd positive integers are removable singularities whereas  it has poles at all odd negative integers. Thus the right-hand side of \eqref{bzbanother} provides meromorphic continuation of $B(z, b)$ to the whole complex plane with simple poles at $ z=-1,-3,-5, \cdots $.  
\end{remark}

As a special case of Theorem \ref{Gen_Voronoi_Sigmakz},  we obtain the following result.
\begin{theorem} \label{thm: gen of wigert}Let $k \in \mathbb{N}, z \in \mathbb{C}$ be such that $ -1< \Re(z) < k$ and $z\neq k-1$. Let $B(z, b)$ be defined in \eqref{integral defn B(z,b)} and \eqref{bzbanother}. 
	Let $ a = 2\pi \left( \frac{ 2\pi n}{w} \right)^{1/k}$, where $\Re(w)>0$ and  $A_j = \zeta_{4k}^{(2-k)(2j-1)}$, $B_j=\zeta_{4k}^{(1-k)(2j)}$, where $\zeta_{4k}$ is the primitive $4k$-th root of unity.  
	For $k \geq 2$ even,
	\begin{align}\label{eeven}
	\sum_{n=1}^\infty \sigma_{z}^{(k)}(n) e^{-nw} &= - \frac{\zeta(-z)}{2} + \frac{\zeta(k-z)}{w}  + \frac{1}{k}\frac{\Gamma\left( \frac{1+z}{k} \right) \zeta\left( \frac{1+z}{k} \right)}{w^{(1+z)/k}} + \frac{ (-1)^{\frac{k}{2}-1} (2 \pi)^{ 2+\frac{2}{k}-z  }    }{\pi^2 k w^{2/k} }  \nonumber\\ 
	& \times  \sum_{n=1}^\infty S_{z}^{(k)}(n)  n^{ \frac{1-z}{k} }  \sum_{j=1}^{\frac{k}{2}} \Bigg[ A_j  B(z,a \zeta_{4k}^{2j-1}) + \overline{A}_j B(z,a \zeta_{4k}^{-(2j-1)}) \Bigg],
	\end{align}
and	for $k\geq 1$ odd,
	\begin{align}\label{oodd}
	\sum_{n=1}^\infty  \sigma_{z}^{(k)}(n) e^{-nw} &= - \frac{\zeta(-z)}{2} + \frac{\zeta(k-z)}{w}  + \frac{1}{k}\frac{\Gamma\left( \frac{1+z}{k} \right) \zeta\left( \frac{1+z}{k} \right)}{w^{(1+z)/k}} + \frac{ (-1)^{\frac{k-1}{2}} (2 \pi)^{1+\frac{1}{k}-z}   }{\pi^2 k w^{1/k}} \nonumber \\ 
	& \times \sum_{n=1}^\infty S_{z}^{(k)}(n)  n^{-z/k} \Bigg[ B(z+1, a)
 + \sum_{j=1}^{\frac{k-1}{2}}  \Big[ B_j~ B\left(z+1,a \zeta_{4k}^{2j}\right)
  + \overline{B}_j~ B\left(z+1,a \zeta_{4k}^{-(2j)}\right) \Big] \Bigg]. 
	\end{align}
\end{theorem}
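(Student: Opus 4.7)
The plan is to apply the Voronoi summation formula of Theorem \ref{Gen_Voronoi_Sigmakz} to the test function $f(y)=e^{-yw}$ with $\Re(w)>0$. Although $f$ is not literally Schwartz on $\mathbb{R}$, its exponential decay together with a standard truncation plus dominated-convergence argument lets us apply \eqref{extra}. The first three terms on the right-hand side of \eqref{extra} immediately produce the first three terms of both \eqref{eeven} and \eqref{oodd}: indeed $f(0^+)=1$ gives $-\tfrac12\zeta(-z)$, while $\int_0^\infty e^{-yw}\,dy=1/w$ and $F\bigl(\tfrac{1+z}{k}\bigr)=\Gamma\bigl(\tfrac{1+z}{k}\bigr)w^{-(1+z)/k}$ give the remaining two.

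The crux is therefore to evaluate, for each $n\geq 1$,
\[
I_n:=\int_0^\infty H_z^{(k)}\!\bigl((2\pi)^{1+1/k}(ny)^{1/k}\bigr)\,y^{(1+z)/k-1}\,e^{-yw}\,dy.
\]
By Theorem \ref{H_z_sum of two_K_z}, $I_n$ splits as a sum of two integrals featuring $K_z^{(k)}(e^{\pm i\pi/(2k)}(2\pi)^{1+1/k}(ny)^{1/k})$ with prefactors $\tfrac12 e^{\pm i\pi(k-1-z)/(2k)}$. Inserting the Mellin--Barnes definition \eqref{line integral_K_z} of $K_z^{(k)}$, interchanging integrals (justified on a strip $0<\Re(s)<1+\Re(z)$), and computing the inner $y$-integral as $\Gamma\bigl(\tfrac{1+z-s}{k}\bigr)w^{-(1+z-s)/k}$, the key observation is that this new Gamma factor combines with the $\Gamma\bigl(\tfrac{s-1-z}{k}+1\bigr)$ already present in \eqref{line integral_K_z} via Euler's reflection formula $\Gamma(x)\Gamma(1-x)=\pi/\sin(\pi x)$ to yield $\pi/\sin\!\bigl(\pi(1+z-s)/k\bigr)$. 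Each piece of $I_n$ thereby reduces to a Mellin--Barnes integral
\[
\frac{1}{2ki}\int_{(c)}\frac{\Gamma(s)\cos(\pi s/2)}{\sin(\pi(1+z-s)/k)}\,\frac{w^{(s-1-z)/k}}{\Lambda^s}\,ds, \qquad \Lambda = e^{\pm i\pi/(2k)}(2\pi)^{1+1/k}n^{1/k},
\]
which I will evaluate by shifting the contour to the right past the simple poles $s=1+z+mk$ ($m\geq 0$) of the $1/\sin$ factor, using Stirling's formula to kill the shifted contour. Setting $a=2\pi(2\pi n/w)^{1/k}$, the resulting power series in $w$, once the two complex-conjugate contributions are summed, lines up with the series part of the representation \eqref{simplified form B(z, b)} of $B(z,b)$ at rotated arguments, while the ``hyperbolic'' part $\tfrac{\pi b^{z-1}\cosh b}{2\cos(\pi z/2)}$ of \eqref{bzbanother} emerges as the closed-form value of the paired residues via $e^{i\theta}+e^{-i\theta}=2\cos\theta$.

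A careful parity analysis of $\cos(\pi(1+z+mk)/2)$, together with the condensation of the outer rotations $e^{\pm i\pi(k-1-z)/(2k)}$ with $\Lambda^{\mp(1+z+mk)}$, then produces the even-versus-odd dichotomy: for $k$ even, the surviving rotations are $\zeta_{4k}^{2j-1}$ ($j=1,\ldots,k/2$), yielding $B(z,a\zeta_{4k}^{\pm(2j-1)})$ with the exact prefactor $A_j=\zeta_{4k}^{(2-k)(2j-1)}$; for $k$ odd, they are $\zeta_{4k}^{2j}$ ($j=0,1,\ldots,(k-1)/2$), yielding $B(z+1,a\zeta_{4k}^{\pm 2j})$ with $B_j=\zeta_{4k}^{(1-k)(2j)}$, and the $j=0$ term is unpaired with its complex conjugate (its rotation is trivial), which accounts for the isolated $B(z+1,a)$ in \eqref{oodd}, in direct analogy with the unpaired term in Wigert's \eqref{wigertkodd}. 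The main obstacle is precisely this combinatorial/analytic bookkeeping, including the shift from $B(z,\cdot)$ to $B(z+1,\cdot)$ dictated by the parity of $k$; a secondary technical point is justifying the interchange of integrals when the Mellin--Barnes contour for $K_z^{(k)}$ lies on the boundary $|\arg|=\pi/(2k)$, where the integral is only conditionally convergent, which should follow from the decay estimates developed around Theorem \ref{meijergk}.
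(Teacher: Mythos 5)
Your setup matches the paper's: substituting $f(y)=e^{-yw}$ into Theorem \ref{Gen_Voronoi_Sigmakz}, writing the resulting integral via the Mellin--Barnes representation of $H_z^{(k)}$ (equivalently, the two rotated $K_z^{(k)}$ pieces), computing the $y$-integral as $\Gamma\left(\frac{1+z-s}{k}\right)w^{-(1+z-s)/k}$, and collapsing the two gamma factors by reflection is exactly how the paper reaches the integral $\frac{1}{4\pi i k}w^{-(1+z)/k}\int_{(c)}\frac{\Gamma(s)\cos(\pi s/2)}{\cos\left(\frac{\pi(1+z-s)}{2k}\right)}\,a^{-s}\,ds$. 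The genuine gap is in your final evaluation of this Mellin--Barnes integral. You propose shifting the contour to the right past the poles $s=1+z+mk$, $m\geq 0$, of the sine (or cosine) factor and "killing the shifted contour by Stirling." This cannot work: as $\Re(s)\to+\infty$ the factor $\Gamma(s)$ grows super-exponentially along horizontal directions while $a^{-s}$ only decays geometrically, so the shifted vertical integral does not tend to zero; correspondingly, the residue at $s=1+z+mk$ carries $\Gamma(1+z+mk)\,a^{-(1+z+mk)}$, and the residue series $\sum_m \Gamma(1+z+mk)(C w/n)^m$ diverges for every $w\neq 0$. Such a rightward shift can only produce an asymptotic expansion with an error integral (this is precisely Wigert's \eqref{wigertkodd} with the term $\Theta(N)$), not the exact formula claimed in the theorem. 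The mismatch is also visible structurally: the series part of \eqref{simplified form B(z, b)} that you want to recover has gamma functions in the \emph{denominator} and \emph{positive} powers of $b=a\zeta_{4k}^{\pm(2j-1)}\propto w^{-1/k}$, whereas your residues produce gammas in the numerator and negative powers of $a$; they cannot line up. (A convergent residue expansion would come from shifting \emph{left}, where $1/\Gamma$-type decay wins, but you would then have to show that the two resulting families of residues reassemble into the stated $B$-combinations, which you do not do.)

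The paper avoids residue summation altogether at this stage: it applies Vu Kim Tuan's extension of Parseval's formula (Theorem \ref{parseval_extension_tuan}, with $G(s)=\Gamma(s)\cos(\pi s/2)$ and Lemma \ref{Mellin} supplying the inverse Mellin transform $\frac{2kt^{k+z}}{t^{2k}+1}$ of $\pi/\cos\left(\frac{\pi(z+s)}{2k}\right)$) to convert the Mellin--Barnes integral into the real integral $\frac{a^{k-z-1}}{w}\int_0^\infty\frac{x^{k+z}\cos x}{x^{2k}+a^{2k}}\,dx$ (first for $\frac{k-1}{2}\leq\Re(z)<k-\frac12$ so that the line $\Re(s)=\frac12$ is admissible, then by analytic continuation in $z$). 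The even/odd dichotomy and the exact constants $A_j$, $B_j$, including the unpaired $B(z+1,a)$ term for odd $k$, then come from the explicit partial fraction decompositions of Lemma \ref{partial fraction_simplified} applied to $\frac{x^k}{x^{2k}+a^{2k}}$, reducing everything to the integral definition \eqref{integral defn B(z,b)} of $B$ in the strip $-1<\Re(z)<2$ and extending via the continuation of $B$ (Remark \ref{analytic continuation of B(z,b)}). To make your argument correct you would need to replace the rightward contour shift with such a step (or an equivalent rigorous evaluation); as written, the key identity connecting the Mellin--Barnes integral to $B(z,a\zeta_{4k}^{\pm(2j-1)})$ is not established.
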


Recently, Gupta and Maji \cite{guptamaji} have obtained a transformation for the series $\sum_{n=1}^{\infty}D_{k, r}(n)e^{-ny}$, where $D_{k, r}(n):=\sum_{d^k|n}\left(\frac{n}{d^k}\right)^r$, $y>0, k\in\mathbb{Z}$ and $r\neq-1$ is an integer. It is clear that $D_{k. r}(n)=n^r\sigma_{-kr}^{(k)}(n)$.

Letting $z=2m$ in \eqref{eeven}, we obtain a generalization of Wigert's identity \eqref{wigertkeven}.
\begin{corollary}\label{Wigert_gen_ for z=2m}
Let $k \geq 2$ be an even integer and $m$ be a non-negative integer with $0 \leq m < k/2$. For $\Re(w) >0$, we have
\begin{align*}
\sum_{n=1}^\infty \sigma_{2m}^{(k)}(n) e^{-nw} &= - \frac{\zeta(-2m)}{2} + \frac{\zeta(k-2m)}{w}  + \frac{1}{k}\frac{\Gamma\left( \frac{1+2m}{k} \right) \zeta\left( \frac{1+2m}{k} \right)}{w^{(1+2m)/k}} + \frac{(-1)^{\frac{k}{2}+ m -1}}{k} \left(\frac{2\pi}{w} \right)^{\frac{1+2m}{k}}\\ & \times \sum_{j=1}^{k/2} \Bigg[\exp\left( \frac{i\pi}{2k}(1-k+2m)(2j-1) \right) \overline{L}_{k,2m}\left(2\pi \left(\frac{2\pi}{w} \right)^{\frac{1}{k}}  e^{ \frac{i\pi}{2k}(2j-1)} \right) \\
& + \exp\left(- \frac{i\pi}{2k}(1-k+2m)(2j-1) \right) \overline{L}_{k,2m}\left(2\pi \left(\frac{2\pi}{w} \right)^{\frac{1}{k}}  e^{- \frac{i\pi}{2k}(2j-1)} \right) \Bigg],
\end{align*}
where
\begin{equation}\label{Lkzw} 
\overline{L}_{k,z}(w):= \sum_{n=1}^{\infty} S_{z}^{(k)}(n) \exp(- n^{1/k} w ).   
\end{equation}
\end{corollary}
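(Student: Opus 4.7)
The plan is to specialize identity \eqref{eeven} of Theorem \ref{thm: gen of wigert} at $z=2m$. The hypotheses $-1<\Re(z)<k$ and $z\neq k-1$ are met, since $k$ is even and $0\leq m<k/2$ force $2m$ to be a non-negative even integer strictly less than $k$, hence different from the odd number $k-1$. Thus \eqref{eeven} applies verbatim, and the task reduces to simplifying the right-hand side when $z=2m$.

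The only substantive step is to obtain a closed form for $B(2m,b)$ from \eqref{simplified form B(z, b)}. Since $\cos(\pi m)=(-1)^m\neq 0$, no pole arises at $z=2m$. In the series $\sum_{n=0}^{\infty}b^{2n}/\Gamma(2n-2m+2)$, the terms with $0\leq n\leq m-1$ vanish, because $2n-2m+2$ is then a non-positive even integer at which $1/\Gamma$ vanishes. Re-indexing $n=\ell+m$ in the remaining sum yields $b^{2m-1}\sum_{\ell=0}^{\infty}b^{2\ell+1}/(2\ell+1)! = b^{2m-1}\sinh b$, and hence
\begin{equation*}
B(2m,b)=\frac{(-1)^m\pi}{2}\,b^{2m-1}\bigl(\cosh b-\sinh b\bigr)=\frac{(-1)^m\pi}{2}\,b^{2m-1}e^{-b}.
\end{equation*}

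Plugging this into \eqref{eeven}, the factor $b^{2m-1}$ with $b=a\zeta_{4k}^{\pm(2j-1)}$ produces $a^{2m-1}=(2\pi)^{2m-1}(2\pi n/w)^{(2m-1)/k}$, and the resulting factor of $n^{(2m-1)/k}$ cancels exactly against the $n^{(1-2m)/k}$ already present in \eqref{eeven}. This renders the prefactor independent of $n$ and permits interchanging the sums in $n$ and $j$. Noting that $a\zeta_{4k}^{\pm(2j-1)}=n^{1/k}\cdot 2\pi(2\pi/w)^{1/k}e^{\pm i\pi(2j-1)/(2k)}$, the resulting inner sum over $n$ is precisely $\overline{L}_{k,2m}\bigl(2\pi(2\pi/w)^{1/k}e^{\pm i\pi(2j-1)/(2k)}\bigr)$ by definition \eqref{Lkzw}.

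Finally, I will consolidate the phases and constants. Using $A_j=\zeta_{4k}^{(2-k)(2j-1)}$, one gets $A_j\zeta_{4k}^{(2m-1)(2j-1)}=\exp\bigl(i\pi(1-k+2m)(2j-1)/(2k)\bigr)$, which matches the stated exponential, and the conjugate pair behaves analogously. Assembling the constants $(-1)^{k/2-1}(2\pi)^{2+2/k-2m}/(\pi^2 k w^{2/k})$ from \eqref{eeven} with $\frac{(-1)^m\pi}{2}(2\pi)^{2m-1}(2\pi/w)^{(2m-1)/k}$ coming from $B(2m,b)$ reduces cleanly to $\frac{(-1)^{k/2+m-1}}{k}(2\pi/w)^{(1+2m)/k}$, completing the identification with the corollary. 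The only real difficulty here is the closed-form evaluation of $B(2m,b)$ via the zeros of $1/\Gamma$; the rest is careful bookkeeping of powers of $2\pi$, $w$, and roots of unity.
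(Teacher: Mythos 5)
Your proposal is correct and follows essentially the same route as the paper: specialize \eqref{eeven} at $z=2m$, evaluate $B(2m,b)=\tfrac{\pi}{2}(-1)^m b^{2m-1}e^{-b}$ via the vanishing of $1/\Gamma$ at non-positive integers in \eqref{simplified form B(z, b)} (this is exactly the paper's Lemma \ref{B(z,b) for z even}, which you reprove inline), and then carry out the same cancellation of the $n$-powers and consolidation of phases and constants into $\overline{L}_{k,2m}$. No gaps; the bookkeeping you describe matches the paper's computation.
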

When $m=0$,  the above corollary reduces to Wigert's identity \eqref{wigertkeven}. Letting $z=2m-1$ in \eqref{oodd} leads to the odd counterpart of Wigert's identity. We note here that our result below is an \emph{exact} formula as compared to the asymptotic formula \eqref{wigertkodd} of Wigert.
\begin{corollary}\label{Wigert_gen_ for z=2m-1}
Let $k > 1$ odd and $m$ be an integer with $1 \leq m < \frac{k+1}{2}$.  For $\Re(w) >0$,
\begin{align*}
\sum_{n=1}^\infty & \sigma_{2m-1}^{(k)}(n)  e^{-nw} = - \frac{\zeta(1-2m)}{2} + \frac{\zeta(k-2m+1)}{w}  + \frac{1}{k}\frac{\Gamma\left( \frac{2m}{k} \right) \zeta\left( \frac{2m}{k} \right)}{w^{(2m)/k}} + \frac{(-1)^{\frac{k-1}{2}+ m}}{k} \left(\frac{2\pi}{w} \right)^{\frac{2m}{k}}\\ & \hspace{1cm}\times  \Bigg[ \overline{L}_{k,2m-1}\left(2\pi \left(\frac{2\pi}{w} \right)^{\frac{1}{k}} \right) + \sum_{j=1}^{\frac{k-1}{2}} \Big[\exp\left( \frac{i\pi j}{k}(-k+2m) \right) \overline{L}_{k,2m-1}\left(2\pi \left(\frac{2\pi}{w} \right)^{\frac{1}{k}}  e^{ \frac{i\pi j}{k}} \right) \\
& \hspace{5.5cm}+ \exp\left(- \frac{i\pi j}{k}(-k+2m) \right) \overline{L}_{k,2m-1}\left(2\pi \left(\frac{2\pi}{w} \right)^{\frac{1}{k}}  e^{- \frac{i\pi j}{k}} \right) \Big] \Bigg]. 
\end{align*}
where $\overline{L}_{k, z}(w)$ is defined in \eqref{Lkzw}.
\end{corollary}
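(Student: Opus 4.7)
The plan is to specialize Theorem \ref{thm: gen of wigert}\,\eqref{oodd} at $z=2m-1$ and simplify the $B$-integrals, which at this special value collapse to elementary exponentials. The range hypothesis $1\leq m<(k+1)/2$ is exactly what guarantees $-1<2m-1<k$, and since $k$ is odd, $z=k-1$ is automatically avoided, so \eqref{oodd} applies.

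The core computation is the following identity: for every non-negative integer $m$ and $\Re(b)>0$,
\begin{equation}\label{Bexp}
B(2m,b)=\frac{(-1)^m\pi\,b^{2m-1}}{2}\,e^{-b}.
\end{equation}
To establish \eqref{Bexp} I would use the meromorphic continuation \eqref{bzbanother} of $B(z,b)$ noted in Remark \ref{analytic continuation of B(z,b)}. Setting $z=2m$ there, $\cos(\pi z/2)=(-1)^m$, while in the series $\sum_{n=0}^{\infty}b^{2n}/\Gamma(2n-2m+2)$ the terms with $0\leq n\leq m-1$ vanish because $\Gamma$ is evaluated at a non-positive even integer. Reindexing $n=m+j$ yields $b^{2m-1}\sinh b$, so that $B(2m,b)=\frac{(-1)^m\pi b^{2m-1}}{2}(\cosh b-\sinh b)$, which is \eqref{Bexp}.

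Once \eqref{Bexp} is in hand, I apply it to each $B(z+1,a\zeta_{4k}^{2j})=B(2m,a\zeta_{4k}^{2j})$ in \eqref{oodd}. Writing $a=2\pi(2\pi n/w)^{1/k}$, the factor $a^{2m-1}$ contributes $(2\pi)^{2m-1}(2\pi/w)^{(2m-1)/k}n^{(2m-1)/k}$, which cancels the $n^{-z/k}=n^{-(2m-1)/k}$ already present in \eqref{oodd}. The only $n$-dependence remaining inside the sum is the exponential $\exp(-a\zeta_{4k}^{2j})=\exp\bigl(-2\pi(2\pi/w)^{1/k}n^{1/k}e^{i\pi j/k}\bigr)$, so that summing against $S_{2m-1}^{(k)}(n)$ produces exactly $\overline{L}_{k,2m-1}\bigl(2\pi(2\pi/w)^{1/k}e^{i\pi j/k}\bigr)$ as defined in \eqref{Lkzw}; the conjugate root gives the conjugate argument, and the stand-alone $B(z+1,a)$ term (i.e.\ $j=0$) supplies $\overline{L}_{k,2m-1}(2\pi(2\pi/w)^{1/k})$.

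It remains to combine the coefficients. The exponential factor from the $j$-th term is obtained from $B_j\cdot\zeta_{4k}^{(2m-1)(2j)}=\zeta_{4k}^{(1-k)(2j)+(2m-1)(2j)}=\zeta_{4k}^{(2m-k)(2j)}=\exp(i\pi j(-k+2m)/k)$, which is exactly the phase appearing in the corollary. Multiplying the prefactor $\frac{(-1)^{(k-1)/2}(2\pi)^{2-2m+1/k}}{\pi^2 k w^{1/k}}$ of \eqref{oodd} by $\frac{(-1)^m\pi}{2}(2\pi)^{2m-1}(2\pi/w)^{(2m-1)/k}$ coming from \eqref{Bexp} and the extracted $a^{2m-1}$ produces $\frac{(-1)^{(k-1)/2+m}}{k}(2\pi/w)^{2m/k}$ after straightforward bookkeeping of powers of $2\pi$ and $w$. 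The first three terms of \eqref{oodd} carry over verbatim to $-\tfrac12\zeta(1-2m)$, $\zeta(k-2m+1)/w$, and $\tfrac{1}{k}\Gamma(2m/k)\zeta(2m/k)/w^{2m/k}$, completing the identification with the stated corollary. The only non-routine step is \eqref{Bexp}; the rest is the careful tracking of constants and roots of unity that I outlined above.
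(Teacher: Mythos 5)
Your proposal is correct and follows essentially the route the paper intends: the paper omits this proof as "similar to that of Corollary \ref{Wigert_gen_ for z=2m}", and your argument is exactly that analogue — specialize \eqref{oodd} at $z=2m-1$, evaluate each $B(2m,\cdot)$ via the exponential formula (your \eqref{Bexp} is precisely the paper's Lemma \ref{B(z,b) for z even}, which you could simply cite instead of rederiving from \eqref{simplified form B(z, b)}), and track the powers of $2\pi$, $w$, $n$ and the roots of unity, all of which you do correctly.
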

\subsection{Significance of our results in a broader context}
To date,  Vorono\"{\dotlessi} summation formulas provide one of the most useful methods to  obtain bounds for mean values of arithmetic functions. A case in point is the Dirichlet divisor problem,  a fundamental problem in number theory. The error term 
\begin{align} \label{error dir}
\Delta(x) \coloneqq \sideset{}{'}\sum_{n\leq x}d(n) - x\log x-(2\gamma-1))x-\frac{1}{4}
\end{align}
was shown to be $\ll x^{1/2}$ by Dirichlet. It was Vorono\"{\dotlessi}'s ingenious idea of writing the  general summation formula \eqref{kvsf} for the summatory function of $d(n)$ times a test function $f(n)$  that allowed him to obtain $\Delta(x) \ll x^{1/3}$.  The overarching principle behind how a summation formula like \eqref{kvsf}  proves useful is that it enables one to express the error $\Delta(x)$ as an infinite series involving certain kernel functions (in this case, the Bessel functions): see for instance \eqref{error as inf sum}. One then needs to invoke known asymptotic estimates for the kernel functions in order to obtain $O$-bounds or omega bounds for $\Delta(x)$. 

To understand how our results fit into this broader fabric, let us keep aside convergence issues for the moment and informally consider $\a \to 0^+, \b \to x$ and $f$ as identically one in our main result, Theorem \ref{Voronoi_Sigma_z}. The integral in \eqref{Voronoi_Sigma_z_eqnn} then yields the main term $$\zeta(k-z) x + \frac{1}{z+1} \zeta \left(\frac{1+z}{k} \right) x^{ \frac{z+1}{k} }, $$
for the sum $\sum_{n \le x} \sigma_{z}^{(k)}(n)$. 
Note that this recovers the main term $x (\log x  + (2\gamma-1))$ of \eqref{vsf} when we let $k=1$, write out the Laurent series expansions and let $z\to 0$. Our result \eqref{Voronoi_Sigma_z_eqnn} also 
allows us to write the  error term in the expression for $\sum_{n \le x} \sigma_{z}^{(k)}(n)$ as an infinite series involving the  generalized Hardy-Koshliakov integrals $H_z^{(k)}(x)$ defined in \eqref{defn_H_z^k}, that is, 
\begin{equation*}
	\Delta_{z, k} (x) =   2 (2\pi)^{\frac{1+z}{k}-1} \sum_{n=1}^{\infty}  S_{z}^{(k)}(n)  \int_{0}^{x}   t^{\frac{1+z}{k}-1} H_z^{(k)}\left( ( 2\pi)^{\frac{1}{k}+1} (n t)^{\frac{1}{k}} \right)\, dt .
\end{equation*}
It is $H_z^{(k)}(x)$  that now plays the role of the kernel function, acting as a vast generalisation of the Bessel kernel $K_{0}(2 \sqrt{x}) -\frac{\pi}{2} Y_{0}(2 \sqrt{x})$. In order to obtain upper bounds and omega results for 	$\Delta_{z, k} (x)$, it is essential to develop the theory of the function $H_z^{(k)}(x)$. 

This paper represents a fundamental contribution towards building this theory. Indeed, in Theorems \ref{asym zero} and \ref{asym of H_z^k}, we determine the asymptotic behavior of $H_z^{(k)}(x)$ as $x \to \infty$. Moreover, in the special case $k=1$,  we obtain an explicit expression for $H_z^{(k)}(x)$ in terms of Bessel functions. With these ingredients in place, we expect to be able to prove, for instance, that $\Delta_{z, k}(x) \ll x^{1/3} (\log x)^2$ for $k>1$ and $-1<\Re(z)<k, z\neq k-1$. 
This would not only yield the asymptotic formulae given by Theorem $1.4$ of Robles and Roy in \cite{roblesroy}, but also extend them to larger ranges of $z$ as well as to $k\geq 4$. It may also be possible to determine better upper bounds for $\Delta_{z, k}(x)$ by obtaining non-trivial estimates for the kernel  $H_z^{(k)} (x)$. 

Since these non-trivial estimates call for extensive additional analysis, including them here would render the manuscript unwieldy. 
Moreover, the results obtained here are of independent interest and mark the natural conclusion of our initial line of inquiry, thereby meriting their documentation in this paper. 

\section{Preliminaries}\label{prelim}
Here we state some known results which will be useful in the sequel. For $0< \Re(s) <1$, the Mellin transform of $\cos(x)$ is given by \cite[p.~1101, Formula (3)]{grn}
\begin{equation}\label{Mellin_cos}
\int_{0}^{\infty} \cos(x) x^{s-1} dx = \Gamma(s) \cos\left(\frac{\pi s}{2}\right).
\end{equation}
%
Stirling's formula for $\Gamma(s)$, $s=\sigma+it$, in a vertical strip $C\leq\sigma\leq D$ is given by \cite[p.~224]{cop}
\begin{equation}\label{strivert}
	|\Gamma(s)|=(2\pi)^{\tfrac{1}{2}}|t|^{\sigma-\tfrac{1}{2}}e^{-\tfrac{1}{2}\pi |t|}\left(1+O\left(\frac{1}{|t|}\right)\right),
\end{equation}
as $|t|\to\infty$. Throughout the paper,  $\int_{(c)}$ will always denote the line integral $\int_{c-i\infty}^{c+i\infty}$.

\begin{theorem}[Parseval's formula] \cite[p.~83, Equation (3.1.13)]{kp}\label{Parseval}
Let $F(s)$ and $G(s)$ be the Mellin transforms of $f(x)$ and $g(x)$ respectively. If $F(1-s)$ and $G(s)$ have a common strip of analyticity, then for any vertical line $Re(s)=c$ in the common strip, we have
\begin{align}\label{Parseval1}
\frac{1}{2 \pi i} \int_{(c)} G(s) F(1-s) ds = \int_{0}^\infty f(t) g(t) dt,
\end{align}
under the assumption that the integral on the right-hand side exists and the conditions 
\begin{equation}\label{conditions}
t^{c-1} g(t) \in L[0, \infty)\hspace{5mm}\text{and}\hspace{5mm}F(1-c-it) \in L(-\infty,  \infty)
\end{equation}
hold. 
\end{theorem}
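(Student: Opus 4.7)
The plan is to reduce the identity to the defining integral for $G(s)$ combined with the Mellin inversion theorem applied to $F$. First I would substitute
$$G(s) = \int_{0}^{\infty} g(t) t^{s-1} \, dt$$
into the left-hand side of \eqref{Parseval1}, producing a double integral over the product of the vertical line $\Re(s)=c$ and $(0,\infty)$. The strategy would then be to interchange the order of integration so that the inner integral becomes a contour integral involving $t^{s} F(1-s)$, and finally to recognize this contour integral via Mellin inversion as $t \cdot f(t)$.

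To justify the interchange, I would invoke Fubini's theorem on the absolute double integral. On the vertical line one has $|t^{s-1}| = t^{c-1}$ and $|F(1-s)|=|F(1-c-it)|$, so the absolute double integral factors as
$$\left(\int_{0}^{\infty} |g(t)|\, t^{c-1}\, dt\right) \left(\int_{-\infty}^{\infty} |F(1-c-it)| \, dt\right),$$
and both factors are finite precisely by the two conditions in \eqref{conditions}. Thus the iterated integrals may be freely swapped, giving
$$\frac{1}{2\pi i} \int_{(c)} G(s) F(1-s) \, ds = \int_{0}^{\infty} g(t)\, t^{-1} \left( \frac{1}{2\pi i} \int_{(c)} t^{s} F(1-s) \, ds \right) dt.$$

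Next, in the inner contour integral I would substitute $u = 1-s$, which sends the contour $\Re(s)=c$ to $\Re(u)=1-c$ (traversed in the opposite direction, producing an offsetting sign) and converts the integrand to $t \cdot t^{-u} F(u)$. The common strip of analyticity contains the line $\Re(u) = 1-c$ by hypothesis, and the condition $F(1-c-it) \in L(-\infty,\infty)$ is precisely the integrability needed for Mellin inversion to identify this integral with $t \cdot f(t)$. Substituting back and canceling the $1/t$ prefactor yields $\int_{0}^{\infty} f(t) g(t)\, dt$, which is the right-hand side of \eqref{Parseval1}.

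The main obstacle is the Fubini justification, which is exactly what the hypotheses in \eqref{conditions} are designed to provide; the secondary delicate point is invoking Mellin inversion for $F$ on the translated contour $\Re(u)=1-c$, which is valid because of the analyticity assumption on the common strip and the $L^1$ condition.
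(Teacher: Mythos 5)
The paper does not prove this statement at all: it is quoted as a known result from Paris and Kaminski \cite[p.~83, Eq.~(3.1.13)]{kp}, so there is no internal proof to compare against. Your argument---inserting the defining integral for $G(s)$, factoring the absolute double integral using the two hypotheses in \eqref{conditions} to justify Fubini, and then identifying the inner contour integral along $\mathrm{Re}(u)=1-c$ with $t\,f(t)$ via Mellin inversion---is the standard derivation and is sound; the one point worth stating explicitly is that the condition $F(1-c-it)\in L(-\infty,\infty)$ makes the inverse Mellin integral an absolutely convergent (continuous) function whose Mellin transform is $F$, hence equal to $f$ almost everywhere, which is enough because only the value of $\int_{0}^{\infty}f(t)g(t)\,dt$ is being asserted.
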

An extension of Parseval's formula due to Vu Kim Tuan \cite{vu} is given in the next theorem. This result allows application of Parseval's formula in situations where the first condition in \eqref{conditions} does not hold, albeit with an additional restriction. We will require this in the proof of Theorem \ref{kkzxreal}. Before we state this extension though, we define the concepts needed to do so, namely, a new function space and a certain class of functions. 

Let $\mathfrak{M}^{-1}(L)$ denote the space of functions $f(x)$ which are inverse Mellin transforms of functions $F(s)\in L\left(\frac{1}{2}-i\infty, \frac{1}{2}+i\infty\right)$ over the contour Re$(s)=1/2$ with norm $||f||_{\mathfrak{M}^{-1}(L)}$ equal to $\int_{0}^{\infty}\left|F\left(\frac{1}{2}+it\right)\right|\, dt$.

We  let $\mathcal{K}$ be the set of functions $g(x)$ integrable on any segment $[\epsilon, E], 0<\epsilon<E<\infty$, and such that the improper integral
\begin{equation*}
	\mathfrak{M}\{g(x); s\}=G(s)=\int_{0}^{\infty}x^{s-1}g(x)\, dx,\hspace{8mm}\textup{Re}(s)=\frac{1}{2},
\end{equation*}
converges boundedly, that is, there exists a constant $C>0$ such that for almost all $\epsilon, E>0$ and $t\in\mathbb{R}$, we have $\left|\int_\epsilon^{E}x^{it-1/2}g(x)\, dx\right|<C$.

Then the extension of Parseval's theorem \cite[Lemma 1]{vu} is as follows.
 \begin{theorem}\label{parseval_extension_tuan}
Let $f(x)\in\mathfrak{M}^{-1}(L)$ and $g(x)\in\mathcal{K}$. Then the following convolution formula holds:
\begin{equation}\label{parseval_extension_tuan_eqn}
	\int_{0}^{\infty}g(xt)f(t)\, dt=\frac{1}{2\pi i}\int_{\left(\frac{1}{2}\right)}G(s)F(1-s)x^{-s}\, ds.
\end{equation}
\end{theorem}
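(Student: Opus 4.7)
The plan is to start from the inverse Mellin representation $f(t)=\frac{1}{2\pi i}\int_{(1/2)}F(s)\,t^{-s}\,ds$, which is valid pointwise a.e.\ because $F\in L\bigl(\{\Re(s)=1/2\}\bigr)$, insert it into the left-hand side of \eqref{parseval_extension_tuan_eqn}, and interchange the two integrations. The obstacle is that $g$ need not be absolutely integrable on $(0,\infty)$---the defining property of $\mathcal{K}$ gives only uniform boundedness of the truncated Mellin integrals of $g$---so Fubini cannot be invoked directly. The whole proof consists of carefully exploiting this uniform bound in place of absolute integrability.

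First I would truncate the outer $t$-integration to a compact interval $[\epsilon,E]\subset(0,\infty)$. On such an interval $g(xt)$ is integrable in $t$, since $g$ is integrable on every finite segment by hypothesis, and $|t^{-s}|=t^{-1/2}$ is bounded. Combined with $F\in L$ on the vertical line, this shows $F(s)g(xt)t^{-s}\in L^{1}\bigl(\{\Re(s)=1/2\}\times[\epsilon,E]\bigr)$, so ordinary Fubini applies and gives
\begin{equation*}
\int_{\epsilon}^{E} g(xt)f(t)\,dt=\frac{1}{2\pi i}\int_{(1/2)}F(s)\left(\int_{\epsilon}^{E}g(xt)t^{-s}\,dt\right)ds.
\end{equation*}
The substitution $u=xt$ inside then yields $\int_{\epsilon}^{E}g(xt)t^{-s}\,dt=x^{s-1}\int_{\epsilon x}^{Ex}g(u)u^{-s}\,du$.

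Next I would remove the truncation by letting $\epsilon\to 0^{+}$ and $E\to\infty$. The defining property of $\mathcal{K}$ bounds $\bigl|x^{s-1}\int_{\epsilon x}^{Ex}g(u)u^{-s}\,du\bigr|\le Cx^{-1/2}$ uniformly in $\epsilon$ and $E$ along $\Re(s)=1/2$, while the quantity in question converges pointwise in $s$ to $x^{s-1}G(1-s)$ by the very definition of $G(s)$ as a boundedly convergent improper integral. Since the product is dominated by $Cx^{-1/2}|F(s)|\in L(\{\Re(s)=1/2\})$, Lebesgue's dominated convergence theorem delivers
\begin{equation*}
\lim_{\epsilon\to 0^{+},\,E\to\infty}\int_{\epsilon}^{E}g(xt)f(t)\,dt=\frac{1}{2\pi i}\int_{(1/2)}F(s)G(1-s)x^{s-1}\,ds,
\end{equation*}
which simultaneously establishes the existence of the improper integral on the left. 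A final substitution $s\mapsto 1-s$, which preserves the line $\Re(s)=1/2$, recasts the right-hand side as $\frac{1}{2\pi i}\int_{(1/2)}G(s)F(1-s)x^{-s}\,ds$, completing the identity.

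The hard part is undoubtedly the dominated-convergence step in the second stage: it is exactly the uniform boundedness built into the class $\mathcal{K}$, and not any $L^{1}$ assumption on $g$, that makes the interchange legitimate. This is precisely the feature that distinguishes the present extension from the classical Parseval formula of Theorem~\ref{Parseval}, where absolute integrability of $g$ underwrites a much simpler argument.
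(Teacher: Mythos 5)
Your argument is correct and complete. Note that the paper itself offers no proof of Theorem \ref{parseval_extension_tuan}; it is quoted from Vu Kim Tuan \cite{vu}, so there is nothing internal to compare against, but your route --- truncate to $[\epsilon,E]$, apply ordinary Fubini there (legitimate since $g$ is locally integrable, $|t^{-s}|=t^{-1/2}$ is bounded on the segment, and $F\in L$ on the line), substitute $u=xt$, and then pass to the limit by dominated convergence using precisely the uniform bound $\bigl|\int_{\epsilon x}^{Ex}u^{-s}g(u)\,du\bigr|\le C$ built into the class $\mathcal{K}$ --- is exactly the standard argument this definition of bounded convergence is designed to support, and it simultaneously establishes existence of the improper integral on the left. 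Two minor points you handled correctly but could flag explicitly: the change of variable $s\mapsto 1-s$ preserves both the line $\Re(s)=\tfrac12$ and the orientation of the contour, and the ``for almost all $\epsilon,E$'' in the definition of $\mathcal{K}$ upgrades to all $\epsilon,E$ by continuity of $(\epsilon,E)\mapsto\int_{\epsilon}^{E}$, so the domination is genuinely uniform along the limit.
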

\begin{remark}\label{extension_s}
Using Cauchy's residue theorem, we note that \eqref{parseval_extension_tuan_eqn} can be extended to any vertical strip containing the line $[1/2-i\infty, 1/2+i\infty]$ as long as it does not contain any poles of the integrand and the integrals along the horizontal segments of the rectangular contour tend to zero as the height of the contour tends to $\infty$.
\end{remark}
\begin{remark}\label{parsevalcos}
As mentioned in \cite[Corollary 1]{vu}, the cosine function belongs to the class $\mathcal{K}$ and hence the extension of Parseval's formula, that is, \eqref{parseval_extension_tuan_eqn} holds with $g(x)=\cos(x)$ and $f\in\mathfrak{M}^{-1}(L)$. It is this fact that will be employed in the proof of Theorem \ref{kkzxreal}.
\end{remark} 
These results are also given in \cite[p.~15-17]{yakubovich}.

The next result, which gives the evaluation of a Mellin transform of a certain rational function, will be used in the sequel.
\begin{lemma}\label{Mellin}
For $-k-\Re(z)< c=\Re(s) < k-\Re(z)$,
\begin{align*}
\frac{1}{2\pi i}\int_{(c)}\frac{\pi t^{-s}\, ds}{\cos\left(\frac{\pi}{2k}\left(z+s\right) \right)}=\frac{2kt^{k+z}}{t^{2k}+ 1}.
\end{align*}
\end{lemma}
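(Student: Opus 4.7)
The plan is to recognize this as a Mellin inversion formula and verify it by computing the Mellin transform of the claimed right-hand side directly. Setting $F(t) := \frac{2k t^{k+z}}{t^{2k}+1}$, I will compute
\[
\int_0^\infty t^{s-1} F(t)\, dt = 2k \int_0^\infty \frac{t^{s+k+z-1}}{t^{2k}+1}\, dt,
\]
and then apply Mellin inversion. The substitution $u = t^{2k}$, which gives $dt = \tfrac{1}{2k} u^{1/(2k)-1}\, du$, reduces the right-hand integral to the classical form
\[
\int_0^\infty \frac{u^{\frac{s+k+z}{2k}-1}}{u+1}\, du = \frac{\pi}{\sin\!\left(\frac{\pi(s+k+z)}{2k}\right)},
\]
which holds precisely when $0 < \Re\!\left(\frac{s+k+z}{2k}\right) < 1$, i.e., when $-k-\Re(z) < \Re(s) < k-\Re(z)$. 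This is exactly the strip in the lemma.

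Next, I would use the elementary identity $\sin(\theta + \pi/2) = \cos\theta$ with $\theta = \frac{\pi(s+z)}{2k}$ to rewrite the denominator as $\cos\!\left(\frac{\pi(s+z)}{2k}\right)$. Thus the Mellin transform of $F$ equals $\pi / \cos\!\left(\frac{\pi(s+z)}{2k}\right)$ throughout the stated strip. Since $|\cos(x+iy)|$ grows like $\tfrac{1}{2}e^{|y|}$ as $|y|\to\infty$, the reciprocal decays exponentially on any vertical line within the strip, so the line integral in the lemma is absolutely convergent and the standard Mellin inversion theorem applies. This immediately yields the asserted identity.

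Alternatively, one could argue by contour shifting: the integrand has simple poles precisely at $s = (2n+1)k - z$ for $n \in \mathbb{Z}$, with residues computable from $\tfrac{d}{ds}\cos\!\left(\frac{\pi(z+s)}{2k}\right) = -\tfrac{\pi}{2k}\sin\!\left(\frac{\pi(z+s)}{2k}\right)$, so closing the contour to the right when $t>1$ (resp.\ left when $t<1$) and summing the resulting geometric series in $t^{-2k}$ (resp.\ $t^{2k}$) reproduces $\tfrac{2k t^{k+z}}{t^{2k}+1}$. This second route is slightly more delicate because one must justify the vanishing of the integral on the closing arcs, so I would prefer the Mellin-transform route.

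There is no real obstacle here: the only mild point is the verification of convergence on the vertical line and the existence of a common strip, both of which follow at once from Stirling-type decay of $1/\cos$ and the explicit strip $-k-\Re(z) < c < k-\Re(z)$. The lemma will then follow in one line from Theorem~\ref{parseval_extension_tuan} or, more simply, from the classical Mellin inversion theorem applied to the continuous function $F(t)$.
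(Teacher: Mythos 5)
Your proposal is correct and follows essentially the same route as the paper: compute the Mellin transform of $\frac{2kt^{k+z}}{t^{2k}+1}$ via the substitution $u=t^{2k}$, evaluate the classical integral $\int_0^\infty \frac{u^{a-1}}{1+u}\,du=\frac{\pi}{\sin(\pi a)}$, rewrite the sine as a cosine, and invoke Mellin inversion. Your additional remark on the exponential decay of $1/\cos$ along vertical lines (justifying the inversion) is a welcome but minor supplement to what the paper leaves implicit.
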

\begin{proof}
	Employ the change of variable $t=v^{1/(2k)}$ in the integral below so that
\begin{align*}
	\int_{0}^\infty t^{s-1}\frac{2kt^{k+z}}{t^{2k}+ 1}\, dt =\int_0^\infty \frac{v^{\frac{(s+k+z)}{2k}-1}}{v+1} dv=\frac{\pi }{\sin\left(\pi\left(\frac{1}{2}+\frac{z+s}{2k} \right)\right)}=\frac{\pi }{\cos\left(\frac{\pi}{2k}\left(z+s\right) \right)}, 
\end{align*}
since $-k<\textup{Re}(s+z)<k$. The result now follows from the Mellin inversion theorem \cite[p.~341]{Mcla}.
\end{proof}

Next, we define an important special function called the  Meijer $G$-function \cite[p.~415, Definition 16.17]{NIST}.
Let $m,n,p,q$ be integers such that $0\leq m \leq q$, $0\leq n \leq p$. Let $a_1, \cdots, a_p$ and $b_1, \cdots, b_q$ be complex numbers such that $a_i - b_j \not\in \mathbb{N}$ for $1 \leq i \leq n$ and $1 \leq j \leq m$.   The Meijer $G$-function is defined by 
\begin{align}\label{MeijerG}
G_{p,q}^{\,m,n} \!\left(  \,\begin{matrix} a_1,\cdots , a_p \\ b_1, \cdots , b_q \end{matrix} \; \Big| X   \right) := \frac{1}{2 \pi i} \int_L \frac{\prod_{j=1}^m \Gamma(b_j - w) \prod_{j=1}^n \Gamma(1 - a_j +w) X^w  } {\prod_{j=m+1}^q \Gamma(1 - b_j + w) \prod_{j=n+1}^p \Gamma(a_j - w)}\, dw.
\end{align}
Here $L$  goes from $-i \infty$ to $+i \infty$ separating the poles of $\Gamma(1-a_j+w)$  from the poles of $\Gamma(b_j-w)$.  Note that the integral converges  absolutely if $p+q  < 2(m+n)$ and $|\arg(X)| < (m+n - \frac{p+q}{2}) \pi$.  In the case $p+q  = 2(m+n)$ and $\arg(X)=0$,  the integral converges absolutely if $ \left(  \Re(w) + \frac{1}{2} \right) (q-p) > \Re(\psi ) +1$,  where $\psi = \sum_{j=1}^q b_j - \sum_{j=1}^p a_j$.  

The following result elucidates the asymptotic behaviour of the Meijer $G$-function $G_{p,q}^{\,m,0}(X)$ when the argument is large.
\begin{proposition}{\cite[Theorem 2,  p.~190]{Luke}}\label{asymptotic_Meijer G}
If $ 1  \leq m \leq q$,  then for $|X| \rightarrow \infty$,  we have
\begin{align*}
& G_{p,q}^{\,m,0} \!\left( X   \right) \sim A^{m,0}_{\hspace{0.3cm} q}  \, H_{p, q}\left(X e^{i\pi (q-m)} \right),  \textrm{ if} \,\, m\leq q-1,  \delta \leq \arg(X) \leq (m-p+1)\pi - \delta,  \delta >0; \\
& G_{p,q}^{\,m,0} \!\left( X   \right) \sim \overline{A^{m,0}_{\hspace{0.3cm} q}}  \, H_{p, q}\left(X e^{-i\pi (q-m)} \right),  \textrm{if}\,\, m\leq q-1,  \delta-(m-p+1)\pi \leq \arg(X)\leq-\delta,  \delta >0; 
\end{align*}
where \cite[p.~183, Equation (2)]{Luke}
\begin{align}\label{A}
A^{m,0}_{\hspace{0.3cm}  q}= \left(-\frac{1}{2\pi i}\right)^{\nu} \exp\bigg(\!\!\!-i\pi\!\!\! \sum_{j=m+1}^q b_j  \bigg), \quad \nu= q-m,
\end{align}
 $ \overline{A^{m,0}_{\hspace{0.3cm} q}} $ is obtained  by replacing $i$ by $-i$ in \eqref{A},  and the function $H_{p, q}(X) $ is defined by \cite[p.~180]{Luke}
\begin{align}\label{H_{p,q}}
H_{p, q}(X) = \frac{(2\pi)^{(\sigma-1)/2}}{\sqrt{\sigma}} \exp\left(-\sigma\, X^{1/\sigma}  \right) X^{\theta} \sum_{k=0}^\infty M_k X^{-k/\sigma},
\end{align}
with $\sigma= q-p$,  $ \theta =\frac{1}{\sigma}\left( \frac{1-\sigma}{2}+ \Xi_1 - \Lambda_1 \right)$, where $\Xi_1 = \sum_{j=1}^q b_j$ and $\Lambda_1=\sum_{j=1}^p a_j$.  Here  $M_0=1$ and $M_k'$s are independent of $X$. 

\end{proposition}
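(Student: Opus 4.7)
The plan is to derive the asymptotic expansion by applying the saddle-point (method of steepest descent) to the Mellin--Barnes integral defining $G_{p,q}^{\,m,0}(X)$ in \eqref{MeijerG}. First I would substitute Stirling's expansion
\[
\log \Gamma(z) \;=\; \bigl(z-\tfrac{1}{2}\bigr)\log z - z + \tfrac{1}{2}\log(2\pi) + O(|z|^{-1})
\]
(together with the reflection formula for those Gammas whose arguments have large negative real part) into each factor in the integrand. Writing the resulting integrand schematically as $\exp(\Phi(w))\,\psi(w)$, where $\psi$ collects the slowly varying algebraic pre-factors, the leading behaviour of $\Phi$ as $|w|\to\infty$ is controlled by a combination of $-\sigma w \log w$ (with $\sigma = q - p \ge 1$, since $p+q<2m$ is incompatible with $n=0$ unless $\sigma\ge 1$) together with a linear-in-$w$ term that can be absorbed into an effective $\log X$. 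The sign conventions produced by Stirling ensure that the resulting saddle lies in a region where the exponential contribution is \emph{subdominant}, consistent with the well-known fact that $G_{p,q}^{\,m,0}$ is exponentially small at infinity in the relevant sectors.

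Next I would locate the saddle via $\Phi'(w)=0$, obtaining $w_* \asymp X^{1/\sigma}$ up to the choice among $\sigma$ roots of this equation. Deforming the contour $L$ to pass through $w_{*}$ along its steepest-descent direction and applying Laplace's method then produces the dominant exponential $\exp(-\sigma X^{1/\sigma})$ together with the Gaussian prefactor $(2\pi)^{(\sigma-1)/2}/\sqrt{\sigma}$ and the algebraic factor $X^{\theta}$, where $\theta = \sigma^{-1}\bigl((1-\sigma)/2 + \Xi_1 - \Lambda_1\bigr)$; this reproduces precisely the function $H_{p,q}$ defined in \eqref{H_{p,q}}. The Stokes multiplier $A^{m,0}_{\;\;q}$ of \eqref{A} emerges as follows: the product of $\sqrt{2\pi}$ constants from Stirling contributes the factor $(-2\pi i)^{-\nu}$, with $\nu=q-m$, once the reciprocation of the $\nu$ denominator Gammas is accounted for, while the rigid shift $\exp\bigl(-\sum_{j=m+1}^q b_j\bigr)$ is produced by applying Stirling to the denominator factors $\Gamma(1-b_j+w)$.

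The splitting of the statement into two cases corresponds to which of the $\sigma$ candidate saddles the deformed contour actually selects, which is dictated by $\arg X$: for $\arg X$ in the upper sector of width described in the proposition the relevant saddle corresponds to $X$ replaced by $Xe^{i\pi(q-m)}$, giving the asymptotic $A^{m,0}_{\;\;q}H_{p,q}\bigl(Xe^{i\pi(q-m)}\bigr)$, and symmetrically for the conjugate sector. The higher Watson coefficients $M_k$ in \eqref{H_{p,q}} are obtained by Taylor-expanding both $\psi$ and the $O(|w|^{-1})$ Stirling remainder at $w_{*}$ and computing the resulting Gaussian moments; they are polynomials in the parameters $\{a_j\}\cup\{b_j\}$ and independent of $X$.

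The main obstacle is the rigorous justification of the contour deformation. One must verify that no poles of $\prod_{j=1}^m \Gamma(b_j - w)$ are crossed when $L$ is moved to pass through $w_{*}$, which is automatic for $|X|$ sufficiently large since $|w_{*}|=|X|^{1/\sigma}\to\infty$, and one must bound the contributions from the tails of the deformed contour via uniform Stirling estimates, which is the most technically demanding piece. A cleaner but more abstract alternative, and the one adopted in Luke's treatise, is to observe that $G_{p,q}^{\,m,0}(X)$ satisfies a linear ODE of order $\max(p,q)$ with polynomial coefficients and an irregular singular point at infinity; Birkhoff's asymptotic theory for such ODEs then supplies $H_{p,q}$ directly as a formal Stokes solution, and the Mellin--Barnes representation is invoked only at the end to pin down the connection multiplier $A^{m,0}_{\;\;q}$.
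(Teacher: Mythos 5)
This proposition is not proved in the paper at all: it is quoted from Luke's treatise, and the authors only apply it (in the proof of Theorem \ref{asym of H_z^k}), so there is no in-paper argument to compare yours against. Your plan—steepest descent applied to the Mellin--Barnes integral \eqref{MeijerG} after inserting Stirling's formula—is the standard analytic route to such exponentially small expansions and is viable; you also correctly identify the hard points (justifying the contour shift through the saddle $w_*\asymp X^{1/\sigma}$ and the uniform tail bounds). As you note, this is genuinely different from Luke's own treatment, which goes through the differential equation satisfied by the $G$-function and Meijer's connection formulas, with $H_{p,q}$ entering as a formal solution at the irregular singularity; your route is more self-contained but carries the analytic burden you describe.

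Two mechanisms in your sketch are misattributed, and they are exactly where the constants in the statement come from. The multiplier $A^{m,0}_{\;\;q}$ and the rotation $Xe^{\pm i\pi(q-m)}$ do not arise from the $\sqrt{2\pi}$'s in Stirling's formula or from ``which saddle is selected''; they arise from rewriting each reciprocal gamma $1/\Gamma(1-b_j+w)$, $m+1\le j\le q$, via the reflection formula as $\Gamma(b_j-w)\sin(\pi(b_j-w))/\pi$ and retaining the dominant exponential of each sine in the given sector. The product of the $\nu=q-m$ factors of the form $\tfrac{1}{2i}e^{\pm i\pi b_j}e^{\mp i\pi w}$ yields both the factor $(-2\pi i)^{-\nu}$ and the exponential in $\sum_{j=m+1}^{q}b_j$, while the leftover $e^{\mp i\pi\nu w}$ combines with $X^{w}$ to produce the argument $Xe^{\mp i\pi(q-m)}$; the condition that the retained exponential really dominates is what delimits the two sectors in the statement. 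Carrying this out also shows the exponent in the multiplier should contain a factor $i\pi$, i.e. $\exp\bigl(-i\pi\sum_{j=m+1}^{q}b_j\bigr)$, which is how the authors actually use it in \eqref{asymp_K_z}; the displayed \eqref{A} omits that $i\pi$, and your sketch inherits the same slip.
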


 We now state Slater's theorem \cite[p.~145, Equation (7)]{Luke}, which allows us to represent Meijer $G$-function in terms of generalized hypergeometric functions. 
If $p \leq q$ and $ b_j - b_h \not\in \mathbb{Z}$ for $j\neq h$, $1 \leq j, h \leq m$, then 
\begin{align}\label{Slater}
	 G_{p,q}^{\,m,n} \!\left(   \,\begin{matrix} a_1, \cdots , a_p \\ b_1, \cdots , b_q \end{matrix} \; \Big| z   \right)  
	& = \sum_{h=1}^{m} \frac{ z^{b_h}  \prod_{ j=1,  j\neq h}^{m} \Gamma(b_j - b_h ) \prod_{j=1}^n   \Gamma( 1 + b_h -a_j ) }{ \prod_{j=m+1}^{q} \Gamma(1 + b_h - b_{j}) \prod_{j=n+1}^{p} \Gamma(a_{j} - b_h)  }\\
	&\qquad\quad\times {}_p F_{q-1} \left(  \begin{matrix}
		1+b_h - a_1,\cdots, 1+ b_h - a_p \\
		1+ b_h - b_1, \cdots, *, \cdots, 1 + b_h - b_q 
	\end{matrix} \Big| (-1)^{p-m-n} z  \right), \nonumber 
\end{align}
where $*$ indicates that the entry $1 + b_h - b_h$ is omitted.

Lastly, we define certain mathematical objects which will play an important role in the proof of Theorem \ref{H_z_sum of two_K_z}. Consider the monic polynomial $(w-x_1)(w-x_2)\cdots(w-x_n)$ and let $X_n=\{x_1, x_2, \cdots, x_n\}$. For all $n, k\in\mathbb{N}$, the elementary 
symmetric polynomial $e_\ell(X_n)$ is given by \cite[p.~24]{egge}
\begin{equation}\label{esp}
	e_\ell(X_n):=\sum_{1\leq j_1<\cdots<j_\ell\leq n}\prod_{m=1}^{\ell}x_{j_m}.
\end{equation}
It is well-known that
\begin{equation*}
\sum_{j=0}^{n}e_j(X_{n})t^j=\prod_{j=1}^{n}(1+x_jt).
\end{equation*}
For all $n, k\in\mathbb{N}$, the Stirling number of the second kind $S(n, k)$ is the number of set partitions of $\{1, 2, \cdots, n\}$ with exactly $k$ non-empty parts. Clearly, $S(n, k)=0$ for $n<k$. By convention, $S(0, 0)=1$. See \cite[p.~204, Chapter V]{comtet} for more details.
 
\section{A generalization of the Hardy-Koshliakov integral $H_z^{(k)}(x)$}\label{hk}

\subsection{Convergence of $H_z^{(k)}(x)$} We begin with determining the values of $z$ for which the integral $H_z^{(k)}(x)$ in \eqref{defn_H_z^k} converges.
\begin{theorem}\label{gen_Hardy_conv}
	Let $x>0$ and $k\in\mathbb{N}$. Then $H_z^{(k)}(x)$ converges in $ -1 < \Re(z) < k$. Moreover, when $x=0$, it converges in $ -1 < \Re(z) < k-1$.
\end{theorem}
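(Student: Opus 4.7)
I will split the integral at $t=1$ and analyze the tail $\int_{1}^{\infty}$ and the piece near zero $\int_{0}^{1}$ separately, controlling the latter through the substitution $u = 1/t^{k}$ which turns the problematic oscillation of $\cos(1/t^{k})$ near $t=0$ into the familiar oscillation of $\cos(u)$ at infinity.

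For the tail $\int_{1}^{\infty} t^{z-k}\cos(xt)\cos(1/t^{k})\, dt$ with $x>0$, I would write $\cos(1/t^{k}) = 1 + \bigl(\cos(1/t^{k})-1\bigr)$. The correction $\cos(1/t^{k})-1 = O(t^{-2k})$ as $t\to\infty$, so the contribution $\int_{1}^{\infty} t^{z-k}\cos(xt)\bigl(\cos(1/t^{k})-1\bigr)\, dt$ is absolutely convergent for any $\Re(z)<3k-1$, which is harmless here. The remaining integral $\int_{1}^{\infty} t^{z-k}\cos(xt)\, dt$ converges by Dirichlet's test: one separates real and imaginary parts of $t^{z-k}$ via $t^{z-k} = t^{\Re(z)-k}\bigl(\cos(\Im(z)\log t)+i\sin(\Im(z)\log t)\bigr)$, converts the resulting integrands to sums of $\cos(xt \pm \Im(z)\log t)$ by product-to-sum, and uses that the phase derivative tends to $x>0$ so the oscillatory integrals have uniformly bounded partial integrals, while $t^{\Re(z)-k}$ is monotone and tends to $0$ precisely when $\Re(z)<k$. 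This yields the upper bound $\Re(z)<k$.

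For the near-zero piece, set $u = 1/t^{k}$, giving
\begin{align*}
\int_{0}^{1} t^{z-k}\cos(xt)\cos(1/t^{k})\, dt = \frac{1}{k}\int_{1}^{\infty} u^{-(z+1)/k}\cos(xu^{-1/k})\cos(u)\, du.
\end{align*}
Since $|u^{-(z+1)/k}\cos(xu^{-1/k})|\leq u^{-(\Re(z)+1)/k}$ and a direct differentiation shows that $u\mapsto u^{-(z+1)/k}\cos(xu^{-1/k})$ is of bounded variation on $[1,\infty)$ tending to $0$ as $u\to\infty$ (both derivative terms being $O(u^{-(\Re(z)+1)/k - 1})$, hence integrable) whenever $\Re(z)>-1$, Dirichlet's test against the oscillatory factor $\cos(u)$ (whose partial integrals are uniformly bounded) yields convergence. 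This is the source of the lower bound $\Re(z)>-1$.

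For the case $x=0$, the argument near $t=0$ is unchanged and still requires $\Re(z)>-1$. The tail, however, becomes $\int_{1}^{\infty} t^{z-k}\cos(1/t^{k})\, dt$, and expanding $\cos(1/t^{k}) = 1 + O(t^{-2k})$ reduces convergence at infinity to convergence of $\int_{1}^{\infty} t^{z-k}\, dt$, which holds iff $\Re(z)<k-1$. The main subtlety (and the only place where care is needed) is the Dirichlet-test verification in the $x>0$ tail for complex $z$: the modulus $t^{\Re(z)-k}$ must be matched with a genuinely oscillatory integrand of bounded partial integrals, which is secured by the product-to-sum trick above. Everything else is routine estimation.
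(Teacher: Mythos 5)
Your proof is correct, and it reaches the same conclusions by a slightly different technical route than the paper. The paper splits the integral at two points $\epsilon$ and $M$, and on each unbounded piece it Taylor-expands the cosine with the bounded argument ($\cos(x/T^{1/k})$ after the substitution $T=1/t^{k}$ near the origin, and $\cos(1/t^{k})$ at infinity) into finitely many terms plus a big-$O$ remainder, reducing everything to model oscillatory integrals of the form $\int^{\infty}\cos(T)\,T^{a-1}\,dT$, whose conditional convergence for $\Re(a)<1$ is taken as known; collecting the conditions gives $-1<\Re(z)<k$, and for $x=0$ the paper instead evaluates the integral explicitly via the Mellin transform of cosine, which forces $-1<\Re(z)<k-1$. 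You split only at $t=1$, keep just the leading term $\cos(1/t^{k})=1+O(t^{-2k})$ at infinity, and handle the genuinely oscillatory pieces by Dirichlet's test: near zero you verify directly (via an integrable derivative) that $u^{-(z+1)/k}\cos(xu^{-1/k})$ is of bounded variation and tends to $0$ exactly when $\Re(z)>-1$, and at infinity you absorb $\Im(z)$ into the phase by product-to-sum so that the monotone factor $t^{\Re(z)-k}$ can be paired with $\cos(xt\pm\Im(z)\log t)$, whose partial integrals are bounded since the phase derivative tends to $x>0$. What your version buys is an explicit treatment of complex $z$ (the non-monotonicity coming from $\Im(z)$, which the paper's sketch passes over) and a self-contained justification of the oscillatory convergence; what the paper's version buys is the closed-form value of $H_z^{(k)}(0)$ in \eqref{Hzero}, which it reuses later in the proof of Theorem \ref{H_z_sum of two_K_z}, whereas your argument for $x=0$ establishes convergence only — which is all the theorem asserts. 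The only minor elision in your write-up is that the phase derivative $x\pm\Im(z)/t$ is bounded below by, say, $x/2$ only for $t$ beyond some $t_0$ depending on $\Im(z)$; the compact piece $[1,t_0]$ is of course harmless, but it is worth a sentence.
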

\begin{proof}
	Let $\epsilon>0$ be  small and $M$ be a large positive real number. For simplicity let $\nu=k-z$. We split the integral into three parts, namely, 
	\begin{align*}
		H_z^{(k)}(x) & = \int_{0}^\epsilon + \int_{\epsilon}^M + \int_{M}^\infty \cos\left(\frac{1}{t^k}\right) \cos(x t)  \frac{dt}{t^\nu}, =: I_1 + I_2 + I_3 \text{ (say)}. 
	\end{align*}
	It is easy to observe that $I_2$ is finite since the integrand is a continuous function on the closed and bounded interval $[\epsilon, M]$.
In the first integral $I_1$, replacing  $1/t^k$ by $T$ gives 	
	\begin{align*}
		I_1=  \frac{1}{k} \int_{\epsilon^{-k}}^\infty \cos(T)\cos\left(\frac{x}{T^{1/k}}\right) T^{\frac{\nu-1}{k} -1} dT.
	\end{align*}
	Using the series expansion of cosine, we have
	\begin{align*}
		I_1 &= \frac{1}{k} \int_{\epsilon^{-k}}^\infty \cos(T) \left[1 - \frac{x^2}{2! T^{2/k}} +  \frac{x^4}{4! T^{4/k}} - \cdots + O\left(  \frac{x^{2m}}{(2m)! T^{2m/k}} \right) \right] T^{\frac{\nu-1}{k} -1} dT
\\
		& = \frac{1}{k} \int_{\epsilon^{-k}}^\infty \cos(T)T^{\frac{\nu-1}{k} -1} dT - \frac{x^2}{ 2! k} \int_{\epsilon^{-k}}^\infty \cos(T)T^{\frac{\nu-3}{k} -1} dT+  \frac{x^4}{ 4! k} \int_{\epsilon^{-k}}^\infty \cos(T)T^{\frac{\nu-5}{k} -1} dT \\
		& \quad + \cdots+ O\left( \frac{x^{2m}}{(2m)!} \int_{\epsilon^{-k}}^\infty T^{\frac{\nu-(2m+1)}{k} -1} dT \right).
	\end{align*}
	The first term above is convergent for $\Re(\frac{\nu-1}{k})<1$, the second   for $\Re(\frac{\nu-3}{k})<1$ and so on.  The final  term is convergent for $\Re(\frac{\nu-(2m+1)}{k})<0$. These conditions hold simultaneously  if $\Re(\nu)<  \min\{ k+1,  k+3, \hdots,  {2m+1} \}$. Choosing $m$ large enough so that $2m+1 >k+1$, we see that   $I_1$ converges for $\Re(\nu) < k+1$. 
	Turning to $I_3$, we similarly have 
	\begin{align*}
		I_3& =  \int_{M}^\infty \cos\left(\frac{1}{t^k}\right)  \cos(x t)\frac{dt}{t^\nu} \\
		&= \int_{M}^\infty \cos(x t) t^{(-\nu+1)-1} dt - \frac{1}{2!}\int_{M}^\infty \cos(x t) t^{(-\nu-2k+1)-1} dt \\
		& + \frac{1}{4!}\int_{M}^\infty \cos(x t) t^{(-\nu-4k+1)-1} dt+ \cdots + O\left( \frac{1}{(2m)!} \int_{M}^\infty t^{(-\nu-2mk+1)-1} dt\right). 
	\end{align*}
	Similar to the discussion for $I_1$, the conditions for convergence are $\Re(\nu) >0, \Re(\nu)>-2k, \hdots, \Re(\nu)>-2mk+1$. As $k>0$, choosing $m$ sufficiently large yields that $I_3$ is convergent for $\Re(\nu) >0$. 
Combining the conditions for  convergence of $I_1$ and $I_3$, we have that   $H_z^{(k)}(x)$ converges for 
	$0< \Re(\nu) <k+1$, that is, $-1 < \Re(z) < k$, as needed. 
	
	Now let $x=0$. Employing the change of variable $t=u^{-1/k}$ in \eqref{defn_H_z^k}, we see that for $-1<\textup{Re}(z)<k-1$,
	\begin{align}
		H_z^{(k)}(0)=\frac{1}{k}\int_{0}^{\infty}u^{\frac{k-1-z}{k}-1}\cos(u)\, du
		=\frac{1}{k}\Gamma\left(\frac{k-1-z}{k}\right)\cos\left(\frac{\pi}{2}\left(\frac{k-1-z}{k}\right)\right),
	\end{align}
	where in the last step we used \eqref{Mellin_cos}. 
\end{proof}

\subsection{Differential equation satisfied by  $H_{z}^{(k)}(x)$
}\label{de}

Hardy \cite{hardyde} proved \eqref{Hardy} by finding a fourth order differential equation for $H_{0}^{(1)}(x)$. In what follows here, and in the Appendix, we adapt Hardy's method to derive the differential equation of order $2k+2$ for $H_{z}^{(k)}(x)$. This will play a crucial role in the second proof of Theorem \ref{H_z_sum of two_K_z}.
\begin{theorem}\label{hade}
Let $x\geq0$, $k\in\mathbb{N}$ and $k-2<\textup{Re}(z)<k-1$. The function $H_{z}^{(k)}(x)$ defined in  \eqref{defn_H_z^k} satisfies the homogeneous linear differential equation of order $2k+2$ given by
\begin{equation}\label{hade_eqn}
	x^{2}\frac{d^{2k+2}w}{dx^{2k+2}}+(2z+k+3)x\frac{d^{2k+1}w}{dx^{2k+1}}+(z+1)(z+k+1)\frac{d^{2k}w}{dx^{2k}}+(-1)^kk^2w=0.
\end{equation}
\end{theorem}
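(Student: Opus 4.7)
Plan: I adapt Hardy's method. Write $w(x):=H_z^{(k)}(x)=\int_0^\infty f(t)\cos(xt)\,dt$ with $f(t)=t^{z-k}\cos(1/t^k)$, and apply the operator
$$L:=x^2D^{2k+2}+(2z+k+3)xD^{2k+1}+(z+1)(z+k+1)D^{2k}+(-1)^k k^2$$
to $w$ under the integral sign. The idea is to convert every $x$-derivative into a $t$-derivative via $\partial_x^{2k}\cos(xt)=(-1)^k t^{2k}\cos(xt)$, together with $x\sin(xt)=-\partial_t\cos(xt)$ and $x^2\cos(xt)=-\partial_t^2\cos(xt)$. A short calculation gives the pointwise identity
$$L\cos(xt)=(-1)^k\Bigl[t^{2k+2}\partial_t^2\cos(xt)+(2z+k+3)t^{2k+1}\partial_t\cos(xt)+\bigl((z+1)(z+k+1)t^{2k}+k^2\bigr)\cos(xt)\Bigr].$$
Two integrations by parts in $t$ then transfer the $t$-derivatives onto $f$, producing
$$Lw(x)=(-1)^k\int_0^\infty \mathcal{L}_t f(t)\,\cos(xt)\,dt+(-1)^k\bigl[B(t)\bigr]_{t=0}^{t=\infty},$$
with $\mathcal{L}_t f:=(t^{2k+2}f)''-(2z+k+3)(t^{2k+1}f)'+\bigl((z+1)(z+k+1)t^{2k}+k^2\bigr)f$ and $B(t)$ collecting the boundary contributions.

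The central step is the algebraic identity $\mathcal{L}_t f\equiv 0$. Writing $f=t^{z-k}g(t)$ with $g(t)=\cos(1/t^k)$, so that $g'(t)=kt^{-k-1}\sin(1/t^k)$ and $g''(t)=-k(k+1)t^{-k-2}\sin(1/t^k)-k^2 t^{-2k-2}\cos(1/t^k)$, the expansion of $\mathcal{L}_t f$ breaks into three families: multiples of $t^{z-k}\cos(1/t^k)$, of $t^z\sin(1/t^k)$, and of $t^{k+z}\cos(1/t^k)$. Their coefficients cancel as follows: the $t^{z-k}\cos(1/t^k)$ terms contribute $-k^2+k^2=0$; the $t^z\sin(1/t^k)$ terms contribute $k\bigl[2(k+z+2)-(k+1)-(2z+k+3)\bigr]=0$; and the $t^{k+z}\cos(1/t^k)$ terms contribute $(k+z+1)\bigl[(k+z+2)-(2z+k+3)\bigr]+(z+1)(z+k+1)=0$. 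These are precisely the identities that force the particular coefficients $(2z+k+3)$, $(z+1)(z+k+1)$ and $(-1)^k k^2$ in \eqref{hade_eqn}.

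The main obstacle is justifying $[B(t)]_{t=0}^{t=\infty}=0$. At $t=0^+$, every summand in $B(t)$ carries a positive power of $t$ (either $t^{k+z+2}$, $t^{k+z+1}$, or $t^{z+1}$) and so vanishes thanks to $\Re(z)>-1$. At $t=\infty$, however, $B(t)$ consists of oscillating factors $\cos(xt),\sin(xt)$ multiplied by polynomials in $t$ that grow like $t^{k+z+2}$ (since $\cos(1/t^k)\to 1$), so $\lim_{t\to\infty}B(t)$ does not exist classically. To handle this I would insert a rapidly decaying regularizer by replacing $f(t)$ with $f(t)e^{-\varepsilon t^{2k+2}}$ in the definition of $w$, redo the integration by parts (where now the boundary terms at infinity vanish identically), and pass to $\varepsilon\to 0^+$. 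The extra terms produced by differentiating $e^{-\varepsilon t^{2k+2}}$ are $O(\varepsilon)$ against convergent oscillatory integrals and disappear in the limit, while an Abel/Dirichlet-type argument ensures $w_\varepsilon^{(j)}(x)\to w^{(j)}(x)$ for $0\le j\le 2k+2$, yielding $Lw(x)=0$. (A cleaner alternative, if one prefers to sidestep boundary analysis, is to verify the ODE on the Mellin side: using the multiplier rule $\mathcal{M}[x^a f^{(n)}](s)=(-1)^n\Gamma(s+a)/\Gamma(s+a-n)\,F(s+a-n)$, the equation $\mathcal{M}[Lw]=0$ reduces to a two-step recurrence $s\mapsto s-2k$ on $F(s)=\mathcal{M}[H_z^{(k)}](s)$ that is identically satisfied by the explicit evaluation of $F(s)$ as a product of $\Gamma$'s and trigonometric factors.)
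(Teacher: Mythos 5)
Your algebraic core is sound: the identity $\mathcal{L}_t f\equiv 0$ for $f(t)=t^{z-k}\cos(1/t^k)$ checks out (all three families of terms cancel exactly as you list), and it is indeed what forces the coefficients $(2z+k+3)$, $(z+1)(z+k+1)$, $(-1)^k k^2$. The gap is in the analysis that is supposed to turn this into $Lw=0$. Differentiating $w(x)=\int_0^\infty f(t)\cos(xt)\,dt$ under the integral sign $j$ times produces $\int_0^\infty t^{z-k+j}\cos(1/t^k)\,(\cos\ \text{or}\ \sin)(xt)\,dt$, and this diverges at $t=\infty$ as soon as $\Re(z-k+j)>0$ (integration by parts leaves a boundary term $t^{\alpha}\sin(xt)$ with $\Re(\alpha)>0$), which happens long before $j=2k+2$. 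So the displayed formula $Lw=(-1)^k\int_0^\infty\mathcal{L}_tf\cdot\cos(xt)\,dt+(-1)^k[B(t)]_0^\infty$ is not established, and in fact the very existence of $\frac{d^j}{dx^j}H_z^{(k)}(x)$ for these $j$ is part of what has to be proved. This is exactly the obstruction the paper flags ("we cannot directly differentiate under the integral sign, for the resulting integral becomes divergent"); its proof avoids it by an incremental scheme: at most two $x$-differentiations at a time, each followed by an integration by parts in $t$ that re-expresses the result through the companion integral $I_k(x,s+1-k)$ with a shifted exponent, so every integral along the way converges, with the range of $z$ then enlarged by analytic continuation.

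Your regularization sketch does not close this gap as written. The extra terms coming from differentiating $e^{-\varepsilon t^{2k+2}}$ are not "$O(\varepsilon)$ against convergent oscillatory integrals": after the rescaling $t\mapsto\varepsilon^{-1/(2k+2)}u$ their amplitude carries a negative power of $\varepsilon$, and only oscillatory cancellation from $\cos(xt)$ with $x>0$ (a quantitative integration-by-parts or stationary-phase estimate, which you do not supply) makes them vanish as $\varepsilon\to0^+$; note also that at $x=0$, which the theorem allows, there is no oscillation to exploit. Likewise the assertion $w_\varepsilon^{(j)}\to w^{(j)}$ for all $0\le j\le 2k+2$ is precisely the delicate point, not a consequence of an Abel/Dirichlet test: since the unregularized differentiated integrals diverge, you must prove that the $w_\varepsilon^{(j)}$ converge locally uniformly (e.g.\ by first integrating by parts in $t$ inside $w_\varepsilon^{(j)}$ to reach convergent representations), which essentially reproduces the paper's bookkeeping. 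The parenthetical Mellin-transform route has analogous unaddressed issues (the strips of analyticity shift with each derivative, and recovering the pointwise ODE from $\mathcal{M}[Lw]=0$ presupposes that $Lw$ exists and is Mellin-transformable). If you carry out the $\varepsilon$-regularization with these estimates in place, you would obtain a genuinely different and rather elegant one-shot proof; as it stands, the argument is a correct formal computation plus an unproven limiting step.
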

\begin{proof}
Replacing $t$ by $1/t$ in \eqref{defn_H_z^k},  one can see that $H_z^{(k)}(x)$ can be equivalently written in the form 
\begin{align*}
	H_z^{(k)}(x) = \int_{0}^\infty \cos\left( \frac{x}{t} \right) \cos\left(t^k \right) \frac{dt}{t^{z-k+2}}. 
\end{align*}
Let us define the following two functions:
\begin{align}
	J_k:=J_{k}(x,  s) & :=  \int_{0}^\infty \cos\left( \frac{x}{t} \right) \cos\left(t^k \right) \frac{dt}{t^{s}}, \label{integral jk} \\ 
	I_{k}:= I_{k}(x,  s) &:=  \int_{0}^\infty \sin\left( \frac{x}{t} \right) \sin\left(t^k \right) \frac{dt}{t^{s}}.  \label{integral ik}
\end{align}
Since $J_{k}(x,  z-k+2) = H_{z}^{(k)}(x)$, Theorem \ref{gen_Hardy_conv} implies that $J_k$ converges in $1-k<\textup{Re}(s)<2$. In a similar vein, one can show that $I_k$ converges in  $-k<\textup{Re}(s)<k+2$. 
First suppose that $0<\textup{Re}(s)<1$. Observe that for any $k\in\mathbb{N}$, both $J_k$ and $I_k$ converge in this strip.

From \cite[p.~433]{bromwich}, one can see that $J_k$ (and also $I_k$) are uniformly convergent with respect to $x$ in any interval $0<x_0\leq x\leq x_1$. Hence differentiation under the integral sign with respect to $x$ yields
\begin{align*}
	\frac{dJ_{k}}{dx} & = - \int_{0}^\infty \sin\left( \frac{x}{t} \right)   \cos\left(t^k \right) \frac{dt}{t^{s+1}}.  
\end{align*}
We cannot directly differentiate the above integral with respect to $x$ under the integral sign, for, the resulting integral becomes divergent. However, 
\begin{align}\label{can differentiate}
	\int_{0}^{\infty}\left(1-\cos(t^k)\right)\sin\left(\frac{x}{t}\right)\frac{dt}{t^{s+1}}&=	\frac{dJ_{k}}{dx}+\int_{0}^{\infty}\sin\left(\frac{x}{t}\right)\frac{dt}{t^{s+1}}\nonumber\\
	&=\frac{dJ_{k}}{dx}+x^{-s}\Gamma(s)\sin\left(\frac{\pi s}{2}\right),
\end{align}
where the last step resulted from the well-known identity
\begin{align}\label{wki}
\int_{0}^{\infty}u^{\xi-1}\sin(x u)\, du=x^{-\xi}\Gamma(\xi)\sin\left(\frac{\pi\xi}{2}\right)\hspace{8mm}(-1<\textup{Re}(\xi)<1).
\end{align}
(Note that in our case, we have assumed $0<\textup{Re}(s)<1$, thus permitting us to use the above evaluation.) Now we can differentiate \eqref{can differentiate} under the integral sign thereby obtaining
\begin{align*}
&\frac{d^{2}J_{k}}{dx^2}-x^{-s-1}\Gamma(s+1)\sin\left(\frac{\pi s}{2}\right)\nonumber\\
&=	\int_{0}^{\infty}\left(1-\cos(t^k)\right)\cos\left(\frac{x}{t}\right)\frac{dt}{t^{s+2}}\nonumber\\
&=-\frac{1}{x}\int_{0}^{\infty}\frac{\left(1-\cos(t^k)\right)}{t^s}\frac{d}{dt}\sin\left(\frac{x}{t}\right)\, dt\nonumber\\
&=-\frac{1}{x}\left[\frac{\left(1-\cos(t^k)\right)}{t^s}\sin\left(\frac{x}{t}\right)\right]_{0}^{\infty}+\frac{1}{x}\int_{0}^{\infty}\sin\left(\frac{x}{t}\right)\frac{d}{dt}\left(\frac{\left(1-\cos(t^k)\right)}{t^s}\right)\ dt\nonumber\\
&=\frac{k}{x}I_k(x, s+1-k)-\frac{s}{x}\frac{dJ_k}{dx}-x^{-s-1}\Gamma(s+1)\sin\left(\frac{\pi s}{2}\right),
\end{align*}
where, the last step follows from \eqref{wki} and the fact that the condition $0<\textup{Re}(s)<1$ renders the boundary terms zero. Hence
\begin{align}\label{2nd_order_J}
	\frac{d^{2}J_{k}}{dx^2} + \frac{s}{x} \frac{dJ_{k}}{dx} =  \frac{k}{x} I_{k}(x,  s+1-k). 
\end{align}
Similarly,  one can derive that 
\begin{align}\label{2nd_order_I}
	\frac{d^{2}I_{k}}{dx^2} + \frac{s}{x} \frac{dI_{k}}{dx} =  \frac{k}{x} J_{k}(x,  s+1-k). 
\end{align}
Multiply both sides of \eqref{2nd_order_J} by $x$ and then  differentiate the resulting equation with respect to $x$ to get
\begin{align}
	x\frac{d^{3}J_{k}}{dx^3} + (s+1)\frac{d^2J_{k}}{dx^2} = k\frac{d}{dx} I_{k}(x,  s+1-k). 
\end{align}
Now multiply both sides by $x$ and differentiate once again to see that 
\begin{align}\label{4th order_J}
	x \frac{d^{4}J_{k}}{dx^4} + (s+2)  \frac{d^{3}J_{k}}{dx^3} = -k I_{k}(x,  s+3-k),
\end{align}
where we used the fact $\frac{d^2I_{k}(x,s)}{dx^2} = -I_{k}(x, s+2)$. 
Differentiating  \eqref{4th order_J}  $k-3$ times with respect to $x$, we arrive at the equation  
\begin{align}\label{k+1_even_J}
	x \frac{d^{k+1}J_{k}}{dx^{k+1}} + (s+k-1)  \frac{d^{k}J_{k}}{dx^k} = (-1)^{\frac{k+3}{2}} k I_{k}(x,  s)
\end{align}
for $k$ odd, and at
\begin{align}\label{k+1_odd_J}
	x \frac{d^{k+1}J_{k}}{dx^{k+1}} + (s+k-1)  \frac{d^{k}J_{k}}{dx^k} = (-1)^{\frac{k+2}{2}} k \frac{d}{dx} I_{k}(x,  s-1)
\end{align}
for $k$ even. Similarly,

\begin{align}\label{k+1_even_I}
	x \frac{d^{k+1}I_{k}}{dx^{k+1}} + (s+k-1)  \frac{d^{k}I_{k}}{dx^k} &= (-1)^{\frac{k+3}{2}} k J_{k}(x,  s), \quad \text{for} ~~k~~\text{odd},\\
	x \frac{d^{k+1}I_{k}}{dx^{k+1}} + (s+k-1)  \frac{d^{k}I_{k}}{dx^k} &= (-1)^{\frac{k+2}{2}} k \frac{d}{dx} J_{k}(x,  s-1),  \quad \text{for} ~~k~~\text{even}. \label{k+1_odd_I}
\end{align}
Now applying the differential operator $
D:= x \frac{d^{k+1}}{dx^{k+1}} + (s+k-1)  \frac{d^{k}}{dx^k}
$ on both sides of \eqref{k+1_even_J}, utilizing \eqref{k+1_even_I} and substituting $s=z-k+2$,  we derive the differential equation in \eqref{hade_eqn} for $k$ odd. 
However,  when $k+1$ is odd,  replace $s$ by $s+1$ on both sides of \eqref{k+1_odd_J}, then apply the differential operator $  x \frac{d^{k}}{dx^{k}} + (s+k-1)  \frac{d^{k-1}}{dx^{k-1}}$ on both sides of the resulting equation, and then employ \eqref{k+1_odd_I}, to obtain
\begin{align*}
	&x^2  \frac{d^{2k+1}}{dx^{2k+1}}J_{k}(x,s+1) + x(2s+3k-1)  \frac{d^{2k}}{dx^{2k}}J_{k}(x,s+1)+ ( s+k-1)(s+2k-1) \frac{d^{2k-1}}{dx^{2k-1}} J_{k}(x,s+1)\\
	&=  k^2 \frac{d}{dx} J_{k}(x,  s-1). 
\end{align*}
Finally, differentiating the above equation, again, with respect to $x$ and observing that \newline $\frac{d^2 J_{k}(x,s-1)}{dx^2} = -J_{k}(x, s+1)$, we arrive at \eqref{hade_eqn} again upon replacing $s$ by $z-k+1$. 

Our assumption $0<\textup{Re}(s)<1$ implies $k-2<\textup{Re}(z)<k-1$. But from Theorem \ref{gen_Hardy_conv}, $H_z^{(k)}(x)$ itself converges for $-1<\textup{Re}(z)<k$. Hence \eqref{hade_eqn} holds for $k\in\mathbb{N}$ and all $z$ with  $k-2<\textup{Re}(z)<k-1$.

\end{proof}
\begin{remark}\label{other solution}
	We note in passing that \eqref{2nd_order_J}, \eqref{2nd_order_I} and the above analysis show that 
	\begin{equation}\label{hkzx-analogue}
		I_k(x, z-k+2)=\int_{0}^\infty \sin\left( \frac{x}{t} \right) \sin\left(t^k \right) \frac{dt}{t^{z-k+2}}=\int_{0}^\infty  t^{z-k}\sin(x t)\sin\left(\frac{1}{t^k}\right)\, dt
	\end{equation}
also satisfies the same differential equation given by \eqref{hade_eqn}.
\end{remark}
\subsection{The auxiliary integral $K_{z}^{(k)}(x)$}\label{kkzxaux}

We derive some properties of the integral $K_z^{(k)}(x)$ defined in \eqref{line integral_K_z}. The first one expresses it in terms of a Meijer-G function.
\begin{theorem}\label{meijergk}
Let $k\in\mathbb{N}$. Let $X=\frac{1}{4} \left( \frac{x}{2k} \right)^{2k} $, where $|\arg(x)|\leq\pi/(2k)$. Then 
\begin{align}\label{meijergkeqn}
	K_z^{(k)}(x)= \frac{1}{ \sqrt{k}\,\, 2^{\frac{1+z}{k}}}  G_{0,2k+2}^{\,k+2,0} \!\left(  \,\begin{matrix} \{  \} \\ b_1^{'}, \cdots , b_{2k+2}^{'} \end{matrix} \; \Big| X  \right),
\end{align}	
where
\begin{align}\label{b_j^{'}}
	b_j^{'} = \begin{cases}
		\frac{j-1}{k},  & {\rm if} \,\,    1 \leq j \leq k,   \\
		\frac{k-1-z}{2 k},  & {\rm if} \,\,  j=k+1,  \\
		\frac{2k-1-z}{2 k},  & {\rm if} \,\,  j=k+2,  \\
		\frac{4k-2j+5}{2k}  & {\rm if} \,\,  k+3 \leq j \leq 2k+2, \\
	\end{cases}
\end{align}
and where we additionally assume $-1<\textup{Re}(z)<k$ if $|\arg(x)|=\pi/(2k)$.
\end{theorem}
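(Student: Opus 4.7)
The plan is to start from the Meijer $G$-function on the right-hand side of \eqref{meijergkeqn} and collapse it, via classical gamma function identities, to the Mellin--Barnes integral defining $K_z^{(k)}(x)$ in \eqref{line integral_K_z}. Inserting the values $b_j^{\prime}$ from \eqref{b_j^{'}} into the definition \eqref{MeijerG}, the integrand of $G_{0,2k+2}^{\,k+2,0}(X)$ is the quotient
\[
\frac{\displaystyle\prod_{j=0}^{k-1}\Gamma\!\Big(\tfrac{j}{k}-w\Big)\,\Gamma\!\Big(\tfrac{k-1-z}{2k}-w\Big)\,\Gamma\!\Big(\tfrac{k-1-z}{2k}-w+\tfrac12\Big)}{\displaystyle\prod_{j=0}^{k-1}\Gamma\!\Big(\tfrac{2j+1}{2k}+w\Big)}\,X^{w}.
\]

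The first task is to simplify this integrand into one involving only $\Gamma(-2kw)$, $\Gamma\!\big(\tfrac{k-1-z}{k}-2w\big)$ and $\cos(\pi kw)$. For that I would apply the Gauss multiplication formula both to the first $k$ numerator factors (yielding $(2\pi)^{(k-1)/2}k^{kw+1/2}\Gamma(-kw)$) and to the denominator (yielding $(2\pi)^{(k-1)/2}k^{-kw}\Gamma(\tfrac12+kw)$), combine the two middle factors by Legendre's duplication formula into $\sqrt{\pi}\,2^{1+2w-(k-1-z)/k}\,\Gamma\!\big(\tfrac{k-1-z}{k}-2w\big)$, and finally use the reflection formula together with one more application of duplication to rewrite $\Gamma(-kw)/\Gamma(\tfrac12+kw)$ as $\tfrac{2^{\,2kw+1}}{\sqrt\pi}\cos(\pi kw)\,\Gamma(-2kw)$. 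Careful bookkeeping of the surviving powers of $2$, $\pi$ and $k$, together with $X^{w}=4^{-w}(2k)^{-2kw}x^{2kw}$, should reduce the right side of \eqref{meijergkeqn} to
\[
\frac{1}{\sqrt k\,2^{(1+z)/k}}\cdot 2\sqrt{k}\,2^{(1+z)/k}\cdot\frac{1}{2\pi i}\int_{L}\cos(\pi kw)\,\Gamma(-2kw)\,\Gamma\!\Big(\tfrac{k-1-z}{k}-2w\Big)\,x^{2kw}\,dw,
\]
with the overall prefactor collapsing to $2$. The substitution $s=-2kw$ (which reverses the direction of the contour and converts $2\,dw$ into $ds/k$ once the sign and the $-2k$ Jacobian are taken into account) then turns this into precisely the defining integral \eqref{line integral_K_z} of $K_z^{(k)}(x)$.

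Next I would line up the Mellin--Barnes contours. The Meijer $G$-contour $L$ must lie to the left of every pole of the numerator gammas, i.e.\ $\Re(w)<\min_j b_j^{\prime}$. For $-1<\Re(z)<k-1$ this minimum is $b_1^{\prime}=0$, while for $k-1\le\Re(z)<k$ it is $b_{k+1}^{\prime}=(k-1-\Re(z))/(2k)\le 0$; in either case the condition $\Re(w)<\min_j b_j^{\prime}$ translates, under $\Re(w)=-c/(2k)$, into exactly the inequality $c>\max\{0,1-k+\Re(z)\}$ already built into the definition of $K_z^{(k)}$.

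Finally, the convergence claims stated just before Theorem \ref{meijergk} must be checked. Absolute convergence in the open sector $|\arg x|<\pi/(2k)$ follows by applying Stirling's formula \eqref{strivert} termwise: the factors $\Gamma(s)$, $\cos(\pi s/2)$, $\Gamma\!\big(\tfrac{s-1-z}{k}+1\big)$ and $x^{-s}$ combine to produce an exponential decay of the form $e^{-(\pi/(2k)-|\arg x|)|t|}$ multiplied by a polynomial in $|t|$. The subtle point, which I expect to be the main obstacle, is the boundary case $|\arg x|=\pi/(2k)$, where the exponential decay is exactly lost on one half of the vertical line and one is left with a product of the shape $|t|^{\alpha}\exp(i\theta(t))$ with $\alpha=(c(k+1)-1-\Re(z))/k$ and $\theta$ a smooth monotone phase coming from the Stirling expansion of $\arg\Gamma$ and from $x^{-s}$. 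The strip restriction $c\le(1+\Re(z))/(k+1)$ at the boundary is precisely what forces $\alpha\le 0$; after isolating the slowly varying Stirling remainder from the genuinely oscillatory factor, Dirichlet's test then delivers the claimed conditional convergence. The non-emptiness of that strip requires both $-1<\Re(z)$ and $\Re(z)<k$, which is exactly the hypothesis imposed in the boundary case of the theorem.
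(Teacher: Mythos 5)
Your proposal is correct and takes essentially the same approach as the paper: the identity rests on the same chain of Gauss multiplication, Legendre duplication and reflection formulas connecting the Mellin--Barnes integral \eqref{line integral_K_z} with the Meijer $G$-integrand (you simply run the reduction from the $G$-function back to $K_z^{(k)}(x)$ rather than the reverse), with the same contour bookkeeping $c>\max\{0,1-k+\Re(z)\}$. Your treatment of the boundary case $|\arg(x)|=\pi/(2k)$ --- Stirling's formula, the exponent $(c(k+1)-1-\Re(z))/k\le 0$ forced by $c\le(1+\Re(z))/(k+1)$, and a Dirichlet-test/integration-by-parts argument for the oscillatory remainder --- is likewise the argument given in the paper.
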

\begin{proof}
Invoke the variant of Euler's reflection  formula, namely
	$\cos\left( \frac{\pi w}{2} \right) = \frac{\pi}{\Gamma\left( \frac{1-w}{2} \right) \Gamma\left( \frac{1+w}{2} \right)  }$, the duplication formula $\frac{\Gamma(s)}{\Gamma\left( \frac{1+s}{2}  \right)} = \frac{  \Gamma\left( \frac{s}{2} \right) 2^{s-1} }{\sqrt{\pi}} $ in \eqref{line integral_K_z} and then replace $s$ by $2ks$ so as to get upon simplification
	\begin{align}
	K_z^{(k)}(x) = \sqrt{\pi}  \frac{1}{2 \pi i } \int_{\left(\frac{c}{2k}\right)} \frac{ \Gamma\left( ks \right) \Gamma\left( 2s + \frac{k-1-z}{k} \right) }{\Gamma\left( \frac{1}{2}-ks \right)} \left( \frac{2}{x} \right)^{2ks} \, ds. 
\end{align}
Again use the duplication formula for $\Gamma\left( 2s + \frac{k-1-z}{k} \right)$ followed by the Gauss multiplication formula  \cite[p.~52]{temme}
\begin{equation}\label{gmf}
	\prod_{j=1}^{m}\Gamma\left(w+\frac{j-1}{m}\right)=(2\pi)^{\frac{1}{2}(m-1)}m^{\frac{1}{2}-mw}\Gamma(mw)\hspace{8mm}(m\in\mathbb{N})
\end{equation}
for $\Gamma(ks)$ and $\Gamma\left( \frac{1}{2}-ks \right)$ to arrive at
\begin{align*}
	K_z^{(k)}(x) & =  \frac{1}{ \sqrt{k}\,\, 2^{\frac{1+z}{k}}}   \frac{1}{2 \pi i } \int_{\left(\frac{c}{2k}\right)}  \frac{ \prod_{j=1}^{k} \Gamma\left( s + \frac{j-1}{k} \right)  \Gamma\left( s + \frac{k-1-z}{2 k} \right)  \Gamma\left( s + \frac{2k-1-z}{2 k} \right)}{ \prod_{j=1}^{k}\Gamma\left(  \frac{2j-1}{2k} -s \right) }   \left(\frac{2k}{x}\right)^{2ks} 4^s \, ds \nonumber \\
	& =  \frac{1}{ \sqrt{k}\,\, 2^{\frac{1+z}{k}}}   \frac{1}{2 \pi i } \int_{\left(\frac{c}{2k}\right)} \frac{ \prod_{j=1}^{k} \Gamma\left( s + \frac{j-1}{k} \right)  \Gamma\left( s + \frac{k-1-z}{2 k} \right)  \Gamma\left( s + \frac{2k-1-z}{2 k} \right)  }{\prod_{j=k+3}^{2k+2} \Gamma\left( \frac{2j-2k-5}{2k}-s  \right)} X^{-s}  \, ds,
\end{align*}
where $X=\frac{1}{4} \left( \frac{x}{2k} \right)^{2k} $.   Replace $s$ by $-s$ to obtain
\begin{align}\label{before_applying Meijer G}
	K_z^{(k)}(x)=  \frac{1}{ \sqrt{k}\,\, 2^{\frac{1+z}{k}}}   \frac{1}{2 \pi i } \int_{\left(-\frac{c}{2k}\right)} \frac{ \prod_{j=1}^{k} \Gamma\left(  \frac{j-1}{k}-s \right)  \Gamma\left(  \frac{k-1-z}{2 k} -s \right)  \Gamma\left(  \frac{2k-1-z}{2 k} -s \right)  }{\prod_{j=k+3}^{2k+2} \Gamma\left( \frac{2j-2k-5}{2k}+s  \right)} X^{s}  \, ds.
\end{align}
Comparing this with  the definition \eqref{MeijerG}  of the Meijer $G$-function, we see that $m =k+2$,  $n=p=0$,  $q=2k+2$ 
and the $b_j^{'}$ are as defined in \eqref{b_j^{'}}.

One can check that $\sum_{j=1}^{2k+2} b_j^{'} = 1+k- \frac{1+z}{k}$.  Since $p+q < 2(m+n)$, the integral  representation  \eqref{before_applying Meijer G} of $K_z^{(k)}(x)$ converges absolutely for $|\arg(X)|<\pi$, that is, for $|\arg(x)|<\pi/(2k)$, and can be expressed as the Meijer $G$-function given on the right-hand side of \eqref{meijergkeqn}.

Now let $|\arg(x)|=\pi/(2k)$. Then we show that $K_{z}^{(k)}(x)$ is conditionally convergent, provided $\max\{ 0,  1 -k +  \Re(z) \} < \Re(s)=c\leq\frac{1+\textup{Re}(z)}{k+1}$. We will prove this in the case $\arg(x)=\pi/(2k)$. The result can be similarly obtained in the other case, that is, when $\arg(x)=-\pi/(2k)$.
Let $x=re^{i\pi/(2k)}, r>0$. Then
\begin{align*}
	K_{z}^{(k)}(x)&=\frac{1}{2 \pi i k} \int_{(c)} \Gamma(s) \cos\left( \frac{\pi s}{2}  \right)  \Gamma\left( \frac{s-1-z}{k} +1 \right)e^{-\frac{i\pi s}{2k}}r^{-s} \, ds\nonumber\\
	&=\frac{1}{2 \pi k} \left[\int_{-\infty}^{-T}+\int_{-T}^{T}+\int_{T}^{\infty}\right] \Gamma(c+it) \cos\left( \frac{\pi}{2}(c+it)  \right)  \Gamma\left( \frac{c+it-1-z}{k} +1 \right)e^{-\frac{i\pi(c+it)}{2k}}\frac{dt}{r^{c+it}},
\end{align*}
where $T>\textup{Im}(z)$ is large enough. The integral from $-T$ to $T$ is clearly finite. We now consider the integral from $T$ to $\infty$. 
From \cite[p.~73]{dav}, as $|s|\to\infty$ in the angle $-\pi+\delta<\arg(s)<\pi-\delta$, for any fixed $\delta>0$,
\begin{equation*}
	\Gamma(s)=\sqrt{2\pi}\exp{\left(\left(s-\frac{1}{2}\right)\log(s)-s\right)}\left(1+O\left(\frac{1}{|s|}\right)\right).
\end{equation*}
With $\theta=\tan^{-1}(t/c)$, this implies that as $t\to\infty$,
\begin{equation}\label{cfirst}
	\Gamma(c+it)=\exp{\left(\left(c-\frac{1}{2}\right)\log(t)-t\theta-c\right)}\cdot\exp{(if_1(t))}\left(1+O\left(\frac{1}{t}\right)\right),
\end{equation}
where $f_1(t)=t\log(t)+\theta\left(c-\frac{1}{2}\right)-t$. Next,
\begin{equation}\label{csecond}
	\cos\left(\frac{\pi (c+it)}{2}\right)=\frac{1}{2}\left(\exp{\left(\frac{i\pi}{2}(c+it)\right)}+\exp{\left(-\frac{i\pi}{2}(c+it)\right)}\right).
\end{equation}
Let $c_1=1+\frac{1}{k}(c-1-\textup{Re}(z))$ and $t_1=\frac{1}{k}(t-\textup{Im}(z))$. Clearly, $c_1>0$ and $t_1>0$. With $\theta_1=\tan^{-1}(t_1/c_1)$, this implies that as $t\to\infty$,
\begin{equation}\label{cthird}
\Gamma(c_1+it_1)=\exp{\left(\left(c_1-\frac{1}{2}\right)\log(t_1)-t_1\theta_1-c_1\right)}\cdot\exp{(if_2(t))}\left(1+O\left(\frac{1}{t}\right)\right),
\end{equation}
where $f_2(t)=t_1\log(t_1)+\theta_1\left(c_1-\frac{1}{2}\right)-t_1$. Also,
\begin{equation}\label{cfourth}
\exp{\left(-\frac{i\pi s}{2k}\right)}=	\exp{\left(\frac{\pi}{2k}(\textup{Im}(z)-ic)\right)}\exp{\left(\frac{\pi t_1}{2}\right)}.
\end{equation} 
Moreover, since $\tan^{-1}(x)+\tan^{-1}(1/x)=\pi/2$ for $x>0$, we find that as $t\to\infty$,
\begin{align}\label{cfifth}
	\theta&=\frac{\pi}{2}-\frac{c}{t}+O\left(\frac{1}{t^2}\right),\nonumber\\
	\theta_1&=\frac{\pi}{2}-\frac{c_1}{t_1}+O\left(\frac{1}{t_1^2}\right).
\end{align}
Now let $f(t)=f_1(t)+f_2(t)$. Hence using from \eqref{cfirst}-\eqref{cfifth} and observing that  $f'(t)\sim\log(t)$ and $f''(t)\ll1/t$ as $t\to\infty$, we see that
\begin{align*}
&\int_{T}^{\infty}\Gamma(c+it) \cos\left( \frac{\pi}{2}(c+it)  \right)  \Gamma\left( \frac{c+it-1-z}{k} +1 \right)e^{-\frac{i\pi(c+it)}{2k}}\frac{dt}{r^{c+it}}\nonumber\\
&=a_{z, k, r, c}\int_{T}^{\infty}t^{(c-\frac{1}{2})+(c_1-\frac{1}{2})}\exp{(i(f(t)-t\log(r)))}\left(1+O\left(\frac{1}{t}\right)\right)\, dt\nonumber\\
&=a_{z, k, r, c}\left\{\left[\frac{t^{c+c_1-1}}{i(f'(t)-\log(r))}\exp{\left(i(f(t)-t\log(r))\right)}\right]_{T}^{\infty}+O\left(\int_{T}^{\infty}t^{c+c_1-2}\, dt\right)\right\},
\end{align*}
where $a_{z, k, r, c}$ is a constant. Here, in the last step, we performed integration by parts on the first integral. Thus the integral will be finite only if $c+c_1\leq1$, which implies that $c\leq(1+\textup{Re}(z))/(k+1)$. 
One can similarly show the existence of the integral from $-\infty$ to $-T$. This shows that $K_{z}^{(k)}(x)$ is conditionally convergent on the ray $\arg(x)=\pi/(2k)$, provided $\max\{ 0,  1 -k +  \Re(z) \} < \Re(s)=c\leq\frac{1+\textup{Re}(z)}{k+1}$. 
Proceeding as in the first part of the proof, we see that \eqref{meijergkeqn} holds for $|\arg(x)|=\pi/(2k)$ as well.

\end{proof}
\begin{remark}\label{kkzx-analytic in x}
 In $|\arg(x)|<\pi/(2k)$,  $K_z^{(k)}(x)$ is an analytic function of $x$ as can be seen from \cite[p.~30, Theorem 2.3]{temme} or, with the help of Theorem \ref{meijergk}, from \cite[p.~618]{prud3}.
\end{remark}
For $x>0$, $K_z^{(k)}(x)$ has a representation as an integral of a real variable given in the following theorem. This will be instrumental in proving Theorem \ref{H_z_sum of two_K_z}.
	\begin{theorem}\label{kkzxreal}
		Let $K_z^{(k)}(z)$ be defined in \eqref{line integral_K_z}. For $x\geq0$ and \textup{Re}$(z)<k$, we have
		\begin{align}\label{defn_K_z}
			K_z^{(k)}(x) = \int_{0}^\infty \exp\left( -\frac{1}{t^k} \right) \cos( xt) \frac{\, dt}{t^{k-z}}.
		\end{align}
	\end{theorem}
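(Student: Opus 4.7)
The plan is to recognize the contour integral defining $K_z^{(k)}(x)$ as a Mellin--Barnes convolution and then apply the extension of Parseval's formula given in Theorem \ref{parseval_extension_tuan}. The natural choice is $g(t)=\cos(t)$ and $f(t)=t^{z-k}e^{-1/t^{k}}$, since cosine belongs to the class $\mathcal K$ (Remark \ref{parsevalcos}) and has Mellin transform $G(s)=\Gamma(s)\cos(\pi s/2)$ in $0<\Re(s)<1$, while the change of variable $u=1/t^{k}$ in the Mellin integral for $f$ shows that
\begin{equation*}
F(s)=\mathcal{M}(f)(s)=\frac{1}{k}\,\Gamma\!\left(\frac{k-s-z}{k}\right),\qquad \Re(s)<k-\Re(z).
\end{equation*}
Hence $F(1-s)=\tfrac{1}{k}\Gamma\bigl(\tfrac{s-1-z}{k}+1\bigr)$, which is exactly the second gamma factor occurring in the definition \eqref{line integral_K_z} of $K_z^{(k)}(x)$.

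Next I would verify that the hypotheses of Theorem \ref{parseval_extension_tuan} hold. Stirling's formula \eqref{strivert} implies that $F(1/2+it)$ decays like $|t|^{A}e^{-\pi|t|/(2k)}$ as $|t|\to\infty$, so $F(1/2+i\cdot)\in L(\mathbb R)$ and $f\in\mathfrak M^{-1}(L)$; moreover the integral defining $F(s)$ is absolutely convergent on $\Re(s)=1/2$ whenever $\Re(z)<k-1/2$. Applying Theorem \ref{parseval_extension_tuan} with this $f$ and $g$ and then setting the dilation parameter equal to $x>0$ yields
\begin{equation*}
\int_{0}^{\infty}\cos(xt)\,t^{z-k}e^{-1/t^{k}}\,dt=\frac{1}{2\pi i}\int_{(1/2)}\Gamma(s)\cos\!\left(\tfrac{\pi s}{2}\right)\frac{1}{k}\,\Gamma\!\left(\tfrac{s-1-z}{k}+1\right)x^{-s}\,ds,
\end{equation*}
which is precisely $K_z^{(k)}(x)$ with contour $\Re(s)=1/2$.

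To obtain the identity for the full range $\Re(z)<k$ and for the general admissible abscissa $c$ in the definition of $K_z^{(k)}(x)$, I would shift the line of integration from $\Re(s)=1/2$ to $\Re(s)=c$, invoking Remark \ref{extension_s}. In the strip $\max\{0,1-k+\Re(z)\}<\Re(s)<k-\Re(z)$ the integrand has no poles (the trivial poles of $\Gamma(s)$ lie at $s\le 0$, and those of $\Gamma(\tfrac{s-1-z}{k}+1)$ at $s=1-k+z-kn$, $n\ge 0$, all sit to the left of this strip), and the exponential decay of the integrand supplied by Stirling's formula makes the horizontal segments of the rectangular contour vanish as the height goes to infinity. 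The corresponding shift is simultaneously justified on the left-hand side by analytic continuation in $z$, since both sides are holomorphic in $z$ wherever the respective integrals converge.

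The main technical obstacle is that for $k-1\le\Re(z)<k$ the integral on the right of \eqref{defn_K_z} is only conditionally convergent at $\infty$ (because $e^{-1/t^{k}}\to 1$ and one is left with $\int^{\infty}t^{z-k}\cos(xt)\,dt$), so the classical Parseval formula, which requires absolute convergence, is inadequate; this is exactly the situation in which the Vu Kim Tuan extension is needed, and one must keep track of the precise strip $0<c<k-\Re(z)$ in which the contour may legitimately be placed. Verifying these convergence conditions, together with the contour shift, constitutes the bulk of the work; once they are in place, the identification of the right-hand side of the displayed Parseval identity with $K_z^{(k)}(x)$ is immediate from the definition \eqref{line integral_K_z}.
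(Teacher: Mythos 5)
Your argument is essentially the paper's own proof: the same choice $g(t)=\cos t$, $f(t)=t^{z-k}e^{-1/t^{k}}$ with $F(s)=\frac{1}{k}\Gamma\left(\frac{k-s-z}{k}\right)$ in Vu Kim Tuan's extension (Theorem \ref{parseval_extension_tuan}), the same use of Remark \ref{extension_s} to identify the resulting contour integral with \eqref{line integral_K_z}, and the same analytic continuation in $z$ to pass from $\Re(z)<k-\tfrac12$ to $\Re(z)<k$. One small correction to your closing remark: for $k-\tfrac12\le\Re(z)<k$ it is not the Parseval extension that handles the conditional convergence (the line $\Re(s)=\tfrac12$ then lies outside the strip in which $F$ is the Mellin transform of $f$, so Theorem \ref{parseval_extension_tuan} cannot be invoked there at all); as in the paper, that range is covered purely by the continuation in $z$, both sides being analytic in $z$ in the stated regions.
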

\begin{proof}
We first prove the above result for Re$(z)<k-1/2$ and then extend it to Re$(z)<k$ by analytic continuation. As mentioned in Remark \ref{parsevalcos}, we can take $g(t)=\cos(xt)$ in Theorem \ref{parseval_extension_tuan}. Then, from \eqref{Mellin_cos}, $G(s)=\Gamma(s)\cos\left(\frac{\pi s}{2}\right)x^{-s}$. Moreover, if $f(t)=t^{z-k}\exp{\left(-t^{-k}\right)}$, then, with the change of variable $t=u^{-1/k}$, it is easy to see that
	\begin{equation*}
	F(s)=\frac{1}{k}\Gamma\left(\frac{k-s-z}{k}\right)\hspace{7mm}(\textup{Re}(s)<k-\textup{Re}(z)).
	\end{equation*}
Now for $f$ to be in $\mathfrak{M}^{-1}(L)$, we must have $1/2<k-\textup{Re}(z)$. This explains the condition 
$\textup{Re}(z)<k-1/2$ that we initially need to impose.

Invoking Theorem \ref{parseval_extension_tuan} with the above choices of $k$ and $f$, we see that 
\begin{align*}
\int_{0}^\infty \exp\left( -\frac{1}{t^k} \right) \cos( xt) \frac{\, dt}{t^{k-z}}&=\frac{1}{2 \pi i } \int_{\left(\frac{1}{2}\right)} \Gamma(s) \cos\left( \frac{\pi s}{2}  \right)  \Gamma\left( \frac{s-1-z}{k} +1 \right) \frac{\, ds}{k x^s}\nonumber\\
&=K_z^{(k)}(x),
\end{align*}
where in the last step we used Remark \ref{extension_s} and the fact that the line $[1/2-i\infty, 1/2+i\infty]$ lies in the half-plane $\max\{ 0,  1 -k +  \Re(z) \} < \Re(s)$.  This proves \eqref{defn_K_z} for Re$(z)<k-1/2$.

Next, using the techniques of Theorem \ref{gen_Hardy_conv}, it is easy to see that the right-hand side of \eqref{defn_K_z} is convergent in Re$(z)<k$ and is analytic in this region. Along with the discussion following \eqref{line integral_K_z}, we see that by analytic continuation, \eqref{defn_K_z} holds for Re$(z)<k$.
\end{proof}	

 Koshliakov \cite[Equation (9)]{koshlyakov-1936a} has shown that for $n\in\mathbb{N}$ and $x>0$,
		\begin{equation*}
			K_0\left(4\pi e^{\frac{i\pi}{4}}\sqrt{nx}\right)+K_0\left(4\pi e^{-\frac{i\pi}{4}}\sqrt{nx}\right)=\frac{1}{2\pi i}\int_{(\frac{3}{2})}\frac{\Gamma^2(s)\cos\left(\frac{\pi s}{2}\right)}{(2\pi)^{2s}(nx)^s}\, ds.
		\end{equation*}
		An easy application of the residue theorem after shifting the line of integration from Re$(s)=3/2$ to Re$(s)=c, 0<c<1$, and then comparing with \eqref{line integral_K_z} yields
		\begin{equation*}
			K_0^{(1)}(4\pi^2nx)=K_0\left(4\pi e^{\frac{i\pi}{4}}\sqrt{nx}\right)+K_0\left(4\pi e^{-\frac{i\pi}{4}}\sqrt{nx}\right).
		\end{equation*}	
	More generally, we have
	\begin{theorem}
 Let $K_{\nu}(\xi)$ be the modified Bessel function of the second kind defined in \eqref{kbes}. For $x>0$, $n\in\mathbb{N}$ and \textup{Re}$(z)<1$,
		\begin{equation}\label{K1}
		K_z^{(1)}(4\pi^2nx)=(2\pi\sqrt{nx})^{-z}\left\{e^{-\frac{i\pi z}{4}}K_z\left(4\pi e^{\frac{i\pi}{4}}\sqrt{nx}\right)+e^{\frac{i\pi z}{4}}K_z\left(4\pi e^{-\frac{i\pi}{4}}\sqrt{nx}\right)\right\}.
	\end{equation}
	\end{theorem}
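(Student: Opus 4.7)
The plan is to reduce the $k=1$ instance of \eqref{line integral_K_z} to the classical Mellin--Barnes representation of the modified Bessel function, namely
$$K_{\nu}(2\sqrt{Y}) \;=\; \frac{Y^{\nu/2}}{2}\cdot\frac{1}{2\pi i}\int_{(c)}\Gamma(s)\,\Gamma(s-\nu)\,Y^{-s}\, ds,$$
valid for $c>\max(0,\Re(\nu))$ and $|\arg Y|<\pi$ (this follows from inverting the well-known Mellin transform of $K_\nu$).

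First I would set $X:=4\pi^{2}nx$ and specialize \eqref{line integral_K_z} to $k=1$ to write
$$K_z^{(1)}(X) \;=\; \frac{1}{2\pi i}\int_{(c)}\Gamma(s)\,\Gamma(s-z)\,\cos\!\left(\frac{\pi s}{2}\right) X^{-s}\, ds$$
for any $c$ with $\max\{0,\Re(z)\}<c<1$. Expanding $\cos(\pi s/2)=\tfrac{1}{2}(e^{i\pi s/2}+e^{-i\pi s/2})$ and splitting the integral yields $K_z^{(1)}(X)=\tfrac{1}{2}(J_{+}+J_{-})$, where
$$J_{\pm}\;:=\;\frac{1}{2\pi i}\int_{(c)}\Gamma(s)\,\Gamma(s-z)\,\bigl(Xe^{\mp i\pi/2}\bigr)^{-s}\, ds.$$
Applying the Mellin--Barnes formula with $Y=Xe^{\mp i\pi/2}$ (note that $|\arg Y|=\pi/2<\pi$) gives
$$J_{\pm}\;=\;2\bigl(Xe^{\mp i\pi/2}\bigr)^{-z/2}\,K_{z}\!\left(2\sqrt{X}\,e^{\mp i\pi/4}\right)\;=\;2X^{-z/2}\,e^{\pm i\pi z/4}\,K_{z}\!\left(2\sqrt{X}\,e^{\mp i\pi/4}\right).$$
Adding the two pieces and using $X^{-z/2}=(2\pi\sqrt{nx})^{-z}$ together with $2\sqrt{X}=4\pi\sqrt{nx}$ immediately produces the right-hand side of \eqref{K1}.

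The only subtlety is to justify both the splitting and the application of the Mellin--Barnes identity at $\arg Y=\pm\pi/2$. Writing $s=c+it$, the factor $(Xe^{\mp i\pi/2})^{-s}$ contributes $|X|^{-c}e^{\pm\pi t/2}$, while Stirling's estimate \eqref{strivert} gives $|\Gamma(s)\Gamma(s-z)|\ll |t|^{2c-1-\Re(z)}e^{-\pi|t|}$ as $|t|\to\infty$. In either sign choice the combined integrand decays like a polynomial in $|t|$ times $e^{-\pi|t|/2}$ at both ends, so each $J_{\pm}$ is absolutely convergent and the splitting is legitimate; the same bound shows that the Mellin--Barnes integral for $K_z$ converges absolutely whenever $|\arg Y|<\pi$, so the application above is fully justified. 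This convergence check is the only genuine work in the argument; once it is in place, \eqref{K1} follows by the direct manipulation outlined above.
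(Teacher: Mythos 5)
Your proof is correct, and it rests on the same essential ingredient as the paper's: the two-gamma Mellin--Barnes (Mellin transform) representation of $K_\nu$, applied at the rotated arguments $4\pi e^{\pm i\pi/4}\sqrt{nx}$, together with the observation that $|\arg Y|=\pi/2<\pi$ keeps everything absolutely convergent. The only difference is the direction of the computation: you split $\cos(\pi s/2)$ in the defining integral \eqref{line integral_K_z} and identify each exponential piece as a rotated $K_z$, whereas the paper starts from the Mellin transform of $K_z(ax)$, computes the transform of the combination $e^{\mp i\pi z/4}K_z\left(4\pi e^{\pm i\pi/4}\sqrt{nx}\right)$, inverts, and then uses $K_{-z}=K_z$ and the shift $s\mapsto s-z/2$ to match the definition of $K_z^{(1)}$ --- your route is slightly more streamlined but substantively the same argument.
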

\begin{proof}	
	For Re$(s)>\pm$ Re$(z)$ and $\textup{Re}(a)>0$, we have
	\begin{equation*}
		\int_{0}^{\infty}x^{s-1}K_{z}(ax)\, dx=2^{s-2}a^{-s}\Gamma\left(\frac{s-z}{2}\right)\Gamma\left(\frac{s+z}{2}\right).
	\end{equation*} 
Replace $s$ by $2s$, $x$ by $\sqrt{x}$ and then let $a=4\pi e^{\pm i\pi/4}\sqrt{n}$ so that for Re$(s)>\pm$ Re $\left(\frac{z}{2}\right)$,
\begin{equation*}
	\int_{0}^{\infty}x^{s-1}K_z\left(4\pi e^{\pm i\pi/4}\sqrt{nx}\right)\, dx=2^{2s-1}\left(4\pi e^{\pm i\pi/4}\sqrt{n}\right)^{-2s}\Gamma\left(s-\frac{z}{2}\right)\Gamma\left(s+\frac{z}{2}\right).
\end{equation*}
Therefore,
\begin{align*}
	&\int_{0}^{\infty}x^{s-1}\left\{e^{\frac{i\pi z}{4}}K_z\left(4\pi e^{i\pi/4}\sqrt{nx}\right)+e^{-\frac{i\pi z}{4}}K_z\left(4\pi e^{-i\pi/4}\sqrt{nx}\right)\right\}\, dx\nonumber\\
	&=(4\pi^2n)^{-s}\Gamma\left(s-\frac{z}{2}\right)\Gamma\left(s+\frac{z}{2}\right)\cos\left(\frac{\pi}{2}\left(\frac{z}{2}-s\right)\right).
\end{align*}
Hence by the Mellin inversion theorem \cite[p.~341]{Mcla}, we have with $c=$Re$(s)>\pm$ Re $\left(\frac{z}{2}\right)$,
\begin{align*}
&\frac{1}{2\pi i}\int_{(c)}\Gamma\left(s-\frac{z}{2}\right)\Gamma\left(s+\frac{z}{2}\right)\cos\left(\frac{\pi}{2}\left(\frac{z}{2}-s\right)\right)(4\pi^2nx)^{-s}\, ds\nonumber\\
&=e^{\frac{i\pi z}{4}}K_z\left(4\pi e^{i\pi/4}\sqrt{nx}\right)+e^{-\frac{i\pi z}{4}}K_z\left(4\pi e^{-i\pi/4}\sqrt{nx}\right).
\end{align*}
Now replace $z$ by $-z$ in the above equation and use the well-known fact $K_{-z}(y)=K_z(y)$ so as to get for Re$(s)>\pm$ Re $\left(\frac{z}{2}\right)$, 
\begin{align}\label{almost_there}
	&\frac{1}{2\pi i}\int_{(c)}\Gamma\left(s-\frac{z}{2}\right)\Gamma\left(s+\frac{z}{2}\right)\cos\left(\frac{\pi}{2}\left(\frac{z}{2}+s\right)\right)(4\pi^2nx)^{-s}\, ds\nonumber\\
	&=e^{-\frac{i\pi z}{4}}K_z\left(4\pi e^{i\pi/4}\sqrt{nx}\right)+e^{\frac{i\pi z}{4}}K_z\left(4\pi e^{-i\pi/4}\sqrt{nx}\right).
\end{align}
Finally, replace $s$ by $s-z/2$ in \eqref{almost_there} so that for $\max\left\{0, \textup{Re}(z)\right\}<c'=\textup{Re}(s)$,
\begin{align*}
	&\frac{1}{2\pi i}(4\pi^2nx)^{z/2}\int_{(c')}\Gamma(s-z)\Gamma(s)\cos\left(\frac{\pi s}{2}\right)(4\pi^2nx)^{-s}\, ds\nonumber\\
	&=e^{-\frac{i\pi z}{4}}K_z\left(4\pi e^{i\pi/4}\sqrt{nx}\right)+e^{\frac{i\pi z}{4}}K_z\left(4\pi e^{-i\pi/4}\sqrt{nx}\right).
\end{align*}
Upon adding the restriction Re$(s)<1$ and observing \eqref{line integral_K_z}, this leads us to \eqref{K1}.
\end{proof}

\subsection{Relation between $H_{z}^{(k)}(x)$ and $K_{z}^{(k)}(x)$: Proof of Theorem \ref{H_z_sum of two_K_z}}
The integrals $H_{z}^{(k)}(x)$ and $K_{z}^{(k)}(x)$  defined in \eqref{defn_H_z^k} and \eqref{line integral_K_z} respectively are related by means of the  identity in Theorem \ref{H_z_sum of two_K_z} of which we now give a proof. Note also that neither Parseval's formula \eqref{Parseval1} nor its extension in \eqref{parseval_extension_tuan_eqn} is capable of handling the integral $H_z^{(k)}(x)$ since the integrand of $H_z^{(k)}(x)$ cannot be decomposed in any way into functions $f$ and $g$ so that they satisfy the conditions of Theorems \ref{Parseval} or  \ref{parseval_extension_tuan}. 


We prove Theorem \ref{H_z_sum of two_K_z} using the theory of linear differential equations \cite[p.~21, Section 6]{coddington-levinson}. But before that, we need the following lemma.
\begin{lemma}\label{coeff_de}
Let $X_{2k+2}=\{b_1, b_2, \cdots, b_{2k+2}\}$ where $b_j, 1\leq j\leq 2k+2,$ are defined in \eqref{coefficients_bj}. Let $e_{\ell}(X_{2k+2})$ denote the elementary symmetric polynomial defined in \eqref{esp} and let $S(n, k)$ denote the Stirling numbers of the second kind defined in Section \ref{prelim}. Then
\begin{align}\label{coeff_de_eqn}
	\sum_{j=0}^{2k+2-m}(-2k)^je_j(X_{2k+2})S(2k+2-j, m)=\begin{cases}
		1,\hspace{1mm}\textup{if}\hspace{1mm}m=2k+2,\\
		2z+k+3,\hspace{1mm}\textup{if}\hspace{1mm}m=2k+1,\\
		(z+1)(z+k+1),\hspace{1mm}\textup{if}\hspace{1mm}m=2k,\\
		0,\hspace{1mm}\textup{if}\hspace{1mm}1\leq m\leq 2k-1.
	\end{cases}
\end{align}
\end{lemma}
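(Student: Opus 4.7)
The strategy I would follow is to recast the LHS of \eqref{coeff_de_eqn} as the coefficient of the differential monomial $x^m\frac{d^m}{dx^m}$ in the expansion of the Euler-operator polynomial $\prod_{j=1}^{2k+2}(\vartheta-2kb_j)$, where $\vartheta:=x\frac{d}{dx}$. Writing
\begin{align*}
\prod_{j=1}^{2k+2}(\vartheta-2kb_j)=\sum_{\ell=0}^{2k+2}(-2k)^{\ell}e_\ell(X_{2k+2})\,\vartheta^{2k+2-\ell}
\end{align*}
(where we used $e_\ell(\{2kb_j\})=(2k)^\ell e_\ell(X_{2k+2})$), and then invoking the classical identity $\vartheta^n=\sum_{m=0}^{n}S(n,m)\,x^m\frac{d^m}{dx^m}$, one sees that the coefficient of $x^m\frac{d^m}{dx^m}$ in the expansion is exactly the sum on the LHS of \eqref{coeff_de_eqn}. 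Hence the lemma is equivalent to the operator identity
\begin{align*}
\prod_{j=1}^{2k+2}(\vartheta-2kb_j)=x^{2k+2}\frac{d^{2k+2}}{dx^{2k+2}}+(2z+k+3)\,x^{2k+1}\frac{d^{2k+1}}{dx^{2k+1}}+(z+1)(z+k+1)\,x^{2k}\frac{d^{2k}}{dx^{2k}}.
\end{align*}

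The next step exploits the explicit form of the $b_j$'s in \eqref{coefficients_bj}. Direct inspection gives $\{2kb_j:1\le j\le k\}=\{0,2,\ldots,2k-2\}$ and $\{2kb_j:k+2\le j\le 2k+1\}=\{1,3,\ldots,2k-1\}$, so that together they exhaust $\{0,1,\ldots,2k-1\}$, while the two remaining values are $2kb_{k+1}=k-1-z$ and $2kb_{2k+2}=2k-1-z$. Since all factors are polynomials in $\vartheta$ and therefore commute, this produces the factorization
\begin{align*}
\prod_{j=1}^{2k+2}(\vartheta-2kb_j)=\left[\prod_{i=0}^{2k-1}(\vartheta-i)\right]\bigl(\vartheta-(k-1-z)\bigr)\bigl(\vartheta-(2k-1-z)\bigr),
\end{align*}
and the classical formula $\prod_{i=0}^{n-1}(\vartheta-i)=x^n\frac{d^n}{dx^n}$ collapses the bracketed product to $x^{2k}\frac{d^{2k}}{dx^{2k}}$.

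It only remains to apply the two leftover factors to $x^{2k}\frac{d^{2k}}{dx^{2k}}$ in turn, using the commutation rule $(\vartheta-c)\,x^m\frac{d^m}{dx^m}=(m-c)\,x^m\frac{d^m}{dx^m}+x^{m+1}\frac{d^{m+1}}{dx^{m+1}}$, which is an immediate consequence of $\vartheta\,x^m=x^m(\vartheta+m)$. A first application with $c=2k-1-z$ yields $(1+z)\,x^{2k}\frac{d^{2k}}{dx^{2k}}+x^{2k+1}\frac{d^{2k+1}}{dx^{2k+1}}$, and a second application with $c=k-1-z$ then reproduces exactly the three-term expression displayed above. Matching coefficients of $x^m\frac{d^m}{dx^m}$ reads off all four cases of \eqref{coeff_de_eqn} at once (in fact the sum also vanishes for $m=0$, although the lemma only asserts this for $1\le m\le 2k-1$). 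The only conceptual input is the observation that the multiset $\{2kb_j\}$ splits cleanly as $\{0,1,\ldots,2k-1\}\cup\{k-1-z,2k-1-z\}$; once this fortuitous splitting is noticed, the remainder is a routine four-line calculation, so I do not anticipate any substantial obstacle.
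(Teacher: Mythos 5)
Your proposal is correct, and it takes a genuinely different route from the paper. The paper proves \eqref{coeff_de_eqn} case by case: the values $m=2k+2, 2k+1, 2k$ are handled by explicitly computing $e_1(X_{2k+2})$ and (laboriously) $e_2(X_{2k+2})$ and invoking the closed forms of $S(n,n-1)$ and $S(n,n-2)$, while the range $1\leq m\leq 2k-1$ is treated by inserting the alternating-sum formula for $S(2k+2-j,m)$, interchanging sums, and observing that for each $1\leq n\leq m$ exactly one factor of $\prod_{j=1}^{2k+2}\bigl(1-\tfrac{2k}{n}b_j\bigr)$ vanishes. Your argument repackages the same structural fact --- that the multiset $\{2kb_j\}$ is exactly $\{0,1,\ldots,2k-1\}\cup\{k-1-z,\,2k-1-z\}$ (which is also what drives the paper's vanishing argument) --- into the single operator identity $x^n\frac{d^n}{dx^n}=\vartheta(\vartheta-1)\cdots(\vartheta-n+1)$, after which a two-step application of the commutation rule $(\vartheta-c)\,x^m\frac{d^m}{dx^m}=(m-c)\,x^m\frac{d^m}{dx^m}+x^{m+1}\frac{d^{m+1}}{dx^{m+1}}$ produces all four cases simultaneously; I checked the two applications and they do give the coefficients $1$, $2z+k+3$ and $(z+1)(z+k+1)$. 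This avoids the $e_2$ computation entirely and makes transparent why the lemma is precisely the bridge between \eqref{geqn} and \eqref{hade_eqn}, since your operator polynomial $\prod_{j}(\vartheta-2kb_j)$ is (up to the factor $x^{-2k}$ and the term $(-1)^kk^2$) the Meijer $G$ differential operator. The only point worth adding explicitly is a one-line justification that matching coefficients of $x^m\frac{d^m}{dx^m}$ is legitimate, i.e.\ that these operators are linearly independent; this follows at once by applying both sides to $x^r$ and using the linear independence of the falling factorials $r(r-1)\cdots(r-m+1)$ as polynomials in $r$ (equivalently, the Stirling-number change of basis between $\{\vartheta^n\}$ and $\{x^m\frac{d^m}{dx^m}\}$ is triangular and invertible). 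With that remark included, your proof is complete and somewhat shorter than the paper's.
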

\begin{proof}
	The result is trivial for $m=2k+2$ as $S(2k+2, 2k+2)=1$. 
	Let $m=2k+1$. Using 
	\begin{equation}\label{e1}
	e_1(X_{2k+2})= \sum_{j=1}^{2k+2} b_j= 1+k - \frac{1+z}{k},
	\end{equation}
	 as well as the result \cite[p.~227]{comtet} 
	 \begin{align}\label{snn-1}
	 S(n, n-1)=n(n-1)/2,
	 \end{align}
	  in the second step, we see that
\begin{align*}
\sum_{j=0}^{1}(-2k)^je_j(X_{2k+2})S(2k+2-j, 2k+1)&=S(2k+2, 2k+1)-2ke_1(X_{2k+2})S(2k+1, 2k+1)\nonumber\\
&=(k+1)(2k+1)\!-2k\Big(\!1+k-\frac{1+z}{k}\Big) =2z+k+3.
\end{align*}
Now let $m=2k$. Then substituting \eqref{e1}, \eqref{snn-1} and the identity \cite[p.~227]{comtet},
$
	S(n, n-2)=\frac{1}{24}n(n-1)(n-2)(3n-5), 
	$
in the second step below, we have
\begin{align}\label{m=2k}
	&\sum_{j=0}^{2}(-2k)^je_j(X_{2k+2})S(2k+2-j, 2k)\nonumber\\
	&=S(2k+2, 2k)-2ke_1(X_{2k+2})S(2k+1, 2k)+4k^2e_2(X_{2k+2})S(2k, 2k)\nonumber\\
	&=	\frac{1}{24}(2k+2)(2k+1)(2k)(6k+1)-2k\left(1+k - \frac{1+z}{k}\right)(k(2k+1))+4k^2e_2(X_{2k+2}).
\end{align}
Using \eqref{coefficients_bj}, we now show
\begin{equation}\label{e2}
	e_2(X_{2k+2})= \!\!\!\!\!\! \sum_{1\leq i<j\leq 2k+2}\!\!\!\!\!\!
	b_ib_j=\frac{1}{24k^2}\left(12k^4+16k^3-3k^2(7+8z)-k(7+6z)+6(1+z)^2\right).
\end{equation}
To that end, observe that
\begin{align}\label{e2-calc}
	\sum_{1\leq i<j\leq 2k+2}b_ib_j&=\sum_{2\leq i<j\leq k}\frac{(i-1)(j-1)}{k^2}+b_{k+1}\sum_{i=2}^{2k+2}b_i+\left(\sum_{i=2}^{k}\frac{i-1}{k}\right)\left(\sum_{j=k+2}^{2k+1}\left(2+\frac{3-2j}{k}\right)\right)\nonumber\\
	&\quad+\sum_{k+2\leq i<j\leq 2k+1}\left(2+\frac{3-2i}{k}\right)\left(2+\frac{3-2j}{k}\right)+b_{2k+2}\sum_{i=2}^{2k+1}b_i-b_{k+1}b_{2k+2},
	\end{align}
where the last expression on the right was subtracted since it was considered twice, once in $b_{k+1}\sum_{i=2}^{2k+2}b_i$, and again in $b_{2k+2}\sum_{i=2}^{2k+1}b_i$. Now it can be seen that
\begin{align}\label{evaluations}
\sum_{2\leq i<j\leq k}\frac{(i-1)(j-1)}{k^2}&=\frac{1}{24k}(3k^3-10k^2+9k-2),\nonumber\\
b_{k+1}\sum_{i=2}^{2k+2}b_i&=\left(\frac{1}{2}-\frac{1+z}{2k}\right)\left(\frac{1}{2}+k-\frac{1+z}{2k}\right),\nonumber\\
\sum_{i=2}^{k}\frac{i-1}{k}\sum_{j=k+2}^{2k+1}\left(2+\frac{3-2j}{k}\right)&=\frac{k(k-1)}{4},\nonumber\\
b_{2k+2}\sum_{i=2}^{2k+1}b_i&=\left(1-\frac{1+z}{2k}\right)\left(k-\frac{1+z}{2k}\right),\nonumber\\
b_{k+1}b_{2k+2}&=\left(\frac{1}{2}-\frac{1+z}{2k}\right)\left(1-\frac{1+z}{2k}\right).
\end{align}
Substituting \eqref{evaluations} in \eqref{e2-calc} and simplifying, we arrive at \eqref{e2}, and substituting \eqref{e2}, in turn, in \eqref{m=2k} gives \eqref{coeff_de_eqn} for $m=2k$ upon simplification.

It remains to show the validity of \eqref{coeff_de_eqn} in the case $1\leq m\leq 2k-1$. To that end, we represent $S(2k+2-j,m)$ using the identity \cite[p.~204, Theorem A]{comtet}
\begin{align*}
S(\ell, m)=\frac{1}{m!}\sum_{n=1}^{m}(-1)^{m-n}\binom{m}{n}n^{\ell},
\end{align*}
and then interchange the order of summation consequently obtaining
\begin{align*}
	\!\sum_{j=0}^{2k+2-m} \!\!\!\! (-2k)^je_j(X_{2k+2})S(2k+2-j, m)&\!=\!\frac{(-1)^m}{m!}\sum_{n=1}^m(-1)^n\binom{m}{n}n^{2k+2}\sum_{j=0}^{2k+2}\left(\frac{-2k}{n}\right)^je_j(X_{2k+2})\nonumber\\
	&=\frac{(-1)^m}{m!}\sum_{n=1}^m(-1)^n\binom{m}{n}n^{2k+2}\prod_{j=1}^{2k+2}\left(1-\frac{2k}{n}b_j\right).
\end{align*}
Now it is important to observe that for any $1\leq n\leq m$ and any $1\leq m\leq 2k-1$, the product $\prod_{j=1}^{2k+2}\left(1-\frac{2k}{n}b_j\right)$ equals zero since there is precisely one factor in the product which vanishes. 

Indeed, for any odd $n$ of the form $n=2\ell-1$, where $1\leq\ell\leq k$, we have  $1-\frac{2k}{n}b_{2k-\ell+2}=0$ as can be seen from \eqref{coefficients_bj}. Similarly, for any even $n$ of the form $n=2\ell$, where $1\leq\ell\leq k-1$, the expression $1-\frac{2k}{n}b_{\ell+1}=0$. This proves \eqref{coeff_de_eqn} in the remaining case $1\leq m\leq 2k-1$ and completes the proof.
\end{proof}
\subsubsection{Proof of Theorem \ref{H_z_sum of two_K_z}:}
We initially prove the result for $k-2<\textup{Re}(z)<k-1$ and later extend it by analytic continuation to $-1<\textup{Re}(z)<k$. Define
	\begin{align}\label{inverse Mellin_H_z}
	A_z^{(k)}(x):= \frac{1}{2} \left\{ \exp\left(\frac{i \pi(k-1-z)}{2 k}  \right) K_z^{(k)}\left( e^{-\frac{i \pi}{2k}} x  \right) + \exp\left(\frac{- i \pi(k-1-z)}{2 k}  \right) K_z^{(k)}\left( e^{\frac{i \pi}{2k}} x  \right)     \right\}.
	\end{align}
We first show that the identity in \eqref{H_z in terms K_z} holds for $x=0$, that is, 
\begin{equation}\label{H=Azero}
H_z^{(k)}(0)=A_z^{(k)}(0)=\frac{\pi}{\sqrt{k}2^{\frac{1+z}{k}}}G_{0,2k+2}^{\,k+1,0} \!\left(  \,\begin{matrix} \{  \} \\ b_1, \cdots , b_q \end{matrix} \; \Big| 0   \right)=\frac{1}{k}\Gamma\left(\frac{k-1-z}{k}\right)\cos\left(\frac{\pi}{2}\left(\frac{k-1-z}{k}\right)\right).
\end{equation}
From \eqref{Mellin_cos}, for $-1<\textup{Re}(z)<k-1$,
\begin{align}\label{Hzero}
H_z^{(k)}(0)
=\frac{1}{k}\Gamma\left(\frac{k-1-z}{k}\right)\cos\left(\frac{\pi}{2}\left(\frac{k-1-z}{k}\right)\right).
\end{align}
Invoking Theorem \ref{kkzxreal} and employing again the change of variable $t=u^{-1/k}$, we see that for Re$(z)<k-1$,
\begin{align*}
K_z^{(k)}(0)=\frac{1}{k}\int_{0}^{\infty}u^{\frac{k-1-z}{k}-1}e^{-u}\, du=\frac{1}{k}\Gamma\left(\frac{k-1-z}{k}\right),
\end{align*}
which when substituted in \eqref{inverse Mellin_H_z} yields
\begin{equation}\label{Azero}
A_z^{(k)}(0)=\frac{1}{k}\Gamma\left(\frac{k-1-z}{k}\right)\cos\left(\frac{\pi}{2}\left(\frac{k-1-z}{k}\right)\right).
\end{equation}
Next, we prove the last equality of \eqref{H_z in terms K_z}. To that end, note that by Slater's theorem\footnote{In general, Meijer $G$-function has a complicated branch point at $x=0$. However, since $b_1=0$, $b_j>0$ for $2\leq j\leq k$, and $\textup{Re}(b_{k+1})>0$ because of the condition $k-2<\textup{Re}(z)<k-1$, Slater's theorem is applicable, thereby giving the non-trivial value of the Meijer $G$-function.} \eqref{Slater},
\begin{align*}
G_{0,2k+2}^{\,k+1,0} \!\left(  \,\begin{matrix} \{  \} \\ b_1, \cdots , b_{2k+2} \end{matrix} \; \Big| 0   \right)=\frac{\prod_{j=2}^{k+1}\Gamma(b_j-b_1)}{\prod_{j=k+2}^{2k+2}\Gamma(1+b_1-b_j)}
&=\frac{\prod_{j=1}^{k}\Gamma\left(\frac{j}{k}\right)\Gamma\left(\frac{1}{2}-\frac{1+z}{2k}\right)}{\prod_{j=1}^{k}\Gamma\left(\frac{1}{2k}+\frac{j-1}{k}\right)\Gamma\left(\frac{1+z}{2k}\right)}\nonumber\\
&=\frac{1}{\sqrt{k\pi}}\frac{\Gamma\left(\frac{1}{2}-\frac{1+z}{2k}\right)}{\Gamma\left(\frac{1+z}{2k}\right)},
\end{align*}
where the last step results from applying \eqref{gmf} twice, once with $w=1/k$ and $m=k$, and the second time with $w=1/(2k)$ and $m=k$.
This implies that 
\begin{align}\label{Gzero}
	\frac{\pi}{\sqrt{k}2^{\frac{1+z}{k}}}G_{0,2k+2}^{\,k+1,0} \!\left(  \,\begin{matrix} \{  \} \\ b_1, \cdots , b_{2k+2} \end{matrix} \; \Big| 0   \right)&=\frac{\sqrt{\pi}}{k 2^{\frac{1+z}{k}}}\frac{\Gamma\left(\frac{1}{2}-\frac{1+z}{2k}\right)}{\Gamma\left(\frac{1+z}{2k}\right)}\nonumber\\
	&=\frac{1}{k}\Gamma\left(\frac{k-1-z}{k}\right)\cos\left(\frac{\pi}{2}\left(\frac{k-1-z}{k}\right)\right),
\end{align}
as can be seen by specializing the identity \cite[p.~73]{dav},
${\Gamma\left(\frac{s}{2}\right)} {\Gamma\left(\frac{1}{2}-\frac{s}{2}\right)}^{-1}=2^{1-s}\pi^{-1/2}\Gamma(s)\cos\left(\frac{\pi s}{2}\right),
$
with $s=1-(1+z)/k$. From \eqref{Hzero}, \eqref{Azero} and \eqref{Gzero}, we have proven \eqref{H=Azero} in totality. 

We next show that for $x>0$,
\begin{equation}\label{A=G}
	A_z^{(k)}(x)=\frac{\pi}{\sqrt{k}2^{\frac{1+z}{k}}}G_{0,2k+2}^{\,k+1,0} \!\left(  \,\begin{matrix} \{  \} \\ b_1, \cdots , b_q \end{matrix} \; \Big| \frac{1}{4} \left(\frac{x}{2k} \right)^{2k}   \right).
\end{equation}
To that end, using \eqref{line integral_K_z} and making a note of the discussion following it, it is easy to see that for $\max\{ 0,  1 -k +  \Re(z) \} < \Re(s)=c\leq\frac{1+\textup{Re}(z)}{k+1}$ ,
	\begin{align}\label{inverse Mellin_H_z1}
A_z^{(k)}(x)= \frac{1}{ 2\pi i} \int_{(c)} \Gamma(s) \cos\left( \frac{\pi s}{2}  \right)  \Gamma\left( \frac{s-1-z}{k} +1 \right)  \cos\left( \frac{\pi}{2}  \left(  \frac{s-1-z}{k} +1  \right)  \right) \frac{\, ds}{k x^s}.
\end{align}
Proceeding along the similar lines as in the proof of Theorem \ref{meijergk}, we see that for \newline $\max\{ 0,  \frac{1 -k +  \Re(z)}{2k} \} < \Re(s)=c'\leq\frac{1+\textup{Re}(z)}{2k (k+1)}$,
\begin{align*}
	A_z^{(k)}(x)=\frac{1}{\sqrt{k}}2^{\frac{k}{2}-\frac{1+z}{k}}\pi^{\frac{k+2}{2}}\frac{1}{2\pi i}\int_{(c')}\frac{\prod_{j=1}^{k}\Gamma\left(s+\frac{j-1}{k}\right)\Gamma\left(\frac{1}{2}+s-\frac{1+z}{2k}\right)\left(4\left(\frac{2k}{x}\right)^{2k}\right)^{s}\, ds}{\prod_{j=1}^{k}\Gamma\left(-s+\frac{2j-1}{2k}\right)\Gamma\left(-s+\frac{1+z}{2k}\right)}.
\end{align*}
Now replace $s$ by $-s$ and use \eqref{MeijerG} and \eqref{coefficients_bj} to arrive at \eqref{A=G}. 

Our next task is to show that for $x>0$,
\begin{equation}\label{H=G}
	H_z^{(k)}(x)=\frac{\pi}{\sqrt{k}2^{\frac{1+z}{k}}}G_{0,2k+2}^{\,k+1,0} \!\left(  \,\begin{matrix} \{  \} \\ b_1, \cdots , b_{2k+2} \end{matrix} \; \Big| \frac{1}{4} \left(\frac{x}{2k} \right)^{2k}   \right).
\end{equation} 
This is achieved in two parts. In the first, we show that the right-hand side of \eqref{H=G} satisfies the same differential equation as that of $H_z^{(k)}(x)$, namely, the one in \eqref{hade_eqn}. This is what we do next. The second part of the proof, which is concerned with actually showing that the two sides of \eqref{H=G} are equal, is given in the Appendix. 

It is well-known \cite[p.~417]{NIST} that $w\!=\!G_{p,q}^{\,m,n} \!\left( \begin{matrix} a_1,\cdots , a_p \\ b_1, \cdots , b_q \end{matrix}  \Big| \xi  \! \right)$ satisfies the differential equation
\begin{equation*}
	\left((-1)^{p-m-n}\xi (\theta-a_1+1)\cdots(\theta-a_p+1)-(\theta-b_1)\cdots(\theta-b_q)\right)w=0,
\end{equation*}
where $\theta=\xi\frac{d}{d\xi}$. With $\xi=\frac{1}{4} \left(\frac{x}{2k} \right)^{2k} $, this implies that $G_{0,2k+2}^{\,k+1,0} \!\left(  \,\begin{matrix} \{  \} \\ b_1, \cdots , b_{2k+2} \end{matrix} \; \Big| \frac{1}{4} \left(\frac{x}{2k} \right)^{2k}   \right)$ satisfies the differential equation
\begin{align*}
	\bigg[&\left(\xi\frac{d}{d\xi}\right)^{2k+2}-e_1(X_{2k+2})\left(\xi\frac{d}{d\xi}\right)^{2k+1}+e_2(X_{2k+2})\left(\xi\frac{d}{d\xi}\right)^{2k}-\cdots\nonumber\\
	&+(-1)^{2k+2}e_{2k+2}(X_{2k+2})+(-1)^k\xi\bigg]w=0,
\end{align*}
where $X_{2k+2}=\{b_1, b_2, \cdots, b_{2k+2}\}$ with $b_j$ defined in \eqref{coefficients_bj} and $e_\ell(X_{2k+2})$ is the elementary symmetric polynomial defined in \eqref{esp}, or, written more compactly, the differential equation
\begin{align*}
\left(\sum_{j=0}^{2k+2}(-1)^je_j(X_{2k+2})\left(\xi\frac{d}{d\xi}\right)^{2k+2-j}+(-1)^k\xi\right)w=0.
\end{align*}
Since $\xi=\frac{1}{4} \left(\frac{x}{2k} \right)^{2k} $ implies $\left(\xi\frac{d}{d\xi}\right)^{\ell}(w)=\left(\frac{x}{2k}\frac{d}{dx}\right)^{\ell}(w)$, the above differential equation, upon simplification, takes the form
\begin{align*}
	\left(x^{-2k}\sum_{j=0}^{2k+2}(-2k)^je_j(X_{2k+2})\left(x\frac{d}{dx}\right)^{2k+2-j}+(-1)^kk^2\right)w=0.
\end{align*}
Now employ the well-known identity \cite[p.~157]{comtet}
\begin{align}\label{s2}
	\left(x\frac{d}{dx}\right)^\ell(w)=\sum_{m=1}^{\ell}S(\ell, m)x^m\frac{d^mw}{dx^m},
\end{align}
where $S(\ell, m)$ denote the Stirling numbers of the second kind defined in Section \ref{prelim}, to write $\left(x\frac{d}{dx}\right)^{2k+2-j}$ as a sum and then interchange the order of summation while noting $S(0,m)=0$ to derive
\begin{align}\label{geqn}
	\left(\sum_{m=1}^{2k+2}x^{m-2k}\frac{d^m}{dx^m}	\sum_{j=0}^{2k+2-m}(-2k)^je_j(X_{2k+2})S(2k+2-j,m)+(-1)^kk^2\right)w=0.
\end{align}
Invoking Lemma \ref{coeff_de}, we are led to \eqref{hade_eqn}. This proves that both sides of \eqref{H=G} satisfy the same differential equation in \eqref{hade_eqn}. The remaining part of the proof of \eqref{H=G} now follows from the Appendix.

This completes the proof of Theorem \ref{H_z_sum of two_K_z} for $k-2<\textup{Re}(z)<k-1$. Since $K_z^{(k)}(x)$ is analytic in Re$(z)<k$ as can be seen from the discussion following \eqref{line integral_K_z} and $H_z^{(k)}(x)$ is analytic in $-1<\textup{Re}(z)<k$, by analytic continuation, the identity holds for $-1<\textup{Re}(z)<k$.
\qed
\subsection{Asymptotics of $H_{z}^{(k)}(x)$}
We first obtain the asymptotic behavior of $H_{z}^{(k)}(x)$ as $x\to0^{+}$.
\begin{theorem}\label{asym zero}
Let $H_{z}^{(k)}(x)$ be defined in \eqref{defn_H_z^k}. Then $H_{z}^{(k)}(x)=O(1)$ as $x\to0^{+}$.	
\end{theorem}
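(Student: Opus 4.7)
The plan is to write $H_z^{(k)}(x)=I_1(x)+I_2(x)$ where $I_1(x)=\int_0^1 t^{z-k}\cos(xt)\cos(1/t^k)\,dt$ and $I_2(x)=\int_1^\infty t^{z-k}\cos(xt)\cos(1/t^k)\,dt$, and to bound each piece uniformly for $x$ in a neighborhood of $0$. In the natural range $-1<\Re(z)<k-1$ (the range in which $H_z^{(k)}(0)$ already converges by Theorem~\ref{gen_Hardy_conv}), the tail $I_2(x)$ is immediate: the trivial majorization $|I_2(x)|\le\int_1^\infty t^{\Re(z)-k}\,dt$ gives a bound that is finite, since $\Re(z)-k<-1$, and independent of $x$.

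For $I_1(x)$ the integrand blows up as $t\to 0^+$ and absolute convergence fails, so one must exploit the rapid oscillation of $\cos(1/t^k)$. I would make the change of variable $s=1/t^k$, which turns $I_1(x)$ into
\begin{align*}
I_1(x)=\frac{1}{k}\int_1^\infty s^{\beta}\cos\bigl(xs^{-1/k}\bigr)\cos(s)\,ds,\qquad \beta:=-\frac{1+z}{k},
\end{align*}
with $-1<\Re(\beta)<0$ throughout the relevant range. A single integration by parts with $dv=\cos(s)\,ds$ yields
\begin{align*}
I_1(x)=\frac{1}{k}\Bigl\{\bigl[s^{\beta}\cos(xs^{-1/k})\sin(s)\bigr]_{1}^{\infty}-\int_1^\infty \tfrac{d}{ds}\!\bigl[s^{\beta}\cos(xs^{-1/k})\bigr]\sin(s)\,ds\Bigr\}.
\end{align*}
The boundary term at infinity vanishes because $\Re(\beta)<0$, the one at $s=1$ equals $-\cos(x)\sin(1)$ and so is trivially uniformly bounded in $x$, and the differentiated integrand equals $\beta s^{\beta-1}\cos(xs^{-1/k})+(x/k)s^{\beta-1/k-1}\sin(xs^{-1/k})$, which is dominated for $x\in[0,1]$ by $|\beta|s^{\Re(\beta)-1}+\tfrac{1}{k}s^{\Re(\beta)-1/k-1}$; since both exponents are strictly less than $-1$, the resulting integral converges and is bounded uniformly in $x\in[0,1]$.

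The main obstacle is the absolute-majorization step after integration by parts: one has to verify that the oscillation of $\cos(1/t^k)$ (now transferred, after the change of variable, onto $\cos(s)$) really does produce integrability uniformly in $x$, and in particular that no pathology arises as $\Re(\beta)$ approaches either endpoint of $(-1,0)$. Once this is checked, the uniform bound $|H_z^{(k)}(x)|\le C$ on $x\in[0,1]$ is immediate and gives $H_z^{(k)}(x)=O(1)$ as $x\to 0^+$. An entirely equivalent, slightly more structural route would be to invoke the Meijer $G$-function representation of Theorem~\ref{H_z_sum of two_K_z} and appeal to Slater's theorem: in the range $-1<\Re(z)<k-1$ the parameters $b_j$ from \eqref{coefficients_bj} all satisfy $\Re(b_j)\ge 0$ with $b_1=0$, so the leading term of the expansion of $G_{0,2k+2}^{\,k+1,0}$ near $X=\tfrac14(x/(2k))^{2k}=0$ is the constant $X^{b_1}=1$, which yields the same conclusion.
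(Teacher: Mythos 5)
Your argument is correct, but it is not the route the paper takes. The paper's proof is a two-line appeal to softer machinery: it notes that $H_z^{(k)}(0)$ is finite (computed in the proof of Theorem \ref{gen_Hardy_conv}) and then cites Abel's and Dirichlet's tests for \emph{uniform} convergence of the improper integral to conclude that $H_z^{(k)}(x)$ is continuous at $x=0$, whence bounded as $x\to 0^{+}$. What you do instead is make that uniform-convergence step explicit and quantitative: after the substitution $s=1/t^{k}$ (your exponent $\beta=-(1+z)/k$ is right) a single integration by parts transfers the oscillation of $\cos s$ into an absolutely convergent integral whose majorant $|\beta|s^{\Re(\beta)-1}+\tfrac{1}{k}s^{\Re(\beta)-1/k-1}$ is independent of $x\in[0,1]$, so you get an explicit uniform bound rather than continuity; this is essentially a hands-on proof of the Dirichlet-test uniformity the paper merely invokes, and it is self-contained, at the cost of losing the slightly stronger conclusion (continuity at $0$, i.e.\ $H_z^{(k)}(x)\to H_z^{(k)}(0)$) that the paper's phrasing yields. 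Your worry about the endpoints of $(-1,0)$ is unnecessary: $z$ (hence $\beta$) is fixed, so the implied constant may depend on it. Finally, your restriction to $-1<\Re(z)<k-1$ is not a defect but exactly the range the paper's own proof implicitly requires, since it rests on the finiteness of $H_z^{(k)}(0)$; indeed for $k-1<\Re(z)<k$ the tail $\int_{1}^{\infty}t^{z-k}\cos(xt)\,dt$ contributes a term of size $x^{k-1-\Re(z)}$ and the $O(1)$ statement fails, so both proofs should be read with that restriction. Your alternative suggestion via Theorem \ref{H_z_sum of two_K_z} and Slater's theorem is also legitimate (and mirrors how the paper evaluates the $G$-function at the origin), again under the same condition $\Re(b_{k+1})>0$, i.e.\ $\Re(z)<k-1$.
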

\begin{proof}
	Note that in \eqref{Hzero}, it was shown that $H_{z}^{(k)}(0)$ is a finite quantity. Using Abel's and Dirichlet's tests for uniform convergence of infinite integrals \cite{bromwich}, it can be seen that $H_{z}^{(k)}(x)$ is continuous at $x=0$ whence $H_{z}^{(k)}(x)=O(1)$ as $x\to0^{+}$. 
\end{proof}
The behavior of $H_{z}^{(k)}(x)$ as $x\to\infty$ is derived next.
\begin{theorem}\label{asym of H_z^k}
Let $y>0$, $k\in\mathbb{N}$ and $-1<\textup{Re}(z)<k$. As $y \rightarrow \infty$,
	\begin{align}\label{Hinfty}
		H_{z}^{(k)}(y) \sim  \frac{\sqrt{\pi} \left(\frac{1}{4}\big(\frac{y}{2k} \big)^{2k}  \right)^{\frac{1}{4(k+1)} - \frac{1+z}{2k(k+1)}} }{ \sqrt{k(k+1)}\,\, 2^{\frac{1+z}{k}}}  \cos\left( \frac{\pi}{4} + (2k+2) \left(\frac{1}{4}\left(\frac{y}{2k} \right)^{2k}  \right)^{\frac{1}{2k+2}} \right). 
	\end{align}
\end{theorem}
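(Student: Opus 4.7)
The idea is to exploit the Meijer $G$-function representation established in Theorem \ref{H_z_sum of two_K_z}, namely
\begin{equation*}
	H_z^{(k)}(y) = \frac{\pi}{\sqrt{k}\,2^{(1+z)/k}}\, G_{0,2k+2}^{\,k+1,0}\!\left(\begin{matrix}\{\,\}\\ b_1,\ldots,b_{2k+2}\end{matrix}\Big|X\right),\qquad X=\tfrac{1}{4}\bigl(y/(2k)\bigr)^{2k},
\end{equation*}
and to combine it with the large-argument asymptotic expansion for Meijer $G$-functions given in Proposition \ref{asymptotic_Meijer G}. First I would set $p=0$, $q=2k+2$, $m=k+1$, so that $\nu=q-m=k+1$ and $\sigma=q-p=2k+2$, and compute the constants $A_{2k+2}^{k+1,0}$ and $\theta$ explicitly. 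Using \eqref{coefficients_bj} one finds that $\sum_{j=k+2}^{2k+2}b_j=\tfrac{k}{2}+1-\tfrac{1+z}{2k}$, and combined with the value $\Xi_1=1+k-\tfrac{1+z}{k}$ from \eqref{e1} this yields $\theta=\tfrac{1}{\sigma}\bigl(\tfrac{1-\sigma}{2}+\Xi_1\bigr)=\tfrac{1}{4(k+1)}-\tfrac{1+z}{2k(k+1)}$, which matches precisely the exponent of $X$ appearing in the claimed asymptotic.

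Next, since $X\to\infty$ along the positive real axis, which is a Stokes boundary between the two sectors described in Proposition \ref{asymptotic_Meijer G}, I would argue that the asymptotic on this ray is obtained by summing contributions from the two adjacent half-planes. The Meijer $G$-function is real for $X>0$ and the two one-sided asymptotics are complex conjugates (since $\overline{A_{q}^{m,0}}$ is the conjugate of $A_{q}^{m,0}$ and $H_{p,q}(\overline{Z})=\overline{H_{p,q}(Z)}$), so their sum collapses to $2\,\textup{Re}\bigl\{A_{q}^{m,0}H_{p,q}(Xe^{i\pi(k+1)})\bigr\}$. Taking the principal branch gives $(Xe^{i\pi(k+1)})^{1/(2k+2)}=iX^{1/(2k+2)}$, so the exponential factor in $H_{p,q}$ becomes $e^{-i\tau}$ with $\tau=(2k+2)X^{1/(2k+2)}$, while $(Xe^{i\pi(k+1)})^{\theta}=X^{\theta}e^{i\pi(k+1)\theta}$ contributes an additional phase. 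Collecting the phases supplied by $A_{q}^{m,0}=(i/(2\pi))^{k+1}\exp(-k/2-1+(1+z)/(2k))$ and applying Euler's formula converts the pair of complex exponentials into a single cosine of $\tau$ plus a computable phase.

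The main obstacle is the careful bookkeeping of the overall constants and of the resulting phase: one must simplify the combined phase coming from $i^{k+1}$, from $(Xe^{i\pi(k+1)})^{\theta}$, and from the exponentials of the $b_j$-sums, and verify that modulo $2\pi$ it reduces to $\pi/4$; similarly the powers of $2\pi$ from $A_{q}^{m,0}$ and from the prefactor $(2\pi)^{(\sigma-1)/2}/\sqrt{\sigma}$ in \eqref{H_{p,q}}, multiplied with the outer normalization $\pi/(\sqrt{k}\,2^{(1+z)/k})$, must collapse to $\sqrt{\pi}/(\sqrt{k(k+1)}\,2^{(1+z)/k})$. The case $k=1$, $z=0$ provides a transparent sanity check: using \eqref{Hardy} together with $K_0(2\sqrt{y})=O(y^{-1/4}e^{-2\sqrt{y}})$ and $Y_0(2\sqrt{y})\sim(\pi\sqrt{y})^{-1/2}\sin(2\sqrt{y}-\pi/4)$ reproduces exactly $H_0^{(1)}(y)\sim\tfrac{\sqrt{\pi}}{2y^{1/4}}\cos(\pi/4+2\sqrt{y})$. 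An alternative route that avoids the Stokes-line bookkeeping altogether is direct stationary phase applied to
\begin{equation*}
	H_z^{(k)}(y)=\tfrac12\int_0^\infty t^{z-k}\bigl[\cos(yt-t^{-k})+\cos(yt+t^{-k})\bigr]\,dt:
\end{equation*}
the first integrand has no stationary point and contributes at lower order by repeated integration by parts, while the phase $f(t)=yt+t^{-k}$ has a unique stationary point $t_\ast=(k/y)^{1/(k+1)}$ with $f(t_\ast)=(k+1)(y/k^k)^{k/(k+1)}=(2k+2)X^{1/(2k+2)}$, and the standard formula $g(t_\ast)\sqrt{2\pi/f''(t_\ast)}\cos(f(t_\ast)+\pi/4)$ then delivers both the stated prefactor and the phase $\pi/4$ directly.
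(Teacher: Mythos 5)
Your first route reproduces the paper's constants correctly (the value $\theta=\tfrac{1}{4(k+1)}-\tfrac{1+z}{2k(k+1)}$, the phase, the $k=1,z=0$ check), but it has a genuine gap exactly where you flag it: Proposition \ref{asymptotic_Meijer G} is stated only for $\delta\leq|\arg(X)|$, $\delta>0$, so it does not apply on the ray $\arg(X)=0$ where you want to use the representation $H_z^{(k)}(y)=\frac{\pi}{\sqrt{k}2^{(1+z)/k}}G^{\,k+1,0}_{0,2k+2}(X)$. Declaring that the asymptotic on this Stokes line is "the sum of the two one-sided expansions because the function is real" is a heuristic, not a proof; an asymptotic expansion valid in a sector need not extend to its boundary, and justifying the crossing requires either a sharper result on the real axis or a different device. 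The paper's proof is precisely the rigorous version of your summation idea: it avoids the boundary ray altogether by using the \emph{first} equality of Theorem \ref{H_z_sum of two_K_z}, writing $H_z^{(k)}(y)$ exactly as a combination of $K_z^{(k)}\left(e^{\pm\frac{i\pi}{2k}}y\right)$, and then applying Theorem \ref{meijergk} so that the relevant Meijer $G^{\,k+2,0}_{0,2k+2}$ functions have arguments $Ye^{\pm i\pi}$, which lie strictly inside the sectors where Proposition \ref{asymptotic_Meijer G} is legitimate; the two conjugate asymptotics are then combined through the exact identity, and the cosine emerges as in \eqref{Hinfty}. So the "two contributions" are supplied by an exact functional relation, not by an asymptotic matching across a Stokes line.

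Your second route, direct stationary phase on $\tfrac12\int_0^\infty t^{z-k}\left[\cos(yt-t^{-k})+\cos(yt+t^{-k})\right]dt$, is genuinely different from the paper and is viable: the stationary point $t_\ast=(k/y)^{1/(k+1)}$, the phase value $f(t_\ast)=(2k+2)X^{1/(2k+2)}$, and the prefactor $g(t_\ast)\sqrt{2\pi/f''(t_\ast)}$ do reproduce \eqref{Hinfty} exactly (I checked the exponent bookkeeping). What this buys is independence from the Meijer $G$ machinery; what it costs is that the integral is only conditionally convergent (Theorem \ref{gen_Hardy_conv}), the amplitude $t^{z-k}$ is singular as $t\to0^+$, and the term $\cos(yt-t^{-k})$ plus the tails must be shown to be $o\left(y^{(k-2-2\Re(z))/(2(k+1))}\right)$ uniformly in the allowed range of $z$; these are routine oscillatory-integral estimates (the large phase derivative near $t=0$ helps), but they need to be carried out to make the argument complete.
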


\begin{proof}
Letting $x=ye^{\pm\frac{i \pi}{2k}}, y>0,$ in Theorem \ref{meijergk},  we get, with $Y=\frac{1}{4}\left(\frac{y}{2k}\right)^{2k}$,
	\begin{align*}
		K_z^{(k)} \left(e^{\pm\frac{i \pi}{2k}} y \right) & = \frac{1}{ \sqrt{k}\,\, 2^{\frac{1+z}{k}}}  G_{0,2k+2}^{\,k+2,0} \!\left(  \,\begin{matrix} \{  \} \\ b_1^{'}, \cdots , b_q^{'} \end{matrix} \; \Big| e^{\pm i \pi} Y  \right).
	\end{align*}
Employ the asymptotic expansion of the Meijer $G$-function from Proposition \ref{asymptotic_Meijer G} to get
	\begin{align}\label{asymp_K_z}
		K_z^{(k)} \left(e^{\frac{i \pi}{2k}} y \right) \sim \frac{1}{ \sqrt{k}\,\, 2^{\frac{1+z}{k}}} \left( - \frac{1}{2\pi i}\right)^{k} \exp\left( -i \pi \sum_{j=k+3}^{2k+2} b_j^{'} \right) H_{0,2k+2}\left( Y e^{ i \pi (k+1)}  \right).
	\end{align}
From \eqref{b_j^{'}} one can easily check that $\sum_{j=k+3}^{2k+2} b_j^{'} = k/2$.  Taking the leading term,  the definition of $H_{p, q}$ in \eqref{H_{p,q}} gives
	\begin{align}\label{asymptotic_H}
		H_{0,2k+2}\left( Y e^{ i \pi (k+1)}  \right) \sim \frac{(2\pi)^{(2k+1)/2} Y^\theta }{\sqrt{2k+2}} \exp\left( -i (2k+2) Y^{1/(2k+2)}  + i \pi (k+1) \theta \right),
	\end{align}
	where $\theta = \frac{1}{4(k+1)} - \frac{1+z}{2k(k+1)}$. 
	Substituting \eqref{asymptotic_H} in \eqref{asymp_K_z}, as $y\rightarrow \infty$,
	\begin{align}
		K_z^{(k)} \left(e^{\frac{i \pi}{2k}} y \right)&  \sim \frac{(2\pi)^{(2k+1)/2} Y^\theta }{ \sqrt{2k(k+1)}\,\, 2^{\frac{1+z}{k}}} \left( - \frac{1}{2\pi i}\right)^{k} \exp\left(-\frac{i \pi k}{2} -i (2k+2) Y^{1/(2k+2)}  + i \pi (k+1) \theta \right)\nonumber \\
		& \sim \frac{\sqrt{\pi} Y^\theta }{ \sqrt{k(k+1)}\,\, 2^{\frac{1+z}{k}}}  \exp\left(-i (2k+2) Y^{1/(2k+2)}  + i \pi (k+1) \theta \right). \label{asymp_K_z +ve}
	\end{align}
	Similarly, it can be seen that
	\begin{align}\label{asymp_K_z -ve}
		K_z^{(k)} \left(e^{\frac{-i \pi}{2k}} y \right) & \sim \frac{\sqrt{\pi} Y^\theta }{ \sqrt{k(k+1)}\,\, 2^{\frac{1+z}{k}}}  \exp\left(i (2k+2) Y^{1/(2k+2)}  - i \pi (k+1) \theta \right).
	\end{align}
	Finally, making use of \eqref{asymp_K_z +ve} and \eqref{asymp_K_z -ve} in Theorem \ref{H_z_sum of two_K_z}, 
	we obtain after some simplification,  
	\begin{align*}
		H_{z}^{(k)}(y) \sim \frac{\sqrt{\pi} Y^\theta }{ \sqrt{k(k+1)}\,\, 2^{\frac{1+z}{k}}} \cos\left( \frac{\pi}{4} + (2k+2) Y^{1/{2k+2}} \right). 
	\end{align*}
Substituting the values of $Y$ and $\theta$ in the above formula, we arrive at \eqref{Hinfty}. 
\end{proof}

\begin{remark}
	When $k=1$ and $z=0$,  Theorem \ref{asym of H_z^k} implies that as $x \rightarrow \infty$,  
	\begin{align*}
		H_{0}^{(1)}(x) \sim \frac{\sqrt{\pi}}{2 \sqrt[4]{x}} \cos\left(\frac{\pi}{4} + 2 \sqrt{x} \right). 
	\end{align*}	This  can also be verified from the asymptotic formulas of $Y_0(x)$ and $K_0(x)$ upon using \eqref{Hardy}.
\end{remark}

\subsection{The special case $H_{z}^{(1)}(x)$}
In this section, we explicitly evaluate $H_z^{(1)}(x)$ in terms of Bessel functions. This will be required while proving Corollary \ref{varlauform}. 

\begin{theorem}\label{k=1}
	Let $H_z^{(k)}(x)$ be as defined in \eqref{defn_H_z^k} and let $M_{\nu}(x) = \frac{2}{\pi} K_{\nu}(x) - Y_{\nu}(x)$. For $-1<\textup{Re}(z)<1$ and $x>0$, we have
	\begin{equation}\label{k=1 eqn}
		H_z^{(1)}(x)=\frac{\pi}{2}x^{-\frac{z}{2}}\left( \cos \left( \tfrac{1}{2}\pi z \right) 
		M_{z}( 2 \sqrt{x}) - \sin \left( \tfrac{1}{2} \pi z \right) 
		J_{z}(2\sqrt{x})\right).
	\end{equation}
\end{theorem}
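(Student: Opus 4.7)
\medskip
\noindent\textbf{Proof plan for Theorem \ref{k=1}.}
The plan is to combine two results already established in the paper: the representation of $H_z^{(k)}(x)$ as a symmetric combination of $K_z^{(k)}$ at complex conjugate arguments (Theorem \ref{H_z_sum of two_K_z}), specialized to $k=1$, together with the explicit formula \eqref{K1} that evaluates $K_z^{(1)}$ in terms of classical $K$-Bessel functions. Specializing Theorem \ref{H_z_sum of two_K_z} at $k=1$ gives
\begin{equation*}
H_z^{(1)}(x)=\tfrac12\Bigl\{e^{-i\pi z/2}K_z^{(1)}\!\bigl(e^{-i\pi/2}x\bigr)+e^{i\pi z/2}K_z^{(1)}\!\bigl(e^{i\pi/2}x\bigr)\Bigr\}.
\end{equation*}

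Next, I would use \eqref{K1}, which after setting $w=4\pi^{2}nx$ rewrites as
\begin{equation*}
K_z^{(1)}(w)=w^{-z/2}\Bigl\{e^{-i\pi z/4}K_z\!\bigl(2\sqrt{w}\,e^{i\pi/4}\bigr)+e^{i\pi z/4}K_z\!\bigl(2\sqrt{w}\,e^{-i\pi/4}\bigr)\Bigr\}.
\end{equation*}
This is originally stated for $w>0$, but since $K_z^{(1)}(w)$ is analytic in $|\arg w|<\pi/2$ and extends continuously to the boundary $|\arg w|=\pi/2$ (by Remark \ref{kkzx-analytic in x} and the convergence discussion following \eqref{line integral_K_z}), and the right-hand side is manifestly analytic on the right half-plane and continuous up to its boundary via the principal branch of $\sqrt{w}$, the identity holds for $w=\pm ix$ with $x>0$. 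Putting $w=ix$ gives $\sqrt{w}=\sqrt{x}\,e^{i\pi/4}$, so that $2\sqrt{w}\,e^{i\pi/4}=2i\sqrt{x}$ and $2\sqrt{w}\,e^{-i\pi/4}=2\sqrt{x}$, yielding
\begin{equation*}
K_z^{(1)}(ix)=x^{-z/2}\bigl\{e^{-i\pi z/2}K_z(2i\sqrt{x})+K_z(2\sqrt{x})\bigr\},
\end{equation*}
and analogously $K_z^{(1)}(-ix)=x^{-z/2}\bigl\{K_z(2\sqrt{x})+e^{i\pi z/2}K_z(-2i\sqrt{x})\bigr\}$. Substituting these into the displayed formula for $H_z^{(1)}(x)$ and grouping the $K_z(2\sqrt{x})$-terms via $e^{i\pi z/2}+e^{-i\pi z/2}=2\cos(\pi z/2)$ gives
\begin{equation*}
H_z^{(1)}(x)=\tfrac{x^{-z/2}}{2}\Bigl\{2\cos(\pi z/2)\,K_z(2\sqrt{x})+K_z(2i\sqrt{x})+K_z(-2i\sqrt{x})\Bigr\}.
\end{equation*}

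The last step converts the two imaginary-argument $K_z$'s into the real-variable Bessel functions $J_z$ and $Y_z$. Using the standard relations $K_\nu(ix)=-\tfrac{i\pi}{2}e^{-i\pi\nu/2}H_\nu^{(2)}(x)$ and $K_\nu(-ix)=\tfrac{i\pi}{2}e^{i\pi\nu/2}H_\nu^{(1)}(x)$ together with $H_\nu^{(1,2)}(x)=J_\nu(x)\pm iY_\nu(x)$, one computes
\begin{equation*}
K_z(2i\sqrt{x})+K_z(-2i\sqrt{x})=-\pi\bigl[\sin(\pi z/2)J_z(2\sqrt{x})+\cos(\pi z/2)Y_z(2\sqrt{x})\bigr].
\end{equation*}
Plugging this in and factoring out $\pi/2$ yields
\begin{equation*}
H_z^{(1)}(x)=\tfrac{\pi x^{-z/2}}{2}\Bigl\{\cos(\pi z/2)\bigl[\tfrac{2}{\pi}K_z(2\sqrt{x})-Y_z(2\sqrt{x})\bigr]-\sin(\pi z/2)J_z(2\sqrt{x})\Bigr\},
\end{equation*}
which by the definition of $M_z$ is exactly \eqref{k=1 eqn}.

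The only non-routine point is the justification that \eqref{K1}, originally stated for $w=4\pi^{2}nx>0$, can be applied at the purely imaginary values $w=\pm ix$; this rests on the analyticity of $K_z^{(1)}$ in the open right half-plane and its continuous extension to the imaginary axis established earlier. Once this is granted, the remaining computation is purely trigonometric.
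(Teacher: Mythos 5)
Your proposal is correct in substance but follows a genuinely different route from the paper. The paper proves \eqref{k=1 eqn} by starting from the Mellin--Barnes representation \eqref{integral_representation_H_z^(k)} (a consequence of the first equality in Theorem \ref{H_z_sum of two_K_z} together with \eqref{inverse Mellin_H_z1}), specializing to $k=1$, shifting $s\mapsto w+z/2$, using the product-to-sum formula for cosines, and then quoting Lemma 5.1 of Dixit--Moll to evaluate the resulting integral in terms of $M_z$ and $J_z$. You instead combine the first equality of Theorem \ref{H_z_sum of two_K_z} at $k=1$ with the paper's own identity \eqref{K1} for $K_z^{(1)}$, rotated to the arguments $\pm ix$, and finish with the classical connection formulas $K_\nu(\pm ix)=\mp\frac{i\pi}{2}e^{\mp i\pi\nu/2}H_\nu^{(2,1)}(x)$. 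I have checked your bookkeeping: the evaluation of $K_z^{(1)}(\pm ix)$, the grouping $e^{i\pi z/2}+e^{-i\pi z/2}=2\cos(\pi z/2)$, and the identity $K_z(2i\sqrt{x})+K_z(-2i\sqrt{x})=-\pi\left[\sin(\pi z/2)J_z(2\sqrt{x})+\cos(\pi z/2)Y_z(2\sqrt{x})\right]$ are all correct, and they do produce \eqref{k=1 eqn}. Your route has the appeal of staying entirely within results proved in the paper (no appeal to the external Dixit--Moll lemma), and it makes transparent how the $J_z$-term arises from the rotation of the modified Bessel functions; the paper's route avoids any boundary issues because the combined boundary values are packaged once and for all into the absolutely interpretable line integral \eqref{integral_representation_H_z^(k)}.

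The one step you should tighten is the extension of \eqref{K1} from positive real argument to $\arg w=\pm\pi/2$. The discussion following \eqref{line integral_K_z} and the end of the proof of Theorem \ref{meijergk} establish that the defining integral of $K_z^{(k)}$ converges (conditionally) on the boundary rays and that $K_z^{(k)}$ is analytic in the open sector, but the paper does not explicitly prove that the boundary integral is the \emph{continuous boundary extension} of the interior values --- and that is precisely what your limiting argument uses, since the $K_z^{(1)}(e^{\pm i\pi/2}x)$ appearing in Theorem \ref{H_z_sum of two_K_z} are defined by the boundary integral. This is true and not hard to supply: the integration-by-parts estimate used in the proof of Theorem \ref{meijergk} is uniform in $\arg(x)$ on a closed subsector containing the boundary ray (for $c\leq\frac{1+\Re(z)}{2}$), which gives uniform convergence of the line integral and hence continuity of $K_z^{(1)}$ on the closed sector minus the origin; then both sides of the rotated \eqref{K1} are analytic in the open right half-plane, agree on $(0,\infty)$, and are continuous up to the imaginary axis, so they agree at $w=\pm ix$. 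Alternatively, you can bypass the boundary-limit argument entirely by repeating the Mellin computation that proves \eqref{K1} directly at the rotated arguments, i.e.\ keeping the factor $e^{\mp i\pi s/2}$ inside the integrand; that is essentially what the paper's own proof accomplishes through \eqref{inverse Mellin_H_z1}. With this point made explicit, your proof is complete.
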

\begin{proof}
From the first equality in Theorem \ref{H_z_sum of two_K_z}, \eqref{inverse Mellin_H_z} and \eqref{inverse Mellin_H_z1}, we have, for $ \max\{ 0,  1 -k +  \Re(z) \} < c\leq\frac{1+\textup{Re}(z)}{k+1} $, 
	\begin{align}\label{integral_representation_H_z^(k)}
		H_z^{(k)}(x)= \frac{1}{ 2\pi i} \int_{(c)} \Gamma(s) \cos\left( \frac{\pi s}{2}  \right)  \Gamma\left( \frac{s-1-z}{k} +1 \right)  \cos\left( \frac{\pi}{2}  \left(  \frac{s-1-z}{k} +1  \right)  \right) \frac{\, ds}{k x^s}.
	\end{align}
	Letting $k=1$, replacing $s$ by $w+z/2$, we then use the  formula $2\cos(A)\cos(B)=\cos(A+B)+\cos(A-B)$ in order to get for $|\textup{Re}\left(\frac{z}{2}\right)|<c'=\textup{Re}(w)\leq\frac{1}{2}$,
	\begin{align}\label{integral_representation_H_z^(1)}
		H_z^{(1)}(x)= \frac{x^{-z/2}}{ 4\pi i} \int_{(c')} \Gamma\left(w-\frac{z}{2}\right) \Gamma\left(w+\frac{z}{2}\right)\left(\cos(\pi w)+\cos\left( \frac{\pi z}{2}  \right)\right) x^{-w} \, dw.
	\end{align}
	From \cite[Lemma 5.1]{dixitmoll}, for $|\textup{Re}\left(\frac{z}{2}\right)|<c'=\textup{Re}(w)<3/4$, we have
	\begin{align}\label{above}
		&\frac{1}{ 2\pi^2 i} \int_{(c')} \Gamma\left(w-\frac{z}{2}\right) \Gamma\left(w+\frac{z}{2}\right)\left(\cos(\pi w)+\cos\left( \frac{\pi z}{2}  \right)\right) (4\pi^2ty)^{-w} \, dw\nonumber\\
		&=\cos \left( \tfrac{1}{2}\pi z \right) 
		M_{z}( 4 \pi \sqrt{ty}) - \sin \left( \tfrac{1}{2} \pi z \right) 
		J_{z}(4 \pi \sqrt{ty}).
	\end{align}
	Employing \eqref{above} with $ty=x/(4\pi^2)$ on the right-hand side of \eqref{integral_representation_H_z^(1)} and noting that $H_z^{(1)}(x)$ converges for $-1<\textup{Re}(z)<1$ as proved in Theorem \ref{gen_Hardy_conv}, we arrive at \eqref{k=1 eqn}.
\end{proof}

\section{Voronoi summation formula for $\sigma_{z}^{(k)}(n)$:  Theorems \ref{Voronoi_Sigma_z} and \ref{Gen_Voronoi_Sigmakz} }

Armed with results in the previous section, we are now all set to prove the Vorono\"{\dotlessi} summation formula for $\sigma_{z}^{(k)}(n)$ for an analytic  function $f$ in a closed contour containing the segment $[\alpha, \beta]$, where $0<\alpha<\beta$ and $\alpha, \beta\notin\mathbb{Z}$.
\subsection{Proof of Theorem \ref{Voronoi_Sigma_z} } 
	First,  let us define 
\begin{align}\label{Phi_z}
\Phi_k(x;z) := C\sum_{n=1}^{\infty}  S_{z}^{(k)}(n)  K_z^{(k)}\left( (2\pi)^{\frac{1}{k}+1} (n x)^{\frac{1}{k}}   \right),
\end{align}
where $C:= C_k(x;z)=2 (2\pi x)^{\frac{1+z}{k}-1}$. 
Using \cite[p.~30, Theorem 2.3]{temme} and the discussion following \eqref{line integral_K_z}, it is clear that $K_z^{(k)}\left( (2\pi)^{\frac{1}{k}+1} (n x)^{\frac{1}{k}}   \right)$ is analytic in Re$(x)>0$.
One can establish the uniform convergence of the above series in \eqref{Phi_z} for Re$(x)>0$ in a manner similar to that proved by Koshliakov in \cite[p.~125-126]{koshwigleningrad}. Hence by Weierstrass' theorem on analytic functions, we see that $\Phi_k(x;z)$ is analytic in Re$(x)>0$.

Employing \eqref{line integral_K_z},  one can write the above series representation of $\Phi_k(x;z)$ as 
\begin{align*}
\Phi_k(x;z) = \frac{C}{ 2\pi i k} \sum_{n=1}^{\infty}  S_{z}^{(k)}(n) \int_{(c)} \Gamma(s) \cos\left( \frac{\pi s}{2}  \right)  \Gamma\left( \frac{s-1-z}{k} +1 \right) { \left( (2\pi)^{\frac{1}{k}+1} (n x)^{\frac{1}{k}}   \right)^{-s}}{\, ds},
\end{align*}
where $ c > \max\{ 0,  1 -k +  \Re(z) \}$.  Moreover,  we need $\Re(s)=c >  \max\{ 1,  1  +  \Re(z) \}$ to take the summation inside the integration as the Dirichlet series associated to $S_z^{(k)}(n)$  will be absolutely and uniformly convergent in this region.  Therefore,  using \eqref{divisor_function_S_z},  we get
\begin{align}\label{line_integral_Phi_z}
\Phi_k(x;z)= \frac{C}{ 2\pi i k}  \int_{(c)} \Gamma(s) \zeta(s) \cos\left( \frac{\pi s}{2}  \right)  \Gamma\left( \frac{s-1-z}{k} +1 \right) \zeta\left(   \frac{s-1-z}{k} +1  \right)  \frac{\, ds}{ \left( (2\pi)^{\frac{1}{k}+1} x^{\frac{1}{k}}   \right)^s}.
\end{align}
The asymmetric form of \eqref{zetafe} is given by
\begin{equation}\label{asymmetric_zeta(s)}
	\zeta(s) = 2^s \pi^{s-1} \Gamma(1-s) \zeta(1-s) \sin\left(  \frac{\pi s}{2} \right).
\end{equation}
To simplify further, we shall use \eqref{asymmetric_zeta(s)} in the form
\begin{align}\label{asymmetric_zeta(s)1}
\Gamma(s) \zeta(s) = \frac{\zeta(1-s) 2^{s-1} \pi^s }{\cos\left(\frac{\pi s}{2}  \right)}
\end{align}
Use \eqref{asymmetric_zeta(s)1} twice in  \eqref{line_integral_Phi_z} to obtain
\begin{align}\label{2nd form_Phi-z}
\Phi_k(x;z) = -  \frac{C}{ 2\pi i k}  \int_{(c)}  \frac{\zeta(1-s)\zeta\left(   \frac{z+1-s}{k}   \right) }{ \sin\left( \frac{\pi}{2} \left(  \frac{s-1-z}{k}  \right)   \right)} \frac{ \, ds}{ 4 (2 \pi)^{\frac{1+z}{k}-1} x^{ \frac{s}{k}}}.
\end{align}
Now we substitute $z+1-s= k s'$. This implies that the new line of integration  $\Re(s')=c' < \min\left\{ 0,  \frac{\Re(z)}{k} \right\}$ since $\Re(s)=c > \max\{ 0,  1 -k +  \Re(z) \}$.  Upon simplification, \eqref{2nd form_Phi-z} becomes 
\begin{align}\label{integral form_Phi_z}
\Phi_k(x;z) =   \frac{C}{ 4 x (2\pi x)^{\frac{1+z}{k}-1} } \frac{1}{2 \pi i}  \int_{(c')}  \frac{ \zeta(s') \zeta( k s' -z) }{\sin\left( \frac{\pi s'}{2}  \right) } x^{s'}  \, ds'.
\end{align}
Here we note that in the current line of integration $\Re(s')=c'$,  one can not use the series definition of $ \zeta(s') \zeta( k s' -z) $,  so we would like to shift the line of integration to a new line $\Re(s')=c''$ with $2>c''> \max\left\{ 1, \frac{1+\Re(z)}{k}\right\}$ as $  \zeta(s') \zeta( k s' -z)$ is absolutely and uniformly convergent in this new region.  To do that we consider the following rectangular contour $\mathcal{C}$ defined by $[c''-i T,  c''+iT,  c'+i T,  c'-iT]$.  In the process, we encounter simple poles at $s=0, 1,  \frac{1+z}{k}$ inside this contour.  Applying Cauchy's residue theorem,  we get
\begin{align}\label{using_CRT}
\frac{1}{ 2 \pi i} \int_{\mathcal{C}}  \frac{ \zeta(s') \zeta( k s' -z) }{\sin\left( \frac{\pi s'}{2}  \right) } x^{s'}  \, ds'= R_{0}+R_{1}+R_{\frac{1+z}{k}},
\end{align}
where $R_{\rho}$ denotes the residual the term corresponding to the pole at $s'=\rho$.  Letting $T \rightarrow \infty$,  one can easily show that the horizontal integrals go to zero.  Therefore,  \eqref{using_CRT} reduces to
\begin{align}\label{with residual terms}
\frac{1}{2 \pi i}  \int_{(c')}  \frac{ \zeta(s') \zeta( k s' -z) }{\sin\left( \frac{\pi s'}{2}  \right) } x^{s'}  \, ds' = \frac{1}{2 \pi i}  \int_{(c'')}  \frac{ \zeta(s') \zeta( k s' -z) }{\sin\left( \frac{\pi s'}{2}  \right) } x^{s'}  \, ds' - \left( R_{0}+R_{1}+R_{\frac{1+z}{k}}  \right),
\end{align}
where the residual terms are the following expressions
\begin{align}\label{residual terms}
R_0= -\frac{\zeta(-z)}{\pi},  \quad R_1= \zeta(k-z) x,  \quad R_{\frac{1+z}{k}} = \frac{x^{\frac{1+z}{k}} \zeta\left( \frac{1+z}{k} \right) }{k \sin \left( \frac{\pi}{2}\left( \frac{1+z}{k} \right) \right)}.
\end{align}
Now using the Dirichlet series representation \eqref{gen_sigma_z} of $\zeta(s')\zeta(ks'-z)$,  one can write 
\begin{align}\label{right_vertical}
 \frac{1}{2 \pi i}  \int_{(c'')}  \frac{ \zeta(s') \zeta( k s' -z) }{\sin\left( \frac{\pi s'}{2}  \right) } x^{s'}  \, ds' = \sum_{n=1}^\infty \sigma_{z}^{(k)}(n)   \frac{1}{2 \pi i}  \int_{(c'')} \frac{ \left(\frac{x}{n}  \right)^{s'} }{\sin\left( \frac{\pi s'}{2} \right)} \, ds'.
\end{align}
It is well-known \cite[p.~91, Equation (3.3.10)]{kp} that for any $0<d<2$, 
\begin{align}\label{inverse_Mellin_Cosec}
\frac{1}{2 \pi i}  \int_{(d)} \frac{x^{-s}}{ \sin\left( \frac{\pi s}{2} \right)} \, ds = \frac{2}{\pi} \frac{1}{1+x^2}. 
\end{align}
In view of \eqref{right_vertical} and \eqref{inverse_Mellin_Cosec},  \eqref{with residual terms} reduces to
\begin{align}\label{left vertical}
\frac{1}{2 \pi i}  \int_{(c')}  \frac{ \zeta(s') \zeta( k s' -z) }{\sin\left( \frac{\pi s'}{2}  \right) } x^{s'} \, ds' = \frac{2 x^2}{ \pi } \sum_{n=1}^\infty  \frac{\sigma_{z}^{(k)}(n)}{n^2 + x^2} -\left( R_{0}+R_{1}+R_{\frac{1+z}{k}}  \right).
\end{align}
Now substituting \eqref{left vertical} in \eqref{integral form_Phi_z},  we obtain
\begin{align}\label{2nd last form_Phi_z}
\Phi_k(x;z) =   \frac{C}{ 4 x (2\pi x)^{\frac{1+z}{k}-1} } \left\{ \frac{2 x^2}{ \pi } \sum_{n=1}^\infty  \frac{\sigma_{z}^{(k)}(n)}{n^2 + x^2} -\left( R_{0}+R_{1}+R_{\frac{1+z}{k}}  \right) \right\}.
\end{align}
Substituting the residual terms from \eqref{residual terms} in \eqref{2nd last form_Phi_z},  we see that
\begin{align}\label{Phi_z_final}
\Phi_k(x;z) = \Psi_k(x;z),
\end{align}
where
\begin{align}\label{Psi_z}
\Psi_k(x;z):= \frac{\zeta(-z)}{2 \pi  x} - \frac{  \zeta(k-z)}{2}  -  \frac{x^{\frac{1+z}{k}-1} \zeta\left( \frac{1+z}{k} \right) }{2 k \sin \left( \frac{\pi}{2}\left( \frac{1+z}{k} \right) \right)} + \frac{x}{\pi} \sum_{n=1}^\infty  \frac{\sigma_{z}^{(k)}(n)}{n^2 + x^2}. 
\end{align}
One can easily check that $ \Psi_k(x;z)$ is analytic,  as a function of $x$,  in the entire complex plane except  on the negative real axis and at $x= \pm i n,  n \in \mathbb{N}\cup \{ 0 \} $. Thus, $ \Psi_k(i x;z)$ is analytic on $\mathbb{C}$ except on the positive imaginary axis and at integers.  Similarly, $ \Psi_k(-i x;z)$ is analytic on $\mathbb{C}$ except on the negative imaginary axis and at integers.  The combination of these two facts implies that $\Psi_k(i x;z) +  \Psi_k(-i x;z)$ is analytic on $\mathbb{C}$ except on the imaginary axis and possibly at integers.  But
\begin{align*}
\lim_{x \rightarrow \pm n} ( x \mp n) \Psi_k(i x;z) = \frac{1}{2 \pi i}\sigma_z^{(k)}(n),  \quad \mathrm{and} \quad
\lim_{x \rightarrow \pm n} ( x \mp n) \Psi_k(-i x;z) = -\frac{1}{2 \pi i}\sigma_z^{(k)}(n),
\end{align*}
which implies that the function $\Psi_k(i x;z) +  \Psi_k(-i x;z)$ is analytic in $\Re(x) >0$. Observe that for $x$ lying inside an interval $(u, v)$ on the positive real line not containing
any integer in its interior, we have, using \eqref{Psi_z},
\begin{align}\label{removable_at n}
	\Psi_k(i x;z) +  \Psi_k(-i x;z) = - \zeta(k-z) - \frac{1}{k} x^{\frac{1+z}{k}-1} \zeta\left( \frac{1+z}{k} \right).
\end{align}
However, both sides of \eqref{removable_at n} are analytic in Re$(x)>0$, and hence, by analytic continuation we see that \eqref{removable_at n} holds in Re$(x)>0$.

Now let us consider $f(x)$ to be analytic function of $x$ inside a closed contour $\gamma$ that intersects the real line at $\alpha$ and $\beta$,  where $0< m-1 < \alpha < m \leq n-1 < \beta < n$ and $m, n \in \mathbb{N}$. Let $\gamma'$ and $\gamma''$ denote the upper and lower portion of the contour,  respectively.   This means that $\alpha \gamma' \beta$ and $\alpha \gamma'' \beta$ denote the paths from $\alpha$ to $\beta$ in the upper- and lower- half planes  respectively.  By Cauchy's residue theorem,
\begin{align}\label{Using_CRT}
\frac{1}{2\pi i} \int_{\alpha \gamma'' \beta \gamma' \alpha}  f(x) \Psi_k(i x;z) \, dx = \sum_{ \alpha < n < \beta } \lim_{x \rightarrow n} (x-n) f(x) \Psi_k(i x;z)  = \frac{1}{2 \pi i}\sum_{ \alpha < n < \beta } \sigma_z^{(k)}(n)f(n).
\end{align}
Now 
\begin{align}\label{divide_two parts}
\int_{\alpha \gamma'' \beta \gamma' \alpha}  f(x) \Psi_k(i x;z) \, dx= \int_{\alpha \gamma'' \beta }  f(x) \Psi_k(i x;z) \, dx -\int_{\alpha \gamma' \beta}  f(x) \Psi_k(i x;z) \, dx. 
\end{align}
Thus from \eqref{removable_at n},  \eqref{Using_CRT} and \eqref{divide_two parts}, 
\begin{align}\label{f(n)_Sigma_z}
\sum_{ \alpha < n < \beta } \sigma_z^{(k)}(n) f(n) & = \int_{\alpha \gamma'' \beta }  f(x) \Psi_k(i x;z) \, dx + \int_{\alpha \gamma' \beta}  f(x) \Psi_k(-i x;z) \, dx  \nonumber \\
& +  \int_{\alpha \gamma' \beta}  f(x) \left( \zeta(k-z) + \frac{1}{k} x^{\frac{1+z}{k}-1} \zeta\left( \frac{1+z}{k} \right)\right) \, dx.
\end{align}
Again,  utilizing Cauchy's residue theorem,  one can instantly observe that
\begin{align}\label{CRT_upper half plane}
\int_{\alpha \gamma' \beta}  f(x) \left( \zeta(k-z) + \frac{x^{\frac{1+z}{k}-1}}{k} \zeta\left( \frac{1+z}{k} \right)\right) \, dx = \int_{\alpha}^{\beta}  f(t) \left( \zeta(k-z) + \frac{1}{k} t^{\frac{1+z}{k}-1} \zeta\left( \frac{1+z}{k} \right)\right) \, dt. 
\end{align}
From the discussion following \eqref{Phi_z} and and \eqref{Psi_z},  it is clear that \eqref{Phi_z_final} holds for $ -\pi/2 < \arg(x) < \pi/2$.  Thus, $ \Psi_k(i x; z)= \Phi_k(i x; z)$ holds for $-\pi < \arg(x) <0$,   and $ \Psi_k(-i x; z)= \Phi_k(-i x; z) $ holds for $0 < \arg(x) <\pi$.   Employing these two facts and together with \eqref{CRT_upper half plane},   \eqref{f(n)_Sigma_z} becomes
 \begin{align*}
 \sum_{ \alpha < n < \beta } \sigma_z^{(k)}(n) f(n) & = \int_{\alpha \gamma'' \beta }  f(x) \Phi_k(i x;z) \, dx + \int_{\alpha \gamma' \beta}  f(x) \Phi_k(-i x;z) \, dx \nonumber \\
 & +  \int_{\alpha}^{\beta}  f(t) \left( \zeta(k-z) + \frac{1}{k} t^{\frac{1+z}{k}-1} \zeta\left( \frac{1+z}{k} \right)\right) \, dt. 
 \end{align*}
From the discussion following \eqref{Phi_z}, we know that the series defining $ \Phi_k(i x;z)$ and $ \Phi_k(-i x;z)$ are uniformly convergent in $-\pi < \arg(x) <0$ and $0 < \arg(x) <\pi$ respectively.  Thus using these series representations in the above identity and then interchanging the order of summation and integration, we arrive at 
\begin{align*}
 \sum_{ \alpha < n < \beta }  \sigma_z^{(k)}(n) f(n) & =  \int_{\alpha}^{\beta}  f(t) \left( \zeta(k-z) + \frac{1}{k} t^{\frac{1+z}{k}-1} \zeta\left( \frac{1+z}{k} \right)\right) \, dt \\
& +  2 (2\pi)^{\frac{1+z}{k}-1}  \sum_{n=1}^{\infty}  S_{z}^{(k)}(n) \bigg[  \int_{\alpha \gamma'' \beta }  f(x)  K_z^{(k)}\left( (2\pi)^{\frac{1}{k}+1} ( i n x)^{\frac{1}{k}}   \right) (i x)^{\frac{1+z}{k}-1} \, dx \nonumber \\
& \hspace{3.5cm}+ \int_{\alpha \gamma' \beta }  f(x)  K_z^{(k)}\left( (2\pi)^{\frac{1}{k}+1} (-i n x)^{\frac{1}{k}}   \right) (-i x)^{\frac{1+z}{k}-1} \, dx  \bigg]. \nonumber
\end{align*}
Here we use the residue theorem twice to obtain 
\begin{align*}
 \sum_{ \alpha < n < \beta }  f(n) \sigma_z^{(k)}(n) & =  \int_{\alpha}^{\beta}  f(t) \left( \zeta(k-z) + \frac{1}{k} t^{\frac{1+z}{k}-1} \zeta\left( \frac{1+z}{k} \right)\right) \, dt +  2 (2\pi)^{\frac{1+z}{k}-1}  \sum_{n=1}^{\infty}  S_{z}^{(k)}(n) \nonumber  \\
& \times   \int_{\alpha}^{\beta }  f(t)  t^{\frac{1+z}{k}-1} \bigg[ \exp\left(\frac{-i \pi(k-1-z)}{2 k}  \right) K_z^{(k)}\left( (2\pi)^{\frac{1}{k}+1} (n t)^{\frac{1}{k}} e^{\frac{i \pi}{2k}}  \right)  \, dt \nonumber  \\
& \hspace{3cm} +  \exp\left(\frac{i \pi(k-1-z)}{2 k}  \right) K_z^{(k)}\left( (2\pi)^{\frac{1}{k}+1} (n t)^{\frac{1}{k}} e^{-\frac{i \pi}{2k}}   \right) \, dt \bigg].  
\end{align*}
Finally,  invoking Theorem \ref{H_z_sum of two_K_z},  we arrive at \eqref{Voronoi_Sigma_z_eqnn}. This completes the proof. 
\qed

\begin{proof}[Corollary \ref{varlauform}][]
	
Letting $k=1$ in Theorem \ref{Voronoi_Sigma_z} gives
\begin{align*}
	\sum_{\alpha<j<\beta}\sigma_{z}(j)f(j)&=\int_{\alpha}^{\beta}(\zeta(1-z)+t^{z}\zeta(1+z))f(t)dt+2(2\pi)^{z}\sum_{n=1}^{\infty}\sigma_{z}(n)\int_{\alpha}^{\beta}t^zf(t)H_z^{(1)}(4\pi^2nt)dt.
\end{align*}
Now invoke Lemma \ref{k=1} with $x=4\pi^2nt$, simplify and then replace $z$ by $-z$ to arrive at \eqref{varlauform1}. Here, we have made use of the elementary fact $\sigma_{-z}(n)n^{z/2}=\sigma_{z}(n)n^{-z/2}$ as well as the fact \cite[p.~842]{bdrz1} that $ \cos \left( \tfrac{1}{2}\pi z \right) 
M_{z}( 2 \sqrt{x}) - \sin \left( \tfrac{1}{2} \pi z \right) $ is invariant under the replacement of $z$ by $-z$, which, in fact, is an easy consequence of the definition in \eqref{yj} and the identity $K_{-z}(\xi)=K_{z}(\xi)$.
\end{proof}

\subsection{Proof of Theorem \ref{Gen_Voronoi_Sigmakz}} 
We first prove the result for $z\neq k-1$. Using the inverse Mellin transform of $F(s)$, one can write 
\begin{align}\label{firststep}
\sum_{n=1}^\infty \sigma_{z}^{(k)}(n) f(n) = \sum_{n=1}^\infty \sigma_{z}^{(k)}(n) \frac{1}{2 \pi i} \int_{(c)} F(s) n^{-s} ds = \frac{1}{2 \pi i} \int_{(c)} F(s) \zeta(s) \zeta(k s-z) ds,
\end{align}
where $c > \max \left\{1, \frac{1+\Re(z)}{k}\right\}.$ Since $f\in \mathscr S(\mathbb{R})$, its Mellin transform $F(s)$ is holomorphic on $\textup{Re}(s) >0$. Moreover, integration by parts gives the following identity for $\mathcal{M}(f)(s)$: 
\begin{align*}
	\mathcal{M}(f)(s) &= \left. f(x)\frac{x^s}{s} \right|_0^\infty -  \int_0^\infty f'(x) \frac{x^s}{s} dx 
	= -\frac{1}{s} \mathcal{M}(f')(s+1). 
\end{align*}
Hence
\begin{equation*}
s(s+1) \cdots (s+i) F(s)=(-1)^{i+1} \mathcal{M}(f^{(i+1)})(s+i+1).
\end{equation*}
This proves that $F(s)$ has an analytic continuation to the whole complex plane except for possible simple poles at $s=0, -1,-2, \cdots.$
We know that $\zeta(s) \zeta(k s-z)$ has simple poles at $1$ and $\frac{1+z}{k}$. 
To transform the line integral in \eqref{firststep}, we shall consider  the following contour $\mathcal{C}:= [c-iT, c+iT, \lambda + iT, \lambda-iT]$, where
\begin{equation}\label{conditionlambda}
 \lambda = - \epsilon,\hspace{1mm}\text{with}\hspace{1mm} \max\left\{0,-\frac{\textup{Re}(z)}{k}\right\}< \epsilon <1.
\end{equation}
  Choose $T$ large enough so that $|\Im(z)/k| < T$. 
Now employing the Cauchy residue theorem, we have 
\begin{align}\label{CRT}
\frac{1}{2 \pi i} \int_{\mathcal{C}} F(s) \zeta(s) \zeta(k s-z) ds = R_{0} + R_{1} + R_{\frac{1+z}{k}},
\end{align}
where the residues are given by
\begin{align}\label{residues}
R_{0} & = \lim_{s \rightarrow 0} s F(s) \zeta(s) \zeta(k s-z) = \frac{1}{2}\mathcal{M}(f')(1)\zeta(-z)= \frac{\zeta(-z)}{2} \int_{0}^\infty f'(y) dy =- \frac{\zeta(-z)f(0^{+})}{2}, \nonumber\\
R_{1}& = \lim_{s \rightarrow 1} (s-1) F(s) \zeta(s) \zeta(k s-z)=F(1) \zeta(k-z)=  \zeta(k-z) \int_{0}^\infty f(y) dy,\nonumber\\
R_{\frac{1+z}{k}} & = \lim_{s \rightarrow \frac{1+z}{k}} \left(s - \frac{1+z}{k}\right) F(s) \zeta(s) \zeta(ks-z) = \frac{1}{k} F\left( \frac{1+z}{k} \right)\zeta\left( \frac{1+z}{k} \right).
\end{align}
Now let $T \rightarrow \infty$ in \eqref{CRT}. It can be checked that the integrals along the horizontal segments go to zero whence
\begin{align}\label{eq:CRT simp}
\frac{1}{2 \pi i} \int_{(c)} F(s) \zeta(s) \zeta(k s-z) ds = R_{0} + R_{1} + R_{\frac{1+z}{k}} + I,
\end{align}
where
\begin{equation}\label{III}
I:=\frac{1}{2 \pi i} \int_{(\lambda)} F(s) \zeta(s) \zeta(k s-z) ds.
\end{equation}
We would like to write $I$ in terms of an infinite series involving the function $S_z^{(k)}(n)$. To that end, using \eqref{asymmetric_zeta(s)} twice, we have
\begin{align*}
\zeta(s) \zeta(ks-z) & = \frac{(2 \pi)^{(k+1)s-z}}{\pi^2} \Gamma(1-s) \zeta(1-s) \Gamma(1-ks+z) \zeta(1-ks+z)\\
 & \times \sin\left(\frac{\pi s}{2}\right) 
 \sin\left(\frac{\pi}{2}(ks-z)\right).
\end{align*}
Substituting this expression into the right-hand side of \eqref{eq:CRT simp} and employing the change of variable $s=\frac{1+z}{k}-w$, we have
\begin{align*}
I
&=\frac{1}{2 \pi i} \int_{\left(-\lambda + \frac{1+\Re(z)}{k}\right)}  \frac{(2 \pi)^{(k+1)\left(\frac{1+z}{k}-w\right)-z}}{\pi^2} F\left(\frac{1+z}{k}-w\right) \Gamma\left(w+1-\frac{1+z}{k}  \right)\zeta\left(w+1-\frac{1+z}{k}  \right) \\
&\qquad\qquad \times \Gamma(kw) \zeta(kw) \cos\left(\frac{\pi}{2}\left( w+1-\frac{1+z}{k} \right) \right) \cos\left(\frac{\pi kw}{2}\right) dw.
\end{align*}
From \eqref{conditionlambda}, we have  $-1<\lambda<\min\left\{0,\frac{\textup{Re}(z)}{k}\right\}$ which implies Re$(kw)>1$ as well as Re$\left(w+1-\frac{1+z}{k}  \right)>1$. Hence, invoking \eqref{divisor_function_S_z} and  interchanging of the order of summation and integration (which is justified by the absolute and uniform convergence), we have
\begin{align*}
I&=\frac{(2 \pi)^{(k+1)\left(\frac{1+z}{k}\right)-z}}{\pi^2} \sum_{n=1}^\infty S_{z}^{(k)}(n) \frac{1}{2 \pi i} \int_{\left(-\lambda + \frac{1+\Re(z)}{k}\right)} F\left(\frac{1+z}{k}-w\right)N_z^{(k)}(w)\left( (2\pi)^{k+1}n  \right)^{-w}\, dw\nonumber\\
&=\frac{(2 \pi)^{(k+1)\left(\frac{1+z}{k}\right)-z}}{\pi^2} \sum_{n=1}^\infty S_{z}^{(k)}(n) \frac{1}{2 \pi i} \int_{\left(-k\lambda + 1+\Re(z)\right)} F\left(\frac{1+z-\xi}{k}\right)N_z^{(k)}\left(\frac{\xi}{k}\right)\left( (2\pi)^{1+\frac{1}{k}}n^{\frac{1}{k}}  \right)^{-\xi}\, \frac{d\xi}{k},
\end{align*}
where
\begin{align*}
 N_{z}^{(k)}(w):= \Gamma\left(w+1-\frac{1+z}{k}  \right) \cos\left(\frac{\pi}{2}\left( w+1-\frac{1+z}{k} \right) \right) \Gamma(kw) \cos\left(\frac{\pi}{2}kw\right),
\end{align*}
and where in the last step we employed the change of variable $w=\xi/k$ so that $\textup{Re}(\xi)=-k\lambda + 1+\Re(z)$. 

Observe that $\max\{ 0,  1 -k +  \Re(z)\}<\Re(\xi)$. We also need Re$(\xi)\leq\frac{1+\textup{Re}(z)}{k+1}$ for reasons to be clear soon, however, we unfortunately have $\Re(\xi)>1>\frac{1+\textup{Re}(z)}{k+1}$ at this stage (since $\lambda<\textup{Re}(z)/k$).  To circumvent this problem, we shift the line of integration to $\max\{ 0,  1 -k +  \Re(z)\}<c''=\Re(\xi)\leq\frac{1+\textup{Re}(z)}{k+1}$ and apply Cauchy's residue theorem. Since $-1<\textup{Re}(z)<k$ and Re$(\xi)>0$, we do not encounter any poles of the integrand in this process. (There is no pole at $\xi=1$ as well because the possibility of $F\left(\frac{1+z-\xi}{k}\right)$ giving rise to it arises \emph{only} when $z=0$, since $-1<\textup{Re}(z)<k$, and even if that is the case, $\cos\left(\frac{\pi}{2} \left(1-\frac{z}{k}\right)\right)=0$ there.) Also, the integrals along the horizontal segments tend to zero as the height of the contour tends to $\infty$. Hence
\begin{align}\label{aftershift}
	I=\frac{(2 \pi)^{(k+1)\left(\frac{1+z}{k}\right)-z}}{\pi^2} \sum_{n=1}^\infty S_{z}^{(k)}(n) \frac{1}{2 \pi i} \int_{(c'')} F\left(\frac{1+z-\xi}{k}\right)N_z^{(k)}\left(\frac{\xi}{k}\right)\left( (2\pi)^{1+\frac{1}{k}}n^{\frac{1}{k}}  \right)^{-\xi}\, \frac{d\xi}{k},
\end{align}
with  $\max\{ 0,  1 -k +  \Re(z)\}<c''=\Re(\xi)\leq\frac{1+\textup{Re}(z)}{k+1}$.
 
Now insert the integral representation of $F$, namely,
\begin{equation*}
	F\left(\frac{1+z-\xi}{k}\right)=\int_{0}^{\infty}y^{\frac{1+z-\xi}{k}-1}f(y)\, dy
\end{equation*} 
in \eqref{aftershift}, then interchange the order of integration which is permissible due to  the decay of $F$ (since $f\in\mathscr{S}(\mathbb R))$ so that
\begin{align}\label{Iend}
	I&=\frac{(2 \pi)^{(k+1)\left(\frac{1+z}{k}\right)-z}}{\pi^2} \sum_{n=1}^\infty S_{z}^{(k)}(n) \int_{0}^{\infty}y^{\frac{1+z}{k}-1}f(y)\frac{1}{2 \pi i} \int_{\left(c''\right)}N_z^{(k)}\left(\frac{\xi}{k}\right)\left( (2\pi)^{1+\frac{1}{k}}n^{\frac{1}{k}}  \right)^{-\xi}\, \frac{d\xi}{k}\, dy\nonumber\\
& = \frac{(2 \pi)^{(k+1)\left(\frac{1+z}{k}\right)-z}}{\pi^2} \sum_{n=1}^\infty S_{z}^{(k)}(n) \int_{0}^\infty H_z^{(k)}\left( (2\pi)^{1+ \frac{1}{k}} (ny)^{\frac{1}{k}} \right)y^{\frac{1+z}{k}-1} f(y)\, dy
\end{align}
where, in the last step, we invoked \eqref{integral_representation_H_z^(k)}. Note that we had to shift the line of integration to Re$(\xi)=c''$ to be able to use \eqref{integral_representation_H_z^(k)}.

From \eqref{firststep}, \eqref{residues}, \eqref{eq:CRT simp}, \eqref{III} and \eqref{Iend}, we are led to \eqref{extra}.

Now when $z=k-1$, the only difference is that the poles of the integrand of \eqref{CRT} at  $1$ and $(1+z)/k$ coalesce giving a double pole because of which $R_1=	\int_{0}^{\infty}  f(t) \left(\frac{(k+1)\gamma+\log(t)}{k}\right) dt$.
\qed

\section{A generalization of Theorem \ref{Wigert_Lkw} of Wigert: Towards  Theorem \ref{thm: gen of wigert}}  \label{wigertsection}

This section begins with certain lemmas which will play a crucial role in proving Theorem \ref{thm: gen of wigert}.  We first evaluate special values of the function $B(z, b)$ defined in \eqref{integral defn B(z,b)} and  \eqref{bzbanother}.
\subsection{Special values of $B(z,a)$} 
The next result evaluates $B(z,a)$ at non-negative even integers.
\begin{lemma}\label{B(z,b) for z even}
For $m \in \mathbb{N}\cup \{ 0 \}$, 
\begin{align*}
B(2m, b) = \frac{\pi}{2b} b^{2m} e^{-b} (-1)^m. 
\end{align*}
\end{lemma}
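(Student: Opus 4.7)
The plan is to directly substitute $z=2m$ into the closed-form expression \eqref{simplified form B(z, b)} for $B(z,b)$, namely
\[
B(z,b) = \frac{\pi b^{z-1}}{2}\,\frac{\cosh b}{\cos(\pi z/2)} \;-\; \frac{\pi}{2\cos(\pi z/2)}\sum_{n=0}^\infty \frac{b^{2n}}{\Gamma(2n-z+2)},
\]
which (by Remark \ref{analytic continuation of B(z,b)}) holds as a meromorphic identity with simple poles only at odd negative integers. At $z=2m$ with $m\in\mathbb{N}\cup\{0\}$, the factor $\cos(\pi z/2)=\cos(m\pi)=(-1)^m$ is nonzero, so the evaluation is immediate in principle; the only work is to simplify the infinite sum.

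The key step is the observation that many initial terms of the sum vanish. For $0\le n\le m-1$, the argument $2n-2m+2$ is a non-positive even integer, where $\Gamma$ has a pole, so the reciprocal is zero. The sum therefore begins at $n=m$, and after reindexing with $k=n-m$ we obtain
\[
\sum_{n=0}^\infty \frac{b^{2n}}{\Gamma(2n-2m+2)} \;=\; \sum_{k=0}^\infty \frac{b^{2k+2m}}{(2k+1)!} \;=\; b^{2m-1}\sinh(b),
\]
using $\sum_{k\ge 0} b^{2k+1}/(2k+1)!=\sinh b$.

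Substituting back and using $1/(-1)^m=(-1)^m$ together with $\cosh b - \sinh b = e^{-b}$ then gives
\[
B(2m,b) \;=\; \frac{(-1)^m \pi b^{2m-1}}{2}\bigl(\cosh b - \sinh b\bigr) \;=\; \frac{\pi}{2b}\,b^{2m}\,e^{-b}(-1)^m,
\]
which is exactly the claimed identity. There is no genuine obstacle here: the one thing to be slightly careful about is that for $m\ge 1$ the integral in \eqref{integral defn B(z,b)} diverges, so the statement must be understood via the meromorphic continuation provided by \eqref{simplified form B(z, b)}; this is precisely the content of Remark \ref{analytic continuation of B(z,b)}, and the identity then holds by analytic continuation in $z$ away from the odd negative integers.
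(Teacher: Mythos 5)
Your proposal is correct and follows essentially the same route as the paper: substitute $z=2m$ into \eqref{simplified form B(z, b)}, note that the terms with $0\le n\le m-1$ vanish because $1/\Gamma$ is evaluated at non-positive integers, reindex to get $b^{2m-1}\sinh(b)$, and finish with $\cosh b-\sinh b=e^{-b}$. Your explicit remark that for $m\ge 1$ the lemma is to be read via the meromorphic continuation (the paper writes this as $\lim_{z\to 2m}B(z,b)$, invoking Remark \ref{analytic continuation of B(z,b)}) is a welcome point of care but not a difference in method.
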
 
\begin{proof}
To prove this lemma, we use the identity \eqref{simplified form B(z, b)} so that 
\begin{align*}
\lim_{z \rightarrow 2m} B(z,b) & = \frac{\pi}{2} (-1)^m b^{2m-1} \cosh(b) - \frac{\pi}{2} (-1)^m \sum_{n=0}^{\infty} \frac{b^{2n}}{\Gamma(2n-2m+2)} \\
& =  \frac{\pi}{2} (-1)^m b^{2m-1} \cosh(b) - \frac{\pi}{2} (-1)^m \sum_{n=m}^{\infty} \frac{b^{2n}}{\Gamma(2n-2m+2)} \\
&= \frac{\pi}{2} (-1)^m b^{2m-1} \cosh(b) - \frac{\pi}{2} (-1)^m \sum_{i=0}^{\infty} \frac{b^{2m+2i}}{\Gamma(2i+2)} \\
&= \frac{\pi}{2} (-1)^m b^{2m-1} \cosh(b) - \frac{\pi}{2} (-1)^m b^{2m-1} \sinh(b) 
 = \frac{\pi}{2} (-1)^m b^{2m-1}e^{-b}.
\end{align*}
\end{proof}
Similarly, one can prove that 
\begin{lemma}
For $m \in \mathbb{N}$, 
\begin{align*}
B(-2m, b) = \frac{\pi}{2b} b^{-2m} (-1)^m \Bigg[ e^{-b} - \sum_{j=0}^{m-1} \frac{b^{2j+1}}{\Gamma(2j+2)} \Bigg]. 
\end{align*}
\end{lemma}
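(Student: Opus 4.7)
The proof will mirror the strategy used for the even-positive-index case in the preceding lemma. Starting from the closed form \eqref{simplified form B(z, b)}, I would simply substitute $z=-2m$. Since $\cos(\pi z/2)$ evaluated at $z=-2m$ equals $\cos(\pi m)=(-1)^m$, there is no denominator singularity at these points, and (unlike in the $z=2m$ case) every term of the series on the right of \eqref{simplified form B(z, b)} is finite, because the arguments of the gamma functions are now $2n+2m+2\geq 2$ for all $n\geq0$. Thus I would write immediately
\[
B(-2m,b)\;=\;\frac{\pi\,(-1)^{m}}{2\,b^{2m+1}}\cosh b \;-\;\frac{\pi\,(-1)^{m}}{2}\sum_{n=0}^{\infty}\frac{b^{2n}}{(2n+2m+1)!}.
\]

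The main step is then to recognise the infinite series as the tail of $\sinh b$. Factoring $b^{-(2m+1)}$ out of the sum and re-indexing $k=n+m$ gives
\[
\sum_{n=0}^{\infty}\frac{b^{2n}}{(2n+2m+1)!}\;=\;\frac{1}{b^{2m+1}}\sum_{k=m}^{\infty}\frac{b^{2k+1}}{(2k+1)!}\;=\;\frac{1}{b^{2m+1}}\Bigl[\sinh b-\sum_{j=0}^{m-1}\frac{b^{2j+1}}{(2j+1)!}\Bigr].
\]
Substituting back and using $\cosh b-\sinh b=e^{-b}$ and $\Gamma(2j+2)=(2j+1)!$ collapses everything to the asserted identity. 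I expect no real obstacle here: the only subtleties are the bookkeeping at $z=-2m$ (making sure the $\cos(\pi z/2)$ in the denominator does not cause a pole, which it does not since the zeros of $\cos(\pi z/2)$ sit at odd integers, consistent with Remark \ref{analytic continuation of B(z,b)}) and the careful re-indexing of the truncated $\sinh$ series.

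The entire argument is a short half-page algebraic manipulation; the hardest part is not computational but conceptual, namely verifying that \eqref{simplified form B(z, b)} is indeed valid at $z=-2m$ (as opposed to being a limit). This is granted by Remark \ref{analytic continuation of B(z,b)}, which tells us that the only poles of the meromorphically continued $B(z,b)$ on the real axis occur at $z=-1,-3,-5,\dots$, so at $z=-2m$ the right-hand side of \eqref{simplified form B(z, b)} gives the value of $B(-2m,b)$ directly without taking a limit.
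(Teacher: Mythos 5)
Your route is exactly the one the paper intends: the paper omits the proof of this lemma, saying only that it is similar to that of Lemma \ref{B(z,b) for z even}, and your substitution of $z=-2m$ into \eqref{simplified form B(z, b)} (legitimate by Remark \ref{analytic continuation of B(z,b)}, no limit needed) followed by re-indexing the truncated $\sinh$ series is precisely that argument, and both of your displayed formulas are correct. The problem is the final, unwritten step: carrying out the substitution you describe does \emph{not} collapse to the asserted identity. Since the infinite series enters \eqref{simplified form B(z, b)} with a minus sign, the truncated block $\sum_{j=0}^{m-1}b^{2j+1}/(2j+1)!$ returns with a \emph{plus} sign, and your two displays combine to
\begin{align*}
B(-2m,b)=\frac{\pi(-1)^m}{2b^{2m+1}}\Bigl[\cosh b-\sinh b+\sum_{j=0}^{m-1}\frac{b^{2j+1}}{(2j+1)!}\Bigr]
=\frac{\pi(-1)^m}{2b^{2m+1}}\Bigl[e^{-b}+\sum_{j=0}^{m-1}\frac{b^{2j+1}}{\Gamma(2j+2)}\Bigr],
\end{align*}
that is, with $+$ where the lemma prints $-$.

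This is not a defect of your intermediate algebra but of the claim that it proves the statement as printed. An independent check at $m=1$: from $\frac{t^{z}}{t^2+b^2}=t^{z-2}-\frac{b^2t^{z-2}}{t^2+b^2}$ one gets $B(z,b)=\Gamma(z-1)\sin(\pi z/2)-b^2B(z-2,b)$ for $1<\Re(z)<2$ and hence by continuation; letting $z\to 0$ and using $B(0,b)=\frac{\pi}{2b}e^{-b}$ gives $B(-2,b)=-\frac{\pi}{2b^3}\bigl(e^{-b}+b\bigr)$, confirming the plus sign, whereas the printed lemma would give $-\frac{\pi}{2b^3}\bigl(e^{-b}-b\bigr)$. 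So the lemma as stated carries a sign typo (harmless for the rest of the paper, since only the values at non-negative arguments $z=2m$ and $z+1$ with $z=2m-1$ are used later), and your proposal, read as a proof of the printed statement, fails at its last sentence: you should either flag the discrepancy or state and prove the corrected identity with the plus sign.
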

The proof of this lemma is similar that of Lemma \ref{B(z,b) for z even} and is hence omitted.
The next result evaluates $B(z,b)$ at positive odd integers. 
\begin{lemma}\label{B(z,b) for z odd} 
For $m \in \mathbb{N}\cup \{ 0 \}$, 
\begin{align*}
B(2m+1, b) = b^{2m} (-1)^m \sum_{n=0}^\infty \frac{b^{2n}}{\Gamma(2n+1)} \left( \psi(2n+1)-\log b \right). 
\end{align*}
\end{lemma}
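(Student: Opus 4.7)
The plan is to use formula \eqref{simplified form B(z, b)}, rewritten as
\[ B(z,b) = \frac{\pi}{2}\cdot\frac{N(z)}{D(z)},\qquad N(z):=b^{z-1}\cosh b-\sum_{n=0}^\infty\frac{b^{2n}}{\Gamma(2n-z+2)},\quad D(z):=\cos\!\bigl(\tfrac{\pi z}{2}\bigr), \]
and to analyse the removable singularity at $z=2m+1$ (cf.\ Remark \ref{analytic continuation of B(z,b)}) via L'H\^opital's rule.  First I would verify $N(2m+1)=0$: the terms with $n<m$ drop out because $1/\Gamma$ vanishes at non-positive integers, and the surviving tail reindexes via $i=n-m$ to $\sum_{i\geq 0}b^{2m+2i}/(2i)!=b^{2m}\cosh b$, which cancels the first term of $N$.

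Since $D'(2m+1)=-\tfrac{\pi}{2}(-1)^m$, L'H\^opital gives $B(2m+1,b)=(-1)^{m+1}N'(2m+1)$.  Differentiating $N$ term by term using $\tfrac{d}{dz}[1/\Gamma(2n-z+2)]=\psi(2n-z+2)/\Gamma(2n-z+2)$ produces
\[ N'(z)=b^{z-1}(\log b)\cosh b-\sum_{n=0}^\infty\frac{b^{2n}\,\psi(2n-z+2)}{\Gamma(2n-z+2)}. \]
The key evaluation is that, although both $\psi$ and $1/\Gamma$ have simple singularities at non-positive integers, the ratio $\psi/\Gamma$ extends to an entire function: from the Laurent expansions $\psi(-k+\delta)=-\delta^{-1}+\psi(k+1)+O(\delta)$ and $\Gamma(-k+\delta)=\tfrac{(-1)^k}{k!}\bigl(\delta^{-1}+\psi(k+1)+O(\delta)\bigr)$, a short computation yields $\psi(w)/\Gamma(w)\big|_{w=-k}=(-1)^{k+1}k!$.

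Substituting $z=2m+1$ and splitting the sum at $n=m$: for $0\leq n\leq m-1$ the argument $2n-2m+1=-k$ with $k=2m-2n-1$ odd contributes $(2m-2n-1)!$ per term; for $n\geq m$ the argument $2(n-m)+1$ is a positive odd integer, and after the shift $i=n-m$ the tail equals $b^{2m}\sum_{i\geq 0}b^{2i}\psi(2i+1)/(2i)!$.  Folding $b^{2m}(\log b)\cosh b = b^{2m}(\log b)\sum_{i\geq 0}b^{2i}/(2i)!$ into this tail converts the combination to $-b^{2m}\sum_{i\geq 0}b^{2i}(\psi(2i+1)-\log b)/(2i)!$, which is precisely the series displayed in the lemma (with the right sign after multiplication by $(-1)^{m+1}$).

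Plugging everything into $B(2m+1,b)=(-1)^{m+1}N'(2m+1)$ yields the claimed identity directly in the case $m=0$.  For $m\geq 1$ the same bookkeeping also produces the polynomial contribution $(-1)^m\sum_{n=0}^{m-1}b^{2n}(2m-2n-1)!$ from the $n<m$ terms, structurally parallel to the correction polynomial visible in the analogous formula for $B(-2m,b)$ stated just above; this would need to be either absorbed into the advertised series or stated explicitly.  The main technical obstacle in the scheme is the careful identification of the removable-limit value $\psi(-k)/\Gamma(-k)$; once that is in hand, the rest is algebraic bookkeeping.
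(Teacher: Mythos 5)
Your proposal is mathematically correct and follows the same route as the paper: both start from \eqref{simplified form B(z, b)} and evaluate the removable singularity at $z=2m+1$, you via L'H\^{o}pital applied to $N/D$, the paper via the Laurent expansions of the two terms (these are the same computation). The point of divergence is exactly your treatment of the terms $0\le n\le m-1$. The paper discards them, asserting that ``the sum over $n$ will run from $n=m$ to infinity since the first $m$ terms are zero,'' whereas you correctly note that each such term has only a simple zero at $z=2m+1$ (from $1/\Gamma(2n-z+2)$) and is multiplied by the simple pole of $\sec(\pi z/2)$, so it contributes $\psi(w)/\Gamma(w)\big|_{w=2n-2m+1}=(-1)^{k+1}k!=(2m-2n-1)!$ (with $k=2m-2n-1$ odd) to the limit. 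Your bookkeeping is right, and the extra polynomial cannot be ``absorbed'': for $m\ge1$ the value of the continuation is
\begin{equation*}
B(2m+1,b)=(-1)^m b^{2m}\sum_{n=0}^{\infty}\frac{b^{2n}}{(2n)!}\left(\psi(2n+1)-\log b\right)+(-1)^m\sum_{n=0}^{m-1}(2m-2n-1)!\,b^{2n},
\end{equation*}
so the lemma as printed holds only for $m=0$, and the gap is in the paper's own proof rather than in yours.

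An independent check confirms this. For $1<\Re(z)<2$ one has $\frac{t^z}{t^2+b^2}=t^{z-2}-\frac{b^2t^{z-2}}{t^2+b^2}$, hence $B(z,b)=\Gamma(z-1)\sin(\pi z/2)-b^2B(z-2,b)$, and by continuation this persists at $z=3$, giving $B(3,b)=-1-b^2B(1,b)$; the printed lemma would give $B(3,b)=-b^2B(1,b)$, missing exactly your correction term, which for $m=1$ is $-1$. (Equivalently, letting $b\to0^+$ in \eqref{bzbanother} shows the continuation tends to $\Gamma(z-1)\sin(\pi z/2)=-1$ at $z=3$, while the printed series tends to $0$.) The only thing to tighten in your write-up is the hedge at the end: the polynomial must simply be added to the statement, not absorbed. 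The slip appears harmless for the rest of the paper, since the later applications (Lemma \ref{exact_evaluation}, Corollaries \ref{Wigert_gen_ for z=2m} and \ref{Wigert_gen_ for z=2m-1}) invoke $B$ only at even arguments, and the remark following the lemma uses only $m=0$; but for $m\ge1$ your corrected identity is the right one.
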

\begin{proof}
At first glance, it seems from \eqref{simplified form B(z, b)} that $B(z,b)$ has singularities at odd integers.  However, we show that at positive odd integers, they are removable.
We write  the Laurent series expansions of the terms in \eqref{simplified form B(z, b)}. A bit of calculation implies that for $m \geq 0$,  
\begin{align}\label{Laurent_1st term}
\frac{\pi b^{z-1}}{2}   \frac{\cosh b}{\cos (\pi z/2)} = \frac{a_{-1}}{z-(2m+1)} + a_0 + O \left(|z-(2m+1)|\right),
\end{align}
where $a_{-1}= (-1)^{m+1} b^{2m} \cosh(b)$, and $a_0= (-1)^{m+1} b^{2m} \log(b) \cosh(b)$. 
Here we have used the fact that
\begin{align}\label{Laurent_Sec}
\sec(\pi z/2)=  \frac{c_{-1}}{z-(2m+1)}  + O\left(|z-(2m+1)|\right),
\end{align}
where $c_{-1}= \frac{2}{\pi} (-1)^{m+1}$.
Now we shall try to find the Laurent series expansion of the second term in \eqref{simplified form B(z, b)},  i.e.,  
\begin{align}\label{second term}
\frac{\pi}{2} \sec(\pi z/2) \sum_{n=0}^\infty \frac{b^{2n}}{\Gamma(2n-z+2)}
\end{align} at $z=2m+1$. 
We need to find the Laurent series expansion for the entire function $\frac{1}{\Gamma(2n-z+2)}$ at $z=2m+1$. Note that the sum over $n$ in \eqref{second term} will run from $n=m$ to infinity since the first $m$ terms are zero.  
One can check that
\begin{align}\label{Laurent_1_by_gamma}
\frac{1}{\Gamma(2n-z+2)} = d_0 + d_1 (z- (2m+1)) + O\left(|z -(2m+1)|^2\right), 
\end{align}
with $d_0= \frac{1}{\Gamma(2n-2m+1)}$ and $d_1=  \lim_{z \rightarrow (2m+1)} \frac{d}{dz}\frac{1}{\Gamma(2n-2m+1)} = \frac{\psi(2n-2m+1)}{\Gamma(2n-2m+1)}$ for $n \geq m$, where $\psi(z)$ denotes the logarithmic derivative of the gamma function. Thus,  combining \eqref{Laurent_Sec} and \eqref{Laurent_1_by_gamma},  the Laurent series expansion of \eqref{second term} becomes
\begin{align*}
 \frac{\pi}{2} \left( \frac{c_{-1}}{z-(2m+1)}  + O(z-(2m+1)) \right) \sum_{n=m}^\infty b^{2n} \left(  d_0 + d_1 (z- (2m+1)\right) + O\left(|z -(2m+1)|^2\right). 
\end{align*}
Substituting $c_{-1}, d_0, d_1$ and simplifying, one can find that the coefficient of $\frac{1}{z-(2m+1)}$ is 
\begin{align}\label{residue}
(-1)^{m+1} \sum_{n=m}^\infty \frac{b^{2n}}{\Gamma(2n-2m+1)} = (-1)^{m+1} b^{2m}\cosh(b), 
\end{align}
 and the constant term is 
\begin{align}\label{constant term_2nd}
(-1)^{m+1} \sum_{n=m}^\infty \frac{b^{2n} \psi(2n-2m+1)}{\Gamma(2n-2m+1)} = (-1)^{m+1} b^{2m} \sum_{n=0}^\infty \frac{b^{2n} \psi(2n+1)}{\Gamma(2n+1)} .
\end{align}
Finally, combining \eqref{Laurent_1st term}, \eqref{residue} and \eqref{constant term_2nd}, we can easily see that $B(z,b)$ has a removable singularity at $z=2m+1$ and adding constant terms we complete the proof. 
\end{proof}
\begin{remark}
In particular,  letting $m=0$ in Lemma \ref{B(z,b) for z odd},  we see that
\begin{align*}
B(1,b)= \int_0^\infty \frac{t \cos t}{t^2+b^2} ~dt = \sum_{n=0}^\infty \frac{b^{2n}}{\Gamma(2n+1)} \left( \psi(2n+1)-\log b \right). 
\end{align*}
This result was recently established in \cite[Lemma 3.2]{DGKM}. The integral in the above identity is known as Raabe's consine transform. The reader is encouraged to see \cite[Section 3]{DGKM} for more details on this integral.   
\end{remark}
The next three lemmas offer useful partial fraction decompositions of some algebraic functions. 
\begin{lemma}\label{Partial Fraction}
For $k \geq 1$ odd, 
\begin{align}\label{k odd}
\frac{t^k}{t^{2k} + a^{2k}} = 2t \sum_{j=1}^k \frac{C_{2j-1} }{t^2- \left(a\zeta_{4k}^{2j-1}\right)^2},
\end{align}
and for $k \geq 2$ even,
\begin{align}\label{k even}
\frac{t^k}{t^{2k} + a^{2k}} = 2 a \sum_{j=1}^{k} \frac{C_{2j-1} \zeta_{4k}^{2j-1} }{t^2- \left(a\zeta_{4k}^{2j-1}\right)^2},
\end{align}
where $C_{2j-1}= \frac{1}{2k} a^{1-k}\zeta_{4k}^{(1-k)(2j-1)}$ and $\zeta_{4k}:=e^{\frac{i \pi}{2k}}$. 

\end{lemma}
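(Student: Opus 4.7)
\textbf{Proof plan for Lemma \ref{Partial Fraction}.}
The plan is to perform a full complex partial fraction decomposition of $t^k/(t^{2k}+a^{2k})$ into simple linear pieces and then pair up the resulting terms. First I would locate the $2k$ simple roots of the denominator: solving $(t/a)^{2k}=-1$ gives $t=a\zeta_{4k}^{2j-1}$ for $j=1,\ldots,2k$, where $\zeta_{4k}=e^{i\pi/(2k)}$. Since $\zeta_{4k}^{2k}=-1$, the roots pair up as $\pm a\zeta_{4k}^{2j-1}$ for $j=1,\ldots,k$, and
\[
t^{2k}+a^{2k}=\prod_{j=1}^{k}\bigl(t^{2}-(a\zeta_{4k}^{2j-1})^{2}\bigr).
\]

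Next, using the standard residue formula for simple poles, I would write
\[
\frac{t^k}{t^{2k}+a^{2k}}=\sum_{j=1}^{2k}\frac{D_j}{t-a\zeta_{4k}^{2j-1}},
\qquad
D_j=\left.\frac{t^k}{\tfrac{d}{dt}(t^{2k}+a^{2k})}\right|_{t=a\zeta_{4k}^{2j-1}}=\frac{1}{2k}\bigl(a\zeta_{4k}^{2j-1}\bigr)^{1-k}=C_{2j-1}.
\]
This is a routine computation that already produces the coefficients $C_{2j-1}=\tfrac{1}{2k}a^{1-k}\zeta_{4k}^{(1-k)(2j-1)}$ stated in the lemma.

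The key observation is the relation between the $j$-th and $(j+k)$-th coefficients. Since $\zeta_{4k}^{(1-k)(2k)}=(-1)^{1-k}$, one has $D_{j+k}=(-1)^{1-k}D_j$, i.e.\ $D_{j+k}=D_j$ when $k$ is odd and $D_{j+k}=-D_j$ when $k$ is even. Combining the conjugate pair of fractions accordingly,
\[
\frac{D_j}{t-a\zeta_{4k}^{2j-1}}+\frac{D_{j+k}}{t+a\zeta_{4k}^{2j-1}}
=\begin{cases}\dfrac{2D_j\,t}{t^{2}-(a\zeta_{4k}^{2j-1})^{2}},&k\text{ odd},\\[6pt]
\dfrac{2\,D_j\,a\zeta_{4k}^{2j-1}}{t^{2}-(a\zeta_{4k}^{2j-1})^{2}},&k\text{ even},\end{cases}
\]
since in the odd case the numerators add while in the even case they subtract. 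Summing over $j=1,\ldots,k$ and identifying $D_j$ with $C_{2j-1}$ yields exactly \eqref{k odd} and \eqref{k even}.

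There is no substantial obstacle here; the only care needed is getting the sign bookkeeping right in $(-1)^{1-k}$, which is precisely what separates the odd-$k$ and even-$k$ cases and produces the qualitatively different shapes of the numerators $2t$ versus $2a\zeta_{4k}^{2j-1}$ in the two formulas.
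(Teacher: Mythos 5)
Your proposal is correct and follows essentially the same route as the paper: decompose $t^k/(t^{2k}+a^{2k})$ into simple fractions over the $2k$ roots $\pm a\zeta_{4k}^{2j-1}$, compute the coefficients (your residue formula $t^{1-k}/(2k)$ just makes explicit what the paper states), note that the coefficients at $\pm a\zeta_{4k}^{2j-1}$ agree for $k$ odd and differ by a sign for $k$ even, and pair the fractions accordingly. The sign bookkeeping in $(-1)^{1-k}$ matches the paper's relation between $C_{2j-1}$ and $C_{2j}$, so nothing is missing.
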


\begin{proof}
For any $k \geq 1$,  one can easily check that roots of $t^{2k} + a^{2k} =0$ are $t_j= a \zeta_{4k}^{2j+1}$ for $0 \leq j \leq 2k-1 $.  Note that $t_k = - t_0, t_{k+1 }= - t_1, \cdots, t_{2k-1}= - t_{k-1}$. Thus,
 \begin{align*}
t^{2k}+ a^{2k} &  = (t- t_0)(t+ t_0) (t-t_1) (t+ t_1) \cdots (t-t_{k-1}) (t+ t_{k-1})\\
& = ( t^2 - a^2 \zeta_{4k}^2) (t^2 - a^2 \zeta_{4k}^6) \cdots (t^2 - a^2 \zeta_{4k}^{2(2k-1)}).
\end{align*} 
Utilizing the method of partial fraction decomposition,  one can write
\begin{align}\label{partial_frac}
\frac{t^k}{t^{2k} + a^{2k}} = \sum_{j=1}^{k} \left( \frac{C_{2j-1}}{t- a \zeta_{4k}^{2j-1}} + \frac{C_{2j}}{t+ a \zeta_{4k}^{2j-1}} \right),
\end{align}
with
\begin{align}\label{cj}
C_{2j-1}= \frac{a^{1-k}}{2k  \zeta_{4k}^{(k-1)(2j-1)}},  \,  \text{and} \,\, C_{2j} = \frac{a^{1-k} (-1)^{k-1}}{2k  \zeta_{4k}^{(k-1)(2j-1)}}.
\end{align}
When $k \geq 1$ odd, $C_{2j-1}= C_{2j}$ and when $k \geq 2$ even, $C_{2j-1}=-C_{2j}$. Substituting these values of $C_{2j-1}$ and $C_{2j}$ in \eqref{partial_frac}, we obtain \eqref{k odd} and \eqref{k even}. 
\end{proof}

Lemma \ref{Partial Fraction}, in turn, leads to the following partial fraction decompositions, the second of which was obtained by Koshliakov  \cite[pp. 124-125]{koshwigleningrad}. 
\begin{lemma}\label{partial fraction_simplified}
For $k \geq 1$ odd,
\begin{align}\label{k odd simplified}
\frac{t^k}{t^{2k} + a^{2k}} = \frac{(-1)^{\frac{k-1}{2}} a^{1-k} t}{k} \Bigg[ \frac{1}{t^2+ a^2} + \sum_{j=1}^{\frac{k-1}{2}} \frac{B_j}{t^2 + \left(a \zeta_{4k}^{2j} \right)^2} + \frac{\overline{B}_j}{t^2 + \left( a \zeta_{4k}^{-2j} \right)^2} \Bigg],
\end{align}
where $B_j=\zeta_{4k}^{(1-k)(2j)}$ and $\overline{B}_j$ is the conjugate of $B_j$,
and for $k \geq 2$ even,
\begin{align}\label{k even simplified}
\frac{t^k}{t^{2k} + a^{2k}} =  \frac{(-1)^{\frac{k}{2}-1} a^{2-k} }{k}  \sum_{j=1}^{\frac{k}{2}} \Bigg[ \frac{A_j}{t^2 + \left(a \zeta_{4k}^{2j-1} \right)^2} + \frac{\overline{A}_j}{t^2 + \left( a \zeta_{4k}^{-(2j-1)} \right)^2} \Bigg],
\end{align}
where $A_j = \zeta_{4k}^{(2-k)(2j-1)}$ and $\overline{A}_j$ is the conjugate of $A_j$.
\end{lemma}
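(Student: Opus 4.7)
The result is proved by a direct algebraic manipulation of the decomposition in Lemma \ref{Partial Fraction}, taking advantage of the identity $\zeta_{4k}^{2k} = -1$. The first step is to convert each denominator $t^2 - (a\zeta_{4k}^{2j-1})^2$ into the form $t^2 + (a\zeta_{4k}^{k+2j-1})^2$, which is immediate from the chain $-\zeta_{4k}^{2(2j-1)} = \zeta_{4k}^{2k+2(2j-1)} = \zeta_{4k}^{2(k+2j-1)}$. This single substitution converts every summand of Lemma \ref{Partial Fraction} into the $t^2+(\cdot)^2$ form required by the conclusion.

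Next, I would group the $k$ summands under the involution $j\leftrightarrow k+1-j$. Since the exponents $m:=k+2j-1$ and $m':=k+2(k+1-j)-1$ satisfy $m+m' = 4k$, we have $\zeta_{4k}^{m'}=\zeta_{4k}^{-m}$, so paired denominators take the complex-conjugate form $t^2+(a\zeta_{4k}^{m})^2$ and $t^2+(a\zeta_{4k}^{-m})^2$. A reduction of $m$ modulo $2k$ (the effective period of $m\mapsto(a\zeta_{4k}^{m})^2$) shows that the exponents actually appearing exhaust precisely $\pm 2i$ for $i=1,\ldots,(k-1)/2$ (odd $k$) or $\pm(2i-1)$ for $i=1,\ldots,k/2$ (even $k$). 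In the odd case, the involution has a fixed point at $j=(k+1)/2$, where $\zeta_{4k}^{k}=i$ yields $(a\zeta_{4k}^{k})^2 = -a^2$, producing the isolated term $\tfrac{1}{t^2+a^2}$; in the even case, no such fixed point exists.

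Finally, I would simplify each coefficient. For $k$ odd, write $\zeta_{4k}^{(1-k)(2j-1)} = \zeta_{4k}^{(1-k)k}\,\zeta_{4k}^{(1-k)(2j-1-k)}$; the scalar $\zeta_{4k}^{(1-k)k} = e^{i\pi(1-k)/2} = (-1)^{(k-1)/2}$ accounts for the overall sign out front, while the remaining factor is either $B_i = \zeta_{4k}^{(1-k)(2i)}$ or its conjugate $\overline{B_i}$, depending on which side of the fixed point $j$ lies. The even case is entirely parallel: one factors out $\zeta_{4k}^{(2-k)k} = (-1)^{k/2-1}$, and the residual factors are $A_i$ and $\overline{A_i}$. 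The main (mild) obstacle is purely bookkeeping: one must verify that the re-indexing from the two halves of the $j$-range precisely fills in the required exponents $\pm 2i$ or $\pm(2i-1)$, and that complex-conjugation under $j\leftrightarrow k+1-j$ delivers the $\overline{B_i}$ and $\overline{A_i}$ pairings exactly as stated.
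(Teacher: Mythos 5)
Your proposal is correct and follows essentially the same route as the paper: both start from Lemma \ref{Partial Fraction}, use $\zeta_{4k}^{2k}=-1$ to turn each $t^2-(a\zeta_{4k}^{2j-1})^2$ into a $t^2+(a\zeta_{4k}^{\pm m})^2$ denominator, re-index so the exponents become $\pm 2i$ (odd $k$, with the fixed point $j=(k+1)/2$ giving $t^2+a^2$) or $\pm(2i-1)$ (even $k$), and factor out $\zeta_{4k}^{(1-k)k}=(-1)^{(k-1)/2}$ resp.\ $\zeta_{4k}^{(2-k)k}=(-1)^{k/2-1}$ to identify $B_i,\overline{B}_i$ resp.\ $A_i,\overline{A}_i$. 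The only cosmetic difference is that you organize the re-indexing via the involution $j\leftrightarrow k+1-j$, whereas the paper substitutes $2j-1-k=\mp 2J$ on the two halves of the range; the computations are identical.
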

\begin{proof}
We prove the result only for $k$ odd. The proof for even $k$ is similar.
 From \eqref{k odd}, 
\begin{align*}
\frac{t^k}{t^{2k} + a^{2k}} = 2t \sum_{j=1}^{\frac{k-1}{2}} \frac{C_{2j-1} }{t^2- \left(a\zeta_{4k}^{2j-1}\right)^2}+\frac{2tC_k}{t^2-a^2\zeta_{4k}^{2k}}+2t \sum_{j=\frac{k+3}{2}}^k \frac{C_{2j-1} }{t^2- \left(a\zeta_{4k}^{2j-1}\right)^2},
\end{align*}
where $C_{2j-1}$ are defined in \eqref{cj}.
The term corresponding to $j=(k+1)/2$ is
\begin{align}\label{middle term}
\frac{2tC_k}{t^2-a^2\zeta_{4k}^{2k}} =   \frac{(-1)^{\frac{k-1}{2}} a^{1-k}}{k}    \frac{t}{t^2+ a^2}. 
\end{align}
Also,
\begin{align*}
2t \sum_{j=1}^{\frac{k-1}{2}} \frac{C_{2j-1} }{t^2- \left(a\zeta_{4k}^{2j-1}\right)^2}= \frac{a^{1-k} t}{k} \sum_{j=1}^{(k-1)/2} \frac{\zeta_{4k}^{(1-k)(2j-1)} }{t^2 +\left(a\zeta_{4k}^{2j-1-k}\right)^2}, 
\end{align*}
where we used $\zeta_{4k}^{2k}=-1$.  
Changing the variable $ 2j-1-k$ by $-2J$ yields
\begin{align}\label{first sum}
 \frac{(-1)^{\frac{k-1}{2}} a^{1-k} t}{k} \sum_{J=1}^{(k-1)/2} \frac{ \zeta_{4k}^{-2J(1-k)}}{{t^2 + \left( a \zeta_{4k}^{-2J} \right)^2}}.
\end{align}
The sum from $j=(k+3)/2$ to $k$ is treated in the same way.  Mainly, we replace $ 2j-1-k$ by $2J$ to have
\begin{align}\label{last sum}
2t \sum_{j=\frac{k+3}{2}}^k \frac{C_{2j-1} }{t^2- \left(a\zeta_{4k}^{2j-1}\right)^2}= \frac{(-1)^{\frac{k-1}{2}} a^{1-k} t}{k} \sum_{J=1}^{(k-1)/2} \frac{ \zeta_{4k}^{2J(1-k)}}{{t^2 + \left( a \zeta_{4k}^{2J} \right)^2}}. 
\end{align}
Finally, combining \eqref{middle term}, \eqref{first sum}, and \eqref{last sum}, we derive \eqref{k odd simplified}.
To prove \eqref{k even simplified}, we use \eqref{k even} and separate the sum in two parts, the first from $j=1$ to $k/2$, and then the second from $j=k/2 + 1$ to $k$.  The details are similar.
\end{proof}
The above partial fraction decompositions permit us to obtain an elegant explicit evaluation of an integral:  
\begin{lemma}\label{exact_evaluation}
Let $k \geq 2$ be an even integer and $m$ be an integer such that $0 \leq 2m <k$. Then the following identity holds:
\begin{align*}
\int_{0}^{\infty} \frac{t^{k+2m} \cos(t)}{t^{2k} + a^{2k}} ~ dt = \frac{\pi(-1)^{\frac{k}{2}+m-1}}{2k}  a^{2m-k+1} & \sum_{j=1}^{k/2}  \Bigg[\exp\left( \frac{i\pi}{2k}(1-k+2m)(2j-1)-a e^{ \frac{i\pi}{2k}(2j-1)} \right)   \\
& + \exp\left(- \frac{i\pi}{2k}(1-k+2m)(2j-1)-a e^{ -\frac{i\pi}{2k}(2j-1)} \right) \Bigg].
\end{align*}
\end{lemma}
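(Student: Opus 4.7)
The plan is to reduce the integral to elementary convergent integrals of the form $\int_0^\infty \frac{\cos t}{t^2+b^2}\, dt = \frac{\pi e^{-b}}{2b}$ (valid for $\Re(b)>0$) by first expanding $\frac{t^k}{t^{2k}+a^{2k}}$ via the partial fraction decomposition of Lemma \ref{partial fraction_simplified}. Multiplying \eqref{k even simplified} through by $t^{2m}$ writes
$$\frac{t^{k+2m}}{t^{2k}+a^{2k}} = \frac{(-1)^{k/2-1}a^{2-k}}{k}\sum_{j=1}^{k/2}\left[\frac{A_j t^{2m}}{t^2+b_j^2}+\frac{\overline{A}_j t^{2m}}{t^2+\overline{b}_j^2}\right],$$
where $b_j:=a\zeta_{4k}^{2j-1}$; the range $1\leq j\leq k/2$ ensures $\arg(b_j)\in(0,\pi/2)$, so $\Re(b_j)>0$ throughout.

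For $m\geq 1$ the individual pieces $\frac{t^{2m}}{t^2+b^2}$ are improper and their cosine integrals diverge, so I would first perform the polynomial division
$$\frac{t^{2m}}{t^2+b^2}=Q_{b,m}(t)+\frac{(-b^2)^m}{t^2+b^2},\qquad Q_{b,m}(t)=t^{2m-2}-b^2 t^{2m-4}+\cdots+(-b^2)^{m-1}.$$
Because the LHS $\frac{t^{k+2m}}{t^{2k}+a^{2k}}$ itself decays like $t^{2m-k}\to 0$ at infinity (using $2m<k$), the weighted sum of polynomial remainders $\sum_{j=1}^{k/2}[A_j Q_{b_j,m}(t)+\overline{A}_j Q_{\overline{b}_j,m}(t)]$ must vanish identically by uniqueness of partial fractions. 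Only the proper pieces $\frac{A_j(-b_j^2)^m}{t^2+b_j^2}$ (and their conjugates) survive.

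Integrating termwise against $\cos(t)$ using the classical formula, each $j$-th contribution becomes $A_j(-b_j^2)^m\cdot\frac{\pi e^{-b_j}}{2 b_j}=(-1)^m a^{2m-1}\zeta_{4k}^{(1-k+2m)(2j-1)}\cdot\frac{\pi}{2}\,e^{-a\zeta_{4k}^{2j-1}}$, where I substituted $A_j=\zeta_{4k}^{(2-k)(2j-1)}$ and used $\zeta_{4k}^{(2-k)(2j-1)+(2m-1)(2j-1)}=\zeta_{4k}^{(1-k+2m)(2j-1)}$. Multiplying by the prefactor $\frac{(-1)^{k/2-1}a^{2-k}}{k}$ and adding the complex conjugate term assembles the claimed identity after expanding $\zeta_{4k}=e^{i\pi/(2k)}$.

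The one step requiring care is the vanishing of the polynomial remainders; this is forced by the properness of the LHS and uniqueness of partial fractions. A conceptually cleaner alternative is to replace $2m$ by a complex parameter $z$, recognise Lemma \ref{B(z,b) for z even} as furnishing $B(z,b_j)$ at $z=2m$, verify the identity on the strip $-1<\Re(z)<1$ where every individual integral converges absolutely, and extend to $z=2m$ by analytic continuation.
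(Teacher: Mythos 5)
Your argument is correct, but its main route differs from the paper's. Both proofs start from the same partial fraction decomposition \eqref{k even simplified} of Lemma \ref{partial fraction_simplified}; the divergence is in how the resulting integrals are handled. The paper keeps the exponent as a complex parameter $z$, writes each piece as $B(z,a\zeta_{4k}^{\pm(2j-1)})$ on the strip $-1<\Re(z)<2$ where the defining integral \eqref{integral defn B(z,b)} converges, invokes the meromorphic continuation of $B(z,b)$ from \eqref{simplified form B(z, b)} (Remark \ref{analytic continuation of B(z,b)}) to extend the identity to $-1<\Re(z)<k$, and only then sets $z=2m$ and applies the special value of Lemma \ref{B(z,b) for z even}. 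You instead work directly at the integer exponent: multiplying \eqref{k even simplified} by $t^{2m}$, splitting each $t^{2m}/(t^2+b_j^2)$ by Euclidean division, and observing that since the left-hand side is a proper rational function (here $2m<k$ is used) the aggregated polynomial parts must vanish identically, so only the proper pieces survive and the elementary evaluation $\int_0^\infty \cos t\,(t^2+b^2)^{-1}dt=\tfrac{\pi}{2b}e^{-b}$ for $\Re(b)>0$ finishes the computation; the hypotheses $a>0$ and $1\le j\le k/2$ do guarantee $\Re(b_j)>0$, and your bookkeeping with $A_j(-b_j^2)^m/b_j=(-1)^m a^{2m-1}\zeta_{4k}^{(1-k+2m)(2j-1)}$ reproduces the stated constant exactly. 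Your route is more elementary and self-contained (no analytic continuation in $z$, no Laurent-expansion analysis of $B(z,b)$ near odd integers), at the cost of applying only to even integer exponents; the paper's route yields the evaluation of $\int_0^\infty t^{k+z}\cos t\,(t^{2k}+a^{2k})^{-1}dt$ for all $z$ in $-1<\Re(z)<k$, which is the form actually needed in Theorem \ref{thm: gen of wigert}. Your closing ``cleaner alternative'' is essentially the paper's proof (with the minor correction that the common strip of convergence is $-1<\Re(z)<2$, not $-1<\Re(z)<1$).
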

\begin{proof}
Let  $\mathcal{I}_{k, a}(z):=  \displaystyle\int_{0}^{\infty}\frac{t^{k+z} \cos(t)}{t^{2k} + a^{2k}} \, dt $. It converges in $-1-k <\Re(z) <k$.  Employ \eqref{k even simplified} to see that
\begin{align*}
\mathcal{I}_{k, a}(z)=   \frac{(-1)^{\frac{k}{2}-1} a^{2-k} }{k} \sum_{j=1}^{\frac{k}{2}}A_j \int_{0}^{\infty} \frac{ t^z \cos(t)}{t^2 + \left(a \zeta_{4k}^{(2j-1)}\right)^2}~ dt
 + \bar{A_j}  \int_{0}^{\infty} \frac{  t^z \cos(t) }{t^2 + \left( a\zeta_{4k}^{-(2j-1)} \right)^2} ~ dt,
\end{align*}
where $A_j$ and $ \bar{A_j}$  are defined as in  Lemma \ref{partial fraction_simplified}.  Note that  the above identity holds for $-1 < \Re(z) <2$ since the integrals on the right side are convergent in this region only. Invoking Lemma \ref{Ober}, we obtain
\begin{align*}
\mathcal{I}_{k, a}(z)=  \frac{(-1)^{\frac{k}{2}-1} a^{2-k} }{k} \sum_{j=1}^{\frac{k}{2}}A_j B\left(z, a \zeta_{4k}^{(2j-1)}\right) +  \bar{A_j} B\left(z,a \zeta_{4k}^{-(2j-1)}\right).
\end{align*}
At this juncture,  we use Remark \ref{analytic continuation of B(z,b)} to analytically continue the above identity in the region $- 1< \Re(z) <k$. Now let $z=2m$ with $0 \leq m < k/2$, in the above identity and then utilize Lemma \ref{B(z,b) for z even} to derive
\begin{align*}
\mathcal{I}_{k, a}(2m) = \frac{\pi}{2k}  (-1)^{\frac{k}{2}+m-1} a^{2m-k+1} \sum_{j=1}^{\frac{k}{2}} & \Bigg[ A_j  \zeta_{4k}^{(2m-1)(2j-1)}  \exp\left( -  a \zeta_{4k}^{(2j-1)}\right) \\
& +   \bar{A_j}  \zeta_{4k}^{-(2m-1)(2j-1)}  \exp\left( -  a \zeta_{4k}^{-(2j-1)}\right)\Bigg] .
\end{align*}
Finally, substituting values of $A_j, \bar{A_j}$ and simplifying, one sees that the proof is complete.
\end{proof}
\subsection{Proof of Theorem \ref{thm: gen of wigert}}

%

We first prove the result for $w>0$ and then extend it by analytic continuation to $\Re(w) >0$.  
Substituting $f(x)= \exp(-xw)$ and $F(s):=M(f,s) = \Gamma(s)/w^s$ in Theorem \ref{Gen_Voronoi_Sigmakz} and simplifying,  we get
\begin{align}\label{exact expr wig}
\nonumber
&\sum_{n=1}^\infty \sigma_{z}^{(k)}(n) \exp(-n w)  = - \frac{\zeta(-z)}{2} + \frac{\zeta(k-z)}{w}  + \frac{1}{k}\frac{\Gamma\left( \frac{1+z}{k} \right)}{w^{(1+z)/k}}\zeta\left( \frac{1+z}{k} \right) \\ 
& + \frac{(2 \pi)^{(k+1)\left(\frac{1+z}{k}\right)-z}}{\pi^2} \sum_{n=1}^\infty S_{z}^{(k)}(n) \int_{0}^\infty H_z^{(k)}\left( (2\pi)^{1+ 1/k} (ny)^{1/k} \right)y^{\frac{1+z}{k}-1} \exp(-y w) dy,
\end{align}
where $S_{z}^{(k)}(n)$ and $H_z^{(k)}(x)$ are defined in \eqref{skzn} and \eqref{defn_H_z^k} respectively. Our main aim is to simplify the integral 
\begin{align} \label{def I}
I_{z,w}^{(k)}(n):= \int_{0}^\infty H_z^{(k)}\left( \a y^{1/k} \right)y^{\frac{1+z}{k}-1} \exp(-y w) dy,
\end{align}
where $\a =(2\pi)^{1+ 1/k} n^{1/k}$. Assume first $\frac{k-1}{2}\leq\textup{Re}(z)<k-\frac{1}{2}$. Now write $H_z^{(k)}\left(\a y^{1/k}\right)$ as an integral using \eqref{integral_representation_H_z^(k)} and interchange the order of integration using Fubini's theorem (justified because of the presence of $e^{-y w}$) to get, for $ \max\{ 0,  1 -k +  \Re(z) \} < c=\textup{Re}(s)\leq\frac{1+\textup{Re}(z)}{k+1} $, 
\begin{align*}
	I_{z,w}^{(k)}(n)&=\frac{1}{2\pi ik}\int_{(c)} \Gamma(s) \cos\left( \frac{\pi s}{2}  \right)  \Gamma\left( \frac{s-1-z}{k} +1 \right)  \cos\left( \frac{\pi}{2}  \left(  \frac{s-1-z}{k} +1  \right)  \right) \a^{-s}\nonumber\\
	&\qquad\qquad\quad\times\int_{0}^{\infty}y^{\frac{1+z-s}{k}-1}e^{-yw}\, dy\, ds\nonumber\\
	&=\frac{1}{2\pi ik}w^{-\frac{(1+z)}{k}}\int_{(c)} \Gamma(s) \cos\left( \frac{\pi s}{2}  \right)  \Gamma\left( \frac{s-1-z}{k} +1 \right)  \cos\left( \frac{\pi}{2}  \left(  \frac{s-1-z}{k} +1  \right)  \right)\nonumber\\
	&\qquad\qquad\qquad\qquad\times\Gamma\left(\frac{1+z-s}{k}\right)\left(\a w^{-1/k}\right)^{-s}\, ds.
\end{align*}
Using the reflection formula 
\begin{align*}
	\Gamma\left(\frac{1+z-s}{k}\right)\Gamma\left( \frac{s-1-z}{k} +1 \right) =\frac{\pi}{\sin\left(\pi\left(\frac{1+z-s}{k}\right)\right)},
\end{align*}
and then simplifying, we arrive at
\begin{align*}
	I_{z,w}^{(k)}(n)=\frac{1}{4 ik}w^{-\frac{(1+z)}{k}}\int_{(c)}\frac{\Gamma(s) \cos\left( \frac{\pi s}{2}  \right)}{\cos\left(\pi\left(\frac{1+z-s}{2k}\right)\right)}\left(\a w^{-1/k}\right)^{-s}\, ds.
\end{align*}
Note that our assumption $\frac{k-1}{2}\leq\textup{Re}(z)<k-1/2$ ensures that the line Re$(s)=1/2$ lies inside the strip $ \max\{ 0,  1 -k +  \Re(z) \} < \textup{Re}(s)\leq\frac{1+\textup{Re}(z)}{k+1} $. Hence we now employ Theorem \ref{parseval_extension_tuan} with $G(s)=\Gamma(s) \cos\left( \frac{\pi s}{2}  \right)$ (while keeping in mind Remark \ref{parsevalcos} and \eqref{Mellin_cos}) and $F(s)=\frac{\pi}{\cos\left(\pi\left(\frac{1+z-s}{2k}\right)\right)}$, and use Lemma \ref{Mellin} so as to get, for $\frac{k-1}{2}\leq\textup{Re}(z)<k-1/2$,
\begin{align}\label{lzw}
	I_{z,w}^{(k)}(n)&=w^{-\frac{(1+z)}{k}}\int_{0}^{\infty}\frac{t^{k+z}}{t^{2k}+1}\cos\left(\a w^{-1/k}t\right)\, dt=\frac{\a^{k-z-1}}{w} \int_{0}^\infty \frac{x^{k+z} \cos(x) }{x^{2k}+ \frac{\a^{2k}}{w^2}}\, dx,
\end{align}
where, in the last step, we employed the change of variable $x=\a w^{-1/k}t$.
Now observe using Theorems \ref{asym zero}  and \eqref{asym of H_z^k} that the extreme left-hand side of \eqref{lzw} is analytic in Re$(z)>-1$ whereas the extreme right-hand side is analytic in Re$(z)<k$. Hence by analytic continuation, \eqref{lzw} holds for $-1<\textup{Re}(z)<k$.

We now find an explicit evaluation of the integral on the extreme-right hand side of \eqref{lzw} for any complex $z$ with $-1< \Re(z) <k$.  For simplicity, let $a=\a/ w^{1/k}$.

First, consider the case  $k \geq 2$ even and let $-1<\textup{Re}(z)<2$. Employing the partial fraction decomposition \eqref{k even simplified} and using the definition of $A_j$ from Lemma \ref{partial fraction_simplified}, we have
\begin{align}\label{valid in larger region}
\int_{0}^\infty \frac{x^{k+z} \cos(x)}{x^{2k}+ a^{2k}}\, dx&=  \frac{(-1)^{\frac{k}{2}-1} a^{2-k} }{k}  \sum_{j=1}^{\frac{k}{2}} \Bigg[ A_j \int_{0}^\infty \frac{x^z \cos(x)\, dx}{x^2+ \left(a \zeta_{4k}^{2j-1}\right)^2} + \overline{A}_j \int_{0}^\infty \frac{x^z \cos(x)\, dx}{x^2+ \left(a \zeta_{4k}^{-(2j-1)}\right)^2} \Bigg] \nonumber\\
&= \frac{(-1)^{\frac{k}{2}-1} a^{2-k} }{k}  \sum_{j=1}^{\frac{k}{2}} \Bigg[ A_j  B(z,a \zeta_{4k}^{2j-1}) + \overline{A}_j B(z,a \zeta_{4k}^{-(2j-1)}) \Bigg].
\end{align}
By analytic continuation (see Remark \ref{analytic continuation of B(z,b)}), \eqref{valid in larger region} holds in $-1<\textup{Re}(z)<k$, where $B(z,b)$ is given in \eqref{simplified form B(z, b)}.
From \eqref{exact expr wig}, \eqref{def I}, \eqref{lzw} and \eqref{valid in larger region}, we arrive at \eqref{eeven} for $w>0$. Since both sides of \eqref{eeven} are analytic in Re$(w)>0$, 
the result holds for Re$(w)>0$.

We now turn to the case $k \geq 1$ odd. Assume initially $-1<\textup{Re}(z)<1$. We use the partial fraction decomposition \eqref{k odd simplified} thereby obtaining for $-1<\textup{Re}(z)<1$,
\begin{align*}
\int_{0}^\infty \frac{t^{k+z} \cos t }{t^{2k}+ a^{2k}} ~dt  &=   \frac{(-1)^{\frac{k-1}{2}} a^{1-k} }{k}  \Bigg[ \int_{0}^\infty \frac{t^{z+1} \cos t ~dt}{t^2+ a^2 } 
 + \sum_{j=1}^{\frac{k-1}{2}} \Bigg\{ B_j \int_{0}^\infty \frac{t^{z+1} \cos t}{t^2+ \left(a \zeta_{4k}^{2j}\right)^2} ~ dt \\ 
 &\qquad\qquad\qquad\quad+ \overline{B}_j \int_{0}^\infty \frac{t^{z+1} \cos t}{t^2+ \left(a \zeta_{4k}^{-2j}\right)^2} ~ dt \Bigg\} \Bigg].
\end{align*}
Proceeding along  similar lines as in the case when $k$ was even, we can see upon using \eqref{simplified form B(z, b)}, we have, for $ -1 < \Re(z) < k$,
\begin{align}\label{I-k odd}
\int_{0}^\infty \frac{t^{k+z} \cos t }{t^{2k}+ a^{2k}} ~dt 
&=  \frac{(-1)^{\frac{k-1}{2}} a^{1-k} }{k}  \Bigg[ B(z+1, a)
 + \sum_{j=1}^{\frac{k-1}{2}}  \Big[ B_j B\left(z+1,a \zeta_{4k}^{2j}\right)
  + \overline{B}_j B\left(z+1,a \zeta_{4k}^{-2j}\right) \Big] \Bigg].  
\end{align}
From \eqref{exact expr wig}, \eqref{def I}, \eqref{lzw} and \eqref{I-k odd}, we arrive at \eqref{oodd} for $w>0$. By analytic continuation, the identity holds for Re$(w)>0$.
\qed


\subsubsection{Proofs of Corollaries \ref{Wigert_gen_ for z=2m} and \ref{Wigert_gen_ for z=2m-1} }
Let $z=2m$ with $0\leq m < k/2$ in Theorem \ref{thm: gen of wigert} to obtain
\begin{align} \label{main_form}
\sum_{n=1}^\infty \sigma_{2m}^{(k)}(n) e^{-nw} &= - \frac{\zeta(-2m)}{2} + \frac{\zeta(k-2m)}{w}  + \frac{1}{k}\frac{\Gamma\left( \frac{1+2m}{k} \right) \zeta\left( \frac{1+2m}{k} \right)}{w^{(1+2m)/k}} + P_{2m}^{(k)}(w),
\end{align}	
where, with $a=\a/w^{1/k}=2\pi(2\pi n/w)^{1/k}$,
	\begin{align*}
	P_{2m}^{(k)}(w)&:= \frac{ (-1)^{\frac{k}{2}-1} (2 \pi)^{ 2+\frac{2}{k}-2m  }    }{\pi^2 k w^{2/k} }   \sum_{n=1}^\infty S_{2m}^{(k)}(n)  n^{ \frac{1-2m}{k} }  \sum_{j=1}^{\frac{k}{2}} \Bigg[ A_j  B\left(2m,a \zeta_{4k}^{2j-1}\right) + \overline{A}_j B\left(2m,a \zeta_{4k}^{-(2j-1)}\right) \Bigg]\\
	&=\frac{(-1)^{\frac{k}{2}+m-1}}{k} \left(\frac{2\pi}{w} \right)^{\frac{1+2m}{k}} \sum_{n=1}^\infty S_{2m}^{(k)}(n)  \sum_{j=1}^{k/2}   \Bigg[\zeta_{4k}^{(1-k+2m)(2j-1)}  \exp\left( -2\pi\left(\frac{2\pi n}{w}\right)^{\frac{1}{k}} \zeta_{4k}^{(2j-1)}\right)  \nonumber  \\
	&\qquad\qquad\qquad\qquad\qquad\qquad+ \zeta_{4k}^{-(1-k+2m)(2j-1)}  \exp\left( -2\pi\left(\frac{2\pi n}{w}\right)^{\frac{1}{k}} \zeta_{4k}^{-(2j-1)}\right)  \Bigg],
\end{align*}
where we used Lemma \ref{B(z,b) for z even} in the last step. Corollary \ref{Wigert_gen_ for z=2m}  now follows by substituting the above expression of $P_{2m}^{(k)}(w)$ in \eqref{main_form} and defining $\overline{L}_{k,z}(w):= \sum_{n=1}^{\infty} S_{z}^{(k)}(n) \exp(- n^{1/k} w )$.

The proof of 
Corollary \ref{Wigert_gen_ for z=2m-1} is similar to that of Corollary \ref{Wigert_gen_ for z=2m} and is hence omitted.
%


\section{Concluding remarks}\label{cr}
The focus of this paper was on obtaining the Vorono\"{\dotlessi} summation formula associated with the function $\sigma_{z}^{(k)}(n)$. Two versions were achieved. The first one was in Theorem \ref{Voronoi_Sigma_z} for the finite sum $\sum_{\alpha<n<\beta}\sigma_{z}^{(k)}(n)f(n)$, where $f$ is analytic, and another, in Theorem \ref{Gen_Voronoi_Sigmakz}, for the infinite series $\sum_{n=1}^{\infty}\sigma_{z}^{(k)}(n)f(n)$, where $f$ is a function from the Schwartz class. It might be interesting to find appropriate conditions on the non-analytic functions $f$ for which Theorem \ref{Voronoi_Sigma_z} is still valid, and the non-Schwartz functions $f$ for which Theorem \ref{Gen_Voronoi_Sigmakz} still holds. 

A considerable part of the paper was devoted to obtaining properties of the functions $H_{z}^{(k)}(x)$ and $K_{z}^{(k)}(x)$  defined in \eqref{defn_H_z^k} and \eqref{line integral_K_z} respectively. The proof of the crucial relation between them which was established in Theorem \ref{H_z_sum of two_K_z} necessitated the use of the theory of linear differential equations and required properties of elementary symmetric polynomials and the Stirling numbers of the second kind. 

The fact that the differential equation in \eqref{geqn} simplifies considerably to Theorem \ref{hade_eqn} could be observed only because of the need to prove Theorem \ref{H_z_sum of two_K_z}. Otherwise, it is difficult to a priori conceive why the result in Lemma \ref{coeff_de} should exist. The latter was the main reason for the aforementioned simplification.
 
It would also be interesting to see solutions of the differential equation in Theorem \ref{hade} other than $H_z^{(k)}(x)$ and the integral in Remark \ref{other solution}. This is particularly important in light of \eqref{Hardy} or, more generally, \eqref{k=1 eqn}. Differential equations analogous to the one in Theorem \ref{hade} have played an important role in number theory and special functions. We note two studies in this regard. The first one is by Wigert \cite{wigannalen} and is concerned with our work in a forthcoming paper \cite{dmsv}. The other is by Everitt \cite[Equation (2.6)]{everitt} who considered a differential equation having as one of its solutions a generalization of the Bessel function of the first kind denoted by $J_{\nu, k}(x)$.
 
Since \eqref{H_z in terms K_z} holds, a worthwhile thing to do would be to show that for $0\leq j\leq 2k+1$, 
\begin{align}\label{7.1}
	&\left.\frac{\pi}{\sqrt{k}2^{\frac{1+z}{k}}}\frac{d^j}{dx^j}G_{0,2k+2}^{\,k+1,0} \!\left(  \,\begin{matrix} \{  \} \\ b_1, \cdots , b_{2k+2} \end{matrix} \; \Big| \frac{1}{4} \left(\frac{x}{2k} \right)^{2k}   \right)\right|_{x=0}\nonumber\\
	&=  \begin{cases}
		\frac{1}{k}(-1)^{\frac{j}{2}}\Gamma\left(\frac{k-j-1-z}{k}\right)\cos\left(\frac{\pi}{2}\left(\frac{k-j-1-z}{k}\right)\right),\hspace{1mm}\textup{if}\hspace{1mm}j\hspace{1mm}\textup{is even},\\
		0,\hspace{1mm}\textup{if}\hspace{1mm}j\hspace{1mm}\textup{is odd},
	\end{cases}
\end{align}
where $b_i$ are defined in \eqref{coefficients_bj}. We enlist some steps which may aid in proving \eqref{7.1}.

Let  $\xi= \frac{1}{4} \left(\frac{x}{2k} \right)^{2k} $ and let $G(\xi):=G_{0,2k+2}^{\,k+1,0} \!\left(  \,\begin{matrix} \{  \} \\ b_1, \cdots , b_{2k+2} \end{matrix} \; \Big| \xi   \right)$. From \cite[p.~157]{comtet},
\begin{align*}
x^\ell\frac{d^\ell w}{dx^\ell}	=\sum_{m=1}^{\ell}s(\ell, m)\left(x\frac{d}{dx}\right)^\ell(w),
\end{align*}
where $s(\ell, m)$ are the Stirling numbers of the first kind. Hence
\begin{align}\label{7.3}
x^j\frac{d^j}{dx^j}G(\xi)&=\sum_{m=1}^{j}s(j, m)\left(x\frac{d}{dx}\right)^mG(\xi) =\sum_{m=1}^{j}s(j, m)(2k)^m\left(\x\frac{d}{d\x}\right)^mG(\xi)\nonumber\\
&=\sum_{m=1}^{j}s(j, m)(2k)^m\sum_{n=1}^{m}S(m, n)\xi^n\frac{d^n}{d\xi^n}G(\xi),
\end{align}
where, in the last step, we used \eqref{s2}.  Employing the result \cite[p.~621, Formula (38)]{prud3} (note that $b_1=0$)
\begin{align*}
	\xi^n\frac{d^n}{d\xi^n}G(\xi)=(-1)^nG_{0,2k+2}^{\,k+1,0} \!\left(  \,\begin{matrix} \{  \} \\ n, \cdots , b_{2k+2} \end{matrix} \; \Big| \xi   \right)
\end{align*}
in \eqref{7.3}, we get
\begin{align}
	\frac{d^j}{dx^j}G(\xi)=\frac{1}{x^j}\sum_{m=1}^{j}s(j, m)(2k)^m\sum_{n=1}^{m}S(m, n)(-1)^nG_{0,2k+2}^{\,k+1,0} \!\left(  \,\begin{matrix} \{  \} \\ n, \cdots , b_{2k+2} \end{matrix} \; \Big|\frac{1}{4} \left(\frac{x}{2k} \right)^{2k}  \right).
	\end{align}
While this suggests an application of L'Hopital's rule as the next step towards obtaining \eqref{7.1}, we are unable to obtain \eqref{7.1} this way.

An interesting thing we observed now deserves a mention. Note that proving \eqref{H_z in terms K_z} is equivalent to proving \eqref{integral_representation_H_z^(k)} in view of \eqref{line integral_K_z}.  If one formally applies Parseval's formula \eqref{Parseval1} to the right-hand side of \eqref{integral_representation_H_z^(k)}, it \emph{still} evaluates to $H_z^{(k)}(x)$!  This suggests that perhaps there exists a grand generalization of Parseval's formula which encompasses Vu Kim Tuan's extension given in Theorem \ref{parseval_extension_tuan} to accommodate the case where both functions $f$ and $g$ are highly oscillatory and neither one has its Mellin transform absolutely integrable on $[0, \infty)$.

Finally, it may be important to study the integral 
\begin{equation*}
	\int_{0}^{\infty}t^{z+\frac{1-3k}{2}}J_{\mu}(xt)J_{\nu}\left(\frac{1}{t^k}\right)\, dt,
\end{equation*}
where $J_{\nu}(\xi)$ is the Bessel function of the first kind defined in \eqref{sumbesselj}. Indeed, for $\mu=\nu=-1/2$, it reduces (except for a constant in front) to $H_z^{(k)}(x)$ in view of the relation  $J_{-1/2}(x)=\sqrt{\frac{2}{\pi x}}\cos(x)$.  Similarly, for $\mu=\nu=1/2$, it essentially reduces to the integral in \eqref{hkzx-analogue}
since $J_{1/2}(x)=\sqrt{\frac{2}{\pi x}}\sin(x)$. For $k=1$, this integral was studied by Hanumanta Rao \cite{cvhrao} (see also \cite[p.~437]{watson}) and appears in his work on self-reciprocal functions \cite{watsonself} and which has led to a large amount of research; see the survey on p.~5 of \cite{dkk1}.

\section*{Acknowledgements}
The first and foremost thanks go to Professor Semyon Yakubovich for informing us the extension of Parseval's formula in Theorem \ref{parseval_extension_tuan}. The authors  sincerely thank
Professor Dmitrii Karp for helpful discussions on Meijer $G$-function, and Professor Vinay Kumar Gupta for writing the \emph{Mathematica} code for the functions $\sigma_z^{(k)}(n)$ and $S_{z}^{(k)}(n)$ Part of this work was done when the second author was a postdoctoral fellow at IIT Gandhinagar.  He is grateful to the institute for the support.  The first author's research was supported by SERB ECR Grant ECR/2015/000070 and by Swarnajayanti Fellowship grant SB/SJF/2021-22/08 of SERB (Govt. of India) while the second author's research was supported by the MATRICS grant MTR/2022/000545. The third author was partially supported by the SERB-DST grant ECR/2018/001566
and the DST INSPIRE Faculty Award Program  DST/INSPIRE/Faculty/ Batch-13/2018. The first and the third authors were partially supported by the MHRD SPARC project SPARC/2018 -2019/P567/SL. The authors sincerely thank the respective funding agencies for their support. 


\section*{Appendix}
\begin{center}
\vspace{1mm}
\textbf{A representation of a Meijer $G$-function in terms of the Hardy-Koshliakov integral }\\
\vspace{1mm}
By Shashank Chorge, Atul Dixit and Aviral Srivastava
\end{center}
 
 This appendix is devoted to proving the second part of Theorem \ref{H_z_sum of two_K_z}, namely, the equality between the Hardy-Koshliakov integral $H_{z}^{(k)}(x)$ defined in \eqref{defn_H_z^k} and the Meijer $G$-function  $\frac{\pi}{\sqrt{k}2^{\frac{1+z}{k}}}G_{0,2k+2}^{\,k+1,0} \!\left(  \,\begin{matrix} \{  \} \\ b_1, \cdots , b_{2k+2} \end{matrix} \; \Big| \frac{1}{4} \left(\frac{x}{2k} \right)^{2k}   \right)$, where the $b_j$ are defined in \eqref{coefficients_bj}. 
 
 Observe that in Section 4, we have already shown that the differential equations satisfied by these two functions are the same. But we are unable to apply the uniqueness theorem for linear differential equations \cite[p.~21, Section 6]{coddington-levinson} owing to the difficulty in verifying the initial conditions. 
 
We circumvent the issue in this appendix. Our method is a grand generalization of that of Hardy \cite{hardyde}. Indeed, as we shall see below, there are several new hurdles that one has to deal while proceeding with a general $k$. (Hardy's method was for $k=1$.) 

The process involved is explained in a nutshell as follows. The general solution of the differential equation of the Meijer $G$-function $G_{0,2k+2}^{\,k+1,0} \!\left(  \,\begin{matrix} \{  \} \\ b_1, \cdots , b_{2k+2} \end{matrix} \; \Big| \frac{1}{4} \left(\frac{x}{2k} \right)^{2k}   \right)$ (which is also the differential equation of $H_{z}^{(k)}(x)$) is \cite[p.~621]{prud3}
 \begin{equation*}
 	\sum_{j=1}^{2k+2}c_je^{\pi ikb_j}G_{0,2k+2}^{\,1,0} \!\left(  \,\begin{matrix} \{  \} \\ b_j,b_1,\cdots,b_{j-1}, b_{j+1}, \cdots , b_{2k+2} \end{matrix} \; \Big| \frac{(-1)^k}{4} \left(\frac{x}{2k} \right)^{2k}   \right),
 \end{equation*}
 where $c_j$ are the constants to be determined. We use Slater's theorem \eqref{Slater} to express $G_{0,2k+2}^{\,1,0}$ in terms of the hypergeometric function ${}_0F_{2k+1}$, and use the analytical properties of $H_z^{(k)}(x)$ to show that half of the $2k+2$ coefficients $c_j$ are zero. The resulting expression will then turn out to be exactly the right-hand side of another instance of Slater's theorem whose left-hand side is the Meijer $G$-function $G_{0,2k+2}^{\,k+1,0} \!\left(  \,\begin{matrix} \{  \} \\ b_1, \cdots , b_{2k+2} \end{matrix} \; \Big| \frac{1}{4} \left(\frac{x}{2k} \right)^{2k}   \right)$ under consideration. This will complete the proof.
 
We will prove the equality between the extreme sides of Theorem \ref{H_z_sum of two_K_z} for\footnote{The result is already proved for $x=0$ in the proof of Theorem \ref{H_z_sum of two_K_z}.} $x>0$ in the form
\begin{align}\label{form to be proved in s}
	J_k(x,s) = \frac{\pi}{\sqrt{k}2^{\frac{s+k-1}{k}}}G_{0,2k+2}^{\,k+1,0} \!\left(  \,\begin{matrix} \{  \} \\ b_1, \cdots , b_{2k+2} \end{matrix} \; \Big| \frac{1}{4} \left(\frac{x}{2k} \right)^{2k}   \right),
\end{align}
where $s=z-k+2$, the integral $J_k$ is defined in \eqref{integral jk}, $J_{k}(x,  z-k+2) = H_{z}^{(k)}(x)$, and the $b_j$'s defined in \eqref{coefficients_bj} are reformulated as follows:
	\begin{align}\label{coefficients_bj1}
	b_j = \begin{cases}
		\frac{j-1}{k},  & {\rm if} \,\,    1 \leq j \leq k,   \\
		\frac{1-s}{2k},  & {\rm if} \,\,  j=k+1,  \\
		\frac{4k+3-2j}{2k},  & {\rm if} \,\,  k+2 \leq j \leq 2k+1,  \\
		\frac{k+1-s}{2k},  & {\rm if} \,\,  j=2k+2.
	\end{cases}
\end{align} 
Recall from the proof of Theorem \ref{hade} that the integrals $J_k$ and $I_k$ converge in $1-k<\textup{Re}(s)<2$ and $-k<\textup{Re}(s)<k+2$ respectively. Similarly, one can show that the integral
 \begin{equation}\label{integral ok}
 \frac{d}{dx}I_k(x,s-1)= \int_{0}^{\infty} \sin(t^k)\cos\left(\frac{x}{t}\right)\frac{dt}{t^{s}}
 \end{equation}
 converges in  $1-k<\textup{Re}(s)<k+2$.
 
We first represent $I_k(x, s)$ as a Meijer $G$-function and then use it to prove \eqref{form to be proved in s}. 
\begin{theorem}\label{I_meijerG thm}
	For $x\geq0$ and $-k<\textup{Re}(s)<k+2$,
\begin{align}\label{I_meijerG}
	I_{k}(x,s) =  \frac{\pi}{\sqrt{k}2^{\frac{s+k-1}{k}}}G_{0,2k+2}^{\,k+1,0} \!\left(  \,\begin{matrix} \{  \} \\ b_1', \cdots , b_{2k+2}' \end{matrix} \; \Big| \frac{1}{4} \left(\frac{x}{2k} \right)^{2k}   \right),
\end{align}
where \begin{align}
	b_{j}' =
	\begin{cases}
		b_{j+k+1}, & 1 \leq j \leq k+1, \\
		b_{j-k-1}, & k+2 \leq j \leq 2k+2,
	\end{cases} 
\end{align} 
with $b_j$ being defined in \eqref{coefficients_bj1}.
\end{theorem}

Let us  begin with some lemmas. 
 
  \begin{lemma}\label{Uni_conv}
  	Let $0<s<1$. For $1\leq j \leq k$, $\frac{d^j}{dx^{j}}I_k(x, s)$
 	is uniformly convergent in $x$ on any compact subset of $ \mathbb{R}.$ Moreover, $\frac{d^{k+1}}{dx^{k+1}}I_k(x, s)$ is uniformly convergent in $x$ on any compact subset of $ \mathbb{R}^{+}.$  
 \end{lemma}
  \begin{proof}
Depending on the parity of $j$, we have to show that
\begin{align}\label{two integrals}
\frac{d^j}{dx^{j}}I_k(x, s)&=\int_{0}^{\infty}\frac{\partial^j}{\partial x^{j}}\sin(t^k)\sin\left(\frac{x}{t}\right)\frac{dt}{t^{s}}\nonumber\\
&= (-1)^{\frac{j-1}{2}}\int_{0}^{\infty} \sin(t^k)\cos\left(\frac{x}{t}\right)\frac{dt}{t^{s+j}}\hspace{3mm}\text{or}\hspace{3mm}(-1)^{\frac{j}{2}}\int_{0}^{\infty} \sin(t^k)\sin\left(\frac{x}{t}\right)\frac{dt}{t^{s+j}}	.
\end{align}
We now show that the integral $\int_{0}^{\infty} \sin(t^k)\cos\left(\frac{x}{t}\right)\frac{dt}{t^{s+j}}$ is uniformly convergent in the variable $x$ in $(-\infty, \infty)$. The uniform convergence of the other integral can be similarly proved. 

Let $\epsilon>0$ be given. For $j<k+1-s$, choosing $r_0 = \left(\e(k-s-j+1)\right)^{\frac{1}{k-s-j+1}}$, we see that for any $0<r<r_0$,
$$ \bigg| \int_{0}^{r} \sin(t^k)\cos\bigg(\frac{x}{t}\bigg)\frac{dt}{t^{s+j}} \bigg| \leq \bigg|\int_{0}^{r} \cos\bigg(\frac{x}{t}\bigg)\frac{dt}{t^{s+j-k}}\bigg| \leq \int_{0}^{r}\frac{dt}{t^{s+j-k}}< \frac{(r_0)^{k-s-j+1}}{k-s-j+1}= \e, $$
 which guarantees the uniform convergence around $t=0$. Moreover, if $y_0 = \left(\e(s+j-1)\right)^{\frac{1}{1-s-j}}$, then for any $ y_0< y $,
  \begin{equation}\label{t infty}
   \bigg| \int_{y}^{\infty} \sin(t^k)\cos\bigg(\frac{x}{t}\bigg)\frac{dt}{t^{s+j}} \bigg| \leq \int_{y}^{\infty}\frac{dt}{t^{s+j}}< \frac{(y_0)^{1-s-j}}{s+j-1}= \e,
   \end{equation}
  thereby securing uniform convergence around $t=\infty$. Then, using \cite[p.~437]{bromwich}, it is easy to see $ I_{k}^{(j)}(x,s)$ is uniformly convergent on any compact subset of $ \mathbb{R}$ for $1\leq j\leq k$. 
  
  Next, we show that the second integral on the right-hand side of \eqref{two integrals}, namely,\newline  $\int_{0}^{\infty} \sin(t^k)\sin\left(\frac{x}{t}\right)\frac{dt}{t^{s+j}}$, is uniformly convergent on $\mathbb{R}^{+}$ for $j=k+1$. The proof of the uniform convergence of the first integral is analogous and hence it is omitted.  First note that for a fixed $ \epsilon >0$ and $0<s<1$, there exists $ M_0>0$ such that $ \int_{M}^{\infty} \sin\left(T\right)T^{s-1}dT < \epsilon$ for all $ M > M_0 $. Next, substitute $ t= x/T $ to have 
  \begin{align}
  	\int_{0}^{\infty} \sin(t^k)\sin\left(\frac{x}{t}\right)\frac{dt}{t^{s+k+1}}=\int_{0}^{\infty} \sin\left(\frac{x^k}{T^k}\right)\sin\left(T\right)\frac{T^{s+k-1}}{x^{s+k}}dT.
  \end{align}
  Now let $ 0<x_0\leq x\leq x_1 $. Fix $ \epsilon >0 $ and let $ \epsilon' = x_0^s \epsilon $. Then there exists $ M' > 0 $ such that
  $ \int_{M'}^{\infty} \sin\left(T\right)T^{s-1}dT < \epsilon'$ so that
  \begin{align}
  	\Bigg|\int_{M'}^{\infty} \sin\left(\frac{x^k}{T^k}\right)\sin\left(T\right)\frac{T^{s+k-1}}{x^{s+k}}dT\Bigg|< 	\frac{1}{x^{s}}\Bigg|\int_{M'}^{\infty} \sin\left(T\right)T^{s-1}dT\Bigg|< \frac{\epsilon'}{(x_0)^s}=\epsilon.
  \end{align}
  This shows the uniform convergence around $t=0$. One can prove the uniform convergence around $t=\infty$ by giving arguments similar to that used in \eqref{t infty}. Then another application of \cite[p.~437]{bromwich} proves the uniform convergence of $ I_{k}^{(k+1)}(x,s)$ on any compact subset of $ \mathbb{R}^{+}.$ 
 \end{proof}
 
 \begin{lemma}\label{asym_zero}
 	Let $0<s<1$ and let $I_k(x, s)$ be defined in  \eqref{integral ik}. As $ x \rightarrow 0^{+}$,
 	\begin{align}\label{BigO}
 		I_k(x, s) = O(x).
 	\end{align} 
 \end{lemma}
 \begin{proof}
 	Write the integral $I_k$ as sum of two integrals, one from $0$ to $\pi$, and another from $\pi$ to $\infty$. Then applying the second mean value theorem for the second integral, we get 
\begin{align}\label{SMVT}
	\int_{\pi}^{\infty}\!\! \sin(t^k)\sin\bigg(\frac{x}{t}\bigg)\frac{dt}{t^s}	= \sin\bigg(\frac{x}{\pi}\bigg)\int_{\pi}^{\xi}\!\! \sin(t^k)\frac{dt}{t^s} 
\end{align} 
for some $ \pi < \xi < \infty$. This gives
  \begin{align}
 	|I_k(x, s)| < \bigg|\int_{0}^{\pi}\!\! \sin(t^k)\sin\bigg(\frac{x}{t}\bigg)\frac{dt}{t^s}\bigg|+ \bigg|\sin\bigg(\frac{x}{\pi}\bigg)\int_{\pi}^{\xi}\!\! \sin(t^k)\frac{dt}{t^s} \bigg|.
 \end{align}
 As $ x \rightarrow 0^{+}$, we can write
 \begin{align}
 	|I_k(x, s)| < x \bigg|\int_{0}^{\pi}\!\! \sin(t^k)\frac{dt}{t^{s+1}}\bigg|+ \sin\bigg(\frac{x}{\pi}\bigg)\bigg|\int_{\pi}^{\xi}\!\! \sin(t^k)\frac{dt}{t^s} \bigg|< Mx
 \end{align}
 for some $M>0$.
\end{proof}

We are now ready to represent $I_k(x, s)$ in terms of Meijer $G$-function.
\begin{proof}[Theorem  \textup{\ref{I_meijerG thm}}][]
	
	We first prove the result for $0<s<1$. It can then be extended by analytic continuation to $-k<\textup{Re}(s)<k+2$ using \cite[p.~30, Theorem 2.3]{temme}.
	
It is easy to see using Slater's theorem \eqref{Slater} that the identity holds for $x=0$ as both  sides are identically zero. Now assume $x>0$.
	From Remark \ref{other solution}, we know that, with $s=z-k+2$, the differential equation satisfied by $I_k(x, s)$ is the same as that satisfied by $G_{0,2k+2}^{\,k+1,0} \!\left(  \,\begin{matrix} \{  \} \\ b_1, \cdots , b_{2k+2} \end{matrix} \; \Big| \frac{1}{4} \left(\frac{x}{2k} \right)^{2k}   \right)$, namely, \eqref{hade_eqn}. Note that the  $b_j$'s are as defined in \eqref{coefficients_bj}.  That this Meijer $G$-function satisfies \eqref{hade_eqn}  is shown in the proof of Theorem \ref{H_z_sum of two_K_z}.

Using \cite[p.~621]{prud3}, we must have
$$
I_k(x, s) = \sum_{j=1}^{2k+2} c_j e^{\pi ikb_j} G_{0,2k+2}^{1,0} \!\left(  \,\begin{matrix} \{  \} \\ b_j,b_1 \cdots , b_{j-1},b_{j+1}\cdots ,b_{2k+2} \end{matrix} \; \Big| \frac{(-1)^k}{4} \left(\frac{x}{2k} \right)^{2k}   \right)
$$
where $ c_j$'s are constants to be determined. By Slater's theorem \eqref{Slater}, $I_k(x, s)$ equals
\begin{align}\label{iknew}
\sum_{j=1}^{2k+2} \frac{c_j e^{\pi ikb_j}\big(\frac{(-1)^k}{4}(\frac{x}{2k})^{2k}\big)^{b_j}}{\prod_{\ell=1}^{2k+2}\G(1+b_j-b_{\ell})} {}_0F_{2k+1}\bigg(\begin{matrix} - \\ 1+b_j-b_1 \cdots * \cdots 1+b_j-b_{2k+2} \end{matrix} \; \Big| \frac{(-1)^{k-1}}{4}\left(\frac{x}{2k}\right)^{2k} \bigg),
\end{align}
where * denotes that the term $1+b_j-b_j $ is excluded. Now using the series definition of the hypergeometric function and simplifying, we see that
\begin{align}\label{finalI}
	I_k(x, s) =\sum_{j=1}^{2k+2}\sum_{n=0}^{\infty}C_j M(j,n) x^{2k(n+b_j)},
\end{align}
where, with $ (b)_r= b(b+1)(b+2)\cdots(b+r-1)$,
\begin{align}\label{Cj defn}
C_j = \frac{c_j e^{\pi ikb_j}\bigg(\frac{(-1)^k}{4 (2k)^{2k}}\bigg)^{b_j}}{\prod\limits_{\ell=1}^{2k+2}\G(1+b_j-b_{\ell})},\hspace{3mm} M(j,n) = \frac{(-1)^{n(k-1)}}{\prod\limits_{\substack{m=1 \\ m \neq j}}^{2k+2} (1+b_j-b_m)_n 4^{n}  n! (2k)^{2kn}}.
\end{align}
The goal now is to calculate $C_j$'s. Separating the $n=0$ term from each of the $2k+2$ series, noting that $M(j,0)=1, 1\leq j\leq2k+2$, and substituting the expressions of $b_j$ from \eqref{coefficients_bj1}, we see that
\begin{align}\label{expand}
	I_k(x, s) &=C_1+C_2x^2+\cdots+C_{\ell}x^{2(\ell-1)}+\cdots+ C_kx^{2(k-1)}+C_{k+1}x^{1-s}+C_{k+2}x^{2k-1}\nonumber\\
	&\quad+\cdots+C_{k+p}x^{2k-(2p-3)}+\cdots+C_{2k+1}x+C_{2k+2}x^{k+1-s}+\sum_{j=1}^{2k+2}\sum_{n=1}^{\infty}C_j M(j,n) x^{2k(n+b_j)}.
	\end{align}
	We first show that $C_1=C_{k+1}=0$. Using Lemma \ref{asym_zero} and letting $x\to0^{+}$, it is easy to see that
	\begin{align}\label{c1}
		C_1=0. 
	\end{align}
	 Also, dividing both sides of \eqref{expand} by $x$, noting that Lemma \ref{asym_zero} implies $I_k(x, s)/x=O(1)$ as $x\to0^{+}$, it is clear that 	\begin{align}\label{ck+1}
	 	C_{k+1}=0. 
	 \end{align}
	We now turn our attention to finding the coefficients $C_{2},C_{3}, \cdots, C_{\lfloor{\frac{k}{2}+1}\rfloor}$. Note that for any $\ell$ with $ 2 \leq \ell \leq \lfloor{\frac{k}{2}+1}\rfloor $, we have $2(\ell-1)\leq k$, because of which $I_k(x, s)$ can be differentiated under the integral sign $2(\ell-1)$ times with respect to $x$  (by appealing to Lemma \ref{Uni_conv}). Further, taking $\lim_{x\to0^{+}}$  on the resulting two sides, we obtain
	\begin{equation}
	 \lim_{x \rightarrow 0^+}  \int_{0}^{\infty} (-1)^{\ell+1} \sin(t^k)\sin\left(\frac{x}{t}\right)\frac{dt}{t^{s+2(\ell-1)}}=(2(\ell-1))! C_{\ell}.
	\end{equation}
	Again, Lemma \ref{Uni_conv} and \cite[p.~436]{bromwich} allows us to take $\lim_{x\to0^{+}}$  inside the integral, resulting in
		\begin{align}\label{cell}
		C_{\ell}=0\hspace{20mm}(2 \leq \ell \leq \lfloor{\tfrac{k}{2}+1}\rfloor).
	\end{align}

Our next task is to evaluate the coefficients $C_{k+p}$, where $\lceil{\frac{k+3}{2}}\rceil\leq p\leq k+1$. Using arguments similar to those used above, we get 
\begin{align}
 \lim_{x \rightarrow 0^+}  \int_{0}^{\infty} (-1)^{k-p+1} \sin(t^k)\cos\left(\frac{x}{t}\right)\frac{dt}{t^{s+2k-2p+3}}= (2k - (2p-3))! C_{k+p}.
\end{align}
By another application of Lemma \ref{Uni_conv}, for $\lceil{\frac{k+3}{2}}\rceil\leq p\leq k+1$, we get
\begin{align}\label{ck+p}
	C_{k+p} &=\frac{(-1)^{k-p+1}}{(2k-2p+3)!} \int_{0}^{\infty}  \!\! \sin(t^k)\frac{dt}{t^{s+2k-2p+3}} \notag\\
	&=\frac{(-1)^{k-p+1}}{(2k-2p+3)!} \G\bigg(\frac{2p-2k-2-s}{k}\bigg)\sin\bigg(\frac{\pi}{2}\bigg(\frac{2p-2k-2-s}{k}\bigg)\bigg).
\end{align}
It remains to evaluate $C_{\lfloor{\frac{k}{2}+1}\rfloor+1},\cdots, C_{k}, C_{k+2}, \cdots,C_{k+\lceil{\frac{k+3}{2}}\rceil-1}$, and $C_{2k+2}$. We accomplish this when $k$ is odd, and leave the case $k$ even for the reader. The reason why one needs to consider the parity of $k$ at this stage is because the evaluation of $C_{\lfloor{\frac{k}{2}+1}\rfloor+1},\cdots, C_{k}, C_{k+2}, \cdots,\newline C_{k+\lceil{\frac{k+3}{2}}\rceil-1}$ depends on that of $C_{2k+2}$ , and the method for evaluating the latter depends on the parity of $k$.
Note that
\begin{align}\label{limit_C_2k+2}
	C_{2k+2}=\lim_{x \rightarrow 0^+} \frac{x^s I_k^{(k+1)}(x, s)}{(k+1-s)(k-s) \cdots (1-s)} .
\end{align} 
Now we write
\begin{align}\label{breakI1}
	I_k^{(k+1)}(x,s) = (-1)^{\frac{k+1}{2}}	  \int_{0}^{\infty}\!\! \sin\bigg(\frac{1}{u^k}\bigg)\sin(xu)u^{s-1+k}du = (-1)^{\frac{k+1}{2}}	(A(x)+B(x)),
\end{align}
where
\begin{align}\label{Aintegral}
	A(x) = \int_{0}^{M}\!\! \sin\bigg(\frac{1}{u^k}\bigg)\sin(xu)u^{s-1+k}du,\hspace{4mm} 	B(x) = \int_{M}^{\infty}\!\! \sin\bigg(\frac{1}{u^k}\bigg)\sin(xu)u^{s-1+k}du.
\end{align}
We first treat $B(x)$. Expand $ \sin\big(\frac{1}{u^k}\big)$ around $u=\infty$ to obtain
\begin{align}\label{7.25}
B(x)&= \int_{M}^{\infty}\!\! \sin(xu)u^{s-1}du +  \sum_{n=1}^{\infty} \int_{M}^{\infty}\!\! \frac{(-1)^n \sin(xu) u^{s-1-2nk}du}{(2n+1)!}\nonumber\\
&= \frac{1}{x^s} \int_{xM}^{\infty} \sin(T)T^{s-1}dT +  \sum_{n=1}^{\infty} \int_{M}^{\infty}\!\! \frac{(-1)^n \sin(xu) u^{s-1-2nk}du}{(2n+1)!},
\end{align}  
where, in the second step, we made the substitution $T=xu$ in the first expression on the right-hand side. Around $a=0$, we have \cite[p.~188, Formulas \textbf{8.21.8, 8.21.14}]{NIST}
\begin{align}\label{asym_expn_sin}
	\int_{a}^{\infty} \sin(T)T^{s-1}dT = \G(s)\sin\bigg(\frac{\pi s}{2}\bigg)+a^{s}\sum_{n=1}^{\infty} \frac{(-1)^n a^{2n-1}}{(s+2n-1)(2n-1)!}.
\end{align} 
Apply \eqref{asym_expn_sin} with $a=xM$ and substitute the resulting expression in \eqref{7.25} to get
\begin{align}\label{bx expr}
B(x)=x^{-s}\G(s)\sin\left(\frac{\pi s}{2}\right)+M^{s}\sum_{n=1}^{\infty} \frac{(-1)^n (Mx)^{2n-1}}{(s+2n-1)(2n-1)!} + \sum_{n=1}^{\infty} \int_{M}^{\infty}\!\! \frac{(-1)^n \sin(xu) u^{s-1-2nk}du}{(2n+1)!} .
\end{align}
Next, substitute \eqref{bx expr} in \eqref{breakI1}, then multiply both sides by $x^s$, and let $x\to0^{+}$ to obtain
\begin{align}
	\lim_{x\rightarrow0^{+}} x^sI_k^{(k+1)}(x,s)= (-1)^{\frac{k+1}{2}}
	\G(s)\sin\left(\frac{\pi s}{2}\right),
\end{align}	
since the integral $A(x)$ is bounded and the two series involved are uniformly convergent on any compact subset of $(0,1)$. Therefore, from \eqref{limit_C_2k+2}, we have
\begin{align}\label{c 2k+2}
	C_{2k+2} = \frac{(-1)^{\frac{k+1}{2}}
		\G(s)\sin\big(\frac{\pi s}{2}\big)}{(k+1-s)(k-s) \cdots (1-s)} .
\end{align}
The remaining coefficients are now found using \eqref{c 2k+2}. Since $k$ is odd, any coefficient among $C_{\lfloor{\frac{k}{2}+1}\rfloor+1},\cdots, C_{k}$ is of the form $C_{j+\frac{k+1}{2}}$, where $1\leq j\leq\frac{k-1}{2}$, and whose corresponding power of $x$ is of the form $x^{k+2j-1}$. Similarly, any coefficient among $C_{k+2}, \cdots,C_{k+\lceil{\frac{k+3}{2}}\rceil-1}$ is of the form $C_{k+p}$, where $2\leq p\leq \frac{k+1}{2}$, and whose corresponding power of $x$ is of the form $x^{2k-2p+3}$. 

Any coefficient among these two lists has the exponent of its corresponding power of $x$ of the form $k+r+1, r\geq0$. Let us designate such a generic coefficient by $C(r)$. Observe from \eqref{expand} and \eqref{c 2k+2} that
\begin{align}\label{otherC}
	(k+1+r)!C(r) &= \lim_{x \rightarrow 0^+} \big(I_k(x,s)-C_{2k+2}x^{k+1-s}\big)^{(k+1+r)}\nonumber\\
	&=  \lim_{x \rightarrow 0^+}  \big(I_k^{(k+1)}(x,s)-(-1)^{\frac{k+1}{2}}
	\G(s)\sin\left(\frac{\pi s}{2}\big)x^{-s}\right)^{(r)}.
\end{align}
We proceed to evaluate this limit. Recall the definitions of $A(x)$ and $B(x)$ in \eqref{Aintegral}. It is easy to see that
\begin{align}\label{Aparity}
	A^{(r)}(x) = \begin{cases}
		(-1)^{\frac{r}{2}}\int_{0}^{M}\!\! \sin\big(\frac{1}{u^k}\big)\sin(xu)u^{s-1+k+r}du, & \text{$r$ is even,} \\
		(-1)^{\frac{r-1}{2}}\int_{0}^{M}\!\! \sin\big(\frac{1}{u^k}\big)\cos(xu)u^{s-1+k+r}du, & \text{$r$ is odd}.
	\end{cases}	
\end{align}
Now, if $r$ is odd, one can check that
\begin{align}\label{br odd}
B^{(r)}(x)&=	\frac{-1}{x^{s+r}}(s)_r \G(s)\sin\left(\frac{\pi s}{2}\right)+M^{s}\sum_{n=\frac{1+r}{2}}^{\infty} \frac{(-1)^n M^{2n-1}(2n-1)(2n-2)\cdots (2n-r)x^{2n-1-r}}{(s+2n-1)(2n-1)!}\nonumber\\
&\quad+\sum_{n=1}^{\infty} \frac{(-1)^{n}(-1)^{\frac{r-1}{2}}}{(2n+1)!}\int_{M}^{\infty}\!\! \cos(xu)u^{s-2 nk-1+r}du,
\end{align}
whereas, if $r$ is even, then
\begin{align}\label{br even}
	B^{(r)}(x)&=
	\frac{1}{x^{s+r}}(s)_r \G(s)\sin\big(\frac{\pi s}{2}\big)+M^{s}\sum_{n=1+\frac{r}{2}}^{\infty} \frac{(-1)^n M^{2n-1}(2n-1)(2n-2)\cdots (2n-r)x^{2n-1-r}}{(s+2n-1)(2n-1)!} \nonumber\\
	&\quad+
	\sum_{n=1}^{\infty} \frac{(-1)^{n}(-1)^{\frac{r}{2}}}{(2n+1)!}\int_{M}^{\infty}\!\! \sin(xu)u^{s-2 nk-1+r}du.
\end{align}
Therefore, when $r$ is odd, from\eqref{otherC},  \eqref{Aparity} and \eqref{br odd}, we see, upon simplification, that
\begin{align*}
	(k+1+r)!C(r) 
	&=
	(-1)^{\frac{k+1}{2}}
	\Bigg\{ (-1)^{\frac{r-1}{2}}\int_{0}^{M}\!\! \sin\bigg(\frac{1}{u^k}\bigg)u^{s-1+k+r}du \\ 
	&\quad
	+ \sum_{n=0}^{\infty} \frac{(-1)^{n+\frac{r+1}{2}}M^{s-2nk+r}}{(2n+1)!(s-2nk+r)} \Bigg\}.
\end{align*}
Now employ the change of variable $T=u^{-k}$ in the integral on the right-hand side, then use \eqref{asym_expn_sin} with $s$ replaced by $-(s+k+r)/k$, and then observe that the  two infinite series cancel, thereby leading to
\begin{align}\label{C_oddk_oddr}	
	C(r) = \frac{(-1)^{1+\frac{k+r}{2}}}{k(k+r+1)!} \G\bigg(-\frac{(s+k+r)}{k}\bigg)\sin\bigg(\frac{\pi}{2}\bigg(\frac{s+k+r}{k}\bigg)\bigg).	
\end{align}
If $r$ is even, one can similarly proceed as above and conclude that
\begin{align}\label{other c(r)}
C(r)=0.
\end{align}
Therefore, from \eqref{c1}, \eqref{ck+1}, \eqref{cell}, \eqref{ck+p}, \eqref{c 2k+2}, \eqref{C_oddk_oddr}, and \eqref{other c(r)}, we see that when $k$ is odd,
\begin{equation}
	C_{j} = 
	\begin{cases}
		0   , 
		&  1\leq j \leq k+1,  \\
		\frac{(-1)^{j}}{ k(4k-2j+3)!} \G\big(-\frac{2j-4k-2-s}{k}\big)\sin\big(\frac{\pi}{2}\big(\frac{s+2-2j}{k}\big)\big) ,
		& k+2 \leq j \leq 2k+1, \\
		\frac{(-1)^{\frac{k+1}{2}}\G(s)\sin\big(\frac{\pi s}{2}\big)}{(k+1-s)(k-s) \cdots (1-s)}, 
		& j= 2k+2.
	\end{cases}
\end{equation}
Similarly, when $k$ is even, it can be shown that 
\begin{equation}
	C_{j} = 
	\begin{cases}
		0   , 
		&  1\leq j \leq k+1,  \\
		\frac{(-1)^{j}}{ k(4k-2j+3)!} \G\big(-\frac{2j-4k-2-s}{k}\big)\sin\big(\frac{\pi}{2}\big(\frac{s+2-2j}{k}\big)\big) ,
		& k+2 \leq j \leq 2k+1, \\
		\frac{(-1)^{\frac{k}{2}}
			\G(s)\cos\big(\frac{\pi s}{2}\big)}{(k+1-s)(k-s) \cdots (1-s)}, 
		& j= 2k+2.
	\end{cases}
\end{equation}
Therefore, from \eqref{iknew}, \eqref{Cj defn} and the above expressions for $C_j$, we find that
\begin{align}\label{ik almost done}
	I_k &=\sum_{j=k+2}^{2k+2}C_{j}x^{2kb_{j}} {}_0F_{2k+1}\bigg(\begin{matrix} - \\ 1+b_j-b_1 \cdots * \cdots 1+b_j-b_{2k+2} \end{matrix} \; \Bigg| \frac{(-1)^{k-1}}{4}\left(\frac{x}{2k}\right)^{2k} \bigg) \notag 	\\
	&=\sum_{i=1}^{k+1}C_{i+k+1}x^{2kb_{i+k+1}} {}_0F_{2k+1}\bigg(\begin{matrix} - \\ 1+b_{i+k+1}-b_1 \cdots * \cdots 1+b_{i+k+1}-b_{2k+2} \end{matrix} \; \Bigg| \frac{(-1)^{k-1}}{4}\left(\frac{x}{2k}\right)^{2k} \bigg)\notag\\
	&=\sum_{i=1}^{k+1}C_{i}'x^{2kb_{i}'} {}_0F_{2k+1}\bigg(\begin{matrix} - \\ 1+b_{i}'-b_{1}' \cdots * \cdots 1+b_{i}'-b_{2k+2}' \end{matrix} \; \Bigg| \frac{(-1)^{k-1}}{4}\left(\frac{x}{2k}\right)^{2k} \bigg),
\end{align}
where, for $   1 \leq i \leq k+1$, $ C_{i}' = C_{k+i+1}$ and 
\begin{align}
	b_{i}' =
	\begin{cases}
		b_{i+k+1}, & 1 \leq i \leq k+1, \\
		b_{i-k-1}, & k+2 \leq i \leq 2k+2.
	\end{cases} 
\end{align} 
Using \eqref{gmf} and the elementary properties of gamma function, one can check that
\begin{align}\label{ci dash}
	C_{i}' = \frac{\Pi}{\sqrt{k}2^{\frac{s+k-1}{k}}} \frac{\prod_{n=1}^{k+1}\G(b_n'-b_{i}')^{*}}{\prod_{n=k+2}^{2k+2}\G(1+b_{i}'-b_n')} \bigg(\frac{1}{(4(2k)^{2k})}\bigg)^{b_{i}'}.
\end{align}
Finally, from \eqref{ik almost done}, \eqref{ci dash}, and an application of Slater's theorem, we are led to \eqref{I_meijerG}.
	\end{proof}

We are all set to prove the representation of $J_k(x, s)$ as a Meijer $G$-function.

\begin{proof}[\textup{\ref{form to be proved in s}}][]
	We first prove the identity for $0<s<1$ and then extend it by analytic continuation to $1-k<\textup{Re}(s)<2$ by applying \cite[p.~30, Theorem 2.3]{temme}. 
	
	To that end, from \eqref{k+1_even_I} and Theorem \ref{I_meijerG thm}, we have
	\begin{align}
		(-1)^{\frac{k+3}{2}} k J_k(x,s) =  \frac{\pi}{\sqrt{k}2^{\frac{s+k-1}{k}}}\bigg[x \frac{d^{k+1}}{dx^{k+1}}&G_{0,2k+2}^{\,k+1,0} \!\left(  \,\begin{matrix} \{  \} \\ b_1', \cdots , b_{2k+2}' \end{matrix} \; \Big| \frac{1}{4} \left(\frac{x}{2k} \right)^{2k}   \right)
		+\\  &(s+k-1) \frac{d^{k}}{dx^{k}}G_{0,2k+2}^{\,k+1,0} \!\left(  \,\begin{matrix} \{  \} \\ b_1', \cdots , b_{2k+2}' \end{matrix} \; \Big| \frac{1}{4} \left(\frac{x}{2k} \right)^{2k}   \right)\bigg].
	\end{align}
	We now employ \cite[p.~621, Formula (40)]{prud3} and the definition of $\Delta(k, a)$ given in \cite[p.~798]{prud3} to get
	\begin{align*}
		(-1)^{\frac{k+3}{2}} k x^{k} J_k(x,s) &=  \frac{\pi}{\sqrt{k}2^{\frac{s+k-1}{k}}}\Bigg[(2k)^{k+1}G_{2k,4k+2}^{\,k+1,2k} \!\left(  \,\begin{matrix} 0, \frac{1}{2k} , \cdots \frac{2k-1}{2k} \vspace{0.25 cm}\\  b_1', \cdots , b_{2k+2}', \frac{k+1}{2k},\frac{k+2}{2k}  \cdots , \frac{3k}{2k} \end{matrix} \; \Bigg| \frac{1}{4} \left(\frac{x}{2k} \right)^{2k}   \right)\\
		&\quad+(s+k-1)(2k)^{k}G_{2k,4k+2}^{\,k+1,2k} \!\left(  \,\begin{matrix} 0, \frac{1}{2k} , \cdots \frac{2k-1}{2k} \vspace{0.25 cm}\\  b_1', \cdots , b_{2k+2}', \frac{k}{2k},\frac{k+1}{2k}  \cdots , \frac{3k-1}{2k} \end{matrix} \; \Bigg| \frac{1}{4} \left(\frac{x}{2k} \right)^{2k}   \right)\Bigg].
	\end{align*}
	Now use \eqref{MeijerG} to represent both the Meijer $G$-functions occurring in the above equation as line integrals and perform a tedious, though routine, calculation involving Gauss multiplication formula \eqref{gmf} and other basic properties of the Gamma function to arrive at   
	\begin{align}
		\frac{(-1)^{\frac{k+3}{2}}2^{\frac{s+k-1}{k}} k^{3/2} x^{k} J_k(x,s)}{\pi}=\frac{(2k)^{k+1}}{2 \pi i}\hspace{-2mm}\int_{(c)}^{}\hspace{-2mm}
		\frac{\prod_{j=1}^{k}\Gamma(\frac{2j-2+k}{2k}-w)\Gamma(\frac{k+1-s}{2k}-w)}{\Gamma(\frac{s-1}{2k}+w)\prod_{j=1}^{ k}\Gamma(\frac{2j-k-1}{2k}+w)}\left(\tfrac{1}{4}\left(\tfrac{x}{2k}\right)^{2k}\right)^{w}dw,
	\end{align}
	where $-1/(2k)<c=\textup{Re}(w)<1/(2k)$. Now replace $w$ by $w+1/2$ in the integral on the right, and replace $j$ by $j-k-1$ in the product of the denominator of the integrand to obtain
	\begin{align}
		\frac{(-1)^{\frac{k+3}{2}}2^{\frac{s+k-1}{k}} k^{3/2} x^{k} J_k(x,s)}{\pi}=\frac{kx^k}{2 \pi i}\hspace{-1.5mm}\int_{(c')}^{}\hspace{-1mm}
		\frac{\prod_{j=1}^{k}\Gamma(\frac{j-1}{k}-w)\Gamma(\frac{1-s}{2k}-w)}{\Gamma(\frac{k+s-1}{2k}+w)\prod_{j=k+2}^{ 2k+1}\Gamma(\frac{2j-2k-3}{2k}+w)}\left(\tfrac{1}{4}\left(\tfrac{x}{2k}\right)^{2k}\right)^{w}dw,
	\end{align}
	where $-(k+1)/(2k)<c'=\textup{Re}(w)<-(k-1)/(2k)$. 
	The result now follows upon noting that $k$ is odd and using the definition in \eqref{MeijerG}. The case when $k$ is even is analogous and can be verified by the reader.
	\end{proof}

%
%
%
%
%
%
%
%
%
%
%
%
%
%
%
%
%
%
%
%

%
%
%
%

%
%
%
%
%
%

\begin{thebibliography}{00}
\bibitem{Apostol} T.~Apostol, {\em Introduction to analytic number theory}, Undergraduate Texts in Mathematics. Springer, New York- Heidelberg (1976).

\bibitem{banerjeemaji}
D.~Banerjee and B.~Maji, \emph{Identities associated to a generalized divisor function and modified Bessel function},  Res.  Number Theory, {\bf 9},  28 (2023). 

\bibitem{bdg_N}
S.~Banerjee, A.~Dixit and S.~Gupta, \emph{Explicit transformations for generalized Lambert series associated with the divisor function $\sigma_{a}^{(N)}(n)$}, Res. ~Math. ~Sci.~\textbf{10} (2023) no. 4, Paper No. 38, 50 pp.  


\bibitem{popov}
B.~C.~Berndt, A.~Dixit, S.~Kim and A.~Zaharescu,  \emph{On a theorem of A. I. Popov on sums of squares}, Proc.~Amer.~Math.~Soc.~\textbf{145}, no. 9 (2017), 3795--3808.

\bibitem{bdrz1}
B.~C.~Berndt, A.~Dixit, A.~Roy and A.~Zaharescu, \emph{New pathways and connections in number theory and analysis motivated by two incorrect claims of Ramanujan}, Adv. Math.~\textbf{304} (2017), 809--929.

\bibitem{bdgz}
B. C. Berndt, A. Dixit, R. Gupta and A. Zaharescu, \emph{Modified Bessel functions in analytic number theory}, to appear in the volume commemorating Richard Askey, Springer.

\bibitem{bochner}
S.~Bochner, \emph{Some properties of modular relations}, Ann.~Math.~\textbf{53} (1951), 332--363.

\bibitem{bromwich}
T.~J.~I.~Bromwich, \emph{An Introduction to the Theory of Infinite Series}, Macmillan and Co., London, 1908.

\bibitem{comtet}
L.~Comtet, \emph{Advanced Combinatorics. The art of finite and infinite expansions} Revised and enlarged edition. D. Reidel Publ. Co., Dordrecht-Boston, 1974.

\bibitem{cn1}
K.~Chandrasekharan and R.~Narasimhan, \emph{Hecke's functional equation and arithmetical identities}, Ann.~Math.~(2)~\textbf{4} (1961), 1--23.

\bibitem{cn2}
K.~Chandrasekharan and R.~Narasimhan, \emph{Functional equations with multiple gamma factors and the average order of arithmetical functions}, Ann.~Math.~(2)~\textbf{76} (1962), 93--136. 


\bibitem{coddington-levinson}
E.~A.~Coddington and N~Levinson, \emph{Theory of ordinary differential equations},  McGraw-Hill, New York, 1955, 429 pp.  

\bibitem{cohen}
E.~Cohen, \emph{An extension of Ramanujan's sum}, Duke Math.~J.~\textbf{16} (1949), 85--90.

\bibitem{cohenbook}
H.~Cohen, \emph{Number theory. Vol. II. Analytic and modern tools}, Graduate Texts in Mathematics, 240. Springer, New York, 2007.

\bibitem{con}
J.~B.~Conway, \emph{Functions of One Complex Variable}, 1st ed.,
Springer, New York, 1973.

\bibitem{cop}
E.~T.~Copson, \emph{Theory of Functions of a Complex Variable}, Oxford University Press, Oxford, 1935.

\bibitem{corbett}
A.~Corbett, \emph{Vorono\"{\dotlessi} summation for $\textup{GL}_n$: collusion between level and modulus}, Amer.~J.~Math.~\textbf{143} no. 5 (2021), 1361--1395.

\bibitem{crum}
M.~M.~Crum, \emph{On some Dirichlet series}, J.~London Math.~Soc.~\textbf{15} (1940), 10--15.

\bibitem{dav}
H.~Davenport, \emph{Multiplicative Number Theory}, 3rd ed., Springer--Verlag, New York, 2000.

\bibitem{hhf1}
A.~Dixit, R.~Gupta and R.~Kumar, \emph{Extended higher Herglotz functions I. Functional equations}, Adv.~Appl.~Math.~\textbf{153} (2024), 102622 (41 pages).

	\bibitem{dixitmaji1}
A.~Dixit and B.~Maji, \emph{Generalized Lambert series and arithmetic nature of odd zeta values}, Proc.~Royal~Soc.~Edinburgh, Sect. A: Mathematics, \textbf{150} Issue 2 (2020), 741--769.

\bibitem{dmsv}
A.~Dixit, B.~Maji, A.~Sankaranarayanan and A.~Vatwani, A Riesz-sum identity associated with a generalized divisor function, in preparation.

\bibitem{dixitmoll}
A.~Dixit, V.~H.~Moll, \emph{Self-reciprocal functions, powers of the Riemann zeta function and modular-type transformations}, J.~Number~Theory~\textbf{147} (2015), 211--249.

\bibitem{DGKM}
A.~Dixit, R.~Gupta, R.~Kumar, B.~Maji, \emph{Generalized Lambert series, Raabe's cosine transform and a two-parameter generalization of Ramanujan's formula for $\zeta(2m+1)$},  Nagoya Math. J.,  {\bf 239} (2020),  232--293.

\bibitem{dixfer1}
A.~L.~Dixon and W.~L.~Ferrar, \emph{Lattice-point summation formulae}, Quart.~J.~Math.~\textbf{2} (1931), 31--54.

\bibitem{dkk1}
A.~Dixit, A.~Kesarwani and R.~Kumar, \emph{Explicit transformations of certain Lambert series}, Res.~ Math.~Sci.~ \textbf{9} 34 (2022).

\bibitem{egge}
E.~S.~Egge, \emph{An Introduction to Symmetric Functions and Their Combinatorics}, Student Mathematical Library, 91, American Mathematical Society, Providence, RI (2019), 342 pp.

\bibitem{everitt}
W.~N.~Everitt, \emph{On a generalization of Bessel functions and a resulting class of Fourier kernels}, Quart.~J.~Math.~Oxford~Ser. (2)~\textbf{10} (1959), 270--279.

\bibitem{grn}
I.~S.~Gradshteyn and I.~M.~Ryzhik, eds., \emph{Table of Integrals,
	Series, and Products}, 8th ed., Edited by D.~Zwillinger, V.~H.~Moll, Academic Press, New York, 2015.

\bibitem{guinand} 
A.~P.~Guinand, \emph{Some rapidly convergent series for the Riemann $\Xi$-function}, Quart. J. Math.(Oxford) \textbf{6} (1955), 156-160.

		\bibitem{guptamaji}
A.~Gupta and B.~Maji, \emph{On Ramanujan’s formula for $\zeta(1/2)$ and $\zeta(2m+1)$}, J.~Math.~Anal.~Appl.~\textbf{507} (2022), 125738.

\bibitem{cvhrao}
C.~V.~Hanumanta Rao, \emph{On a certain definite integral}, Mess.~Math.~\textbf{XLVII} (1918), 134--137.

\bibitem{hardyde}
G.~H.~Hardy, \emph{Notes on some points in the integral calculus, XXVII. Oscillating cases of Dirichlet's integral (continued)}, Mess.~Math.~\textbf{XL} (1911), 44--51.

\bibitem{hardyqjpam1915}
G.~H.~Hardy, \emph{On the expression of a number as the sum of two squares}, Quart.~J.~Pure Appl.~Math.~\textbf{46} (1915), 263--283.

\bibitem{hardyramanujan}
G.~H.~Hardy, S.~Ramanujan, \emph{Asymptotic formulae in combinatory analysis}, Proc.~London~Math.~Soc.~(2)~\textbf{17} (1918) 75--115.

\bibitem{koshliakov}
N.~S.~Koshliakov, \emph{On Voronoi's sum-formula},
Mess.~Math.~\textbf{58} (1929), 30--32.

\bibitem{koshwigleningrad}
N.~S.~Koshliakov, \emph{Sur une int\'{e}grale d\'{e}finie et son application $\grave{a}$ la th\'{e}orie des formules sommatoires}, J.~Soc.~Phys.-Math.~Leningrad~\textbf{2}, No. 2 (1929), 123--130.

	\bibitem{koshlyakov-1936a}
N.~S. Koshliakov.
\newblock On an extension of some formulae of {R}amanujan.
\newblock {\em Proc. {L}ondon {M}ath, {S}oc.}, 41:26--32, 1936.

		\bibitem{kratzel1}
E.~Kr\"{a}tzel, \emph{Dedekindsche Funktionen und Summen, I} (German), Period.~Math.~ Hungar.~\textbf{12} no. 2, (1981), 113--123.

\bibitem {Luke} Y.~L.~Luke, \emph{The Special Functions and Their Approximations}, Vol. 1, UK Edition, Academic Press, INC. 1969.

\bibitem{mccarthy}
P.~J.~McCarthy, \emph{Arithmetische Funktionen}, Springer Spektrum, 2017.

\bibitem{Mcla}
N.~W.~McLachlan, \emph{Complex Variable Theory and Transform Calculus}, Cambridge University Press, Cambridge, 1963.

\bibitem{millerschmid}
S.~D.~Miller and W.~Schmid, \emph{Summation formulas, from Poisson and Voronoi to the present}. Noncommutative Harmonic Analysis, 419–440, Progr. Math., 220, Birkhäuser Boston, Boston, MA, 2004.
 
 \bibitem{millerschmid0.5}
 S.~D.~Miller and W.~Schmid, \emph{Automorphic distributions, L-functions, and Voronoi summation for $\textup{GL}(3)$}, Ann.~Math.~(2)~\textbf{164} no. 2 (2006), 423--488. 
 
 \bibitem{millerschmid1} 
 S.~D.~Miller and W.~Schmid, \emph{A general Voronoi summation formula for $\textup{GL}\left(n, \mathbb{Z}\right)$}, Geometry and analysis, No. 2, 173--224, Adv. Lect. Math. (ALM), 18, Int. Press, Somerville, MA, 2011. 
 	
 \bibitem{narain}
 R.~Narain, \emph{The $G$-functions as unsymmetrical Fourier kernels. I}, Proc. Amer. Math. Soc.~\textbf{13} no. 6 (1962), 950--959.
 
 \bibitem{nasim}
 C.~Nasim, On the summation formula of Voronoi, Trans. Amer. Math. Soc.~\textbf{163} (1972), 35--45.
 
\bibitem{Ober}
F.~ Oberhettinger, \emph{Tables of Mellin Transforms}, Springer-Verlag, New York-Heidelberg, 1974.

\bibitem{NIST} F. W. J. Olver, D. W. Lozier, R. F. Boisvert and C. W. Clark, eds., {\it NIST Handbook of Mathematical Functions}, Cambridge University Press, Cambridge, 2010.	

\bibitem{kp}
R.~B.~Paris and D.~Kaminski, \emph{Asymptotics and Mellin-Barnes Integrals},  Encyclopedia of Mathematics and its Applications, 85. Cambridge University Press, Cambridge, 2001.

\bibitem{prud3}
A.~P.~Prudnikov, Yu.~A.~Brychkov and O.~I.~Marichev, \emph{Integrals and Series} Volume 3. More Special Functions, Translated from the Russian by G.~G.~Gould, Gordon and Breach, 1990.

\bibitem{ram1918}
S.~Ramanujan, \emph{On certain trigonometric sums and their applications in the theory of numbers}, Trans.~Cambridge~Philos.~Soc.~\textbf{22} (1918), 179-199.

\bibitem{lnb}
S.~Ramanujan, \emph{The Lost Notebook and Other Unpublished
	Papers}, Narosa, New Delhi, 1988.

\bibitem{roblesroy}
N.~Robles and A.~Roy, \emph{Moments of averages of generalized Ramanujan sums}, Monatsh.~Math.~\textbf{182} no. 2 (2017), 433-461.

\bibitem{sierpinski}
W. Sierpi\'{n}ski, O pewnem zagadnieniu z rachunku funkeyj asymptotycznych, Pr.~Mat.~Fiz.~\textbf{17} (1906) 77--118.


\bibitem{temme}
N.~M.~Temme, \emph{Special functions: An introduction to the classical functions of mathematical physics}, Wiley-Interscience Publication, New York, 1996.

\bibitem{tenenbaum}
G.~Tenenbaum, J.~Wu and Y.-L.~Li, \emph{Power partitions and saddle-point method}, J.~Number Theory~\textbf{204} (2019), 435--445.

\bibitem{titch}
E.~C.~Titchmarsh, \emph{The Theory of the Riemann Zeta-function}, 2nd ed., Revised by D.~R.~Heath-Brown, Clarendon Press, Oxford, 1986.

\bibitem{voronoi}
G.~F.~Vorono\"{\dotlessi}, \emph{Sur une fonction transcendante et
	ses applications \`a la sommation de quelques s\'eries},
Ann.~\'Ecole Norm.~Sup.~(3) \textbf{21} (1904), 207--267,
459--533.

\bibitem{vu}
Vu Kim Tuan, \emph{On the theory of generalized integral transforms in a certain function space} Dokl. AN SSSR.~\textbf{286}, 521--524; English transl. in J.~Soviet~Math.~\textbf{33} (1986), 103--106.

\bibitem{watsonself}
G.~N.~Watson, \emph{Some self-reciprocal functions}, Quart.~J.~Math.~(Oxford)\textbf{2} (1931) 298--309.

\bibitem{watson}
G.~N.~Watson, A Treatise on the Theory of Bessel Functions, Cambridge Mathematical Library, Cambridge University Press, Cambridge, 1995.

\bibitem{wig0}
S.~Wigert, \emph{Sur la s\'{e}rie de Lambert et son application $\grave{a}$ la th\'{e}orie des nombres}, Acta Math.~\textbf{41} (1916), 197--218.

\bibitem{wig}
S.~Wigert, \emph{Sur une extension de la s\'{e}rie de Lambert}, Arkiv Mat.~Astron.~Fys.~\textbf{19} (1925), 13 pp.

\bibitem{wigannalen}
S.~Wigert, \emph{Sur une nouvelle fonction enti$\grave{e}$re et son application $\grave{a}$ la th\'eorie des nombres}, Math.~Ann.~\textbf{96} No. 1 (1927), 420--429.

\bibitem{wilton}
J.~R.~Wilton, \emph{Vorono\"{\dotlessi}'s summation formula}, Quart.~J.~Math.~\textbf{3} (1932), 26--32.

\bibitem{wright}
E.~M.~Wright, \emph{Asymptotic partition formula, III. Partition into $k$-th powers}, Acta Math.~\textbf{63} (1934), 143--191.

\bibitem{yakubovich0}
S.~Yakubovich, \emph{A general class of Voronoi’s and Koshliakov-Ramanujan’s summation formulas
involving $d_k(n)$}, Integral Transforms Spec. Funct.~\textbf{22} no. 11 (2011), 801--821.

\bibitem{yakubovich1}
S.~Yakubovich, \emph{Voronoi-Nasim summation formulas and index transforms}, Integral Transforms Spec. Funct.~\textbf{23} no. 5 (2012), 369--388.


\bibitem{yakubovich}
S.~Yakubovich and Y.~Luchko, \emph{The Hypergeometric Approach to Integral Transforms and Convolutions}, Mathematics and its Applications, 287, Kluwer Academic Publishers, Dordrecht, 1994, 324 pp.
	
	\bibitem{zagierhrj}
D.~Zagier, \emph{Power partitions and a generalized eta transformation property}, Hardy-Ramanujan J.~\textbf{44} (2021), 1-18.
\end{thebibliography}
\end{document}